\def\NM{{\mathbb{N}}}
\def\HM{{\mathbb{H}}}
\def\PM{{\mathbb{P}}}
\def\QM{{\mathbb{Q}}}
\def\FM{{\mathbb{F}}}
\def\ZM{{\mathbb{Z}}}
\def\MG{{\mathfrak M}}
\def\mG{{\mathfrak m}}
\def\ZG{{\mathfrak Z}}
\def\AC{{\mathcal A}}
\def\CC{{\mathcal C}}
\def\HC{{\mathcal H}}
\def\RC{{\mathcal R}}
\def\OC{{\mathcal O}}
\def\MC{{\mathcal M}}
\def\KC{{\mathcal K}}
\def\PC{{\mathcal P}}
\def\FC{{\mathcal F}}
\def\DC{{\mathcal D}}
\def\ssi{si et seulement si}
\def\simto{\buildrel\hbox{\tiny{$\sim$}}\over\longrightarrow}
\def\leq{\leqslant}
\def\geq{\geqslant}
\def\injo{\hookrightarrow}
\def\id{\mathop{\mathrm{Id}}\nolimits}
\def\ba{\backslash}
\def\wt{\widetilde}
\def\wh{\widehat}
\def\o#1{\overline{#1}}
\renewcommand{\o}{\overline}
\def\application#1#2#3#4#5{\begin{array}{rcl}
                            #1 \;\;\; #2 & \to &  #3 \\
                              #4 & \mapsto & #5 
                            \end{array}} 
\def\cas#1#2#3#4#5{\begin{array}{rcl} #1 \; & = &
    \left\{\begin{array}{rcl} #2 & \hbox{ si } & #3 \\
                             #4 & \hbox{ si } & #5 \end{array}
                         \right. \end{array}}
\def\To#1{\buildrel\hbox{\tiny{$#1$}}\over\longrightarrow}
\def\to{\rightarrow}
\def\ker{\mathop{\hbox{\rm ker}\,}}
\def\endo#1#2{\hbox{\rm End}_{#1}\left(#2\right)}
\def\Spec{\hbox{\rm Spec}}
\def\Hom{\mathop{\hbox{\rm Hom}}\nolimits}
\def\Ext{\mathop{\hbox{\rm Ext}}\nolimits}
\def\Rhom{\mathop{R\hbox{\rm Hom}}\nolimits}
\def\opp{{\rm opp}}
\def\Mod{{\rm Mod}}
\def\Rep{{\rm {R}ep}}
\def\Modtop#1#2{\mathop{{\rm Mod}_{#1}(#2)}}
\def\Mo#1#2{\mathop{\Rep^{\infty}_{#1}(#2)}}
\def\Moc#1#2{\mathop{\Rep^{c}_{#1}(#2)}}
\def\Mosc#1#2{\mathop{\Rep^{\rm sc}_{#1}(#2)}}
\def\Irr#1#2{\mathop{{\rm Irr}_{#1}\left(#2\right)}}
\def\ind#1#2#3{\hbox {\rm Ind}_{#1}^{#2}\>\!\left(#3\right)}  
\def\cind#1#2#3{\hbox {\rm ind}_{#1}^{#2}\>\!\left(#3\right)} 
\def\ip#1#2#3{\hbox {\sl i}_{#1}^{#2}\>\!(#3)}  
\def\dim{\mathop{\mbox{\rm dim}}\nolimits}
\def\val{\mathop{\mbox{\sl val}}\nolimits}
\def\gal{\mathop{\mbox{\sl Gal}}\nolimits}
\def\det{\mathop{\mbox{\rm det}}\nolimits}
\def\nrd{\mathop{\mbox{\rm nrd}}\nolimits}
\def\tor#1#2#3#4{\mbox{\sl Tor}^{#4}_{#1}\>\!(#2,#3)}
\def\limproj{{\lim\limits_{\longleftarrow}}}
\def\limind{{\lim\limits_{\longrightarrow}}}
\renewcommand{\subsubsection}{\@startsection{subsubsection}{3}{\parindent}{-\baselineskip}{-0.01\baselineskip}{\bf}}
\renewcommand*{\@seccntformat}[1]{%
  \csname the#1\endcsname\
}
\def\ali{\subsubsection{}\setcounter{equation}{0}}
\def\alin#1{\setcounter{equation}{0}\subsubsection{\it  #1}. }
\newtheoremstyle{th}
  {\baselineskip}{.5\baselineskip}{\itshape}
  {\parindent}{\bf}
  {--}{.5em}{}
\newtheoremstyle{def}
  {\baselineskip}{\baselineskip}{}
  {\parindent}{\bf}
  {--}{.5em}{}
\newtheoremstyle{th*}
  {.5\baselineskip}{.5\baselineskip}{\itshape}
  {\parindent}{\bf}
  {--}{.5em}{}
\newtheoremstyle{remark*}
  {.5\baselineskip}{.5\baselineskip}{}
  {\parindent}{\bf}
  {--}{.5em}{}
\newtheoremstyle{remark}
  {.5\baselineskip}{.5\baselineskip}{}
  {\parindent}{\bf}
  {--}{.5em}{}
\theoremstyle{th}
\newtheorem{lemme}[subsubsection]{\sc Lemme.\bf}
\newtheorem{prop}[subsubsection]{\sc Proposition.\bf}
\newtheorem{coro}[subsubsection]{\sc Corollaire.\bf}
\theoremstyle{def}
\newtheorem{fact}[subsubsection]{\sc Fait\bf}
\newtheorem{DEf}[subsubsection]{\sc D{\'e}finition.\bf}
\theoremstyle{remark}
\newtheorem{rema}[subsubsection]{\sc Remarque.\bf}  
\theoremstyle{th*}
\newtheorem*{thm}{\sc Th{\'e}or{\`e}me.}
\newtheorem*{thm1}{\sc Th{\'e}or{\`e}me 1.}
\newtheorem*{thm2}{\sc Th{\'e}or{\`e}me 2.}
\newtheorem*{thm3}{\sc Th{\'e}or{\`e}me 3.}
\newtheorem*{thm4}{\sc Th{\'e}or{\`e}me 4.}
\newtheorem*{lem}{\sc Lemme.}
\newtheorem*{pro}{\sc Proposition.}
\newtheorem*{cor}{\sc Corollaire.}
\newtheorem*{defn}{\sc D\'efinition.}
\newtheorem*{fac}{\sc Fait.}
\theoremstyle{remark*}
\newtheorem*{rem}{\sc Remarque.}
\newtheorem*{exe}{\sc Exemple.}
\newcommand{\findem}{\hfill$\Box$\par\medskip}
\newcommand{\dem}{\indent {\it Preuve :} \rm }
\title{Théorie de Lubin-Tate non abélienne $\ell$-entière}
\author{J.-F. Dat\footnote{L'auteur remercie l'Institute for Advanced Study pour son
    hospitalité et la National Science Foundation  pour son soutien financier
    No. DMS-0635607. Les idées et conclusions exprimées dans ce
    texte sont  celles de l'auteur et n'engagent pas la NSF.}}
\date{}
\begin{document}
\maketitle
\bibliographystyle{plain}
\renewcommand{\proofname}{\indent Preuve}

\def\la{\langle}
\def\ra{\rangle}
\def\knr{{\wh{K^{nr}}}}
\def\ka{\wh{K^{ca}}}

\abstract{For two primes $\ell\neq p$, we investigate the
$\ZM_{\ell}$-cohomology of the Lubin-Tate towers of a $p$-adic field.
We prove  that it realizes some version of Langlands and Jacquet-Langlands
correspondences for flat families of irreducible supercuspidal representations
parametrized by a $\ZM_{\ell}$-algebra $R$, in a way compatible with extension of scalars.
Applied to $R=\o\FM_{\ell}$, this gives a cohomological realization of
the Langlands-Vigneras correspondence for supercuspidals, and a new proof of its
existence.
Applied to complete local algebras, this provides
bijections between deformations of matching $\o\FM_{\ell}$-representations.
Besides, we also get a
 virtual realization of both the \emph{semi-simple} Langlands-Vigneras correspondence
 and the $\ell$-modular Langlands-Jacquet transfer for \emph{all}
 representations, by using the cohomology
 complex and working in a suitable Grothendieck group.}


\def\dd{D_d^\times}
\def\mdro{\MC_{{\rm Dr},0}}
\def\mdrn{\MC_{{\rm Dr},n}}
\def\mdr{\MC_{\rm Dr}}
\def\mlto{\MC_{{\rm LT},0}}
\def\mltn{\MC_{{\rm LT},n}}
\def\mltno{\MC_{{\rm LT},n}^{(0)}}
\def\mlt{\MC_{\rm LT}}
\def\plt{\PC_{\rm LT}}
\def\pdr{\PC_{\rm Dr}}
\def\mltK{\MC_{LT,K}}
\def\LJ{{\rm LJ}}
\def\JL{{\rm JL}}
\def\SL{{\rm SL}}
\def\GL{{\rm GL}}

\def\scusp{{\rm sc}}

\def\Ql{\QM_{\ell}}
\def\Zl{\ZM_{\ell}}
\def\Zlnr{\ZM_{\ell}^{\rm nr}}
\def\Fl{\FM_{\ell}}
\def\oQl{\o\QM_{\ell}}
\def\oZl{\o\ZM_{\ell}}
\def\bZl{\breve\ZM_{\ell}}
\def\bQl{\breve\QM_{\ell}}
\def\oFl{\o\FM_{\ell}}
\def\mltnc{\wh\MC_{{\tiny{\rm LT}},n}}
\def\mltnco{\wh\MC_{{\tiny{\rm LT}},n}^{(0)}}
\def\Wi{{\rm W}}

\tableofcontents

\section{Principaux résultats}

Soit $K$ un corps local non-archimédien d'anneau d'entiers $\OC$ et 
de corps résiduel $k\simeq
\FM_{q}$, où $q$ est une puissance d'un nombre premier $p$. 
Soit $k^{\rm ca}$ une clôture algébrique de $k$ et
$\wh{K^{\rm nr}}$ l'extension non ramifiée maximale complétée de $K$,
d'anneau des entiers $\wh{\OC^{\rm nr}}$ et de corps résiduel $k^{\rm
  ca}$. 

Fixons un entier $d\in\NM$. Nous notons $\mltnc$ le $n$-ème étage de
la tour de Lubin-Tate de hauteur $d$ du corps $K$. C'est donc le
$\wh{\OC^{\rm nr}}$-schéma formel qui classifie les déformations
\emph{par quasi-isogénies} et \emph{munies de structures de niveau $n$ à la Drinfeld} de l'unique 
$\OC$-module formel $\HM_{d}$ de dimension $1$ et
hauteur $d$ sur $k^{\rm ca}$.
Par définition, le groupe de quasi-isogénies  de $\HM_{d}$ agit sur 
$\mltnc$, et on sait qu'il s'identifie au groupe $D^{\times}$ des
unités de l'algèbre à division de centre $K$ et d'invariant $1/d$. Par
un jeu subtil sur les structures de niveau, 
le groupe linéaire
$G:=\GL_{d}(K)$ agit  sur \emph{la tour}
 des  $(\mltnc)_{n\in\NM}$, et cette action
 commute à celle de $D^{\times}$. De plus, l'action du produit
 $G\times D^{\times}$ se factorise par le quotient $GD:=(G\times
 D^{\times})/K^{\times}_{\rm diag}$.
 Nous noterons $\mltn$ la fibre générique de $\mltnc$. C'est un $\knr$-espace
 analytique lisse de dimension $d-1$. Il est muni d'une donnée de
 descente à $K$ qui permet de prolonger à $W_{K}$ l'action de l'inertie $I_{K}$
 sur $H^{*}_{c}(\mlt^{\rm ca},\Lambda):=\limind_{n} H^{*}_{c}(\mltn^{\rm ca},\Lambda)$.

Soit maintenant  $\ell\neq p$ un nombre premier. Par commodité, nous
choisissons une racine carrée de $q$ dans 
$\oZl$, ce qui nous permet de définir les torsions à la Tate
demi-entières, et de normaliser la correspondance de Langlands
$\ell$-adique.
Notre but est de
prouver des versions mod $\ell$ et $\ell$-entières des théorèmes locaux principaux de
Harris et Taylor dans \cite{HaTay} sur la cohomologie $\ell$-adique
de $\mlt$. Nous apportons donc plusieurs réponses au Problem 6 posé
par Harris dans \cite{HarrisICM}.

\subsection{Modulo $\ell$}

On rappelle que, suivant Vignéras, une $\oFl$-représentation irréductible est dite
\emph{supercuspidale} si elle n'est sous-quotient d'aucune induite
parabolique propre admissible. En général, cette propriété est plus
forte que celle d'être \emph{cuspidale}, \emph{i.e.} annulée par tous
les foncteurs de Jacquet propres.

\begin{thm1}
  Pour $\pi$ une $\oFl$-représentation irréductible supercuspidale de $G$, la
  $\oFl$-représen\-tation $ \Hom_{\Zl G}\left(H^{d-1}_{c}(\mlt^{\rm
  ca},\Zl),\pi\right)$ de $D^{\times}\times W_{K}$ est
irréductible. Si on l'écrit sous
 la forme
$$ \Hom_{\Zl G}\left(H^{d-1}_{c}(\mlt^{\rm
  ca},\Zl),\pi\right)\mathop\simeq\limits_{D^{\times}\times W_{K}}
\rho(\pi)\otimes\sigma(\pi)(\frac{d-1}{2}),$$
alors on a les propriétés suivantes : 
\begin{enumerate}
\item   $\pi\mapsto \sigma(\pi)$ est une bijection entre
 classes de $\oFl$-représentations supercuspidales de $G$ et
 classes de  $\oFl$-représentations irréductibles de dimension $d$ de $W_{K}$.
\item  $\pi\mapsto \rho(\pi)$ est une injection de l'ensemble des
 classes de  $\oFl$-représentations supercuspidales de $G$ dans celui
 des classes de  $\oFl$-représentations irréductibles de $D^{\times}$.
\item Pour tout $\oQl$-relèvement $\wt\pi$ de $\pi$, on a
  $\sigma(\pi)=r_{\ell}(\sigma(\wt\pi))$ et $\rho(\pi)=r_{\ell}(\rho(\wt\pi))$.
\end{enumerate}
\end{thm1}
Dans le dernier point, la notation $\rho(\wt\pi)$ désigne 
la correspondante de Jacquet-Langlands de $\wt\pi$,
  et $\sigma(\wt\pi)$ désigne sa correspondante de Langlands.
La notation $r_{\ell}$ désigne
  l'application de ``décomposition'' qui consiste à prendre un réseau
  stable, le réduire ``modulo $\ell$'' et semi-simplifier.

Quelques remarques s'imposent :
\begin{itemize}
\item l'existence d'une bijection satisfaisant les points i) et iii)
  (et donc nécessairement unique) n'est pas nouvelle et est dûe à
  Vignéras dans \cite{VigLanglands}. Les différences entre sa
  preuve et la notre sont dans le point iii), qu'elle obtient par un
  argument global de congruences entre formes automorphes sur des
  groupes anisotropes, et dans la preuve de la surjectivité. Les preuves
  de l'irréductibilité et de l'injectivité reposent toujours sur son
  ``critère numérique'' \cite[2.3]{VigAENS}.

\item De même, l'existence d'une injection satisfaisant les points ii) et iii)
  est déjà établie dans \cite{jlmodl},
  et la différence réside dans le point iii) qui y est obtenu à l'aide
  de ``caractères de Brauer''.

\item L'énoncé obtenu lorsque l'on remplace $\oFl$ par $\oQl$ (moins
  le point iii) évidemment) découle de \cite[Thm B]{HaTay}, une fois que l'on sait que
 la partie supercuspidale de la cohomologie est concentrée
en degré $d-1$, ce qui est prouvé par Mieda dans \cite{Mieda},
\emph{cf} aussi Strauch \cite{StrauchAdv}. Nous utilisons bien
évidemment tous ces résultats.
\end{itemize}

Bien qu'il n'apparaisse pas dans cet énoncé, un rôle crucial est joué
par le complexe 
  $$R\Gamma_{c}(\mlt^{\rm ca},\Zl) \in
\DC^{b}(\Rep^{\infty,c}_{\Zl}(GD\times W_{K}))$$
défini dans la section 3 de \cite{lt}.
Ici, la catégorie dérivée est celle de la catégorie abélienne 
 des $\Zl$-représentations
 de $GD\times W_{K}$ qui sont \emph{lisses} (signe $\infty$)  pour
l'action de $GD$, et qui sont \emph{continues} (signe $c$) pour
celle de $W_{K}$ au sens  
où l'action de $I_{K}$ provient d'une structure de module sur
l'algèbre complétée $\Zl[[I_{K}]]$ et est lisse sur
le plus grand sous-groupe fermé $I_{K}^{\ell'}$ d'ordre premier à
$\ell$. 
La propriété essentielle sur laquelle repose cet article est prouvée
dans l'appendice \ref{appperfG} où l'on trouvera un énoncé plus précis :
\begin{pro}
  Le complexe $R\Gamma_{c}$ est isomorphe, dans $\DC^{b}(\Mo{\Zl}{G})$, 
à un complexe \emph{borné} de $\Zl G$-représentations lisses projectives
  et ``localement de type fini''.
\end{pro}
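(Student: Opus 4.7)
The plan is to construct the desired complex by first establishing perfectness at each finite level of the tower, and then transferring to the full smooth $\Zl G$-representation via compact induction. At level $n$, a compact open subgroup $K_{0}\subset G$ acts on $\mltn^{\rm ca}$ through a finite quotient $K_{0}/K_{n}$ whose order is coprime to $\ell$. I would first show that $R\Gamma_{c}(\mltn^{\rm ca},\Zl)$ is a perfect complex of $\Zl[K_{0}/K_{n}]$-modules: boundedness and $\Zl$-finiteness of the cohomology groups come from Berkovich's finiteness theorem applied to the smooth $(d-1)$-dimensional analytic space $\mltn^{\rm ca}$, while finite Tor-dimension over $\Zl[K_{0}/K_{n}]$ is automatic since this group algebra is a maximal $\Zl$-order (its order being invertible in $\Zl$). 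Hence $R\Gamma_{c}(\mltn^{\rm ca},\Zl)$ is quasi-isomorphic to a bounded complex $P_{n}^{\bullet}$ of finitely generated projective $\Zl[K_{0}/K_{n}]$-modules.

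Next, the compact induction functor $\cInd{K_{0}}{G}$ is exact and sends finitely generated projective $\Zl[K_{0}]$-modules to smooth projective $\Zl G$-representations that are ``locally of finite type'' in the sense of the proposition (their invariants under any compact open subgroup are finitely generated over $\Zl$). I would then use the natural identification $R\Gamma_{c}(\mlt^{\rm ca},\Zl)^{K_{n}}\simeq R\Gamma_{c}(\mltn^{\rm ca},\Zl)$, equivariant for the Hecke algebra $\HC(G,K_{n})=\Zl[K_{n}\backslash G/K_{n}]$, together with the classical equivalence --- valid integrally since $|K_{0}/K_{n}|$ is invertible in $\Zl$ --- between smooth $\Zl G$-representations generated by their $K_{n}$-fixed vectors and $\HC(G,K_{n})$-modules, in order to transport $P_{n}^{\bullet}$ into a projective resolution of $R\Gamma_{c}(\mlt^{\rm ca},\Zl)$ in $\DC^{b}(\Mo{\Zl}{G})$.

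The main obstacle I foresee is controlling \emph{boundedness} of this resolution at the global $\Zl G$-level: although each finite-level cohomology is concentrated in degrees $[0,2(d-1)]$, one must verify that no unbounded tail appears in the compact-induction step, and that the transition maps between successive levels are of finite Tor-amplitude in a uniform way. I would attack this by exploiting that the projections $\mlt_{m}^{\rm ca}\to\mltn^{\rm ca}$ for $m\geq n$ are finite \'etale of degree prime to $\ell$, so that the Hecke-action relating the different $P_{n}^{\bullet}$ is itself of finite Tor-dimension, and by combining this with the uniform cohomological dimension of compact open subgroups of $G$ acting on smooth $\Zl$-representations (available for $\ell\neq p$). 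Putting these ingredients together with the explicit finite-level resolution yields the bounded projective complex claimed in the proposition.
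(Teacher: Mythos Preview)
Your argument establishes perfectness over the wrong ring. You show that $R\Gamma_c(\mltn^{\rm ca},\Zl)$ is perfect as a complex of $\Zl[K_0/K_n]$-modules, and this is indeed trivial since $|K_0/K_n|$ is a power of $p$. But to transport through the equivalence between $\Mod(\HC(G,K_n))$ and smooth $G$-representations generated by their $K_n$-invariants, what you need is perfectness over the Hecke algebra $\HC(G,K_n)$. That is a genuinely different and harder statement: $\HC(G,K_n)$ is noetherian but not of finite global dimension in the modular setting, so projective resolutions exist but cannot be truncated for free. Perfectness over the small subalgebra $\Zl[K_0/K_n]\subset\HC(G,K_n)$ gives no control on Tor-amplitude over the larger one. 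Relatedly, compactly inducing your $P_n^\bullet$ from $K_0$ to $G$ does produce a perfect complex of $G$-representations, but one that has no reason to be quasi-isomorphic to $R\Gamma_c(\mlt^{\rm ca},\Zl)$; these are two different functors you are conflating.

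This Hecke-algebra Tor-amplitude is the actual obstacle (not the gluing of levels you anticipate in your last paragraph), and the paper overcomes it by a geometric argument: the Gross--Hopkins period map $\xi_n:\mltn\to\PM^{d-1}_{\knr}$ gives $R\Gamma_c(\mltn^{\rm ca},\Zl)\simeq R\Gamma_c(\PM^{d-1,\rm ca},\xi_{n,!}\Zl)$, where $\xi_{n,!}\Zl$ is naturally a sheaf of $\HC(G,K_n)$-modules whose fibres are $\CC_c(K_n\backslash G,\Zl)$ and hence \emph{flat} over $\HC(G,K_n)$ (precisely because the equivalence you invoke makes $M\mapsto M\otimes_\HC\CC_c(K_n\backslash G,\Zl)$ exact). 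For any finitely generated right $\HC$-module $M$ one then gets $M\otimes^L_{\HC} R\Gamma_c^{K_n}\simeq R\Gamma_c(\PM^{d-1,\rm ca},M\otimes_\HC\xi_{n,!}\Zl)$, and the right-hand side manifestly vanishes in negative degrees. That is what bounds the perfect amplitude at $[0,2d-2]$.

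Two smaller corrections. First, ``localement de type fini'' in this paper means that $V^{H}$ is finitely generated over the Hecke algebra $\HC(G,H)$, not over $\Zl$; the latter fails for any nonzero compactly induced representation of $G$. Second, your appeal to a uniform cohomological bound for compact open subgroups is not available: maximal compacts such as $\GL_d(\OC)$ typically contain $\ell$-torsion, so $\Mo{\Zl}{G}$ does not have finite global dimension.
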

La stratégie générale pour prouver des énoncés de ce type remonte au
moins à SGA4, Exp XVII (5.2.10), mais la preuve n'est pas formelle
pour autant : on utilise par exemple le morphisme de périodes de
Gross-Hopkins et le fait que la catégorie $\Mo{\Zl}{G}$ des
$\Zl$-représentations lisses de $G$ est
localement noethérienne.

Cette propriété de finitude montre que 
pour une $\oFl$-représentation  lisse irréductible $\pi$ de $G$,  le complexe
$$
R_{\pi}:=\Rhom_{\Zl G}\left(R\Gamma_{c}(\mlt^{\rm ca},\Zl),\pi\right)(\frac{1-d}{2})[1-d] \in
\DC^{+}(\Rep^{\infty,c}_{\oFl}(D^{\times} \times W_{K})) 
$$
est à cohomologie bornée et de dimension finie. Ceci permet de
considérer son image 
$$[R_{\pi}] \in \RC(D^{\times}\times W_{K},\oFl)$$
dans le groupe de  Grothendieck des représentations de longueur finie
de $D^{\times}\times W_{K}$. Nous obtiendrons alors l'amplification
suivante du théorème 1 :

\begin{thm2} \label{thm1}
  Pour $\pi\in \Irr{\oFl}{G}$, la $\oFl$-représentation virtuelle
  $[R_{\pi}]$
est de la forme
$$ [R_{\pi}]=  \LJ(\pi) \otimes \sigma(\pi)^{\rm ss}, \,\hbox{ où} $$
\begin{enumerate}
\item $\sigma(\pi)^{\rm ss}$ est une $\oFl$-représentation
 semi-simple de dimension $d$ de $W_{K}$  vérifiant les propriétés suivantes : 
 \begin{enumerate}
\item $\sigma(\pi)^{\rm ss}=\sigma(\pi_{1})^{ \rm
    ss}+\sigma(\pi_{2})^{\rm ss}$ si $\pi$ est sous-quotient de
  l'induite parabolique $\pi_{1}\times\pi_{2}$.
 \item $\sigma(\pi)^{\rm ss}= r_{\ell}(\sigma(\wt\pi))$ pour toute
   $\oQl$-représentation irréductible entière $\wt\pi$ dont la
   réduc\-tion modulo $\ell$ contient $\pi$.
 \end{enumerate}
\item $\LJ(\pi)$ est une $\oFl$-représentation virtuelle de
  $D^{\times}$ et l'homomorphisme 
$$\LJ_{\oFl}:\,
  \RC(G,\oFl)\To{}\RC(D^{\times},\oFl)$$ obtenu par linéarité vérifie
  $\LJ_{\oFl}\circ r_{\ell}=r_{\ell}\circ \LJ_{\oQl}$, où
  $\LJ_{\oQl}$ est le transfert de Langlands-Jacquet $\ell$-adique
$\RC(G,\oQl)\To{}\RC(D^{\times},\oQl)$ de \cite[Cor. 2.1.5]{lt}.
\end{enumerate}
\end{thm2}

Dans le texte, nous prouverons d'abord le théorème 2 et montrerons que
l'application $\pi\mapsto \sigma(\pi)^{\rm ss}$ vérifie la propriété
i) du théorème 1. Puis nous déduirons le théorème 1 en remarquant que
$[R_{\pi}]=[\Hom_{\Zl G}(H^{d-1}_{c},\pi)(\frac{d-1}{2})]$ lorsque
$\pi$ est \emph{supercuspidale}. Il est possible de prouver tout le
théorème 1 moins la surjectivité du point i), sans étudier
$[R_{\pi}]$, \emph{cf} remarque \ref{Rempreuvealter}.

On peut faire les mêmes remarques que précédemment :
\begin{itemize}
\item Le i) fournit une réalisation cohomologique de la correspondance de Langlands
  semi-simple établie par Vignéras dans \cite{VigLanglands} et donne
  une nouvelle\footnote{Noter que l'on utilise
  tout de même les résultats fins de classification des
  $\oFl$-représentations de $G$ dûs à Vignéras} preuve de son existence.
\item Le ii) fournit une réalisation cohomologique du transfert de
  Langlands-Jacquet $\ell$-modulaire de \cite{jlmodl} et donne une
  nouvelle preuve de son existence.
\item Lorsque l'on remplace $\oFl$ par $\oQl$, la formule
$[R_{\pi}]=\LJ(\pi)\otimes\sigma(\pi)$ découle des faits suivants :
\begin{itemize}
\item[a)] On a l'égalité
  $[R_{\pi}]=\sum_{i,j}(-1)^{i+j}[\Ext^{j}_{G}(H^{i}_{c}(\mlt^{\rm
    ca},\oQl),\pi)]$. 
\item[b)]  On connait déjà
$\sum_{i}  (-1)^{i}[H^{i}_{c}(\mlt^{\rm ca},\oQl)]$
grâce à \cite[Thm VII.1.5]{HaTay}.
\item[c)] On conclut 
grâce au calcul
  d'extensions de \cite[2.1.16]{lt}.
\end{itemize}
On notera que pour $\pi$ sur $\oFl$, l'égalité a) n'a parfois aucun
sens car la somme de droite est infinie. C'est par exemple le cas
 pour $\pi$ la représentation triviale et $q\equiv 1 [\ell]$.
\end{itemize}

Par ailleurs, lorsque $\pi$ est sur $\oQl$, nous avons décrit beaucoup plus
précisément le complexe $R_{\pi}$ dans \cite[Thm. A]{lt}, en utilisant
les résultats de Boyer  \cite{Boyer2}. Selon cette
description, $R_{\pi}$ n'a de la cohomologie qu'en degrés \emph{de
  même parité}, et il n'y a donc aucune annulation lorsqu'on prend la
somme alternée. L'exemple suivant, détaillé en \ref{exemple}, montre que \emph{ceci n'est
plus toujours vrai sur $\oFl$}.

\begin{exe}
  Supposons $d=3$ et $q\equiv 1 [\ell]$. Soit $\pi$ le quotient de
  $\CC^{\infty}(\PM^{2}(K),\oFl)$ par les fonctions constantes. Alors 
$\HC^{-1}(R_{\pi})$ et $ \HC^{1}(R_{\pi})$ sont de dimension au moins
$2$.
\end{exe}

\subsection{En familles, et en déformations}

Dans le théorème \ref{theocoho} du texte, nous donnons une description explicite
de la ``partie supercuspidale'' de la cohomologie entière de
$\mlt$. Nous ne répétons pas cet énoncé ici, qui demande beaucoup de
notations, mais donnons plutôt
quelques corollaires frappants.

Soit $\Zlnr$ l'extension non ramifiée maximale de $\Zl$
dans $\oZl$.  Pour $R$ 
une $\Zlnr$-algèbre noethérienne, on appellera \emph{$R$-famille de représentations
  irréductibles} de $G$ toute $R$-repré\-sen\-tation lisse $(\Pi,V)$ vérifiant
\begin{itemize}\item $V^{H}$ est localement libre de rang fini pour
  tout pro-$p$-sous-groupe ouvert de $G$
\item $\Pi\otimes_{R}C$ est irréductible pour tout  corps
  algébriquement clos $C$ au-dessus de $R$.
\end{itemize}

\begin{thm3}
  Il existe deux foncteurs exacts 
$$ \application{}{\Rep^{\infty}_{\Zlnr}(G)}{\Mo{\Zlnr}{D^{\times}}}{\Pi}{\rho(\Pi)}
\hbox{ et }
\application{}{\Rep^{\infty}_{\Zlnr}(G)}{\Mo{\Zlnr}{W_{K}}}{\Pi}{\sigma'(\Pi)}$$
tels que pour toute $\Zlnr$-algèbre noethérienne $R$, on a :
\begin{enumerate}
\item $\Pi\mapsto \sigma'(\Pi)$ induit une bijection entre
 classes de $R$-familles de représentations irréduc\-tibles supercuspidales de $G$ et
 classes de  $R$-familles de représentations irréductibles de dimension $d$ de $W_{K}$.
\item  $\Pi\mapsto \rho(\Pi)$ induit 
une injection de l'ensemble des
  classes de  $R$-familles de représen\-tations irréductibles supercuspidales de $G$ dans celui
 des classes de  $R$-familles de représen\-tations irréductibles de $D^{\times}$.
\item Si $\Pi$ est une $R$-famille de représentations irréductibles
  supercuspidales de $G$, alors il existe un $R$-module inversible
  $\RC(\Pi)$ et un isomorphisme
$$ \Hom_{\Zl G}\left(H^{d-1}_{c}(\mlt^{\rm
    ca},\Zl),\Pi\right)\otimes_{R}\RC(\Pi)
\simeq \rho(\Pi)\otimes_{R}\sigma'(\Pi) $$
\end{enumerate}
\end{thm3}

En fait, les foncteurs de l'énoncé induisent des équivalences de
catégories entre certaines sous-catégories ``naturelles'' de
représentations des trois groupes ; on renvoie au paragraphe \ref{defsigmapi} pour l'énoncé précis.
Les points i) et ii)  fournissent des
correspondances de Langlands et Jacquet-Langlands pour les
$R$-familles de représentations irréductibles. En vertu de
l'exactitude des foncteurs annoncés, ces correspondances
sont \emph{compatibles à tout changement d'anneau $R$.}
De même, le module inversible et
l'isomorphisme du point iii) peuvent être choisis fonctoriels en la paire $(R,\Pi)$.
En parti\-culier, ce théorème implique le théorème 1. Néanmoins, nous ne
le prouverons qu'après avoir établi le théorème 1. 

Remarquons qu'on ne peut pas ``descendre'' ce théorème à $\Zl$, mais on peut 
``descendre'' une version où l'on impose une condition de type caractère central/déterminant
 fixé, \emph{cf} \ref{resultdescente}.


Comme corollaire, on obtient 
des correspondances entre \emph{déformations} de représenta\-tions de $G$,
$D^{\times}$ et $W_{K}$, ou si l'on préfère, une déformation
des correspondances de Langlands et Jacquet-Langlands du théorème 1.  
Notons $\AC_{\oFl}$ la
catégorie des $\Zlnr$-algèbres locales complètes noethériennes de
corps résiduel $\oFl$.
Un \emph{relèvement} d'une $\oFl$-représentation $\pi$ de $G$ à
$\Lambda\in\AC_{\oFl}$ est une $\Lambda$-représentation admissible et
plate, dont la réduction modulo $\mG_{\Lambda}$ est isomorphe à
$\pi$. Les relèvements de $\pi$ s'organisent en une catégorie cofibrée
en groupoïdes au-dessus de $\AC_{\oFl}$, que nous noterons $\RC\rm
el(\pi)$. De même, on a les catégories (cofibrées sur $\AC_{\oFl}$) 
$\RC\rm el(\rho)$ et $\RC\rm el(\sigma)$ pour des représentations de
$D^{\times}$ et $W_{K}$.

\begin{cor} Soit $\pi$ une $\oFl$-représentation supercuspidale de $G$.
Les foncteurs $\wt\pi\mapsto \rho(\wt\pi)$ et $\wt\pi\mapsto
\sigma(\wt\pi):=\sigma'(\wt\pi)(\frac{1-d}{2})$ induisent des équi\-valences de catégories cofibrées sur
$\AC_{\oFl}$
$${\RC\rm el(\pi)}\simto {\RC\rm el(\rho(\pi))}
\,\hbox{ et }\,\, 
{\RC\rm el(\pi)}\simto {\RC\rm el(\sigma(\pi))},
$$
et il existe des isomorphismes fonctoriels en $(\Lambda,\wt\pi)\in\RC\rm el(\pi)$
$$  \Hom_{\Zl G}\left(H^{d-1}_{c}(\mlt^{\rm
  ca},\Zl),\wt\pi\right) \simto
\rho(\wt\pi)\otimes_{\Lambda}\sigma(\wt\pi) (\frac{d-1}{2}).
$$
De plus, $\wt\pi$ et $\rho(\wt\pi)$ ont un caractère central relié au
déterminant de $\sigma(\wt\pi)$ par la formule 
$$ {\rm car. cent.}(\wt\pi) =  {\rm car. cent.}(\rho(\wt\pi))  =
{\rm det}(\sigma(\wt\pi))\circ {\rm Art}_{K}^{-1} : \, K^{\times}\To{}\Lambda^{\times}.$$
\end{cor}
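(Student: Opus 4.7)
The plan is to view the corollary as a direct application of Theorem 3 to the case $R = \Lambda \in \AC_{\oFl}$, after identifying the groupoid $\RC\rm el(\pi)$ with the groupoid of $\Lambda$-families of irreducible supercuspidal representations of $G$ whose residue is $\pi$. Given a lift $(\Lambda, \wt\pi)$, admissibility and flatness force every invariant space $\wt\pi^H$ to be finitely generated and flat, hence free, over $\Lambda$; absolute irreducibility of $\pi$ combined with Nakayama yields $\mathrm{End}_G(\wt\pi) = \Lambda$; and supercuspidality of $\pi$ combined with Nakayama and the exactness of the Jacquet functors forces $\wt\pi$ to have vanishing proper Jacquet modules. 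These three properties together ensure that $\wt\pi$ is a $\Lambda$-family of irreducible supercuspidal representations in the sense preceding Theorem 3; conversely, any such family with residue $\pi$ is clearly a lift.

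Theorem 3 then supplies the functors $\wt\pi \mapsto \rho(\wt\pi)$ and $\wt\pi \mapsto \sigma'(\wt\pi)$ on lifts, their cofibered nature over $\AC_{\oFl}$ (from base-change compatibility), and the isomorphism of the corollary, the invertible module $\RC(\wt\pi)$ being free of rank one over the local ring $\Lambda$. Full faithfulness of $\wt\pi \mapsto \sigma(\wt\pi)$ on lifts follows from the bijectivity on isomorphism classes in Theorem 3 (i) applied with $R = \Lambda$. For essential surjectivity, given a lift $\wt\sigma$ of $\sigma(\pi)$, the inverse direction of that bijection produces a $\Lambda$-family $\wt\pi$ of irreducible supercuspidals with $\sigma(\wt\pi) \simeq \wt\sigma$; exactness of $\sigma'$, i.e.\ its compatibility with reduction mod $\mG_\Lambda$, then forces $\sigma(\wt\pi \otimes_\Lambda \oFl) \simeq \sigma(\pi)$, and Theorem 1 (i) gives $\wt\pi \otimes_\Lambda \oFl \simeq \pi$, so $\wt\pi$ is a lift of $\pi$ as required. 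For the $\rho$-side, full faithfulness follows from the injectivity in Theorem 3 (ii), while essential surjectivity is obtained by transporting a lift of $\rho(\pi)$ through the already-established $\sigma$-equivalence, using the isomorphism in (iii) to recover the compatible $\wt\pi$.

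Finally, the central character identity follows from the $GD \times W_K$-equivariance of the isomorphism of the corollary. The $GD = (G \times D^\times)/K^\times_{\rm diag}$ structure forces the two embeddings of $K^\times$ into $G \times D^\times$ to act identically on the left-hand side $\Hom_{\Zl G}(H^{d-1}_c, \wt\pi)$, yielding $\omega_{\wt\pi} = \omega_{\rho(\wt\pi)}$; the further identification with $\det(\sigma(\wt\pi)) \circ {\rm Art}_K^{-1}$ comes from the classical fact, already implicit in the proof of Theorem 1, that the action of $z \in K^\times \subset G$ on the Lubin-Tate tower matches the action of ${\rm Art}_K(z)^{-1} \in W_K^{\rm ab}$ on geometric cohomology. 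The main obstacle I foresee is the essential surjectivity step above, which hinges on confirming that the construction of Theorem 3 really intertwines residue maps on both sides; this is built into the $R$-family formalism but must be carefully invoked, especially on the $\rho$-side where Theorem 3 offers only an injection and one must pass through the $\sigma$-equivalence combined with the invertible-module factorisation of (iii).
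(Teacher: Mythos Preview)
Your overall strategy is the right one and matches how the paper intends the corollary to be read: specialize Theorem 3 to local artinian coefficients and check that lifts are exactly the $\Lambda$-families whose residue is $\pi$. Your verification that a lift is a $\Lambda$-family (freeness of invariants, Schur, cuspidality by Nakayama) is fine, as is your argument for full faithfulness on both sides and for essential surjectivity on the $\sigma$-side.

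The genuine gap is the essential surjectivity for $\rho$. Theorem 3 (ii) only asserts an \emph{injection} on isomorphism classes of $R$-families, so starting from a lift $\wt\rho$ of $\rho(\pi)$ you have no a priori way to land back in families of $G$-representations. Your proposed workaround, ``transporting through the $\sigma$-equivalence using (iii)'', does not work as stated: the isomorphism (iii) presupposes a $\wt\pi$ in hand, and the $\sigma$-equivalence relates $G$ to $W_K$, not $D^\times$ to either. There is no functor in Theorem 3 that takes a $D^\times$-lift as input.

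The fix is already in the paper, one sentence after Theorem 3: the functors $\rho$ and $\sigma'$ are in fact \emph{equivalences of categories} between explicit subcategories of $\Mo{\bZl}{G}$, $\Mo{\bZl}{D^\times}$, $\Moc{\bZl}{W_K}$ (Theorem \ref{equivcat} in the body). Concretely, $\rho(\Pi)=P^D_{\scusp}\otimes_{\ZG_{\scusp}}\Hom_G(P^G_{\scusp},\Pi)$ has an explicit inverse $\PC\mapsto P^G_{\scusp}\otimes_{\ZG_{\scusp}}\Hom_{D^\times}(P^D_{\scusp},\PC)$. Since any lift $\wt\rho$ of $\rho(\pi)$ lies in $\CC_{\rho(\pi)}\subset\Rep^{\scusp}_{\bZl}(D^\times)$, this inverse produces the desired $\wt\pi$, and base-change compatibility (immediate from the tensor-product form of the functors) shows it reduces to $\pi$. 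So you should invoke this categorical equivalence rather than try to bootstrap from the weaker injection statement.

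A smaller point: your argument for the determinant identity via ``the action of $z\in K^\times\subset G$ matches $\mathrm{Art}_K(z)^{-1}\in W_K^{\rm ab}$ on cohomology'' is not quite the mechanism at work here; the cohomology carries commuting $G$-, $D^\times$- and $W_K$-actions, not a single $K^\times$-action identified across them. The paper instead deduces the determinant relation from the known central-character/determinant compatibility of the $\ell$-adic Langlands correspondence, passed to lifts by density (this is how it is used in the proof of Theorem 4). Your $GD$-argument for $\omega_{\wt\pi}=\omega_{\rho(\wt\pi)}$ is correct, but for the $\det\sigma$ part you should appeal to that classical compatibility rather than to a direct geometric identification.
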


Nous allons donner une version plus concrète de ce corollaire en montrant comment
les déformations universelles permettent de décrire la
\emph{partie supercuspidale} de la cohomologie de $\mlt$.
Précisons d'abord ce que signifie ``partie supercuspidale''. Comme une
$\oFl$-représentation irréduc\-tible supercuspidale n'a d'extensions de
Yoneda qu'avec elle-même, on peut scinder \emph{canoniquement}
toute $\Zlnr$-représentation lisse $V$ de $G$ en une somme directe
$V=V_{\scusp}\oplus V'$, dans laquelle tous les  sous-quotients 
irréductibles de $V_{\scusp}$ sont
supercuspidaux, et aucun sous-quotient irréductible de $V'$ n'est
supercuspidal, \emph{cf} \ref{catsupercusp}.

Fixons un élément $\varpi$ de $\OC$ de
valuation strictement positive, et notons
$\varpi^{\ZM}$ le sous-groupe discret de $K^{\times}$ qu'il
engendre. On peut voir $\varpi^{\ZM}$ comme un sous-groupe central
discret de $G$, de $D^{\times}$, ou de $GD$. On s'intéresse alors aux
$\varpi^{\ZM}$-coinvariants $H^{d-1}_{c}(\mlt^{\rm
  ca},\Zl)_{\varpi^{\ZM}}$ de la cohomologie, qui s'identifient à la
cohomologie $H^{d-1}_{c}(\mlt^{\rm ca}/\varpi^{\ZM},\Zl)$ de la tour
quotientée par l'action libre de $\varpi^{\ZM}$.
Soit $\pi$ une $\oFl$-représentation irréductible supercuspidale de $G/\varpi^{\ZM}$.
On sait que $\rho(\pi)$ est une représentation de
$D^{\times}/\varpi^{\ZM}$, et que le déterminant de $\sigma(\pi)$ vaut
$1$ sur l'élément $\varphi := {\rm Art}_{K}(\varpi)$  de $W_{K}^{\rm ab}$
correspondant à $\varpi$ par le corps de classes local.
Un \emph{$\varpi$-relèvement} de $\pi$, resp. $\rho(\pi)$, est
un relèvement qui se factorise par $G/\varpi^{\ZM}$, resp.
$D^{\times}/\varpi^{\ZM}$. Un
\emph{$\varphi$-relèvement} de $\sigma(\pi)$
 est un relèvement de déterminant $1$ sur $\varphi$.

Soit $\bZl$ le complété $\ell$-adique de $\Zlnr$. Posons maintenant
$$\Lambda_{\pi}^{\varpi}:=\bZl[{\rm
  Syl}_{\ell}(\FM_{q^{f_{\pi}}}^{\times}\times f_{\pi}\ZM/dv\ZM)],$$ où
$f_{\pi}$ est la longueur de $\pi_{|G^{0}}$ et $v={\rm val}_{\OC}(\varpi)$. On remarquera que c'est
une $\bZl$-algèbre locale complète noethérienne de corps résiduel $\oFl$.

\begin{thm4}
\emph{Tordons\footnote{ceci pour éviter l'apparition de contragrédientes} l'action de $G$  par $g\mapsto {^{t}g^{-1}}$.} 
 Il existe alors une décomposition
$$ H^{d-1}_{c}(\mlt^{\rm
  ca}/\varpi^{\ZM},\bZl)_{\scusp}\simeq \bigoplus_{\pi\in {\rm Scusp}_{\oFl}(G/\varpi^{\ZM})}
\wt\pi\otimes_{\Lambda_{\pi}^{\varpi}}\wt{\rho(\pi)}\otimes_{\Lambda_{\pi}^{\varpi}}\wt{\sigma(\pi)}
(\frac{1-d}{2}) $$
où $\wt\pi$ et $\wt{\rho(\pi)}$ sont des $\varpi$-relèvements de $\pi$ et
$\rho(\pi)$ sur $\Lambda_{\pi}^{\varpi}$, et $\wt{\sigma(\pi)}$ est un $\varphi$-relèvement de
$\sigma(\pi)$ sur $\Lambda_{\pi}^{\varpi}$. De plus, on a les propriétés suivantes.
\begin{enumerate}
\item  Ces relèvements sont \emph{universels} en tant que déformations
  ; $\Lambda_{\pi}^{\varpi}$ est donc
  l'anneau de $\varpi$- ou $\varphi$-déformations de $\pi$, $\rho(\pi)$ et $\sigma(\pi)$.
\item $\wt\pi$ est une enveloppe projective de
  $\pi$ dans $\Mo{\bZl}{G/\varpi^{\ZM}}$ et 
 $\wt{\rho(\pi)}$ est une enveloppe projective de $\rho(\pi)$ dans $\Mo{\bZl}{D^{\times}/\varpi^{\ZM}}$.
\end{enumerate}
\end{thm4}

Remarquons que la cohomologie fournit donc  l'anneau de $\varpi$- ou $\varphi$-
déformation de $\pi$, $\rho(\pi)$ et $\sigma(\pi)$ par la formule
$\Lambda_{\pi}^{\varpi}=\endo{\bZl GDW}{H^{d-1}_{c}(\mlt^{\rm
    ca}/\varpi^{\ZM},\bZl)_{\CC_{\pi}^{\varpi}}}$, où l'indice $\CC_{\pi}^{\varpi}$
    signifie ``localisé en $\pi$'' ou ``partie $\pi$-isotypique
généralisée'', \emph{cf} appendice \ref{secscinG}. 
Par ailleurs, quitte à se débarasser de l'action de $D^{\times}$ en
appliquant le foncteur $\Hom_{\bZl D^{\times}}(\wt{\rho(\pi)},-)$, on
constate que $H^{d-1}_{c}(\mlt^{\rm ca}/\varpi^{\ZM})_{\CC_{\pi}}$
fournit un cas particulier (facile) d'existence de
 ``correspondance de Langlands locale en familles'' au sens de
 \cite{EmHelm}.

\medskip

\emph{Organisation de l'article.} Dans la section
2, nous prouvons le théorème 2, en reportant la preuve de la propriété
de perfection de $R\Gamma_{c}$ à l'appendice A, pour bien séparer les
arguments. Dans la section 3, nous étudions la partie supercuspidale
de la cohomologie et nous prouvons les autre théorèmes annoncés
ci-dessus. Nous avons isolé les ingrédients de pure théorie des
représentations dans l'appendice B, en espérant que cela puisse
clarifier les arguments.

\section{Réalisation virtuelle de Langlands et Jacquet-Langlands
  modulo $\ell$}

Dans cette section, nous prouvons le théorème 2, 
en renvoyant à l'appendice pour les propriétés cohomologiques utilisées. 
Au passage, nous
redémontrons donc l'existence et les propriétés de  
 la correspondance de Langlands $\ell$-modulaire de
 \cite{VigLanglands} et du transfert de Langlands-Jacquet
 $\ell$-modulaire de \cite{jlmodl}.

\subsection{Une propriété de finitude et ses conséquences}

Dans ce paragraphe, $\Lambda$ désigne une $\Zl$-algèbre finie.
Une représentation lisse $V$ de $G$ à coefficients dans
$\Lambda$ est dite \emph{localement de type fini} si pour tout entier
$n$, le module $V^{H_{n}}$ est de type fini sur l'algèbre de Hecke
$\HC_{\Lambda}(G,H_{n})$. Ici, $H_{n}$ désigne le sous-groupe de
congruences $\id+\varpi^{n}\MC_{d}(\OC)$. La propriété de noethériannité suivante nous
sera utile :
\begin{fact}[\cite{finitude}, Thms 1.3 et 1.5] \label{noetherien}
 L'anneau $\HC_{\Lambda}(G,H_{n})$ est noethérien.
En parti\-culier, toute sous-représentation d'une représentation localement de type
  fini est localement de type fini.
\end{fact}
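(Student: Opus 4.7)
The second assertion follows formally from the first: if $W\subset V$ with $V$ locally of finite type, then for each $n$ the subspace $W^{H_n}$ is a sub-$\HC_{\Lambda}(G,H_n)$-module of the finitely generated module $V^{H_n}$, and noetherianness of $\HC_{\Lambda}(G,H_n)$ forces $W^{H_n}$ to be finitely generated over the Hecke algebra. So the plan is to concentrate on proving that $\HC_{\Lambda}(G,H_n)$ is a noetherian ring.

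The strategy for noetherianness is to decompose $\HC_{\Lambda}(G,H_n)$ according to Bernstein blocks and then to identify each piece with a well-understood algebra. First I would invoke a Bernstein-Vign\'eras type decomposition of $\Mo{\Lambda}{G}$ into a product of full subcategories indexed by inertial classes of (super)cuspidal supports; this descends to a decomposition $\HC_{\Lambda}(G,H_n)=\prod_{\Omega}\HC_{\Lambda}(G,H_n)_{\Omega}$. Since $H_n$ has bounded depth, only finitely many $\Omega$ can carry nonzero $H_n$-invariants, so only finitely many factors are nonzero and it suffices to treat each factor separately. For each such $\Omega$, the theory of $\ell$-modular types for $\GL_d(K)$ (Vign\'eras, M\'inguez-S\'echerre) is expected to provide a type $(J,\lambda)$ whose spherical Hecke algebra is Morita-equivalent to $\Mo{\Lambda}{G}_{\Omega}$, and that is describable as a finite module over a finitely generated commutative $\Lambda$-algebra, typically built from twisted affine Hecke algebras of type $A$. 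Hilbert's basis theorem then gives noetherianness of the center, and finiteness over a noetherian ring transports noetherianness to each block.

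The main obstacle is the second step: securing a sufficiently precise description of each Bernstein block of $\Mo{\Lambda}{G}$ when $\Lambda$ is only a finite $\Zl$-algebra rather than a field of characteristic zero. Over $\CM$ this is classical Bushnell-Kutzko theory, but over $\Zl$-algebras the situation is subtler (integral structures, supercuspidality versus cuspidality, reducibility of mod $\ell$ reductions, behaviour of types under changes of coefficients) and rests on the $\ell$-modular theory of types for inner forms of $\GL$. This is exactly the content of \cite[Thms 1.3 and 1.5]{finitude}; once it is available, the blockwise reduction and the transfer of noetherianness from the center to the whole Hecke algebra are routine applications of Hilbert's theorem and standard commutative algebra, and the ``in particular'' clause then follows by the elementary observation above.
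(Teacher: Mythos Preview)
The paper does not prove this statement at all; it is recorded as a \emph{Fait} and attributed to \cite[Thms~1.3 and 1.5]{finitude}. There is therefore no in-paper proof to compare your proposal against. Your derivation of the ``en particulier'' clause from noetherianity is correct and is exactly the intended formal consequence.

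As for your sketch of the noetherianity itself: the broad shape (reduce to finitely many blocks, then exhibit each block Hecke algebra as finite over a finitely generated commutative $\Lambda$-algebra) is reasonable, but two points deserve caution. First, the M\'inguez--S\'echerre $\ell$-modular type theory you invoke postdates \cite{finitude}, so it cannot be the mechanism used there; Dat's argument proceeds instead through the integral Bernstein center and direct control of Hecke algebras over $\Zl$, without relying on a full semisimple-type classification. Second, over a general finite $\Zl$-algebra $\Lambda$ the Bernstein-block decomposition is not available in the form you describe (inertial classes of supercuspidal supports); what one has is coarser, and the passage from $\Zl$ to $\Lambda$ is handled by base change once the $\Zl$-case is known. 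Your outline is thus morally in the right direction but would not reconstruct the actual proof in \cite{finitude}; since the present paper only cites the result, this is not a defect in your reading of the paper, only in the sketch of the external input.
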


\begin{DEf}
  Un complexe $\CC$ de $\DC^{b}(\Mo{\Lambda}{G})$ sera dit \emph{localement
    parfait} s'il est quasi-isomorphe à un 
complexe borné de la forme $ P_{a}\To{} P_{a+1}\To{}\cdots \To{} P_{b}$ dont chaque
terme $P_{i}$ est projectif et localement de type fini. 
\end{DEf}

On appelle \emph{amplitude parfaite} le plus petit intervalle $[a,b]$
pour lequel on peut trouver un complexe comme ci-dessus. On se gardera
de confondre l'amplitude parfaite et l'amplitude cohomologique.

\begin{prop}\label{propparfait}
Le complexe $R\Gamma_{c}(\mlt^{\rm ca},\Lambda) \in \DC^{b}(\Mo{\Lambda}{G})$
est \emph{localement parfait}, d'amplitude parfaite $[0,2d-2]$.
\end{prop}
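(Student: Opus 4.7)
The plan is to combine a level-by-level finiteness input coming from the Gross--Hopkins period map with an SGA4-style \v{C}ech construction to produce a bounded projective complex.

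First, I would record that each $\mltn$ is smooth of dimension $d-1$ over $\knr$, so the compactly supported étale cohomology $H^i_c(\mltn^{\rm ca},\Lambda)$ vanishes outside $[0,2d-2]$; this dictates the claimed perfect amplitude. Second, the key geometric input is the quasi-compactness of $\mlto/\varpi^{\ZM}$: the Gross--Hopkins period morphism $\pi_{\rm GH} : \mlto/\varpi^{\ZM} \to \PM^{d-1,{\rm an}}_{\knr}$ is étale and surjective onto a proper rigid space, with fibers torsors under $\OC_D^{\times}/\varpi^{\ZM}$. Since each $\mltn/\varpi^{\ZM}$ is finite étale over $\mlto/\varpi^{\ZM}$, every finite level modulo $\varpi^{\ZM}$ is quasi-compact, and hence $H^*_c(\mltn^{\rm ca}/\varpi^{\ZM},\Lambda)$ is finite over $\Lambda$ for every $n$. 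Taking $H_n$-invariants of $R\Gamma_c(\mlt^{\rm ca},\Lambda)$ then yields a bounded complex whose cohomology is finite over $\Lambda$, and therefore finitely generated over the Hecke algebra $\HC_\Lambda(G,H_n)$.

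To produce a global locally perfect complex in $\DC^b(\Mo{\Lambda}{G})$, I would follow SGA4 Exp XVII (5.2.10): choose a finite affinoid open cover $\UC = (U_i)$ of $\mlto^{\rm ca}/\varpi^{\ZM}$ whose pullbacks through the tower remain compatible with the $G$-action, and form the associated compactly supported \v{C}ech complex. At each simplicial degree, the resulting object of $\Mo{\Lambda}{G}$ has, for every $H_n$, finitely generated $H_n$-invariants over $\HC_\Lambda(G,H_n)$; here Fact 2.1.1 (noetherianness of each Hecke algebra) is crucial to upgrade generation-at-one-level to compatible generation across the entire tower. Each such term can then be replaced, via the exact functor $\cInd{H_n}{G}$, by a bounded resolution built from compactly induced representations, all of which are projective and locally of finite type in $\Mo{\Lambda}{G}$. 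Assembling these resolutions yields the required locally perfect representative, whose amplitude is confined to $[0,2d-2]$ by the cohomological vanishing of Step~1.

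The main obstacle is exactly this final assembly step: one must check that the \v{C}ech terms --- built using the $\varpi^{\ZM}$-quotient, the compactifications needed for compact supports, and the étale transition maps of the tower --- really do deliver, at each degree, genuinely projective (and not just flat) locally-of-finite-type $\Lambda[G]$-representations, and that the total complex is bounded. Here the local noetherianness of $\Mo{\Lambda}{G}$, combined with the $\Lambda$-finiteness established in the first paragraph, is what allows one to globalize the level-wise perfectness into an honest bounded projective resolution in $\DC^b(\Mo{\Lambda}{G})$, rather than a mere compatible family of perfect Hecke-module complexes.
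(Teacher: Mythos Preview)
Your proposal has a genuine gap at the step you yourself flag as the main obstacle. The issue is not the finite generation of cohomology over the Hecke algebras (which is fine), but the \emph{lower bound} on the perfect amplitude. Noetherianness of $\HC_{\Lambda}(G,H_n)$ lets you build a projective resolution bounded above at $2d-2$, but gives no control on the length of that resolution: finitely generated modules over a noetherian ring can have infinite projective dimension. Your claim that each \v{C}ech term ``can then be replaced\ldots by a bounded resolution built from compactly induced representations'' is exactly the point at issue, and nothing in your argument justifies the boundedness. Local noetherianness plus $\Lambda$-finiteness of cohomology does not by itself imply finite projective dimension.

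The paper's argument is quite different and does not use a \v{C}ech construction. It works one Hecke level $H=H_n$ at a time. After producing (by noetherianness) a complex $Q_\bullet$ of finitely generated $\HC$-modules concentrated in $[0,2d-2]$ with $Q_i$ projective for $i>0$, the task is to show $Q_0$ is projective, equivalently flat, equivalently that $M\otimes^L_{\HC}R\Gamma_c^H$ has no negative cohomology for every finitely generated right $\HC$-module $M$. Here is where the Gross--Hopkins period map enters, but in a sharper way than you use it: since $\xi_n:\mltn\to\PC_{\rm LT}^{\rm nr}\simeq\PM^{d-1}_{\knr}$ is \'etale, one has $R\Gamma_c^H\simeq R\Gamma_c(\PC_{\rm LT}^{\rm ca},\xi_{n,!}\Lambda)$, and the sheaf $\xi_{n,!}\Lambda$ carries a natural $\HC$-module structure whose fibres are isomorphic to $\CC_c(H\ba G,\Lambda)$. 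The crucial point is that these fibres are \emph{flat} $\HC$-modules (because $V\mapsto V^H$ is an equivalence from a direct factor of $\Mo{\Lambda}{G}$ to $\Mod(\HC)$, so its inverse $M\mapsto M\otimes_{\HC}\CC_c(H\ba G,\Lambda)$ is exact). Hence $M\otimes^L_{\HC}\xi_{n,!}\Lambda = M\otimes_{\HC}\xi_{n,!}\Lambda$ is a single sheaf, and
\[
M\otimes^L_{\HC}R\Gamma_c^H\;\simeq\; R\Gamma_c\bigl(\PC_{\rm LT}^{\rm ca},\,M\otimes_{\HC}\xi_{n,!}\Lambda\bigr)
\]
is the cohomology of an honest sheaf on projective space, which trivially vanishes in negative degrees. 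This flatness-of-fibres trick, turning the Tor-vanishing into a sheaf-cohomology positivity statement, is the missing idea in your proposal.
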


Nous renvoyons à l'appendice A pour la preuve de cette proposition. 
On y trouvera aussi une propriété de perfection
pour $R\Gamma_{c}(\mlt^{\rm ca},\Lambda)$ en tant que complexe de 
$D^{\times}$-modules, mais l'amplitude parfaite est dans
ce cas $[d-1,2d-2]$, donc égale à l'amplitude cohomologique, \emph{cf}
proposition \ref{propparfaitD}. Par contre, le complexe \emph{n'est  pas}
simultanément parfait (\emph{i.e.} en tant que complexe de
$GD$-modules), \emph{cf} remarque \ref{remparfait}.

\begin{DEf} Une $\Lambda$-représentation lisse $V$ de $G$ est dite
  \emph{$Z$-finie} si toute $K^{\times}$-orbite dans $V$
  est contenue dans un sous-$\Lambda$-module de type fini.
\end{DEf}

\begin{prop}\label{proptfZf}
Soit $C=\oFl$ ou $\oQl$. 
Pour toute $C$-représentation \emph{$Z$-finie} et \emph{de type fini}
$V$ de $G$, le complexe
$$\Rhom_{\Zl G}(R\Gamma_{c}(\mlt^{\rm ca},\Zl),V^{\vee}) \in
\DC^{+}(\Rep^{\infty,c}_{C}(D^{\times}\times W_{K})) $$ 
est cohomologiquement  borné, d'amplitude contenue dans $[2-2d,0]$, et sa
 cohomologie est de dimension finie. 
\end{prop}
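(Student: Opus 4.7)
Le plan consiste \`a exploiter la perfection locale de $R\Gamma_{c}$ donn\'ee par la proposition \ref{propparfait}. Je commencerais par remplacer $R\Gamma_{c}(\mlt^{\rm ca},\Zl)$ par un complexe quasi-isomorphe $P^{\bullet}=(P^{0}\to P^{1}\to\cdots\to P^{2d-2})$ de $\Zl G$-modules lisses projectifs, chaque $P^{i}$ \'etant localement de type fini. Alors $\Rhom_{\Zl G}(R\Gamma_{c},V^{\vee})\simeq\Hom_{\Zl G}(P^{\bullet},V^{\vee})$ est un complexe naturellement concentr\'e en degr\'es cohomologiques $[-(2d-2),0]=[2-2d,0]$, ce qui donne imm\'ediatement la borne d'amplitude annonc\'ee.

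L'\'etape centrale est d'\'etablir que $V$ (et donc $V^{\vee}$) est admissible. Comme $V$ est de type fini, on peut \'ecrire $V=CG\cdot W$ pour un sous-$C$-espace $W\subset V^{H_{N}}$ de dimension finie ; pour $n\geq N$, on a alors $V^{H_{n}}=\HC_{C}(G,H_{n})\cdot W$, de type fini sur l'alg\`ebre de Hecke. L'hypoth\`ese $Z$-finie et la noeth\'erianit\'e de $\HC_{C}(G,H_{n})$ rappel\'ee au fait \ref{noetherien} permettent, via l'action du centre $K^{\times}$, de borner la $C$-dimension de $V^{H_{n}}$, d'o\`u l'admissibilit\'e. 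Sur $\oQl$, c'est essentiellement le th\'eor\`eme classique de Bernstein ; sur $\oFl$, on a recours \`a l'argument de Vign\'eras bas\'e pr\'ecis\'ement sur cette combinaison finitude locale / $Z$-finitude.

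Une fois $V^{\vee}$ admissible, $(V^{\vee})^{H_{n}}\simeq (V^{H_{n}})^{*}$ est de dimension finie sur $C$ pour chaque $n$ (on utilise ici que $H_{n}$ est pro-$p$, donc $|H_{n}|$ inversible dans $\Zl$). La finitude de chaque $\Hom_{\Zl G}(P^{i},V^{\vee})$ r\'esulterait alors du fait que $V^{\vee}$ est de longueur finie (cons\'equence d'admissibilit\'e jointe au type fini et au $Z$-finitude), si bien que toute application $P^{i}\to V^{\vee}$ se factorise \`a niveau born\'e, et la finitude locale de $(P^{i})^{H_{n}}$ sur $\HC_{\Zl}(G,H_{n})$ borne la dimension des applications possibles, via l'adjonction $\Hom_{\Zl G}(P^{i},V^{\vee})=\Hom_{\HC_{\Zl}(G,H_{n})}((P^{i})^{H_{n}},(V^{\vee})^{H_{n}})$ prise \`a un niveau convenable.

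L'action de $D^{\times}\times W_{K}$ sur le $\Rhom$ provient fonctoriellement de leur action sur $R\Gamma_{c}$ commutant \`a celle de $G$ ; la lissit\'e (pour $D^{\times}$) et la continuit\'e (pour $W_{K}$) se transportent sans difficult\'e. Le point le plus d\'elicat de cette strat\'egie me para\^it \^etre la transition \emph{``$Z$-finie + type fini'' $\Rightarrow$ ``admissible''} en caract\'eristique $\ell$, qui n'est pas formelle et demande une utilisation fine de la noeth\'erianit\'e des alg\`ebres de Hecke combin\'ee \`a l'action centrale.
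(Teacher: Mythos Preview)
Your amplitude bound via proposition \ref{propparfait} is correct and matches the paper. The gap is in your finite-dimensionality argument: the implication ``$Z$-finie et de type fini $\Rightarrow$ admissible'' is \emph{fausse}. Take $V=\cind{K^{\times}\GL_d(\OC)}{G}{\chi}$ for any central character $\chi$: this is cyclic, has central character $\chi$ (hence is $Z$-finie), yet $V^{\GL_d(\OC)}$ is the space of functions on the infinite set $K^{\times}\GL_d(\OC)\ba G/\GL_d(\OC)$. There is no ``th\'eor\`eme classique de Bernstein'' of the form you invoke; Bernstein's finiteness results concern characters of the \emph{Bernstein center}, not of the group center $K^{\times}$. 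Worse, this is not a removable technicality: the proposition is applied precisely to non-admissible $V$ in the proof of corollaire \ref{indparab}, where one resolves a parabolically induced $\pi^{\vee}$ in $\KC_{Z}(G,C)$ by objects $\ip{P}{G}{\tau\otimes C[P/P^{1}]}$ which are $Z$-finies, de type fini, and not admissible. Your strategy therefore cannot cover the cases that actually matter downstream.

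The paper's argument avoids admissibility entirely. It first uses the decomposition $H^{i}_{c}(\mlt^{\rm ca},C)=\cind{G^{0}}{G}{H^{i}_{c}(\mlt^{(0),\rm ca},C)}$ to rewrite $\Ext^{j}_{G}(H^{i}_{c},V^{\vee})$ as $\Ext^{j}_{G^{0}}(H^{i}_{c}(\mlt^{(0),\rm ca},C),V^{\vee})$ by Frobenius reciprocity. The hypotheses ``$Z$-finie'' and ``de type fini'' are used only to conclude that $V$ is de type fini over $G^{0}$ (since $[G:K^{\times}G^{0}]<\infty$). One then resolves $V_{|G^{0}}$ projectively by sums of $\cind{H_{n}}{G^{0}}{1_{C}}$ (noeth\'erianit\'e, fait \ref{noetherien}), dualizes to an injective resolution of $V^{\vee}_{|G^{0}}$ by sums of $\ind{H_{n}}{G^{0}}{1_{C}}$, and computes
$$\Hom_{G^{0}}\left(H^{i}_{c}(\mlt^{(0),\rm ca},C),\ind{H_{n}}{G^{0}}{1_{C}}\right)\simeq H^{i}_{c}(\mltn^{(0),\rm ca},C)^{\vee},$$
which is finite-dimensional because $\mltn^{(0)}$ is of finite type. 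The genuine geometric input --- finite-dimensionality of cohomology at each finite level --- replaces the admissibility you were hoping for, and there is no purely representation-theoretic substitute.
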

\begin{proof}
L'assertion sur l'amplitude cohomologique du complexe en question
découle immédiatement de celle sur l'amplitude parfaite de la
proposition \ref{propparfait}. Il reste à prouver que la cohomologie
est de dimension finie. Pour cela, il suffit de prouver que pour tous
$i$ et $j$ on a 
$$ \dim_{C} {\rm Ext}_{CG}^{j}(H^{i}_{c}(\mlt^{\rm ca},C),V^{\vee}) < \infty. $$
Soit $G^0:=\ker(\val_{K}\circ{\rm det})$.
D'après l'isomorphisme (3.5.2) de  \cite{lt}, on a  
$H^{i}_{c}(\mlt^{\rm ca},C)=\cind{G^{0}}{G}{H^{i}_{c}(\mlt^{(0),\rm ca},C)}$
où $\mlt^{(0)}$ désigne la tour des déformations de Lubin-Tate
(le lieu où la quasi-isogénie résiduelle est de degré $0$). 
On a donc ${\rm Ext}_{G}^{j}(H^{i}_{c}(\mlt^{\rm ca},C),V^{\vee})={\rm
  Ext}_{G^{0}}^{j}(H^{i}_{c}(\mlt^{(0),\rm ca},C),V^{\vee})$. 

Comme $K^{\times}G^{0}$ est d'indice fini dans $G$, nos hypothèses sur
$V$ impliquent qu'elle est de type fini sur $G^{0}$.
Comme on l'a déjà rappelé, (fait \ref{noetherien}), la catégorie des
représentations de type fini de $G$ est stable par sous-objet, et on
en déduit facilement que celle des représentations de type fini de
$G^{0}$ l'est aussi.
Il s'ensuit que $V_{|G^{0}}$ possède une résolution projective (en
général infinie)
par des représentations de la forme
$\cind{H_{n}}{G^{0}}{1_{C}}^{k}$ (induites à supports compacts). Par passage à la contragrédiente,
$V^{\vee}_{|G^{0}}$ admet une résolution injective par des
représentations de la forme $\ind{H_{n}}{G^{0}}{1_{C}}^{k}$ (induites
sans condition de supports). Or, on a
$\Hom_{G^{0}}(H^{i}_{c}(\mlt^{(0),\rm ca},C),\ind{H_{n}}{G^{0}}{1_{C}})\simeq
\Hom_{H_{n}}(H^{i}_{c}(\mlt^{(0),\rm ca},C),C)\simeq 
H^{i}_{c}(\mltn^{(0),\rm ca},C)^{\vee}$, et on sait que ce dernier $C$-espace
vectoriel est de dimension finie.
\end{proof}

Soit maintenant $\pi$ une $C$-représentation de longueur finie de $G$.
Appliquant la proposition ci-dessus à $\pi^{\vee}$, on voit que
 le complexe $R_{\pi}$ de l'introduction appartient à la sous-catégorie triangulée 
$\DC^{b}_{\rm tf}(\Rep^{\infty,c}_{C}(D^{\times}\times W_{K}))$ des objets de
$\DC^{+}(\Rep^{\infty,c}_{C}(D^{\times}\times W_{K}))$ cohomologiquement bornés et à
cohomologie de dimension finie. Ceci permet de définir la classe 
$$[R_{\pi}]:=\sum_{i\in\ZM}(-1)^{i}[\HC^{i}(R_{\pi})]\in \RC(D^{\times}\times W_{K},C) $$ 
de $R_{\pi}$ dans le groupe de Grothendieck des $C$-représentations de
longueur finie.
Toute suite exacte $\pi_{1}\injo
\pi_{2}\twoheadrightarrow \pi_{3}$ induit un triangle distingué
$R_{\pi_{3}}\To{}R_{\pi_{2}}\To{}R_{\pi_{1}}\To{+1}$, donc on a $[R_{\pi_{2}}]=[R_{\pi_{1}}]+[R_{\pi_{3}}]$.
 On obtient de la sorte un homomorphisme
 $$ \RC(G,C) \To{} \RC(D^{\times}\times W_{K},C)$$
entre groupes de Grothendieck. 

La proposition précédente nous donne une information supplémentaire
sur l'homomorphisme composé $\pi\mapsto [R_{\pi^{\vee}}]$. Celui-ci
doit en effet se factoriser à travers le morphisme canonique
$\RC(G,C)\To{}\KC_{Z}(G,C)$ où
$\KC_{Z}(G,C)$ désigne le groupe de Grothendieck de la catégorie
(abé\-lienne) des $C$-représentations $Z$-finies et de type fini. 
 En particulier, $[R_{\pi}]=0$ dès que la
classe de $\pi^{\vee}$ est nulle dans $\KC_{Z}(G,C)$.
Nous allons voir un exemple intéressant.
\begin{DEf}
  Notons $\RC_{I}(G,C)$ le sous-groupe de $\RC(G,C)$ engendré par les
  induites paraboliques propres. Une $C$-représentation $\pi$ est
 dite \emph{elliptique} si $[\pi] \notin \RC_{I}(G,C)$.
\end{DEf}

\begin{coro} Si $\pi$ n'est pas elliptique, alors
  $[R_{\pi}]=0$. \label{indparab}
\end{coro}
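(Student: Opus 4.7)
Le plan est d'abord de réduire au cas d'une induite parabolique, puis d'appliquer l'adjonction de Frobenius pour ramener le problème à une propriété du foncteur de Jacquet appliqué à $R\Gamma_c$. Par additivité de $\pi\mapsto [R_\pi]$ sur les suites exactes (conséquence du triangle distingué $R_{\pi_3}\to R_{\pi_2}\to R_{\pi_1}\to[+1]$) et par définition de $\RC_I(G,C)$ comme sous-groupe engendré par les induites paraboliques propres, il suffit de traiter le cas $\pi = i_P^G \sigma$ avec $P=MN$ un sous-groupe parabolique propre de $G$ et $\sigma$ une $C$-représentation de longueur finie de $M$.

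Ensuite, je procéderais comme suit. L'exactitude du foncteur de Jacquet normalisé $r_P^G$ entraîne que son adjoint à droite $i_P^G$ préserve les objets injectifs, d'où un isomorphisme canonique dans $\DC^+(\Rep^{\infty,c}_{C}(D^\times \times W_K))$ :
$$\Rhom_{\Zl G}\left(R\Gamma_c(\mlt^{\rm ca}, \Zl),\, i_P^G \sigma\right) \simeq \Rhom_{\Zl M}\left(r_P^G R\Gamma_c(\mlt^{\rm ca}, \Zl),\, \sigma\right).$$
Comme $r_P^G$ est un foncteur de coinvariants exact qui commute aux induites compactes, il préserve la perfection locale (proposition \ref{propparfait}), et un argument analogue à celui de la proposition \ref{proptfZf} donne que le membre de droite est cohomologiquement borné à cohomologie de dimension finie sur $C$. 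Sa caractéristique d'Euler-Poincaré dans $\RC(D^\times \times W_K, C)$ est donc bien définie.

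La dernière étape, qui constituera l'essentiel de la difficulté, sera d'établir l'annulation de cette caractéristique d'Euler-Poincaré. Ma stratégie principale serait de montrer que la classe virtuelle $[r_P^G R\Gamma_c(\mlt^{\rm ca}, \oQl)]$ s'annule dans le groupe de Grothendieck adéquat de $M\times D^\times\times W_K$-représentations : en caractéristique nulle, la formule de Harris-Taylor \cite[Thm VII.1.5]{HaTay} exprime $\sum_i(-1)^i[H^i_c(\mlt^{\rm ca}, \oQl)]$ uniquement en termes de représentations elliptiques (séries discrètes) de $G$, et l'orthogonalité classique entre représentations elliptiques et modules de Jacquet relatifs à des paraboliques propres (principe de Deligne-Kazhdan-Vignéras) fournirait l'annulation souhaitée en caractéristique $0$.

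Le principal obstacle sera le passage au cas modulaire $C=\oFl$, car la réduction modulo $\ell$ ne préserve pas nécessairement le caractère elliptique d'un relèvement entier, ce qui interdit une descente directe. Une approche alternative, peut-être plus robuste, consisterait à exploiter la factorisation de $\pi \mapsto [R_{\pi^\vee}]$ par $\KC_Z(G,C)$ signalée juste avant l'énoncé, et à démontrer \emph{catégoriquement} que les classes d'induites paraboliques propres s'annulent dans $\KC_Z(G,C)$, en tirant parti de l'existence dans cette plus grande catégorie de représentations $Z$-finies de type fini mais de longueur infinie, qui produisent des relations supplémentaires inexistantes dans $\RC(G,C)$.
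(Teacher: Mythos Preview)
Votre approche principale par l'adjonction de Frobenius et le foncteur de Jacquet est plus laborieuse que nécessaire, et vous en avez vous-même identifié l'obstacle modulaire. Même en caractéristique nulle, l'argument esquissé demande plus que l'orthogonalité brute : le module de Jacquet d'une série discrète n'est pas nul, et il faut invoquer les propriétés de l'accouplement d'Euler--Poincaré elliptique pour conclure que $\sum_j(-1)^j\Ext^j_M(r_P^G V,\sigma)=0$ lorsque $V$ est elliptique. Cela marche, mais c'est un détour substantiel, et la descente modulaire reste délicate.

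En revanche, votre \og approche alternative\fg{} de la dernière phrase est exactement celle de l'article, et elle est directe et uniforme en $C$. Voici comment elle se déroule concrètement (argument de Kazhdan). On se ramène à $\pi^\vee = \ip{P}{G}{\tau}$ avec $\tau$ de longueur finie. Choisissons un sous-groupe ouvert $P^1\subset P$ contenant le centre $Z$ de $G$, tel que $P/P^1\simeq\ZM$ ; un tel $P^1$ contient la préimage de $M_P^0$, donc $\tau_{|P^1}$ reste de longueur finie, et $\cind{P^1}{P}{\tau}\simeq\tau\otimes C[P/P^1]$ est de type fini sur $P$. Si $\gamma$ engendre $P/P^1$, la suite exacte
\[
0\To{}\tau\otimes C[P/P^1]\To{\id\otimes(1-\gamma)}\tau\otimes C[P/P^1]\To{}\tau\To{}0
\]
donne, après induction parabolique, une suite exacte de $C$-représentations $Z$-finies et de type fini de $G$ dont les deux premiers termes sont égaux. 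La classe de $\pi^\vee$ dans $\KC_Z(G,C)$ est donc nulle, et la factorisation de $\pi\mapsto[R_{\pi^\vee}]$ par $\KC_Z(G,C)$ conclut. Aucune référence à Harris--Taylor ni aux propriétés elliptiques n'est requise, et l'argument est identique sur $\oQl$ et $\oFl$.
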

\begin{proof}
  Il suffit de le prouver lorsque $\pi$ est une induite parabolique propre.
  Dans ce cas, $\pi^{\vee}$ est aussi une induite parabolique, disons
  $\pi^{\vee}=\ip{P}{G}{\tau}$, et on sait par un argument qui remonte
  à Kazhdan que son image dans $\KC_{Z}(G,C)$ est nulle. Rappelons
  brièvement cet argument. Soit $P^{1}\subset P$ un sous-groupe ouvert
  contenant le centre $Z$ de $G$ et de quotient $P/P^{1}$ 
libre abélien de rang $1$. Un tel groupe contient aussi la préimage du sous-groupe
$M^{0}_{P}$ du quotient de Levi $M_{P}$ engendré par les éléments compacts. Ainsi
$\tau_{|P^{1}}$ est encore de longueur finie, donc de type fini sur
$P^{1}$, donc son induite $\cind{P^{1}}{P}{\tau}\simeq
\tau\otimes C[P/P^{1}]$ est de type fini
sur $P$. Soit $\gamma$ un générateur de $P/P^{1}$. On a une suite
exacte
$$ 0\To{}\tau\otimes C[P/P^{1}]\To{\id\otimes(1-\gamma)} \tau\otimes
C[P/P^{1}] \To{} \tau\To{} 0.$$ 
En appliquant le foncteur d'induction parabolique, on obtient une
suite exacte de représen\-tations $Z$-finies et de type fini de $G$
$$ 0\To{}\ip{P}{G}{\tau\otimes C[P/P^{1}]}\To{\id\otimes(1-\gamma)} \ip{P}{G}{\tau\otimes
C[P/P^{1}]} \To{} \pi^{\vee}\To{} 0$$ 
qui montre que la classe de $\pi^{\vee}$ est nulle dans $\KC_{Z}(G,C)$.

\end{proof}

\begin{prop}\label{cororeduc}
  Le diagramme suivant est commutatif :
$$ \xymatrix{ \RC(G,\oQl) \ar[r]^-{\pi\mapsto [R_{\pi}]} \ar[d]_{r_{\ell}} &
  \RC(D^{\times}\times W_{K},\oQl) \ar[d]^{r_{\ell}} \\
\RC(G,\oFl) \ar[r]_-{\pi\mapsto [R_{\pi}]} & \RC(D^{\times}\times W_{K},\oFl)
}$$
On y a noté $r_{\ell}$ les morphismes de
décomposition obtenus par semi-simplification de réduc\-tions de modèles entiers.
\end{prop}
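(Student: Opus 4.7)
The plan is to realize both horizontal arrows of the diagram from a common integral complex over $\oZl$ and to conclude via the universal coefficient formula. I would treat the case of an integral irreducible $\pi\in\Irr{\oQl}{G}$ of finite length, the general case in $\RC(G,\oQl)$ following by additivity of both the horizontal and vertical maps.

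Fix a stable $\oZl$-lattice $\pi^\circ\subset\pi$; by Vign\'eras's theorem the reduction $\bar\pi^\circ:=\pi^\circ\otimes_{\oZl}\oFl$ has finite length, so that $r_\ell(\pi)=[\bar\pi^\circ]$ in $\RC(G,\oFl)$. Using Proposition~\ref{propparfait}, I fix a locally perfect representative $P^\bullet\simeq R\Gamma_c(\mlt^{\rm ca},\Zl)$ and form the integral complex
$$R_{\pi^\circ}:=\Hom_{\Zl G}(P^\bullet,\pi^\circ),$$
a bounded complex of $\oZl$-modules with smooth/continuous $(D^\times\times W_K)$-action. I would then establish two base-change identifications. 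The first, $R_{\pi^\circ}\otimes^L_{\oZl}\oFl\simeq R_{\bar\pi^\circ}$, is obtained by applying the exact functor $\Hom(P^\bullet,-)$ (exactness from projectivity of each $P^i$) to the short exact sequence $0\to\pi^\circ\xrightarrow{\ell}\pi^\circ\to\bar\pi^\circ\to 0$ and identifying the cone of multiplication by $\ell$ with the derived reduction modulo $\ell$. The second, $R_{\pi^\circ}\otimes_{\oZl}\oQl\simeq R_\pi$, is obtained by writing $\pi=\varinjlim_n\ell^{-n}\pi^\circ$ as a filtered colimit in $\Mo{\oZl}{G}$ and commuting $\Hom(P^i,-)$ with this colimit, using that admissibility of $\pi^\circ$ together with local finite-typeness and the noetherian property of Hecke algebras (Fait~\ref{noetherien}) confines the relevant contributions to a single finite level.

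From Proposition~\ref{proptfZf} applied on both sides, the cohomologies of $R_\pi$ and $R_{\bar\pi^\circ}$ have finite length; combined with the long exact sequence of the triangle $R_{\pi^\circ}\xrightarrow{\ell}R_{\pi^\circ}\to R_{\bar\pi^\circ}$, this forces $\HC^q(R_{\pi^\circ})$ to be a finitely generated $\oZl$-module in each degree. The proposition then reduces to the universal coefficient identity $r_\ell([C\otimes_{\oZl}\oQl])=[C\otimes^L_{\oZl}\oFl]$ in $\RC(D^\times\times W_K,\oFl)$, valid for any bounded complex $C$ of finitely generated $\oZl$-modules with compatible action. Cohomology by cohomology, this reduces to the pointwise identity $r_\ell([M\otimes\oQl])=[M/\ell]-[M[\ell]]$ for finitely generated $M$, itself a consequence of the torsion/free decomposition $0\to M_{\rm tors}\to M\to M'\to 0$ and the snake lemma for multiplication by $\ell$, together with the elementary vanishing of $\chi(N):=[N/\ell]-[N[\ell]]$ in $\RC(D^\times\times W_K,\oFl)$ on finite-length torsion modules $N$ (obtained by devissage to the simple $\oFl$-constituents, on which $\chi$ clearly vanishes). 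The alternating sum over $q$ then matches $[R_{\bar\pi^\circ}]$ via the Tor spectral sequence, and combining with additivity of $\tau\mapsto[R_\tau]$ along a Jordan--H\"older series of $\bar\pi^\circ$ yields $r_\ell([R_\pi])=[R_{\bar\pi^\circ}]=[R_{r_\ell(\pi)}]$.

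The main technical obstacle is the second base-change isomorphism above: local finite-typeness of the $P^i$ does not entail global compactness in $\Mo{\oZl}{G}$, so commuting $\Hom$ with the filtered colimit defining $\pi$ requires a careful level-by-level argument exploiting admissibility of $\pi^\circ$ and the noetherianity furnished by Fait~\ref{noetherien}. Once this is in hand, everything else is standard homological algebra over the discrete valuation ring $\oZl$.
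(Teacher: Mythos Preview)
Your argument has two genuine gaps.

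\medskip
\textbf{Non-integral representations.} You only treat $\ell$-integral irreducibles and assert that ``the general case follows by additivity''. But the integral irreducibles do \emph{not} span $\RC(G,\oQl)$: any irreducible whose central character is not $\ell$-integral lies outside their $\ZM$-span. For such $\pi$ one has $r_\ell(\pi)=0$, so commutativity of the square requires showing $r_\ell([R_\pi])=0$, which is not automatic. The paper handles this separately: if $\pi$ is not elliptic then $[R_\pi]=0$ already by Corollaire~\ref{indparab}; if $\pi$ is elliptic and non-integral then its central character is non-integral, and since the center $K^\times\subset D^\times$ acts on each $\HC^i(R_\pi)$ through the inverse of this character, the $\HC^i(R_\pi)$ are themselves non-integral and $r_\ell([R_\pi])=0$. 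This step is essential and absent from your outline.

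\medskip
\textbf{The ring $\oZl$ is not a DVR.} You write ``standard homological algebra over the discrete valuation ring $\oZl$'', but $\oZl$ has value group $\QM$, is not noetherian, and its maximal ideal is not principal; in particular the short exact sequence $0\to\pi^\circ\xrightarrow{\ell}\pi^\circ\to\bar\pi^\circ\to 0$ you invoke is simply false over $\oZl$ (multiplication by $\ell$ is not surjective onto the maximal ideal). The paper circumvents this by first descending $\pi$ to a model $\pi_\lambda$ over a \emph{finite} extension $E_\lambda$ of $\Ql^{\rm nr}$, with ring of integers $\Lambda$ a genuine DVR, and working with a finite-type $\Lambda G$-lattice $\omega\subset\pi_\lambda$. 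Over $\Lambda$ the identifications $R_\pi\simeq R_\omega\otimes^L_\Lambda\oQl$ and $R_{\bar\omega}\simeq R_\omega\otimes^L_\Lambda\oFl$ are immediate from local perfection (Proposition~\ref{propparfait}), and the universal-coefficient step reduces to the clean observation that $\KC(D^\times\times W_K,\Lambda)$ is generated by $\Lambda$-torsion-free modules, on which the commutativity is tautological. This also dissolves the ``main technical obstacle'' you flag, since no filtered-colimit argument is needed once one works over the noetherian ring $\Lambda$.
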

\begin{proof}
Soit $\pi$ une $\oQl$-représentation irréductible. On sait qu'elle
admet un modèle $\pi_{\lambda}$ sur une extension finie $E_{\lambda}$ de $\Ql^{\rm nr}$, dont
on note $\Lambda$ l'anneau des entiers. Choisissons 
un sous-$\Lambda G$-module de type fini $\omega$ non nul dans $\pi_{\lambda}$. 

Supposons d'abord que $\pi_{\lambda}$ est $\ell$-entière au sens de
\cite[II.4.11]{Vig}. Alors $\omega$ est  $\Lambda$-admissible, et la   
proposition \ref{propparfait} montre que le complexe
$R_{\omega}:=\Rhom_{\Lambda G}(R\Gamma_{c}(\mlt^{\rm ca},\Lambda),\omega)$
 appartient à la sous-catégorie triangulée 
$\DC^{b}_{\rm tf}(\Rep^{\infty,c}_{\Lambda}(D^{\times}\times W_{K}))$ de la catégorie dérivée des
$\Lambda$-représentations de $D^{\times}\times W_{K}$ formée des
objets à cohomologie bornée et de type fini sur $\Lambda$. 
Par construction on a  
$$R_{\pi}=R_{\omega}\otimes^{L}_{\Lambda}\oQl \, \hbox{ et }\,
R_{\bar\omega}=R_{\omega}\otimes^{L}_{\Lambda}\oFl$$
où l'on a noté $\bar\omega:=\omega\otimes_{\Lambda}\oFl$. Nous devons
montrer que $[R_{\bar\omega}]=r_{\ell}([R_{\pi}])$.

Pour cela, 
notons $\KC(D^{\times}\times W_{K},\Lambda)$ le groupe de Grothendieck de la
catégorie des $\Lambda$-représentations de $D^{\times}\times W_{K}$ qui
sont de type fini sur $\Lambda$. Il coïncide canoniquement avec
le groupe de Grothendieck
$ \KC(\DC^{b}_{\rm tf}(\Rep^{\infty,c}_{\Lambda}(D^{\times}\times W_{K})))$ 
de la catégorie triangulée $\DC^{b}_{\rm tf}(\Rep^{\infty,c}_{\Lambda}(D^{\times}\times W_{K}))$.
Les foncteurs triangulés $-\otimes^{L}_{\Lambda}E_{\lambda}$ et
$-\otimes^{L}_{\Lambda}\oFl$ 
 induisent les deux homomorphismes
diagonaux du triangle
$$\xymatrix{ &
 \KC(D^{\times}\times W_{K},\Lambda) \ar[ld] \ar[rd] & \\
\RC(D^{\times}\times  W_{K}, E_{\lambda}) \ar[rr]^{r_{\ell}}  & & 
\RC(D^{\times}\times  W_{K},\oFl) .
}$$
Il nous suffit donc de voir que ce triangle est commutatif. Or le groupe
$\KC(D^{\times}\times W_{K},\Lambda)$ est engendré par les
$\Lambda(D^{\times}\times W_{K})$-modules sans $\ell$-torsion, et pour un
tel module $\tau$, on a par construction
$r_{\ell}([\tau\otimes_{\Lambda} E_{\lambda}]) = [\tau\otimes_{\Lambda}\oFl]$.

Reste à considérer le cas où $\pi$ n'est pas $\ell$-entière. Dans ce
cas, $\omega=\pi_{\lambda}$ donc $r_{\ell}(\pi)=0$, et il nous faut prouver que
$r_{\ell}([R_{\pi}])=0$. Or, on sait d'une part que $R_{\pi}$
est nul si $\pi$ n'est pas elliptique (corollaire \ref{indparab}), et d'autre part
qu'une représentation elliptique est $\ell$-entière \ssi\ son
caractère central l'est. Ainsi le caractère central de $\pi$ n'est pas
entier. Mais par construction, le centre $K^{\times}$ de $D^{\times}$
agit par l'inverse de ce caractère sur chaque $\HC^{i}(R_{\pi})$. Il
s'ensuit que $\HC^{i}(R_{\pi})$ n'est pas $\ell$-entière et par
conséquent que $r_{\ell}([\HC^{i}(R_{\pi})])=0$.
\end{proof}

\subsection{Les correspondances modulo $\ell$}\label{seccor}

\alin{Rappel du cas $\ell$-adique}
La description explicite de la cohomologie du complexe
$R_{\pi}$ dans \cite[Thm A]{lt} montre 
que pour toute $\oQl$-représentation
irréductible $\pi$, on a la formule
\begin{equation}
[R_{\pi}]=\LJ_{\oQl}(\pi)\otimes\sigma_{\oQl}(\pi)^{\rm ss}
\,\hbox{  dans $\,\RC(D^{\times}\times W_{K},\oQl)$. }
\label{ladic}
\end{equation}
En fait, comme on l'a expliqué dans l'introduction, contrairement à
\cite[Thm A]{lt}, on n'a pas besoin des résultats de Boyer sur la
cohomologie $\ell$-adique, mais simplement de la somme alternée comme
dans \cite[Thm VII.1.5]{HaTay}.

\alin{Définition de $\LJ_{\oFl}$}
D'après (\ref{ladic}), on a $[R_{\pi}]=
d\cdot\LJ_{\oQl}(\pi)$ dans $\RC(D^\times,\oQl)$.
 Pour tirer parti de la proposition \ref{cororeduc}, on doit
utiliser un fait non-trivial de théorie des représentations, qui
découle des travaux de classification de M.-F. Vignéras.

\begin{fac} \emph{cf.} \cite[Cor. (2.2.7)]{jlmodl}.\label{rlsurj}
L'application de décomposition $r_{\ell}:\RC(G,\oQl)\To{}\RC(G,\oFl)$
est surjective.
\end{fac}

Ceci, joint à la proposition \ref{cororeduc},
implique que pour tout $\pi\in\Irr{\oFl}{G}$, la représentation
virtuelle $[R_{\pi}]$ est divisible par $d$ dans
$\RC(D^{\times},\oFl)$. Posons alors 
$$\LJ_{\oFl}(\pi):=\frac{1}{d}[R_{\pi}]  \in \RC(D^{\times},\oFl).$$
L'homomorphisme $\LJ_{\oFl}$ vérifie la propriété de commutation à
$r_{\ell}$ annoncée dans le point ii) du théorème 2. La surjectivité
de $r_{\ell}$ montre d'ailleurs que cette propriété le caractérise. On
retrouve ainsi le transfert de Langlands-Jacquet défini dans
\cite{jlmodl} à l'aide des caractères de Brauer de groupes $p$-adiques.

\begin{rem} \label{remK}
Comme la correspondance de Jacquet-Langlands commute aux
contragrédientes, on a $d\cdot\LJ_{C}(\pi)=[R_{\pi}]=[R_{\pi^{\vee}}]^{\vee}$.
Il résulte alors du paragraphe précédent que l'homomorphisme
$d\cdot\LJ_{C}$ se factorise par un morphisme $\KC_{Z}(G,C)\To{}\RC(D^{\times},C)$. 
Mais ce dernier morphisme n'est pas divisible par $d$. Par exemple
pour $C=\oQl$, il envoie $V=\cind{\GL_{d}(\OC)}{G}{1_{\oQl}}$ sur
$(-1)^{d-1}[1_{\oQl}]$. 
\end{rem}



\alin{Définition de $\sigma_{\oFl}(\pi)^{\rm ss}$} 
Soit  $\pi\in\Irr{\oFl}{G}$.

Si $\pi$ est supercuspidale,
on sait par \cite[III.5.10 ii)]{Vig}
qu'elle se relève en une $\oQl$-représentation.
La proposition
\ref{cororeduc} montre alors que $[R_{\pi}]$ est bien de la forme
$\LJ_{\oFl}(\pi)\otimes\sigma_{\oFl}(\pi)^{\rm ss}$, avec
$\sigma_{\oFl}(\pi)^{\rm ss}=r_{\ell}(\sigma_{\oFl}(\wt\pi))^{\rm ss}$
pour \emph{n'importe quel} $\oQl$-relèvement $\wt\pi$.

Si $\pi$ n'est pas supercuspidale, elle apparait comme sous-quotient d'une induite
parabo\-lique $\pi_{1}\times\cdots\times\pi_{r}$ avec les $\pi_{i}$
supercuspidales. La propriété ``d'unicité du support cuspidal'' \cite[V.4]{VigInduced}
dit que les $\pi_{i}$ sont uniquement déterminés à l'ordre près. On pose alors
\begin{equation}
 \sigma_{\oFl}(\pi)^{\rm ss}:=\sigma_{\oFl}(\pi_{1})^{\rm ss}\oplus\cdots\oplus
\sigma_{\oFl}(\pi_{r})^{\rm ss}.\label{defsig}
\end{equation}
Notons que par transitivité du support supercuspidal, l'égalité
ci-dessus est encore vraie si les $\pi_{i}$ ne sont pas supercuspidales.

\begin{prop}\label{propell}
  Soit $\pi\in\Irr{\oFl}{G}$. 
On a $$[R_{\pi}]=\LJ_{\oFl}(\pi)\otimes\sigma_{\oFl}(\pi)^{\rm ss},\,\,\,\hbox{
  dans } \RC(D^{\times}\times W_{K},\oFl).$$
\end{prop}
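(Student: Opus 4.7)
The supercuspidal case was already settled just before the statement: for $\pi$ supercuspidal, any integral $\oQl$-lift $\wt\pi$ (\cite[III.5.10 ii)]{Vig}) yields, via Proposition \ref{cororeduc} and \eqref{ladic},
\begin{equation*}
[R_{\pi}]=r_{\ell}([R_{\wt\pi}])=r_{\ell}(\LJ_{\oQl}(\wt\pi))\otimes r_{\ell}(\sigma_{\oQl}(\wt\pi))^{\rm ss}=\LJ_{\oFl}(\pi)\otimes\sigma_{\oFl}(\pi)^{\rm ss},
\end{equation*}
the first factor being $\tfrac{1}{d}[R_{\pi}]|_{D^{\times}}$ by definition of $\LJ_{\oFl}$ and the second being the supercuspidal case of \eqref{defsig} (well-defined because $\LJ_{\oFl}(\pi)\neq 0$). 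Next, if $\pi$ is not elliptic, Corollary \ref{indparab} gives $[R_{\pi}]=0$, so $\LJ_{\oFl}(\pi)=\tfrac{1}{d}[R_{\pi}]=0$ and both sides of the claimed identity vanish. It remains to treat $\pi$ elliptic and not supercuspidal, with supercuspidal support $(\pi_{1},\ldots,\pi_{r})$, so that $\sigma_{\oFl}(\pi)^{\rm ss}=\bigoplus_{k}\sigma_{\oFl}(\pi_{k})^{\rm ss}$ by \eqref{defsig}.

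I would use the surjectivity of $r_{\ell}$ (Fact \ref{rlsurj}) to lift $[\pi]$ to some $\wt\xi\in\RC(G,\oQl)$. Since each irreducible representation (over $\oQl$ or $\oFl$) has a well-defined supercuspidal support (\cite[V.4]{VigInduced}), both $\RC(G,\oQl)$ and $\RC(G,\oFl)$ decompose into blocks indexed by this support, and $r_{\ell}$ respects this decomposition in the sense that the $\oQl$-block of support $(\wt\pi_{1},\ldots,\wt\pi_{r'})$ is sent into the $\oFl$-block of support $\bigsqcup_{i}\mathrm{Supp}_{\rm sc}(r_{\ell}(\wt\pi_{i}))$. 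Projecting $\wt\xi$ onto the union of $\oQl$-blocks whose image lies in the $\oFl$-block of $\pi$ still gives a virtual representation reducing to $[\pi]$, so I may assume $\wt\xi=\sum_{j}n_{j}[\wt\rho_{j}]$ with each $\wt\rho_{j}$ having $\oQl$-supercuspidal support $(\wt\pi_{j,1},\ldots,\wt\pi_{j,s_{j}})$ satisfying $\bigsqcup_{i}\mathrm{Supp}_{\rm sc}(r_{\ell}(\wt\pi_{j,i}))=(\pi_{1},\ldots,\pi_{r})$. Proposition \ref{cororeduc} and \eqref{ladic} then give
\begin{equation*}
[R_{\pi}]=r_{\ell}([R_{\wt\xi}])=\sum_{j}n_{j}\,r_{\ell}(\LJ_{\oQl}(\wt\rho_{j}))\otimes r_{\ell}(\sigma_{\oQl}(\wt\rho_{j}))^{\rm ss}.
\end{equation*}

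The crucial step, and the main obstacle, is the uniformity $r_{\ell}(\sigma_{\oQl}(\wt\rho_{j}))^{\rm ss}=\sigma_{\oFl}(\pi)^{\rm ss}$ independently of $j$. Since $\sigma_{\oQl}(\wt\rho_{j})^{\rm ss}=\bigoplus_{i}\sigma_{\oQl}(\wt\pi_{j,i})$ by the $\ell$-adic semisimple Langlands identity, this reduces to showing $r_{\ell}(\sigma_{\oQl}(\wt\pi_{j,i}))^{\rm ss}=\bigoplus_{\mu}\sigma_{\oFl}(\mu)^{\rm ss}$ over $\mu\in\mathrm{Supp}_{\rm sc}(r_{\ell}(\wt\pi_{j,i}))$ for every integral $\oQl$-supercuspidal $\wt\pi_{j,i}$. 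In the banal case, when $r_{\ell}(\wt\pi_{j,i})$ is itself $\oFl$-supercuspidal, this follows directly by comparing the supercuspidal case of the proposition applied to both $\wt\pi_{j,i}$ (over $\oQl$) and $r_{\ell}(\wt\pi_{j,i})$ (over $\oFl$), cancelling the common $\LJ$-factor. In the non-banal case, this is exactly the compatibility of Vigneras' supercuspidal $\ell$-modular Langlands correspondence with reduction mod $\ell$, which is where the essential input from modular representation theory enters. Once this uniformity is in hand, the common factor $\sigma_{\oFl}(\pi)^{\rm ss}$ pulls out of the sum, and the commutation $r_{\ell}\circ\LJ_{\oQl}=\LJ_{\oFl}\circ r_{\ell}$ (which is tautological from the definition of $\LJ_{\oFl}$, combined with Proposition \ref{cororeduc}) concludes:
\begin{equation*}
[R_{\pi}]=\Big(\sum_{j}n_{j}\,r_{\ell}(\LJ_{\oQl}(\wt\rho_{j}))\Big)\otimes\sigma_{\oFl}(\pi)^{\rm ss}=\LJ_{\oFl}(\pi)\otimes\sigma_{\oFl}(\pi)^{\rm ss}.
\end{equation*}
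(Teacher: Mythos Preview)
Your argument has a circularity in the elliptic non-supercuspidal case. The ``non-banal'' compatibility you invoke --- that for an integral $\oQl$-supercuspidal $\wt\tau$ whose reduction is cuspidal but \emph{not} supercuspidal one has $r_\ell(\sigma_{\oQl}(\wt\tau))^{\rm ss}=\sigma_{\oFl}(r_\ell(\wt\tau))^{\rm ss}$ --- is exactly the content of the proposition that immediately follows this one in the paper, and \emph{that} proposition's proof uses Proposition~\ref{propell}. Appealing instead to Vign\'eras' original global argument for this compatibility would be logically valid, but it defeats the paper's explicit aim of giving an independent, purely local reproof of the $\ell$-modular semisimple correspondence.

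The paper avoids this trap by choosing lifts more carefully. Rather than lifting $[\pi]$ arbitrarily via surjectivity of $r_\ell$, it uses the structure theory of \cite{jlmodl}: any elliptic $\pi$ satisfies $[\pi]\equiv\sum_i a_i[\delta_i]\pmod{\RC_I(G,\oFl)}$ with the $\delta_i$ \emph{superSpeh} representations sharing $\pi$'s supercuspidal support. Each $\delta_i$ is, by construction, $r_\ell(\wt\delta_i)$ for a Speh representation $\wt\delta_i$ whose $\oQl$-supercuspidal support consists of twists $\wt\pi_0\nu^k$ with $r_\ell(\wt\pi_0)$ genuinely \emph{super}cuspidal --- so only the banal compatibility (already settled) is needed to compute $r_\ell(\sigma_{\oQl}(\wt\delta_i))^{\rm ss}=\sigma_{\oFl}(\delta_i)^{\rm ss}=\sigma_{\oFl}(\pi)^{\rm ss}$. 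Corollary~\ref{indparab} kills the $\RC_I$ discrepancy, and the common Galois factor pulls out. The point is that superSpeh representations generate the elliptic classes modulo $\RC_I$ \emph{and} admit canonical lifts whose $\oQl$-supercuspidal supports never produce the problematic non-banal cuspidals upon reduction.
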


\begin{proof}
Supposons d'abord que $\LJ_{\oFl}(\pi)=0$. Dans ce cas, la seule
chose à démontrer est que $[R_{\pi}]=0$ dans $\RC(D^{\times}\times
W_{K},\oFl)$. Or, d'après le théorème (3.1.4) de \cite{jlmodl}, $\pi$
appartient au sous-groupe $\RC_{I}(G,\oFl)$ de $\RC(G,\oFl)$ engendré
par les induites paraboliques propres. Le corollaire \ref{indparab}
nous assure bien que $[R_{\pi}]=0$.

Supposons maintenant que $\pi$ est une représentation ``superSpeh'',
au sens de \cite[Def 2.2.3]{jlmodl}. Par définition, elle est de la
forme $r_{\ell}(\wt\pi)$ avec $\wt\pi$ l'unique sous-représentation
d'une induite parabolique normalisée
$\wt\pi_{0}\times\wt\pi_{0}\nu\times\cdots\times\wt\pi_{0}\nu^{r-1}$
où $r_{\ell}(\wt\pi_{0})=:\pi_{0}$ est une $\oFl$-représentation
supercuspidale de $\GL_{d/r}(K)$ et $\nu$ est le caractère 
$g\mapsto q^{-{\rm val}_{K}({\rm det}(g))}$.
D'après la proposition \ref{cororeduc}, on a
$[R_{\pi}]=\LJ_{\oFl}(\pi)\otimes
r_{\ell}(\sigma_{\oQl}(\wt\pi)^{\rm ss})$. Or, on calcule
$r_{\ell}(\sigma_{\oQl}(\wt\pi)^{\rm ss})=
r_{\ell}(\sum_{i=0}^{r-1}\sigma_{\oQl}(\wt\pi_{0}\nu^{i})^{\rm ss})=
\sum_{i=0}^{r-1} \sigma_{\oFl}(\pi_{0}\nu^{i})^{\rm ss}=\sigma_{\oFl}(\pi)^{\rm
ss}$, puisque les $\pi_{0}\nu^{i}$ sont supercuspidales.

Supposons enfin que $\LJ_{\oFl}(\pi)\neq 0$, ce qui équivaut,
toujours d'après le théoréme (3.1.4) de \cite{jlmodl}, à ce
que $\pi$ soit elliptique. En vertu de la
proposition (2.2.6) de \cite{jlmodl} précisée par le lemme (3.2.1)
 de \emph{loc. cit.}, on peut écrire
$$ [\pi] \equiv \sum_{i=0}^{r-1} a_{i}[\delta_{i}] \,\hbox{ modulo
}\, \RC_{I}(G,\oFl)$$
où les $\delta_{i}$ sont les représentations superSpeh de même support
supercuspidal que $\pi$, et les $a_{i}$ sont des entiers.
Le corollaire \ref{indparab} nous donne alors 
$$ [R_{\pi}] =\sum_{i=0}^{r-1} a_{i}
[R_{\delta_{i}}].$$
Nous venons de montrer que
$[R_{\delta_{i}}]=\LJ_{\oFl}(\delta_{i})\otimes\sigma_{\oFl}(\delta_{i})^{\rm
ss}$ pour tout $i$. Mais par définition,
$\sigma_{\oFl}(\delta_{i})^{\rm ss}$ ne
dépend que du support cuspidal de $\delta_{i}$, et on a
$\sigma_{\oFl}(\delta_{i})^{\rm ss}=\sigma_{\oFl}(\pi)^{\rm ss}$ pour
tout $i$. On en déduit
$$[R_{\pi}] =\sum_{i=0}^{r-1} a_{i}
\LJ_{\oFl}(\delta_{i})\otimes\sigma_{\oFl}(\pi)^{\rm ss} 
= \LJ_{\oFl}(\pi)\otimes\sigma_{\oFl}(\pi)^{\rm ss}.$$
\end{proof}

\begin{prop}
  Soit $\wt\pi\in \Irr{\oQl}{G}$. Pour tout sous-quotient
  irréductible $\pi$ de $r_{\ell}(\wt\pi)$, on a
  $\sigma_{\oFl}(\pi)^{\rm ss}= r_{\ell}(\sigma_{\oQl}(\wt\pi)^{\rm ss}).$
\end{prop}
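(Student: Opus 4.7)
L'\'enonc\'e est vide si $\wt\pi$ n'est pas $\ell$-enti\`ere, auquel cas $r_{\ell}(\wt\pi)=0$; on suppose donc $\wt\pi$ $\ell$-enti\`ere et on \'ecrit $r_{\ell}(\wt\pi)=\sum_{\pi}n_{\pi}[\pi]$ dans $\RC(G,\oFl)$.

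\emph{R\'eduction au cas supercuspidal.} Soit $(\wt\pi_{1},\ldots,\wt\pi_{r})$ le support supercuspidal de $\wt\pi$; chaque $\wt\pi_{i}$ est $\ell$-enti\`ere car la $\ell$-int\'egralit\'e est pr\'eserv\'ee par passage au support supercuspidal (voir \cite[II.4]{Vig}). Le foncteur $r_{\ell}$ commute \`a l'induction parabolique, de sorte que tout sous-quotient irr\'eductible $\pi$ de $r_{\ell}(\wt\pi)$ est aussi sous-quotient d'une induite $\pi'_{1}\times\cdots\times\pi'_{r}$ pour un choix de sous-quotients irr\'eductibles $\pi'_{i}$ de $r_{\ell}(\wt\pi_{i})$, et son support supercuspidal est alors la concat\'enation de ceux des $\pi'_{i}$. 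Par additivit\'e de la formule (\ref{defsig}) d\'efinissant $\sigma_{\oFl}$ et additivit\'e analogue de $\sigma_{\oQl}$ le long du support supercuspidal, il suffit d'\'etablir le r\'esultat pour chaque $\wt\pi_{i}$. On peut donc supposer $\wt\pi$ supercuspidale.

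\emph{Cas supercuspidal.} Dans ce cas, l'\'egalit\'e $\ell$-adique (\ref{ladic}) se r\'eduit \`a $[R_{\wt\pi}]=\LJ_{\oQl}(\wt\pi)\otimes\sigma_{\oQl}(\wt\pi)$, et en lui appliquant $r_{\ell}$ via la proposition \ref{cororeduc}, on obtient
$$ \sum_{\pi}n_{\pi}[R_{\pi}]=r_{\ell}([R_{\wt\pi}])=\LJ_{\oFl}(r_{\ell}(\wt\pi))\otimes r_{\ell}(\sigma_{\oQl}(\wt\pi))^{\rm ss}.$$
Par ailleurs, la proposition \ref{propell} fournit $[R_{\pi}]=\LJ_{\oFl}(\pi)\otimes\sigma_{\oFl}(\pi)^{\rm ss}$ pour chaque $\pi$. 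Les r\'esultats de classification mod $\ell$ de Vign\'eras garantissent que tous les sous-quotients irr\'eductibles de $r_{\ell}(\wt\pi)$ poss\`edent le m\^eme support supercuspidal (en fait, $r_{\ell}(\wt\pi)$ est m\^eme irr\'eductible), donc tous les $\sigma_{\oFl}(\pi)^{\rm ss}$ co\"incident avec une valeur commune $\tau$. L'\'egalit\'e ci-dessus devient alors
$$ \LJ_{\oFl}(r_{\ell}(\wt\pi))\otimes\tau=\LJ_{\oFl}(r_{\ell}(\wt\pi))\otimes r_{\ell}(\sigma_{\oQl}(\wt\pi))^{\rm ss}.$$
Comme $\rho(\wt\pi):=\LJ_{\oQl}(\wt\pi)$ est une repr\'esentation irr\'eductible $\ell$-enti\`ere de $D^{\times}$ (int\'egralit\'e du caract\`ere central), sa r\'eduction $\LJ_{\oFl}(r_{\ell}(\wt\pi))=r_{\ell}(\rho(\wt\pi))$ est non nulle. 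En comparant les coefficients des produits tensoriels externes irr\'eductibles dans $\RC(D^{\times}\times W_{K},\oFl)$, on en d\'eduit $\tau=r_{\ell}(\sigma_{\oQl}(\wt\pi))^{\rm ss}$, comme voulu.

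\emph{Principale difficult\'e.} Le point sensible est l'appel aux r\'esultats fins de classification de Vign\'eras sur la structure de la r\'eduction modulo $\ell$ des supercuspidales $\ell$-enti\`eres, en particulier l'invariance du support supercuspidal des sous-quotients de $r_{\ell}(\wt\pi)$ pour $\wt\pi$ supercuspidale. Une fois cette donn\'ee admise, l'identification de $\tau$ \`a $r_{\ell}(\sigma_{\oQl}(\wt\pi))^{\rm ss}$ r\'esulte d'une simple comparaison dans le groupe de Grothendieck.
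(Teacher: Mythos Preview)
Votre preuve est correcte et suit essentiellement la m\^eme approche que celle de l'article : r\'eduction au cas o\`u $\wt\pi$ est supercuspidale via l'additivit\'e le long du support supercuspidal, puis utilisation conjointe de la proposition~\ref{cororeduc} et de la proposition~\ref{propell} pour comparer $\sigma_{\oFl}(\pi)^{\rm ss}$ et $r_{\ell}(\sigma_{\oQl}(\wt\pi))$ via la non-nullit\'e de $\LJ_{\oFl}(r_{\ell}(\wt\pi))=r_{\ell}(\rho(\wt\pi))$. La seule diff\'erence, purement cosm\'etique, est que l'article traite \`a part le sous-cas o\`u $\pi_i:=r_{\ell}(\wt\pi_i)$ est supercuspidale (o\`u l'\'egalit\'e est la d\'efinition m\^eme de $\sigma_{\oFl}(\pi_i)^{\rm ss}$), tandis que vous appliquez la proposition~\ref{propell} uniform\'ement ; cela revient au m\^eme puisque cette proposition couvre aussi le cas supercuspidal.
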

\begin{proof}
  Supposons que $\wt\pi$ est sous-quotient de
  $\wt\pi_{1}\times\cdots\times\wt\pi_{r}$ avec les $\wt\pi_{i}$
  supercuspidales. Posons $\pi_{i}=r_{\ell}(\wt\pi_{i})$ ; c'est une
  représentation cuspidale, pas nécessairement supercuspidale. On sait
  que
$$ r_{\ell}(\sigma_{\oQl}(\wt\pi)^{\rm ss}) =
\sum_{i=1}^{r} r_{\ell}(\sigma_{\oQl}(\wt\pi_{i})^{\rm ss})$$
et on a par construction
$$ \sigma_{\oFl}(\pi)^{\rm ss}=\sum_{i=1}^{r}
\sigma_{\oFl}(\pi_{i})^{\rm ss}$$
puisque $\pi$ est sous-quotient de l'induite $\pi_{1}\times\cdots\times\pi_{r}$.
Il nous suffit donc de prouver que 
$r_{\ell}(\sigma_{\oQl}(\wt\pi_{i})^{\rm ss}) =
\sigma_{\oFl}(\pi_{i})^{\rm ss}$ pour tout $i$. Si $\pi_{i}$ est
\emph{super}cuspidale, cette égalité est vraie par définition. Si
$\pi_{i}$ n'est pas supercuspidale, on a les égalités
$$ \LJ_{\oFl}(\pi_{i})\otimes r_{\ell}(\sigma_{\oQl}(\wt\pi_{i})^{\rm
  ss}) =
[R_{\pi_{i}}] = \LJ_{\oFl}(\pi_{i})\otimes\sigma_{\oFl}(\pi_{i})^{\rm
  ss}, $$
la première découlant de la proposition \ref{cororeduc}, et la seconde
de la proposition \ref{propell}.
Or on sait que $\pi$ est elliptique, \emph{i.e.} $\LJ_{\oFl}(\pi)\neq
0$, donc on en déduit l'égalité voulue.
\end{proof}

Les deux propositions précédentes achèvent la preuve du théorème
2. Mais nous n'avons rien dit encore sur les propriétés de
l'application $\pi\mapsto\sigma_{\oFl}(\pi)^{\rm ss}$ lorsque $\pi$
est supercuspidale. 
C'est l'objet de la proposition suivante, qui redonne le théorème de Vignéras
sur la correspondance entre supercuspidales et irréduc\-tibles. Une
fois cette proposition prouvée, nous aurons retrouvé l'intégralité 
du théorème 1.6 de \cite{VigLanglands}.

\begin{prop} \label{propcusp} 
  L'application $\pi\mapsto \sigma_{\oFl}(\pi)^{\rm ss}$ induit une
  bijection de l'ensemble des $\oFl$-représentations supercuspidales
  de $G$
  sur celui des $\oFl$-représentations irréductibles de dimension $d$
  de $W_{K}$.
\end{prop}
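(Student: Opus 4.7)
L'approche combine le crit\`ere num\'erique de Vign\'eras \cite[2.3]{VigAENS} avec un argument explicite de surjectivit\'e par rel\`evement $\ell$-adique. Pour $\pi$ supercuspidale sur $\oFl$ et $\wt\pi$ un $\oQl$-rel\`evement (existant par \cite[III.5.10 ii)]{Vig}), on a par construction $\sigma_{\oFl}(\pi)^{\rm ss} = r_{\ell}(\sigma_{\oQl}(\wt\pi))^{\rm ss}$, une $\oFl$-repr\'esentation semi-simple de dimension $d$ de $W_{K}$, dont le d\'eterminant correspond au caract\`ere central de $\pi$ via ${\rm Art}_{K}^{-1}$. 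L'application est donc bien d\'efinie, a priori \`a valeurs dans un ensemble plus grand que les irr\'eductibles de dimension $d$.

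La premi\`ere chose \`a \'etablir serait la \emph{surjectivit\'e} sur les irr\'eductibles. Partant de $\tau \in \Irr{\oFl}{W_{K}}$ de dimension $d$, on utilise les propri\'et\'es standard des repr\'esentations galoisiennes pour relever $\tau$ en une $\wt\tau \in \Irr{\oQl}{W_{K}}$ de m\^eme dimension, $\ell$-enti\`ere (factorisation par un quotient fini modulo l'inertie sauvage, puis Brauer sur le quotient fini correspondant). Via la correspondance de Langlands $\ell$-adique de Harris-Taylor, $\wt\tau = \sigma_{\oQl}(\wt\pi)$ pour une unique supercuspidale $\wt\pi$ de $G$ sur $\oQl$, qui est $\ell$-enti\`ere puisque son caract\`ere central est $\det(\wt\tau)\circ {\rm Art}_{K}^{-1}$. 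Tout sous-quotient irr\'eductible $\pi$ de la r\'eduction $r_{\ell}(\wt\pi)$ est alors supercuspidal, et par construction $\sigma_{\oFl}(\pi)^{\rm ss} = r_{\ell}(\wt\tau)^{\rm ss} = \tau$ puisque $\tau$ \'etait d\'ej\`a irr\'eductible.

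Pour en d\'eduire la bijectivit\'e, on fixerait un caract\`ere lisse $\chi$ de $K^{\times}$ et un entier de niveau $n$, et on restreindrait l'application aux sous-ensembles $A_{\chi,n}$ des supercuspidales de caract\`ere central $\chi$ et niveau $\leq n$, et $B_{\chi,n}$ des $d$-dim irr\'eductibles avec d\'eterminant $\chi\circ{\rm Art}_{K}^{-1}$ et conducteur born\'e correspondant. Ces ensembles sont finis, et le crit\`ere num\'erique de Vign\'eras fournit $|A_{\chi,n}| = |B_{\chi,n}|$. La surjectivit\'e d\'emontr\'ee pr\'ec\'edemment, entre ensembles finis de m\^eme cardinalit\'e, force la bijectivit\'e et, simultan\'ement, l'irr\'eductibilit\'e de $\sigma_{\oFl}(\pi)^{\rm ss}$ pour toute supercuspidale $\pi$. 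Le passage \`a la limite sur $(\chi,n)$ donne la bijection globale.

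L'obstacle principal est le crit\`ere num\'erique de Vign\'eras lui-m\^eme, qui repose sur une classification d\'elicate des supercuspidales $\oFl$-modulaires via des types de Bushnell-Kutzko adapt\'es et un comptage parall\`ele des irr\'eductibles du groupe de Weil, et qu'on admet ici comme r\'ef\'erence. Un point technique subsidiaire est de v\'erifier que les bornes sur le niveau du c\^ot\'e $G$ et sur le conducteur du c\^ot\'e galoisien peuvent \^etre ajust\'ees de mani\`ere compatible pour que les cardinaux correspondent exactement.
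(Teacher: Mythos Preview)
Votre argument de surjectivit\'e contient une affirmation non justifi\'ee qui est au c\oe ur du probl\`eme : vous \'ecrivez que \og tout sous-quotient irr\'eductible $\pi$ de la r\'eduction $r_{\ell}(\wt\pi)$ est alors supercuspidal\fg. Ceci est faux en g\'en\'eral : il existe des $\oQl$-supercuspidales dont la r\'eduction est cuspidale mais non supercuspidale. La formule $\sigma_{\oFl}(\pi)^{\rm ss}=r_{\ell}(\wt\tau)$ que vous invoquez ensuite \og par construction\fg{} n'est la d\'efinition que si $\pi$ est d\'ej\`a supercuspidale ; pour $\pi$ seulement cuspidale, c'est la d\'efinition~(\ref{defsig}) via le support supercuspidal qui s'applique, et il n'y a aucune raison a priori que cela co\"incide avec $r_{\ell}(\sigma_{\oQl}(\wt\pi))$. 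L'article r\'esout ce point en utilisant la proposition qui pr\'ec\`ede imm\'ediatement (compatibilit\'e $\sigma_{\oFl}(\pi)^{\rm ss}=r_{\ell}(\sigma_{\oQl}(\wt\pi)^{\rm ss})$ pour \emph{tout} sous-quotient $\pi$ de $r_{\ell}(\wt\pi)$), laquelle repose sur le th\'eor\`eme~2 et sa propri\'et\'e i)(a). Une fois cette \'egalit\'e acquise, l'irr\'eductibilit\'e de $\tau$ force $\pi$ \`a \^etre supercuspidale via~(\ref{defsig}). L'article insiste d'ailleurs sur ce point dans la remarque~\ref{Rempreuvealter} : l'auteur y note explicitement qu'un argument de comptage pur, court-circuitant cette \'etape, lui est inconnu.

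Pour le reste, votre strat\'egie est proche de celle de l'article, avec une diff\'erence d'organisation : l'article applique le crit\`ere num\'erique de Vign\'eras pour obtenir directement l'irr\'eductibilit\'e de $\sigma_{\oFl}(\pi)^{\rm ss}$ et l'injectivit\'e de $\pi\mapsto\sigma_{\oFl}(\pi)^{\rm ss}$, puis traite la surjectivit\'e ; vous proposez d'\'etablir d'abord la surjectivit\'e et d'utiliser ensuite l'\'egalit\'e des cardinaux $|A_{\chi,n}|=|B_{\chi,n}|$ pour conclure simultan\'ement \`a la bijectivit\'e et \`a l'irr\'eductibilit\'e. Cette variante fonctionne une fois la surjectivit\'e correctement \'etablie (avec la correction ci-dessus), \`a condition de pr\'eciser que le crit\`ere de Vign\'eras fournit bien cette \'egalit\'e de cardinaux et que les bornes niveau/conducteur sont compatibles --- point que vous reconnaissez comme technique et qui n'est pas enti\`erement \'evident.
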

\begin{proof}
  La proposition \ref{cororeduc} montre que la correspondance de
  Langlands $\ell$-adique est compatible aux congruences : si
  $\wt\pi_{1}$ et $\wt\pi_{2}$ sont deux $\oQl$-représentations
  supercuspidales entières
  et de même réduction $r_{\ell}(\wt\pi_{1})=r_{\ell}(\wt\pi_{2})=\pi$,
  alors
  $r_{\ell}(\sigma_{\oQl}(\wt\pi_{1}))=r_{\ell}(\sigma_{\oQl}(\wt\pi_{2}))=\sigma_{\oFl}(\pi)^{\rm
    ss}$. 

Si $\pi$ est supercuspidale, le critère numérique
\cite[2.3]{VigAENS} montre que $\sigma_{\oFl}(\pi)^{\rm ss}$ est
irré\-ductible. Ce même critère montre aussi que l'application
$\pi\mapsto \sigma_{\oFl}(\pi)^{\rm ss}$ restreinte
à l'ensemble des supercuspidales est \emph{injective}.

Pour prouver la surjectivité, soit
$\sigma$ une $\oFl$-représentation irréductible. Choisissons un
relèvement $\wt\sigma$ de $\sigma$ à $\oQl$ (théorème de
Fong-Swan) et posons $\wt\pi:=\sigma_{\oQl}^{-1}(\wt\sigma)$. C'est une
$\oQl$-représentation supercuspidale entière de $G$. Sa réduction
$\pi:=r_{\ell}(\wt\pi)$ est une $\oFl$-représentation irréductible
cuspidale qui vérifie $\sigma_{\oFl}(\pi)^{\rm
  ss}=r_{\ell}(\sigma_{\oQl}(\wt\pi))= \sigma$ par la proposition
précédente. Comme $\sigma$ est irréductible, $\pi$ est nécessairement
\emph{super}cuspidale (vu la définition de $\sigma_{\oFl}(\pi)^{\rm
  ss}$), et on a prouvé la surjectivité.
\end{proof}

Pour une représentation \emph{supercuspidale}, il n'y a donc plus lieu de
garder la notation $^{\rm ss}$, et on notera simplement $\sigma_{\oFl}(\pi)$.

\alin{Un exemple instructif}\label{exemple}
Reprenons les notations de l'exemple de l'introduction.
  Supposons donc $d=3$ et $q\equiv 1 [\ell]$, et notons $\pi$ le quotient de
  $\CC^{\infty}(\PM^{2}(K),\oFl)$ par les fonctions constantes. Ici on
  peut voir $\PM^{2}(K)$ indifféremment comme l'espace des droites ou
  l'espace des plans ; en d'autres termes, $\pi$ est autoduale.
On  peut alors relever $\pi$ en une $\oQl$-représentation elliptique
 de deux manières différentes : on prend l'espace
$\CC^{\infty}(\PM^{2}(K),\oQl)/\oQl$, et on le munit de l'action
$\wt\pi_{1}$ sur les droites, et de sa transposée $\wt\pi_{2}$ sur les
plans. Ces deux $\oQl$-représentations sont duales l'une de l'autre,
mais pas isomorphes. 

Nous avons calculé les complexes $R_{\wt\pi_{1}}$ et $R_{\wt\pi_{2}}$
dans \cite[4.4.2]{lt}, voir aussi \cite[Thm 4.1.5]{dp}. Leur cohomologie est concentrée
en degrés $-1$ et $1$. Plus précisément, on a 
$$\dim(\HC^{-1}(R_{\wt\pi_{1}}))= 2 \,\hbox{ et }\, 
\dim(\HC^{1}(R_{\wt\pi_{1}}))=1$$
tandis que
$$ \dim(\HC^{-1}(R_{\wt\pi_{2}}))=1 \,\hbox{ et }\, 
\dim(\HC^{1}(R_{\wt\pi_{2}}))=2.$$
On en déduit immédiatement que
$\dim(\HC^{-1}(R_{\pi}))\geq 2$ et $ \dim(\HC^{1}(R_{\pi}))\geq
2$. Cela implique en particulier que la cohomologie des complexes $R_{\omega_{1}}$ et
$R_{\omega_{2}}$ associés aux réseaux stables de $\wt\pi_{1}$ et
$\wt\pi_{2}$ a de la torsion. 

\section{La partie supercuspidale de la cohomologie $\ell$-entière}

\setcounter{subsubsection}{0}
 Jusqu'ici, la notion de \emph{supercuspidalité} n'a été définie que pour
 une représentation irréductible sur $\oFl$. Pour une
 $\Zl$-représentation générale, il nous faudra faire un détour par le groupe
$G^{0}=\ker(\val_{K}\circ\det)$.

\begin{DEf} \label{defsupercusp}
  Un objet $V$ de $\Mo{\Zl}{G^{0}}$ ou de $\Mo{\Zl}{G}$ est dit 
  \emph{supercuspidal} si aucun de ses $\Zl G^{0}$-sous-quotients
  n'est un sous-quotient d'une induite parabolique propre.
\end{DEf}

\begin{rem}Pour $V$ irréductible,
  il n'est pas complètement évident que cette défini\-tion coïncide avec
  la définition de Vignéras, où l'on demande simplement que $V$ ne
  soit pas sous-quotient d'une induite parabolique propre \emph{admissible}.
Nous vérifions que les deux définitions sont  compatibles dans le corollaire \ref{equivdefsupercusp}.
\end{rem}

Une représentation supercuspidale $V$ est en particulier cuspidale
(\emph{i.e.} annulée par tous les foncteurs de Jacquet propres), et
donc les fonctions $g\mapsto e_{H}gv$, pour $v\in V$ et $H$
pro-$p$-sous-groupe ouvert de $G$, sont à support {compact modulo
le centre.} 

Par conséquent, les $\Zl G^{0}$-sous-quotients
irréductibles de $V$ sont des $\Fl$-représentations, et celles-ci apparaissent
comme sous-quotient, et même sous-objet, d'une $\oFl$-représentation
irréductible supercuspidale (au sens habituel) de $G$. 
Vu la remarque ci-dessus, cette propriété des $\Zl
G^{0}$-sous-quotients irréductibles \emph{caractérise} les représentations
supercuspidales au sens de \ref{defsupercusp}.

\begin{exe}
 Si $V$ est une $\oQl$-représentation irréductible, alors elle est
supercuspidale au sens \ref{defsupercusp} \ssi\ c'est un twist non ramifié
d'une représentation $\ell$-entière de réduction supercuspidale au
sens habituel de Vignéras.
\end{exe}

La définition \ref{defsupercusp} peut paraitre alambiquée puisqu'elle fait un détour
par $G^{0}$, mais permet d'avoir la propriété
suivante, prouvée dans l'appendice \ref{coroscindscusp}.

\begin{prop}\label{catsupercusp}
  La sous-catégorie  $\Mosc{\Zl}{G} \subset \Mo{\Zl}{G}$
  (resp. $\Mosc{\Zl}{G^{0}}\subset \Mo{\Zl}{G^0}$)
 dont les objets sont les représentations
  supercuspidales est facteur direct.
\end{prop}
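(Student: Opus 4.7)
The plan is to first establish the result for $G^0=\ker(\val_K\circ\det)$, whose center $Z^0=Z\cap G^0$ is compact, and then transfer it to $G$ using the fact that $G/G^0\simeq\ZM$ is discrete. On $G^0$, the key point is that the matrix coefficients of an irreducible supercuspidal $\oFl$-representation $\pi$ are compactly supported (since $\pi$ is cuspidal and has compact central character). This makes $\pi$ a direct summand of some compact induction $\cind{H}{G^0}{\pi_{|H}}$ from a pro-$p$ compact open subgroup $H$, hence both projective and injective in $\Mo{\oFl}{G^0}$.

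From this, I would deduce that for any irreducible supercuspidal $\pi$ and any irreducible $\pi'$ of $G^0$ which is not supercuspidal, one has $\Ext^i_{G^0}(\pi,\pi')=\Ext^i_{G^0}(\pi',\pi)=0$ for all $i\geq 0$. Since the irreducible $\Zl G^0$-subquotients of a supercuspidal representation are automatically $\oFl$-representations (by the discussion following Definition~\ref{defsupercusp}), this gives the analogous vanishing for arbitrary irreducible $\Zl G^0$-modules. The canonical splitting $V=V_\scusp\oplus V'$ for $V\in\Mo{\oFl}{G^0}$ then follows: define $V_\scusp$ as the sum of all supercuspidal subobjects, and use the $\Ext^1$-vanishing to check that the quotient has no supercuspidal subquotient and that the extension splits canonically.

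To handle $\Zl$-coefficients, I would exploit the local noetherianity recalled in Fact~\ref{noetherien}: working at a fixed level $H_n$, the corresponding Hecke algebra $\HC_{\Zl}(G^0,H_n)$ is noetherian, and the supercuspidal subcategory at that level corresponds to a direct factor cut out by a central idempotent (lifted $\ell$-adically from the $\oFl$-case via Hensel, using that the $\oFl$-splitting produces an idempotent in the reduction). Taking the limit over $n$ yields the required decomposition of $\Mo{\Zl}{G^0}$. Finally, since parabolic induction and restriction commute with conjugation, supercuspidality is preserved by the action of $G/G^0$, so the splitting on $G^0$ is canonically $G$-equivariant and descends to $\Mo{\Zl}{G}$.

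The main obstacle I anticipate is the coherent $\ell$-adic lifting of the supercuspidal idempotent: one must verify that the $\oFl$-block decomposition compatible with all levels $H_n$ lifts to a system of compatible idempotents in $\HC_{\Zl}(G^0,H_n)$, without being spoiled by $\ell$-torsion in intermediate $\Ext$ groups between non-supercuspidal and supercuspidal blocks. This should follow from the $\Ext$-vanishing established over $\oFl$ combined with the flatness/noetherianity properties of $\Zl$-Hecke algebras, but requires careful bookkeeping.
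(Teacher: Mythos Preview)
Your first step contains a genuine gap: an irreducible supercuspidal $\oFl$-representation $\pi^0$ of $G^0$ is \emph{not} in general a direct summand of $\cind{H}{G^0}{\pi^0_{|H}}$, and hence need not be projective or injective in $\Mo{\oFl}{G^0}$. The classical splitting over $\CM$ relies on the fact that the composition $\pi^0 \hookrightarrow \cind{H}{G^0}{\pi^0_{|H}} \twoheadrightarrow \pi^0$ is multiplication by a formal-degree scalar, but modulo $\ell$ this scalar can vanish. Concretely, Proposition~\ref{propscindageG0} computes that the projective envelope $P_{\pi^0}$ of $\pi^0$ in $\Mo{\Zlnr}{G^0}$ has commutant isomorphic to $\Zlnr[{\rm Syl}_\ell(\FM_{q^{f}}^\times)]$; hence $P_{\pi^0}\otimes\oFl$ has length $|{\rm Syl}_\ell(\FM_{q^{f}}^\times)|$, which is $>1$ whenever $\ell\mid q^{f}-1$. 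In that case $\pi^0$ is not projective, it has nontrivial self-extensions, and you cannot deduce the required $\Ext$-vanishing from projectivity of $\pi^0$ alone.

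The $\Ext$-vanishing you actually need (between supercuspidals and non-supercuspidals) does hold, but proving it amounts to knowing that the projective envelope $P_{\pi^0}$ itself stays inside the supercuspidal subcategory. This is exactly the nontrivial input the paper provides via the theory of types: writing $\pi^0=\cind{J^\circ}{G^0}{\lambda}$ for a simple type $(J^\circ,\lambda)$, one invokes \cite[III.4.28]{Vig} to see that every irreducible subquotient of the projective envelope $P_\lambda$ of $\lambda$ is again isomorphic to $\lambda$ --- and it is here, not in any matrix-coefficient argument, that supercuspidality (as opposed to mere cuspidality) is genuinely used. Then $P_{\pi^0}=\cind{J^\circ}{G^0}{P_\lambda}$ is a projective object of $\Mo{\Zlnr}{G^0}$ lying in $\CC^0_{\pi^0}$, and together with the dual injective one applies Lemma~\ref{lemmescind}. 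This also sidesteps your $\ell$-adic idempotent-lifting step: the projective generator is built directly over $\Zlnr$, and the descent to $\Zl$ is obtained by Galois invariance of the resulting decomposition, as in \ref{coroscindscusp}.
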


Ainsi,  toute représentation
$V\in\Mo{\Zl}{G}$ se décompose canoniquement en $V=V_{\scusp}\oplus
V'$ avec $V_{\scusp}\in \Mosc{\Zl}{G}$ et $V'$ une représentation dont
aucun $\Zl G^{0}$-sous-quotient n'est supercuspidal au sens de  \ref{defsupercusp}.

Dans cette section, 
nous étudions la partie supercuspidale $H^{n-1}_{c}(\mlt^{\rm
  ca},\Zl)_{\scusp}$ de la cohomologie et
nous prouvons en particulier les théorèmes 1, 3 et 4 de l'introduction.

\subsection{Projectivité et conséquences}

La propriété suivante est la base de notre étude. C'est une
conséquence de la propriété de perfection du complexe
$R\Gamma_{c}(\mlt^{\rm ca},\Zl)$ dans $\DC^{b}(\Mo{\Zl}{G})$.

\begin{prop} \label{propproj}
  La partie supercuspidale $H^{d-1}_{c}(\mlt^{\rm ca},\Zl)_{\scusp}$ est un objet projectif et localement de type fini de $\Mo{\Zl}{G}$.
\end{prop}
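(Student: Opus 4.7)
The plan is to combine Proposition~\ref{propparfait} (which provides a perfect complex representative of $R\Gamma_{c}(\mlt^{\rm ca},\Zl)$) with Proposition~\ref{catsupercusp} (exactness of the supercuspidal scindage) and the Mieda--Strauch concentration in degree $d-1$, and then to extract projectivity from the resulting picture.

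First, let $P^{\bullet}$ be a bounded complex of projective locally of finite type $\Zl G$-modules in perfect amplitude $[0,2d-2]$ representing $R\Gamma_{c}(\mlt^{\rm ca},\Zl)$. The endofunctor $V\mapsto V_{\scusp}$ of Proposition~\ref{catsupercusp} is exact and, being a projection onto a direct factor subcategory, preserves both projectivity and the property of being locally of finite type. Applying it termwise yields a bounded complex $(P^{\bullet})_{\scusp}$ of projective locally finite type objects supported in $\Mosc{\Zl}{G}$, whose cohomology is $H^{*}_{c}(\mlt^{\rm ca},\Zl)_{\scusp}$.

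Second, I would argue that the cohomology of $(P^{\bullet})_{\scusp}$ is concentrated in degree $d-1$. Over $\oQl$ this is the Mieda--Strauch result cited in the introduction; the integral refinement should follow by reducing modulo $\ell$, using Vign\'eras's classification (every supercuspidal $\oFl$-representation lifts to an $\oQl$-representation) to obtain the analogous vanishing for $\oFl$-coefficients, and then invoking universal coefficients to localize all torsion in $H^{*}_{c}(\mlt^{\rm ca},\Zl)_{\scusp}$ to degree $d-1$ as well.

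Third, one must pass from concentration to projectivity. This is the main obstacle: in a general abelian category, a bounded complex of projectives with cohomology in one degree need not have projective cohomology (for example $\Zl\To{\ell}\Zl$ has cohomology $\FM_{\ell}$ in degree~$1$, which is not projective over $\Zl$). The additional structural input is the block-theoretic description of $\Mosc{\Zl}{G}$ furnished by the Bushnell--Kutzko--Vign\'eras theory of types, in which each supercuspidal block is governed by a Hecke algebra with sufficiently tame homological behaviour. Combined with the $D^{\times}\times W_{K}$-equivariance and the explicit description of $H^{d-1}_{c}(\mlt^{\rm ca},\oQl)_{\scusp}$ via Harris--Taylor (which pins down its shape in each block), this should allow one to split the differentials of $(P^{\bullet})_{\scusp}$ and realize $H^{d-1}_{c}(\mlt^{\rm ca},\Zl)_{\scusp}$ as a direct summand of $P^{d-1}_{\scusp}$, hence as a projective locally finite type object of $\Mo{\Zl}{G}$.
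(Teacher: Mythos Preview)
Your first two steps are broadly in line with the paper, though with one difference worth noting: for the concentration of $(P^{\bullet})_{\scusp}$ in degree $d-1$, the paper does not pass through $\oQl$ and $\oFl$ separately and then reassemble via universal coefficients. Instead it invokes Mieda's argument \emph{integrally}: for $i\neq d-1$, the cohomology $H^{i}_{c}(\mlt^{(0),\rm ca},\Zl)$ admits a finite filtration whose graded pieces are subquotients of representations of the form $\ind{P\cap G^{0}}{G^{0}}{W^{0}}$ with $W^{0}$ a $\Zl$-admissible representation, and one checks (using Boyer's argument both on the $\Ql$-part and on the $\ell$-torsion part) that the unipotent radical acts trivially on $W^{0}$, so these inductions are genuine parabolic inductions and have no supercuspidal subquotient. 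Your proposed route is not obviously wrong, but the passage ``use that supercuspidals lift to deduce vanishing over $\oFl$'' is not a proof; you would still need Mieda's argument over $\oFl$, and at that point one might as well run it over $\Zl$.

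The genuine gap is your third step. You have correctly identified the obstruction and given the right counterexample, but the remedy you sketch (block structure, tame Hecke algebras, $D^{\times}\times W_{K}$-equivariance, Harris--Taylor) is not an argument, and none of these ingredients appears in the paper's proof of projectivity. The actual mechanism is a \emph{duality trick}, resting on the following lemma (Lemme~\ref{lemprojcusp} in the paper): if $P\in\Mo{\Zl}{G^{0}}$ is cuspidal, projective and of finite type, then so is its contragredient $P^{\vee}$. One reduces to $G^{0}$ and fixes a level $H$; the object $\HC:=\langle H^{d-1}_{c}(\mlt^{(0),\rm ca},\Zl)_{\scusp}^{H}\rangle$ is admissible, supercuspidal, torsion-free, and of finite cohomological dimension (your step~2). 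Choose a finite projective resolution
\[
0\To{} P_{n}\To{}\cdots\To{} P_{0}\To{}\HC\To{} 0
\]
with each $P_{i}$ cuspidal, projective, of finite type. Dualising (everything is torsion-free) yields
\[
0\To{}\HC^{\vee}\To{} P_{0}^{\vee}\To{}\cdots\To{} P_{n}^{\vee}\To{} 0,
\]
and by the lemma each $P_{i}^{\vee}$ is again projective. Now one can split off the last surjection, then the next, and so on from the right; after finitely many steps $\HC^{\vee}$ is exhibited as a direct summand of $P_{0}^{\vee}$, hence projective. A second application of the lemma gives that $\HC$ itself is projective. The $D^{\times}\times W_{K}$-action plays no role here.
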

\begin{proof} Avant toute chose, remarquons que $H^{d-1}_{c}(\mlt^{\rm
  ca},\Zl)$ est sans $\ell$-torsion, puisque $H^{d-2}_{c}(\mlt^{\rm
  ca},\Fl)$ est nul. 

\medskip
\emph{Première étape : $H^{i}_{c}(\mlt^{\rm ca},\Zl)_{\scusp}=0$
  pour $i\neq d-1$.} Pour cela, rappelons d'abord que
 $$H^{i}_{c}(\mlt^{\rm ca},\Zl)_{\scusp}=
\cind{G^{0}}{G}{H^{i}_{c}(\mlt^{(0),\rm ca},\Zl)_{\scusp}}$$
où $\mlt^{(0)}$ est le lieu où la quasi-isogénie du problème de
déformations est un isomorphisme. Maintenant, 
 l'argument de \cite[Proof of Thm
3.7]{Mieda}, de nature géométrique, montre que pour $i\neq d-1$, 
$H^{i}_{c}(\mlt^{(0),\rm ca},\Zl)$ admet une filtration finie dont les
sous-quotients successifs sont des sous-quotients de représentations
de la forme $\ind{P\cap G^{0}}{G^{0}}{W^{0}}$ avec $W^{0}$ une
représentation \emph{$\Zl$-admissible} de $P\cap G^{0}$. Montrons qu'une telle
représentation est nécessairement triviale sur le radical unipotent.
En effet, d'après \cite[13.2.3]{Boyer1}, l'action du radical unipotent
sur la $\Ql$-représentation admissible $W^{0}\otimes_{\Zl} \Ql$ est triviale.
L'argument de \emph{loc. cit.} appliqué 
à $W^{0}_{\ell\rm -tors}$ en utilisant la longueur au lieu de la
dimension montre que le radical unipotent y agit aussi trivialement.
Comme il est localement pro-$p$, il agit donc trivialement sur $W^{0}$.
Mais alors, l'induite $\ind{P\cap G^{0}}{G^{0}}{W^{0}}$ est une
induite parabolique, et $H^{i}_{c}(\mlt^{(0),\rm ca},\Zl)$ n'a donc
pas de sous-quotient supercuspidal.

\medskip

  \emph{Deuxième étape : $H^{d-1}_{c}(\mlt^{\rm ca},\Zl)_{\scusp}$
  est de dimension cohomologique finie.} En effet, considérons la partie supercuspidale
  $R\Gamma_{c,\scusp}$ du complexe $R\Gamma_{c}(\mlt^{\rm
    ca},\Zl)$. C'est un facteur direct de ce dernier, donc il est
  localement parfait d'après la proposition \ref{propparfait}. D'après
  l'étape précédente, il est concentré en degré $d-1$, \emph{i.e.} on
  a 
  \begin{equation}
R\Gamma_{c}(\mlt^{\rm ca},\Zl)_{\scusp}\simeq H^{d-1}_{c}(\mlt^{\rm ca},\Zl)_{\scusp}[1-d],\label{rgammascusp}
\end{equation}
 d'où la
  propriété annoncée.

\medskip

\emph{Troisième étape : $H^{d-1}_{c}(\mlt^{\rm ca},\Zl)_{\scusp}$ est projectif.} 
Fixons un sous-groupe de congruences $H$ de $\GL_{d}(\OC)$. Pour tout
$V\in\Mo{\Zl}{G}$, notons $\langle V^{H}\rangle$ la sous $\Zl G$-représentation de
$V$ engendrée par les invariants $V^{H}$ de $V$. On sait que c'est un
facteur direct de $V$, \emph{cf} \cite[3.5.8]{lt}. Il nous suffira donc de prouver que
le facteur direct $\langle H^{d-1}_{c}(\mlt^{\rm ca},\Zl)_{\scusp}^{H}\rangle$ est projectif. Ce
dernier peut s'écrire sous la forme
$$ \langle H^{d-1}_{c}(\mlt^{\rm ca},\Zl)_{\scusp}^{H}\rangle =
\cind{G^{0}}{G}{\langle H^{d-1}_{c}(\mlt^{(0),\rm ca},\Zl)_{\scusp}^{H}\rangle}, $$
et il suffit donc de prouver que $\HC:=\langle H^{d-1}_{c}(\mlt^{(0),\rm
  ca},\Zl)_{\scusp}^{H}\rangle$ est projectif dans $\Mo{\Zl}{G^{0}}$.
On sait déjà qu'il est admissible, supercuspidal, et de dimension
cohomologique finie. 
On peut donc trouver une résolution 
$$ 0\To{} P_{d-1}\To{} \cdots\To{}P_{0} \To{} \HC \To{} 0 $$
où chaque $P_{i}$ est supercuspidal, projectif et de type fini (et
donc admissible). Comme tous les membres de cette suite longue sont
sans $\ell$-torsion, on obtient en passant aux contragrédientes une suite exacte longue
$$0\To{} \HC^{\vee} \To{} P_{0}^{\vee} \To{}\cdots\To{}
P_{d-1}^{\vee}\To{} 0.$$
Or, d'après le lemme \ref{lemprojcusp} ci-dessous, chaque $P_{i}^{\vee}$ est encore
projectif. On peut donc simplifier le complexe de manière
inductive à partir de la droite, et on obtient ainsi que $\HC^{\vee}$
est facteur direct de $P_{0}^{\vee}$ et par conséquent est projectif.
D'après le lemme à nouveau, $\HC$ est donc projectif.
\end{proof}

\begin{lemme}\label{lemprojcusp}
  Soit $P\in \Mo{\Zl}{G^{0}}$ une représentation cuspidale, projective
  et de type  fini. Alors sa contragrédiente $P^{\vee}$ est aussi
  cuspidale, projective et de type fini.
\end{lemme}
\begin{proof}
Comme le centre de $G^{0}$ est compact, $P$ est admissible sur
$\Zl$. Sa contragrédiente est donc aussi admissible et on en déduit
aisément qu'elle est cuspidale.  Choisissons des générateurs
$v_{1},\cdots, v_{n}$ de $P$ et un sous-groupe de
congruences $H$ tel que $v_{i}\in P^{H}$ pour tout $i$. Ils définissent
un épimorphisme
$$ p:\, \CC_{c}(G/H,\Zl)^{n}\twoheadrightarrow 
P$$
puis par passage à la contragrédiente une injection
$$ j:\, P^{\vee}\injo \CC(G/H,\Zl)^{n} $$
qui envoie $v^{\vee}$ sur les fonction $g\mapsto \langle v^{\vee}, g v_{i}\rangle$
 pour $i=1,\cdots, n$. Comme $P$ est cuspidale, ces fonctions sont à support
 compact, donc l'image $j(P^{\vee})$ est contenue dans
 $\CC_{c}(G/H,\Zl)^{n}$.
Maintenant, $P$ est projectif sonc $p$ admet une section $i$ qui
induit par dualité une rétraction $q$ de $j$. La restriction de $q$ à
$\CC_{c}(G/H,\Zl)^{n}$ est encore une rétraction, si bien que $P^{\vee}$
est facteur direct de $\CC_{c}(G/H,\Zl)^{n}$, et par conséquent projectif
et de type fini.
\end{proof}

\alin{Preuve du théorème 1} Compte tenu du théorème 2 et de la
proposition \ref{propcusp}, il suffit de montrer que 
$[\Hom_{\Zl G}\left(H^{d-1}_{c}(\mlt^{\rm
  ca},\Zl),\pi\right)]=[R_{\pi}](\frac{d-1}{2})$ dans
$\RC(D^{\times}\times W_{K},\oFl)$. Or on a :
\begin{eqnarray*}
  R_{\pi}(\frac{d-1}{2})  & =  & \Rhom_{\Zl G}\left(R\Gamma_{c}(\mlt^{\rm
      ca},\Zl),\pi\right)[1-d] \\
&  = & \Rhom_{\Zl G}\left(R\Gamma_{c}(\mlt^{\rm
    ca},\Zl)_{\scusp},\pi\right)[1-d] \\
& = & \Rhom_{\Zl G}\left(H^{d-1}_{c}(\mlt^{\rm
    ca},\Zl)_{\scusp}[1-d],\pi\right)[1-d] \\
& = & \Rhom_{\Zl G}\left(H^{d-1}_{c}(\mlt^{\rm
    ca},\Zl)_{\scusp},\pi\right) \\
& = & \Hom_{\Zl G}\left(H^{d-1}_{c}(\mlt^{\rm
    ca},\Zl)_{\scusp},\pi\right) \\
& = & \Hom_{\Zl G}\left(H^{d-1}_{c}(\mlt^{\rm
    ca},\Zl),\pi\right)
\end{eqnarray*}
La deuxième égalité vient du fait que $\pi$ est supposée supercuspidale,
la troisième provient de (\ref{rgammascusp}) et la cinquième
de la projectivité de $H^{d-1}_{c}(\mlt^{\rm
    ca},\Zl)_{\scusp}$.

\begin{rema}\label{Rempreuvealter}
  Soit $\pi$ une $\oFl$-représentation supercuspidale.
  La proposition \ref{propproj} implique que pour tout  $\oQl$-relèvement
  $\wt\pi$ de $\pi$, on a
$$ \left[ \Hom_{\Zl G}\left(H^{d-1}_{c}(\mlt^{\rm
  ca},\Zl),\pi\right) \right]= r_{\ell}\left(\Hom_{\Zl G}\left(H^{d-1}_{c}(\mlt^{\rm
  ca},\Zl),\wt\pi\right)\right),
$$
et donc que la correspondance de Langlands $\ell$-adique est
compatible aux congruences. Cela prouve le point iii) du théorème 1.
A partir de là, le critère numérique de
\cite[2.3]{VigAENS} montre l'irréductibilité de $\rho(\pi)$ et
$\sigma(\pi)$ et l'injectivité des correspondances.
Reste à prouver la surjectivité de $\pi\mapsto \sigma(\pi)$. 
Il faut pour cela prouver que si $\wt\pi'$ est supercuspidale de
réduction \emph{non supercuspidale}, alors $\sigma(\wt\pi')$ est de
réduction \emph{réductible}.
Peut-être qu'un argument de comptage, inconnu de l'auteur, pourrait
suffire. Sinon, il faut utiliser  le théorème 2 et sa
propriété i)(a).
\end{rema}

\begin{rema}
Dans l'énoncé du théorème 1, on peut remplacer 
la cohomologie entière $H^{d-1}_{c}(\mlt^{\rm ca},\Zl)$ par la
cohomologie modulaire $H^{d-1}_{c}(\mlt^{\rm ca},\Fl)$. 
En effet, l'application canonique
  $H^{d-1}_{c}(\mlt^{\rm ca},\Zl)_{\scusp}\otimes \Fl \To{}
H^{d-1}_{c}(\mlt^{\rm ca},\Fl)_{\scusp}$ est un isomorphisme puisque 
$H^{d}_{c}(\mlt^{\rm ca},\Zl)_{\scusp}=0$.   
\end{rema}

\def\bZl{\Zlnr}
\subsection{Description explicite de $H^{d-1}_{c}(\mlt^{\rm ca},\bZl)_{\scusp}$}

Dans ce paragraphe, on étend les scalaires à l'extension non-ramifiée
maximale de $\bZl$ dans $\oZl$, et on s'intéresse au facteur direct
$$ H^{d-1}_{c}(\mlt^{\rm ca},\bZl)_{\scusp} \simeq
H^{d-1}_{c}(\mlt^{\rm ca},\Zl)_{\scusp}\otimes_{\Zl}\bZl.$$
C'est la plus grande sous-$\bZl G$-représentation de
$H^{d-1}_{c}(\mlt^{\rm ca},\bZl)$ dont tous les $\bZl
G^{0}$-sous-quotients irréductibles sont supercuspidaux. 

Soit $\pi$ une $\oFl$-représentation
\emph{supercuspidale}.  D'après la
proposition \ref{propscindageG0}, 
la sous-catégorie pleine $\CC_{\pi}$ de $\Mo{\bZl}{G}$
formée des objets dont tous les $\bZl G^{0}$-sous-quotients irréductibles sont des
sous-quotients de $\pi_{|G^{0}}$ est un facteur direct de
$\Mo{\bZl}{G}$. On note
$V_{\CC_{\pi}}$ le facteur direct  d'une
$\bZl$-représentation $V$ appartenant à $\CC_{\pi}$. On a alors 
$$ V_{\scusp}= \bigoplus_{\pi\in{\rm Scusp}_{\oFl}(G)/\sim} V_{\CC_{\pi}}$$
où $\pi$ parcours un ensemble de représentants des orbites non
ramifiées dans l'ensemble des classes de $\oFl$-représentations
supercuspidales. 

\begin{prop}\label{progeneG}
Pour $\pi\in\Irr{\oFl}{G}$ supercuspidale, 
  $H^{d-1}_{c}(\mlt^{\rm ca},\bZl)_{\CC_{\pi}}$ est un progénérateur
  de $\CC_{\pi}$.
\end{prop}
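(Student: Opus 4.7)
The plan is to verify three properties for $P := H^{d-1}_{c}(\mlt^{\rm ca},\bZl)_{\CC_{\pi}}$: projectivity in $\CC_\pi$, local finite generation, and the generator property. The first two are inherited directly from Proposition \ref{propproj} via extension of scalars from $\Zl$ to $\bZl$: the object $P$ is a direct summand of $H^{d-1}_{c}(\mlt^{\rm ca},\bZl)_{\scusp}$ according to the block decomposition recalled just above the statement, and both projectivity and local finite generation pass to direct summands.

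The substantive part is the generator property. Since $\CC_\pi$ is a direct factor of the locally noetherian category $\Mo{\bZl}{G}$ and $P$ is projective, it suffices to verify that $\Hom_{\bZl G}(P, S) \neq 0$ for every simple object $S$ of $\CC_\pi$. I first observe that any such $S$ is automatically supercuspidal: its $G^{0}$-subquotients lie in $\pi_{|G^{0}}$, which is a finite sum of supercuspidal $G^{0}$-representations, so $S$ cannot be a subquotient of a proper parabolic induction. For an $\oFl$-simple $\pi' \in \CC_\pi$, necessarily an unramified twist of $\pi$, Theorem 1 applied to $\pi'$ gives $\Hom_{\bZl G}(H^{d-1}_{c},\pi') \simeq \rho(\pi') \otimes \sigma(\pi')(\frac{d-1}{2}) \neq 0$, and this factors through $P$ by the block decomposition. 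For an $\oQl$-simple $\wt{\pi'} \in \CC_\pi$, I would write $\Hom_{\bZl G}(P, \wt{\pi'}) = \Hom_{\oQl G}(P \otimes_{\bZl} \oQl, \wt{\pi'})$ and invoke the classical $\ell$-adic result of \cite{HaTay} combined with the concentration theorem of \cite{Mieda} (already used in the proof of Proposition \ref{propproj}): every $\oQl$-supercuspidal appears in $H^{d-1}_{c}(\mlt^{\rm ca},\oQl)$.

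The main obstacle will be showing that the $\oQl$-simples of $\CC_\pi$ are exactly those $\oQl$-supercuspidals whose integral reductions share a $G^{0}$-subquotient with $\pi$, and that the $\oQl$-non-vanishing above really yields nonzero $\bZl$-homomorphisms into objects of $\CC_\pi$. This compatibility between the $\bZl$-block decomposition and its base change to $\oQl$ should follow from Proposition \ref{propscindageG0} and the structural analysis of $\CC_\pi$ in Appendix B, which identifies $\CC_\pi$ with modules over an explicit local $\bZl$-algebra and thereby converts the progenerator criterion into a manageable Morita-type statement.
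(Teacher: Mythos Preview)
Your treatment of projectivity matches the paper's: both pass to a direct summand of the object from Proposition \ref{propproj}. For finite generation the paper is slightly sharper than your ``locally of finite type'': a progenerator must be of \emph{finite type}, and the paper notes that $P$ is generated by its $H$-invariants for any congruence subgroup $H$ with $\pi^H\neq 0$, which together with local finite generation gives honest finite type.

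For the generator property, the paper's argument is shorter than yours. It checks $\Hom_{\bZl G}(P,\pi\psi)\neq 0$ for every unramified $\oFl^\times$-valued character $\psi$ (via Theorem 1, each $\pi\psi$ being supercuspidal) and then asserts that the $\pi\psi$ exhaust the simple objects of $\CC_\pi$. Your instinct that this last step deserves scrutiny is sound: strictly speaking $\CC_\pi$ also has characteristic-zero simples (for instance the one corresponding to the maximal ideal $(\ell t-1)$ of $\bZl[t,t^{-1}]\subset\ZG_\pi$). But your proposed remedy --- a separate appeal to Harris--Taylor and Mieda for $\oQl$-simples --- is heavier than needed, and the simples in question live over $\Ql^{\rm nr}$ and its finite extensions rather than over $\oQl$. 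The clean argument is the Morita one you gesture toward at the end: under the equivalence $\CC_\pi\simeq\Mod(\ZG_\pi)$ of Proposition \ref{propscindageG0}, $P$ corresponds to a finitely generated projective $\ZG_\pi$-module; since $\Spec(\ZG_\pi)$ is \emph{connected} (a fact the paper invokes explicitly in the proof of Theorem \ref{theocoho}), this module has constant rank, and the single nonvanishing $\Hom(P,\pi)\neq 0$ forces positive rank everywhere, hence the generator property. No case split on the characteristic of the residue field is required.

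One correction to your final paragraph: you call $\ZG_\pi$ a \emph{local} $\bZl$-algebra, but $\ZG_\pi\simeq\bZl[{\rm Syl}_\ell(\FM_{q^f}^\times)\times\ZM]$ is not local --- the $\ZM$-factor contributes infinitely many maximal ideals. You may be thinking of $\ZG_{\pi^0}$ or $\ZG_\pi^\varpi$. It is connectedness of $\Spec(\ZG_\pi)$, not locality, that carries the argument.
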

\begin{proof}
Il est projectif, puisque facteur direct de l'objet projectif
$H^{d-1}_{c}(\mlt^{\rm ca},\bZl)_{\scusp}$.  Il est de type fini,
puisque localement de type fini et engendré par ses $H$-invariants
pour n'importe quel sous-groupe de congruence $H$ tel que $\pi^{H}\neq 0$.
Reste à voir que cet objet est générateur. Or pour tout caractère
  non ramifié $\psi :\, G\To{}\oFl^{\times}$, on a
$$  {\rm Hom}_{\bZl G}(H^{d-1}_{c}(\mlt^{\rm ca},\bZl)_{\CC_{\pi}},\pi\psi)=
 {\rm Hom}_{\bZl G}(H^{d-1}_{c}(\mlt^{\rm
   ca},\bZl),\pi\psi)\neq 0.$$ Les $\pi\psi$
épuisant les objets simples de $\CC_{\pi}$, on en déduit que
$H^{d-1}_{c}(\mlt^{\rm ca},\bZl)_{\CC_{\pi}}$ est générateur de
$\CC_{\pi}$.
  
\end{proof}

\ali Notons maintenant $\rho:=\rho(\pi)$ la correspondante
de Jacquet-Langlands de $\pi$ et $\sigma:=\sigma'(\pi)=\sigma(\pi)(\frac{d-1}{2})$ sa correspondante de
Langlands tordue. 
Dans l'appendice, nous définissons les sous-catégories facteurs directs
$\CC_{\rho^{\vee}}$  de $\Mo{\bZl}{D^{\times}}$, \emph{cf} \ref{propscindageD0} et
$\CC_{\sigma^{\vee}}$ de  $\Moc{\bZl}{W_{K}}$, \emph{cf} \ref{propscindageW}.

\begin{pro} \label{propcrho}
  $H^{d-1}_{c}(\mlt^{\rm ca},\bZl)_{\CC_{\pi}}$ est un objet de
  $\CC_{\rho^{\vee}}$ et de $\CC_{\sigma^{\vee}}$. 
\end{pro}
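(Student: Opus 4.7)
The plan is to use the block decompositions of $\Mo{\bZl}{D^{\times}}$ (Proposition \ref{propscindageD0}) and $\Moc{\bZl}{W_{K}}$ (Proposition \ref{propscindageW}) to decompose $H:=H^{d-1}_{c}(\mlt^{\rm ca},\bZl)_{\CC_{\pi}}$ as $\bigoplus_{\rho'}H_{\CC_{\rho'}}$ and $\bigoplus_{\sigma'}H_{\CC_{\sigma'}}$. Since the projectors onto these summands commute with the $G$-action, each summand is a $G\times D^{\times}\times W_{K}$-subrepresentation. To prove the proposition, it suffices to show that $H_{\CC_{\rho'}}=0$ unless $\rho'$ is a nonramified twist of $\rho^{\vee}$, and analogously for the $\sigma$-decomposition.

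Fix $\rho'$ with $H_{\CC_{\rho'}}\neq 0$. As a direct summand of $H$ in $\Mo{\bZl}{G}$, it remains projective (Proposition \ref{propproj}) and still belongs to $\CC_{\pi}$. Being a nonzero projective object of $\CC_{\pi}$, it surjects onto some simple quotient, which must be a nonramified twist $\pi\psi$ of $\pi$. Consequently $\Hom_{\bZl G}(H_{\CC_{\rho'}},\pi\psi)$ is a nonzero $D^{\times}\times W_{K}$-module and is a direct summand of $\Hom_{\bZl G}(H,\pi\psi)$.

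Now I would invoke Theorem 1 applied to the supercuspidal $\pi\psi$ to identify
$$ \Hom_{\bZl G}\left(H,\pi\psi\right)\simeq \rho(\pi\psi)\otimes \sigma(\pi\psi)\left(\tfrac{d-1}{2}\right) $$
as $D^{\times}\times W_{K}$-modules. Since $\rho(\pi\psi)$ is a nonramified twist of $\rho$, the $D^{\times}$-action on the right-hand side lies in $\CC_{\rho}$, and the nonzero direct summand $\Hom_{\bZl G}(H_{\CC_{\rho'}},\pi\psi)$ inherits this membership. On the other hand, the $D^{\times}$-action on $\Hom_{\bZl G}(H_{\CC_{\rho'}},\pi\psi)$ comes by precomposition from the action on $H_{\CC_{\rho'}}$, so it is contragredient to the latter; since $H_{\CC_{\rho'}}\in\CC_{\rho'}$, its irreducible $D^{\times,0}$-subquotients are dual to subquotients of $\rho'|_{D^{\times,0}}$, placing $\Hom_{\bZl G}(H_{\CC_{\rho'}},\pi\psi)$ in $\CC_{\rho'^{\vee}}$. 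Comparing, $\rho'^{\vee}\sim\rho$, hence $\rho'\sim\rho^{\vee}$, as wanted. The same strategy, applied to the $W_{K}$-decomposition and using that the $W_{K}$-action on $\Hom_{\bZl G}(H,\pi\psi)$ is likewise contragredient to that on $H$, gives $H\in\CC_{\sigma^{\vee}}$.

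The main obstacle is the bookkeeping of duality conventions: the appearance of $\rho^{\vee}$ and $\sigma^{\vee}$ (rather than $\rho$ and $\sigma$) in the conclusion is dictated by the fact that $\Hom_{\bZl G}(-,\pi\psi)$ turns the left action of $D^{\times}$ and $W_{K}$ on $H$ into a right action on the Hom space, so that the $D^{\times,0}$- and $I_{K}^{\ell'}$-subquotients picked out by Theorem 1 are the duals of those visible on $H$ itself. One needs to check that the definitions of $\CC_{\rho^{\vee}}$ and $\CC_{\sigma^{\vee}}$ in the appendix are set up so that contragredience exchanges $\CC_{\rho}$ with $\CC_{\rho^{\vee}}$ (respectively $\CC_{\sigma}$ with $\CC_{\sigma^{\vee}}$), which is a formal matter once those sub\-cat\'egories are described by their simple objects.
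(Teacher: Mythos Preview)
Your proof is correct and follows essentially the same route as the paper's. The paper argues by contradiction on the complement $(H)^{\CC_{\rho^{\vee}}}$ rather than decomposing over all blocks $\rho'$, but the core of both arguments is identical: extract a $G$-irreducible quotient $\pi\psi$ from a putatively nonzero piece, invoke Theorem~1 to see that $\Hom_{\bZl G}(H,\pi\psi)$ is $\rho(\pi\psi)$-isotypic as a $D^{\times}$-module, and then use that the $D^{\times}$-action on the Hom space is contragredient to that on $H$ to force the contradiction (respectively, to pin down $\rho'\sim\rho^{\vee}$). Your discussion of the duality bookkeeping is exactly the point the paper's proof leaves implicit when it passes from ``$\rho(\psi\circ\nrd)$-isotypique'' to ``$\rho^{\vee}(\psi^{-1}\circ\nrd)$ intervient comme quotient''.
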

\begin{proof}
    Montrons d'abord que c'est bien un objet de $\CC_{\rho^{\vee}}$. Pour cela
  considérons le facteur direct $(H^{d-1}_{c}(\mlt^{\rm
    ca},\bZl)_{\CC_{\pi}})^{\CC_{\rho^{\vee}}}$ hors de
  $\CC_{\rho^{\vee}}$. 
S'il est non nul, il admet
  un quotient $G$-irréductible non nul, de la forme $\pi(\psi\circ
  \det)$ pour un caractère non ramifié de $K^{\times}$. Or on sait par
  le théorème 1 que 
$\Hom_{G}(H^{d-1}_{c}(\mlt^{\rm ca},\bZl)_{\CC_{\pi}},\pi(\psi\circ
  \det))$ est $\rho(\psi\circ
  \nrd)$-isotypique, et donc que $\rho^{\vee}(\psi^{-1}\circ
  \nrd)$ intervient comme quotient de $(H^{d-1}_{c}(\mlt^{\rm
    ca},\bZl)_{\CC_{\pi}})^{\CC_{\rho^{\vee}}}$, ce qui est par
  définition impossible.

De la même manière, on montre que   $H^{d-1}_{c}(\mlt^{\rm
  ca},\bZl)_{\CC_{\pi}}$ est un objet de 
 $\CC_{\sigma^{\vee}}$
\end{proof}

\begin{rem}
Les seuls résultats sur la cohomologie de Lubin-Tate  que nous utilisons
dans cet article sont ceux de Harris-Taylor, et ceux de
Mieda-Strauch. En utilisant plus, on peut obtenir plus. Voici quelques
exemples :
\begin{enumerate}
\item En utilisant les résultats de Boyer \cite{Boyer2} sur la $\oQl$-cohomologie de
  Lubin-Tate, on montre facilement que :
 $$H^{d-1}_{c}(\mlt^{\rm ca},\bZl)_{\CC_{\pi}}=
H^{d-1}_{c}(\mlt^{\rm ca},\bZl)_{\CC_{\rho^{\vee}}}=
H^{d-1}_{c}(\mlt^{\rm ca},\bZl)_{\CC_{\sigma^{\vee}}}.$$

\item En utilisant les résultats annoncés par Boyer \cite{Boyer3} sur l'absence de
  torsion dans les $H^{i}_{c}(\mlt^{\rm ca},\bZl)$, on montre même que
$$R\Gamma_{c}(\mlt^{\rm ca},\bZl)_{\CC_{\pi}}
=R\Gamma_{c}(\mlt^{\rm ca},\bZl)_{\CC_{\rho^{\vee}}} 
=R\Gamma_{c}(\mlt^{\rm ca},\bZl)_{\CC_{\sigma^{\vee}}}$$
en tant que facteurs directs de $R\Gamma_{c}(\mlt^{\rm ca},\bZl)$ dans
$\DC^{b}(\Rep^{\infty,c}_{\bZl}(GD\times W_{K}))$.

\item Enfin, en utilisant les résultats de Faltings et Fargues \cite{FarFal} de
  comparaison avec la cohomologie de la tour de Drinfeld, on peut
  montrer que $R\Gamma_{c}(\mlt^{\rm ca},\bZl)$ est parfait dans
  $\DC^{b}(\Rep^{\infty}_{\Zl}(D^{\times}))$, \emph{cf.} proposition
  \ref{propparfaitD},
 et on en déduit \emph{a priori} que
  \begin{center}
    $H^{d-1}_{c}(\mlt^{\rm ca},\bZl)_{\CC_{\pi}}$ est un objet
    projectif de $\CC_{\rho^{\vee}}$.
  \end{center}
En fait nous trouverons cette propriété \emph{a posteriori}, comme
conséquence de notre description explicite.
\end{enumerate}
\end{rem}

\alin{Résumé de l'appendice B}
Dans l'appendice, nous exhibons des
progénérateurs $P_{\pi}$, $P_{\rho^{\vee}}$ et $P_{\sigma^{\vee}}$ respectivement de
$\CC_{\pi}$, $\CC_{\rho^{\vee}}$ et $\CC_{\sigma^{\vee}}$, et calculons leurs
commutants ``opposés''
$$\ZG_{\pi}=\endo{\bZl G}{P_{\pi}}^{\opp},\,\,
\ZG_{\rho^{\vee}}=\endo{\bZl D^{\times}}{P_{\rho^{\vee}}}^{\opp}\, \hbox{ et }
\ZG_{\sigma^{\vee}}=\endo{\bZl W_{K}}{P_{\sigma^{\vee}}}^{\opp}.$$ On dispose
alors d'équivalences de catégories associées à ces progénérateurs,
\emph{cf} \ref{faitequiv}:
\begin{itemize}\item 
  $\Hom_{\bZl G}(P_{\pi},-):\,\CC_{\pi}\simto \Mod(\ZG_{\pi})$
  d'inverse 
$ P_{\pi}\otimes_{\ZG_{\pi}}- :\,  \Mod(\ZG_{\pi})\simto\CC_{\pi}$
\item 
  $\Hom_{\bZl G}(P_{\rho^{\vee}},-):\,\CC_{\rho^{\vee}}\simto
  \Mod(\ZG_{\rho^{\vee}})$
  d'inverse 
$ P_{\rho^{\vee}}\otimes_{\ZG_{\rho^{\vee}}}- :\, \Mod(\ZG_{\rho^{\vee}})\simto\CC_{\rho^{\vee}}$
\item $\Hom_{\bZl
    W_{K}}(P_{\sigma^{\vee}},-):\,\CC_{\sigma^{\vee}}\simto
   \Mod(\ZG_{\sigma^{\vee}})$  d'inverse 
$ P_{\sigma^{\vee}}\otimes_{\ZG_{\sigma^{\vee}}}- :\, \Mod(\ZG_{\sigma^{\vee}})\simto\CC_{\sigma^{\vee}}$.
\end{itemize}

D'après le $ii)(b)$ de la proposition  \ref{propscindageG0}, $\ZG_{\pi}$ est
isomorphe à l'algèbre $\bZl[{\rm
  Syl}_{\ell}(\FM_{q^{f_{\pi}}}^{\times})\times \ZM]$ où $f_{\pi}$
désigne la longueur de $\pi_{|G^{0}}$, et d'après la proposition
\ref{propscindageD0},
$\ZG_{\rho^{\vee}}$ est isomorphe à
l'algèbre $\bZl[{\rm
  Syl}_{\ell}(\FM_{q^{f_{\rho^{\vee}}}}^{\times})\times \ZM]$ où $f_{\rho^{\vee}}$
désigne la longueur de $\rho_{|\OC_{D}^{\times}}$. 
Ces anneaux sont donc commutatifs. Par ailleurs, d'après la
proposition \ref{propscindageW}, $\ZG_{\sigma^{\vee}}$  est
isomorphe au produit croisé de $\bZl[[\Zl]]$ par
$\ZM$, le générateur de $\ZM$ agissant par multiplication par
$q^{f_{\sigma^{\vee}}}$ sur $\Zl$,  où $f_{\sigma^{\vee}}$
désigne la longueur de $\sigma_{|I_{K}}$. Ainsi, $\ZG_{\sigma^{\vee}}$ n'est pas
commutatif, mais son plus grand quotient commutatif $\ZG_{\sigma^{\vee}}^{\rm
ab}$ est
isomorphe à l'algèbre $\bZl[{\rm
  Syl}_{\ell}(\FM_{q^{f_{\sigma^{\vee}}}}^{\times})\times \ZM]$.

\begin{lem}
On a les égalités $f_{\pi}=f_{\rho^{\vee}}=f_{\sigma^{\vee}}$.
\end{lem}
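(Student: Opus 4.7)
Le plan est d'identifier chacun des trois entiers $f_{\pi}$, $f_{\rho^{\vee}}$, $f_{\sigma^{\vee}}$ avec l'ordre du stabilisateur de la repr\'esentation concern\'ee sous l'action de torsion par caract\`eres non ramifi\'es \`a valeurs dans $\oFl^{\times}$. L'\'egalit\'e d\'esir\'ee d\'ecoulera alors de l'\'equivariance par torsion des correspondances de Langlands et de Jacquet-Langlands.

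La premi\`ere \'etape repose sur la th\'eorie de Clifford appliqu\'ee aux trois inclusions normales $G^{0}\vartriangleleft G$, $\OC_{D}^{\times}\vartriangleleft D^{\times}$ et $I_{K}\vartriangleleft W_{K}$, toutes de quotient cyclique isomorphe \`a $\ZM$ (via $\val_{K}\circ\det$, $\val_{K}\circ\nrd$, et le Frobenius g\'eom\'etrique respectivement). Pour $\pi$, la restriction $\pi_{|G^{0}}$ est semi-simple et se d\'ecompose en une $G$-orbite de $f_{\pi}$ repr\'esentations irr\'eductibles $\tau_{1},\ldots,\tau_{f_{\pi}}$; le caract\`ere central de $\pi$ force $K^{\times}\subset\mathrm{Stab}_{G}(\tau_{1})$, et comme l'image de $K^{\times}$ dans $G/G^{0}\simeq\ZM$ est $d\ZM$, on obtient $\mathrm{Stab}_{G}(\tau_{1})/G^{0}=f_{\pi}\ZM$ avec $f_{\pi}\mid d$. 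Par cons\'equent, un caract\`ere non ramifi\'e $\chi\in\mathrm{Hom}(G/G^{0},\oFl^{\times})$ v\'erifie $\pi\otimes(\chi\circ\det)\simeq\pi$ si et seulement si $\chi^{f_{\pi}}=1$, de sorte que $f_{\pi}$ est l'ordre du sous-groupe stabilisateur dans $\oFl^{\times}$. Les m\^emes arguments s'appliquent \`a $\rho_{|\OC_{D}^{\times}}$ (torsion par $\chi\circ\nrd$) et \`a $\sigma_{|I_{K}}$ (caract\`eres non ramifi\'es de $W_{K}$), identifiant $f_{\rho^{\vee}}$ et $f_{\sigma^{\vee}}$ aux ordres des stabilisateurs analogues de $\rho$ et $\sigma$.

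Pour conclure, on utilise l'\'equivariance par torsion non ramifi\'ee: $\rho(\pi\otimes(\chi\circ\det))\simeq\rho(\pi)\otimes(\chi\circ\nrd)$ et $\sigma(\pi\otimes(\chi\circ\det))\simeq\sigma(\pi)\otimes(\chi\circ{\rm Art}_{K}^{-1})$ pour tout caract\`ere non ramifi\'e $\chi$ de $K^{\times}$. Ces propri\'et\'es d\'ecoulent de l'unicit\'e fournie par le th\'eor\`eme 1, combin\'ee \`a la compatibilit\'e des caract\`eres centraux d\'ej\`a \'etablie via le rel\`evement $\ell$-adique et la proposition \ref{cororeduc}. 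Puisque ${\rm Art}_{K}$ induit une bijection entre caract\`eres non ramifi\'es de $K^{\times}$ et caract\`eres non ramifi\'es de $W_{K}$, les trois stabilisateurs sont mis en bijection canonique et leurs ordres co\"incident, d'o\`u $f_{\pi}=f_{\rho^{\vee}}=f_{\sigma^{\vee}}$. L'obstacle technique principal est double: d'une part la semi-simplicit\'e des restrictions (qui repose sur la classification par types des supercuspidales $\ell$-modulaires de Vign\'eras pour $\pi$, et sur des arguments analogues pour $\rho$ et $\sigma$), d'autre part la primalit\'e de $f_{\pi}$ \`a $\ell$, garantissant que le stabilisateur dans $\oFl^{\times}$ a bien l'ordre annonc\'e.
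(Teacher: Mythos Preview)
Your overall strategy---identify each $f$ as the order of a twist-stabilizer and then invoke equivariance of the correspondences---is exactly the one the paper uses. The crucial difference is the coefficient field over which you run the Clifford argument.

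You work directly over $\oFl$. As you yourself note at the end, the group of unramified $\oFl^{\times}$-valued characters of $G/G^{0}\simeq\ZM$ is $\oFl^{\times}$, and the subgroup $\{\chi:\chi^{f_{\pi}}=1\}$ has order equal to the \emph{prime-to-$\ell$ part} of $f_{\pi}$, not $f_{\pi}$ itself. So from the equality of the three stabilizers you only conclude that $f_{\pi}$, $f_{\rho^{\vee}}$ and $f_{\sigma^{\vee}}$ have the same prime-to-$\ell$ part. You flag ``la primalit\'e de $f_{\pi}$ \`a $\ell$'' as a technical obstacle, but you do not prove it, and in fact there is no reason for it to hold in general: the formula $f_{\pi}=d/e(E/K)$ obtained from the type (end of the proof of Proposition~\ref{propscindageG0}\,i)(b)) puts no constraint on the $\ell$-part of $f_{\pi}$. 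So as written the argument has a genuine gap on the $\ell$-part.

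The paper closes this gap by first lifting to $\oQl$. It chooses a $\oQl$-lift $\wt\pi$ of $\pi$ and uses the type-theoretic formula just mentioned to show $f_{\pi}=f_{\wt\pi}:=\mathrm{long}(\wt\pi_{|G^{0}})$ (the formula depends only on the field $E$ attached to the type, which is the same for $\pi$ and $\wt\pi$); similarly for $\rho$ and $\sigma$. Then Clifford theory is applied over $\oQl$, where $\oQl^{\times}$ contains $\mu_{n}$ for all $n$, so the twist-stabilizer of $\wt\pi$ genuinely has order $f_{\wt\pi}$. Equivariance of the $\ell$-adic Langlands and Jacquet--Langlands correspondences under $\oQl$-twists (a classical fact) then gives $f_{\wt\pi}=f_{\rho(\wt\pi)^{\vee}}=f_{\sigma(\wt\pi)^{\vee}}$, hence the desired equality. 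In short: your argument is the right shape, but to make it go through you must pass to characteristic zero first, and the nontrivial input needed for that passage is the equality $f_{\pi}=f_{\wt\pi}$ coming from types.
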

  \begin{proof}
Soit $\wt\pi$ un $\oQl$-relèvement de $\pi$. Une formule donnant
$f_{\pi}$ en termes d'un type contenu dans $\pi$ est donnée à la fin
de la preuve du $i)(b)$ de la proposition \ref{propscindageG0}. Cette
formule montre que $f_{\pi}=f_{\wt\pi}:={\rm
  long}(\wt\pi_{|G^{0}})$. Par la théorie de Clifford, ceci permet de réinterpréter $f_{\pi}$ comme
le nombre de $\oQl$-caractères non ramifiés $\psi$ de $G$ tels que
$\wt\pi\psi\simeq \wt\pi$.
De même $f_{\rho^{\vee}}= f_{\rho(\wt\pi)^{\vee}}$ est le nombre 
de $\oQl$-caractères non ramifiés $\psi$ de $G$ tels que
$\rho(\wt\pi)\psi\simeq \rho(\wt\pi)$, et idem pour $f_{\sigma^{\vee}}$.
Or, les correspondances $\wt\pi\mapsto \rho(\wt\pi)$ et $\wt\pi\mapsto
\sigma(\wt\pi)$ sont compatibles à la torsion par les $\oQl$-caractères.
 \end{proof}

Le lemme montre que $\ZG_{\pi}$,
    $\ZG_{\rho^{\vee}}$ et $\ZG_{\sigma^{\vee}}^{\rm ab}$ sont
    abstraitement isomorphes, en tant que $\bZl$-algèbres. Comme on va
    le voir, la cohomologie fournit des isomorphismes particuliers
    entre ses anneaux.

\ali  Posons maintenant $\HC:=H^{d-1}_{c}(\mlt^{\rm ca},\bZl)_{\CC_{\pi}}$ et $\ZG:=\endo{\bZl
    GDW}{\HC}$, et considérons
  \begin{eqnarray*}
 \HC' & := &\Hom_{\bZl [G\times D^{\times}\times W_{K}]}\left(
  P_{\pi}\otimes_{\bZl} P_{\rho^{\vee}}\otimes_{\bZl} P_{\sigma^{\vee}}, \HC \right)
\\ & \in & \Mod(\ZG_{\pi}\otimes_{\bZl}\ZG_{\rho^{\vee}}\otimes_{\bZl}\ZG_{\sigma^{\vee}})
.
\end{eqnarray*}

Via les équivalences mentionnées
ci-dessus, on a
$\ZG\simto \endo{\ZG_{\pi}\otimes\ZG_{\rho^{\vee}}\otimes\ZG_{\sigma^{\vee}}}{\HC'}$. 
En particulier, les actions des anneaux commutatifs  $\ZG_{\pi}$ et
$\ZG_{\rho^{\vee}}$ fournissent des morphismes de $\bZl$-algèbres
$\ZG_{\pi}\To{}\ZG$, et $\ZG_{\rho^{\vee}}\To{}\ZG$.

\begin{thm}\label{theocoho}
  \begin{enumerate}
  \item Les morphismes d'action $\ZG_{\pi}\To{}\ZG$ et
    $\ZG_{\rho^{\vee}}\To{}\ZG$ sont bijectifs.
  \item L'action de $\ZG_{\sigma^{\vee}}$ sur $\HC'$
se factorise par $\ZG_{\sigma^{\vee}}^{\rm ab}$ et induit un isomorphisme
    $\ZG_{\sigma^{\vee}}^{\rm ab}\simto \ZG$.
 \item $\HC'$ est libre de rang $1$ sur $\ZG$.
Tout générateur $\varphi$ de $\HC'$ sur $\ZG$ se factorise à
    travers un $\ZG[GDW]$-isomorphisme
$$\bar\varphi:\,
P_{\pi}\otimes_{\ZG}P_{\rho^{\vee}}\otimes_{\ZG}\otimes
    P_{\sigma^{\vee}}^{\rm ab} \simto H^{d-1}_{c}(\mlt^{\rm ca},\bZl)_{\CC_{\pi}}$$
où $P_{\sigma^{\vee}}^{\rm
  ab}:=P_{\sigma^{\vee}}\otimes_{\ZG_{\sigma^{\vee}}}\ZG_{\sigma^{\vee}}^{\rm
  ab}$ (\emph{cf} \ref{csigmaab}) et le
produit tensoriel est relatif aux actions de $\ZG$ déduites des 
isomorphismes précédents.
  \end{enumerate}
\end{thm}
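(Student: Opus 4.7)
The plan is to transport everything via the Morita equivalences of \ref{faitequiv} into the language of modules over the commutative algebras $\ZG_{\pi}$, $\ZG_{\rho^{\vee}}$ and $\ZG_{\sigma^{\vee}}^{\rm ab}$, then use Theorem 1 at the $\oQl$-generic fibre to pin down the structure. Set $R:=\ZG_{\pi}\otimes_{\bZl}\ZG_{\rho^{\vee}}\otimes_{\bZl}\ZG_{\sigma^{\vee}}$. By Proposition \ref{propcrho}, $\HC$ lies in $\CC_{\pi}\cap\CC_{\rho^{\vee}}\cap\CC_{\sigma^{\vee}}$, and the three commuting equivalences identify $\HC$ with the $R$-module $\HC'$ and $\ZG$ with $\endo{R}{\HC'}$. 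Since $\ZG_{\pi}$ and $\ZG_{\rho^{\vee}}$ are commutative and occupy distinct tensor factors, they are central in $R$, so their action on $\HC'$ lands automatically in $\ZG$; this produces tautologically the two maps $\ZG_{\pi}\to\ZG$ and $\ZG_{\rho^{\vee}}\to\ZG$ appearing in (i).

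The crux is then to show that $\HC'$ is free of rank $1$ over $\ZG_{\pi}$. For the generic rank, I would pick a $\oQl$-lift $\wt\pi$ of $\pi$ (which exists by \cite{Vig}), corresponding under $\CC_{\pi}\otimes\oQl\simeq \Mod(\ZG_{\pi}\otimes\oQl)$ to the quotient $\ZG_{\pi}/\mG_{\wt\pi}$ at some maximal ideal, and identify the specialisation $\HC'\otimes_{\ZG_{\pi}}\ZG_{\pi}/\mG_{\wt\pi}$ with $\Hom_{\bZl G}(H^{d-1}_{c},\wt\pi)$. By Theorem 1 applied to $\wt\pi$ (equivalently by the $\oQl$-statements of \cite{HaTay}) this is the simple $D^{\times}\times W_{K}$-representation $\rho(\wt\pi)\otimes \sigma(\wt\pi)(\frac{d-1}{2})$, which under the remaining two equivalences is one-dimensional over the residue field $\ZG_{\pi}/\mG_{\wt\pi}$. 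As $\wt\pi$ ranges over all lifts, the corresponding maximal ideals exhaust the closed points of $\Spec(\ZG_{\pi}\otimes\oQl)$, so $\HC'\otimes\oQl$ has generic rank $1$ over $\ZG_{\pi}\otimes\oQl$. Integrality is then supplied by Proposition \ref{propproj}: since $\HC$ is projective in $\Mo{\bZl}{G}$, the equivalence forces $\HC'$ to be a finitely generated projective $\ZG_{\pi}$-module, and because $\ZG_{\pi}\cong\bZl[{\rm Syl}_{\ell}(\FM_{q^{f_{\pi}}}^{\times})][T,T^{-1}]$ is a Laurent polynomial ring over a complete local ring with residue field $\oFl$, its Picard group is trivial; hence a rank-$1$ projective is free.

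Combining this with the obvious inclusion $\ZG\subset \endo{\ZG_{\pi}}{\HC'}=\ZG_{\pi}$ yields $\ZG=\ZG_{\pi}$, and the symmetric argument gives $\ZG=\ZG_{\rho^{\vee}}$, completing (i). In particular $\ZG$ is commutative, so the image of $\ZG_{\sigma^{\vee}}$ in $\ZG$ is commutative and the action on $\HC'$ must factor through $\ZG_{\sigma^{\vee}}^{\rm ab}$; repeating the same freeness argument with $\ZG_{\sigma^{\vee}}^{\rm ab}$ in place of $\ZG_{\pi}$, using the equality $f_{\sigma^{\vee}}=f_{\pi}$ just established to get the same Picard triviality, yields (ii). Point (iii) then follows by transporting the isomorphism $\HC'\cong \ZG$ of $\ZG$-modules back through the three Morita equivalences. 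The main obstacle I anticipate is the integrality step: combining Proposition \ref{propproj} with the progenerator formalism to obtain projectivity of $\HC'$ over each of the three algebras, and then upgrading generic rank $1$ to integral freeness via the vanishing of the Picard group. All the underlying representation-theoretic ingredients (existence of $\oQl$-lifts of supercuspidals, Theorem 1, and the $\oQl$-level Langlands/Jacquet-Langlands bijections) are already in place.
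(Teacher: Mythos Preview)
Your overall strategy---transport via the Morita equivalences and compute the rank of $\HC'$---matches the paper's, and computing the rank at a $\oQl$-point $\wt\pi$ rather than at the $\oFl$-point $\pi$ is a legitimate variant. The conclusion $\ZG_\pi\simto\ZG$ then follows as you say, since an invertible module over a commutative ring already has endomorphism ring equal to that ring (freeness is not needed for this step).

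The genuine gap is in the ``symmetric argument'' for $\ZG_{\rho^\vee}$ and the ``repeat the same freeness argument'' for $\ZG_{\sigma^\vee}^{\rm ab}$. Both presuppose that $\HC'$ is a finitely generated projective module over $\ZG_{\rho^\vee}$ (resp.\ over $\ZG_{\sigma^\vee}^{\rm ab}$), equivalently that $\HC$ is projective in $\CC_{\rho^\vee}$ (resp.\ in $\CC_{\sigma^\vee}$). But Propositions \ref{propproj} and \ref{progeneG} only supply $G$-projectivity; the paper explicitly notes (in the Remark following Proposition \ref{propcrho}) that $D^\times$-projectivity is available only a posteriori or via Faltings--Fargues, and no $W_K$-projectivity is available at all. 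The paper therefore does \emph{not} argue symmetrically. Once $\ZG_\pi\simto\ZG$ is known, one has ring maps $\ZG_{\rho^\vee}\to\ZG_\pi$ and $\ZG_{\sigma^\vee}^{\rm ab}\to\ZG_\pi$ between $\bZl$-algebras of the same explicit shape $\bZl[{\rm Syl}_\ell(\FM_{q^f}^\times)\times\ZM]$; the $\oQl$ Jacquet--Langlands and Langlands correspondences show these maps induce bijections on $\oQl$-characters, and the paper concludes by a direct ring-theoretic argument (injectivity from reducedness, then a roots-of-unity count on $\bZl[{\rm Syl}_\ell(\FM_{q^f}^\times)]$ followed by fibrewise flatness over it). A smaller issue: your freeness step in (iii) appeals to triviality of the Picard group of $\ZG_\pi\cong A[T,T^{-1}]$ with $A=\bZl[C_{\ell^a}]$, but $A$ is a non-regular local ring, so this is not immediate; the paper instead uses Lemma \ref{lemmecommut} to write $\HC'=\ZG_\pi\otimes_{\ZG_{\pi^0}}\HC'_0$ and exploits that $\ZG_{\pi^0}$ is \emph{local}, whence the projective $\ZG_{\pi^0}$-module $\HC'_0$ is free.
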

\begin{proof}
  $i)$  D'après la proposition \ref{progeneG},
 on sait déjà que $\HC'$ est projectif et de type fini sur
  $\ZG_{\pi}$. Comme $\Spec(\ZG_{\pi})$ est connexe, le rang de $\HC'$
  est constant, et non nul puisque $\HC'$ est non nul.
Le morphisme
  $\ZG_{\pi}\To{a}\ZG\subset\endo{\bZl}{\HC'}$ est donc injectif.
Pour montrer qu'il est bijectif, il suffit de prouver que le rang de
$\HC'$ sur $\ZG_{\pi}$ est égal à $1$, puisqu'alors on aura
$\ZG\subset\endo{\ZG_{\pi}}{\HC'}=a(\ZG_{\pi})$.
Toujours par connexité de $\Spec(\ZG_{\pi})$, on peut
 calculer ce rang en (co)spécialisant en le $\oFl$-caractère
  ``trivial'' de $\ZG_{\pi}$, \emph{i.e.} celui qui correspond à $\pi$
via l'équivalence de catégories $\CC_{\pi}\simto \Mod(\ZG_{\pi})$. On touve alors
\begin{eqnarray*}
\Hom_{\ZG_{\pi}}(\HC',\oFl) & \simeq & \Hom_{\bZl G}\left(\Hom_{\bZl DW}(P_{\rho^{\vee}}\otimes
P_{\sigma^{\vee}}, \HC),\pi\right) \\
& \simeq & \Hom_{\bZl DW}\left(P_{\rho}\otimes
P_{\sigma}, \Hom_{\bZl G}(\HC,\pi)\right) \\
& \simeq & \Hom_{\bZl D}(P_{\rho}, \rho)\otimes_{\bZl} 
\Hom_{\bZl W_{K}}(P_{\sigma},\sigma) \simeq \oFl, 
\end{eqnarray*}
où le deuxième isomorphisme provient des lemmes \ref{dualite} et
\ref{dualiteW},
 et le troisième provient du
théorème 1.
Ainsi, $\HC'$ est bien de rang $1$ sur $\ZG_{\pi}$. 

Considérons maintenant le morphisme composé
$\ZG_{\rho^{\vee}}\To{}\ZG\simto \ZG_{\pi}$.
Ce morphisme induit une bijection entre $\oQl$-caractères de 
$\ZG_{\pi}$ et $\oQl$-caractères de $\ZG_{\rho^{\vee}}$
(la correspondance de Jacquet-Langlands !). A partir de là, on
raisonne comme dans le point ii) ci-dessous pour en conclure que c'est
un isomorphisme.

$ii)$ Comme l'action de $\ZG_{\sigma^{\vee}}$ commute à $\ZG_{\pi}$, elle est
donnée par un morphisme
$\ZG_{\sigma^{\vee}}\To{\beta}\endo{\ZG_{\pi}}{\HC'}=\ZG_{\pi}$ qui se factorise
donc par $\ZG_{\sigma^{\vee}}^{\rm ab}\To{}\ZG_{\pi}=\ZG$. 
Ce morphisme induit une bijection entre $\oQl$-caractères de
$\ZG_{\pi}$ et $\oQl$-caractères de $\ZG_{\sigma^{\vee}}$ (la correspondance
de Langlands !). Il est donc \emph{injectif}, puisque
$\ZG_{\sigma^{\vee}}^{\rm ab}$ est réduit et sans $\ell$-torsion.
 Identifions ce morphisme $\beta$ à un
endomorphisme de la $\bZl$-algèbre
$\bZl[C_{\ell^{a}}\times \ZM]$, où $C_{\ell^{a}}={\rm
  Syl}_{\ell}(\FM_{q^{f}}^{\times})$ est un groupe
\emph{cyclique} d'ordre $\ell^{a}$.
 Observons que  le groupe 
$\mu_{\ell^{a}}(\bZl[C_{\ell^{a}}\times \ZM])$ est égal à
$\mu_{\ell^{a}}(\bZl[C_{\ell^{a}}])$.
Ainsi $\beta$ stabilise $\bZl[C_{\ell^{a}}]$ et induit un
endomorphisme injectif de cet anneau. Or par définition, cet anneau
$\bZl[C_{\ell^{a}}]$ est engendré par son groupe de $\ell^{a}$-racines de l'unité
 $\mu_{\ell^{a}}(\bZl[C_{\ell^{a}}])$ qui est fini. Comme $\beta$
  est injectif, il induit une permutation de
  $\mu_{\ell^{a}}(\bZl[C_{\ell^{a}}])$, donc finalement un
  automorphisme de $\bZl[C_{\ell^{a}}]$.
 Quittes à composer par un prolongement de cet automorphisme à $\bZl[C_{\ell^{a}}\times\ZM]$, on
 peut considérer maintenant $\beta$ comme un endomorphisme injectif de
\emph{ $\bZl[C_{\ell^{a}}]$-algèbres}. Cet endomorphisme induit aussi une
 bijection entre $\oFl$-caractères (issue de la correspondance de Langlands mod
 $\ell$), donc ses fibres sont encore injectives, donc sont plates
 puisque morphismes finis entre anneaux principaux (isomorphes à
 $\oFl[X,X^{-1}]$). Le critère de platitude fibre à fibre nous dit
 alors que $\beta$ est plat, et donc localement libre puisque fini. Mais
 son rang est nécessairement $1$, donc $\beta$ est un isomorphisme.

$iii)$ Puisque $P_{\pi}$, $P_{\rho^{\vee}}$ et $P_{\sigma^{\vee}}$ sont des
progénérateurs, le
morphisme d'évaluation est un isomorphisme :
$$ (P_{\pi}\otimes_{\bZl}P_{\rho^{\vee}}\otimes_{\bZl}P_{\sigma^{\vee}})
\otimes_{\ZG_{\pi}\otimes_{\bZl}\ZG_{\rho^{\vee}}\otimes_{\bZl}\ZG_{\sigma^{\vee}}}
\HC'\simto \HC.$$
Par les points $i)$ et $ii)$, on peut le réécrire sous la forme
$$ (P_{\pi}\otimes_{\ZG}P_{\rho^{\vee}}\otimes_{\ZG}P_{\sigma^{\vee}}^{\rm ab})
\otimes_{\ZG} \HC'\simto \HC.$$
Il ne reste donc plus qu'à prouver que $\HC'$ est libre.

Pour cela, notons $\HC^{0}$ la plus grande sous-$G^{0}$-représentation de
$H^{d-1}_{c}(\mlt^{(0),\rm ca},\bZl)$ dont tous les sous-quotients
irréductibles sont isomorphes à un sous-quotient irréductible de
$\pi_{|G^{0}}$. C'est un facteur direct stable par $(G\times
D^{\times}\times W_{K})^{0}$, et on a
$\HC=\cind{GDW^{0}}{GDW}{\HC^{0}}$, \emph{cf} preuve du $ii)(a)$ de la
proposition \ref{propscindageG0}. On vérifie alors que
\begin{eqnarray*}
 \Hom_{\bZl DW}(P_{\rho^{\vee}}\otimes_{\bZl}P_{\sigma^{\vee}}, \HC)
& \simeq & \Hom_{\bZl D^{0}W^{0}}(P_{\rho^{\vee}_{0}}\otimes_{\bZl}P_{\sigma^{\vee}_{0}}, \HC)\\
& \simeq & \cind{G^{0}}{G}{\Hom_{\bZl
    D^{0}W^{0}}(P_{\rho^{\vee}_{0}}\otimes_{\bZl}P_{\sigma^{\vee}_{0}},
  \HC^{0})}
\end{eqnarray*}
où on a écrit
$P_{\rho^{\vee}}=\cind{D^{0}}{D}{P_{\rho^{\vee}_{0}}}$ et
$P_{\sigma^{\vee}}=\cind{W^{0}}{W}{P_{\sigma^{\vee}_{0}}}$. D'après le
lemme \ref{lemmecommut}, on en déduit que $\HC'$ est, en tant que $\ZG_{\pi}$-module, de la forme 
$\HC'=\ZG_{\pi}\otimes_{\ZG_{\pi^{0}}}\HC'_{0}$ pour un certain
$\ZG_{\pi^{0}}$-module $\HC'_{0}$. Comme l'extension
$\ZG_{\pi^{0}}\subset \ZG_{\pi}$ est fidèlement plate, $\HC'_{0}$ est
plat et de type fini sur $\ZG_{\pi^{0}}$. Or, ce dernier anneau est local, donc
$\HC'_{0}$ est libre sur $\ZG_{\pi^{0}}$, et par conséquent $\HC'$ est
libre sur $\ZG_{\pi}$.
\end{proof}

\alin{Les foncteurs $\Pi\mapsto \sigma'(\Pi)$ et $\Pi\mapsto \rho(\Pi)$}\label{defsigmapi}

Soit $(\Pi,V)$ une $\bZl$-représentation de $G$.
Supposons dans un premier temps que $\Pi$ est  engendrée
par ses invariants sous un sous-groupe ouvert compact. Cette propriété
de finitude assure que $V_{\CC_{\pi}}$ est nul sauf pour un nombre fini de
classes inertielles de $\oFl$-représentations supercuspidales. 
Le théorème \ref{theocoho} et les propositions \ref{dualite} et
\ref{dualiteW} nous donnent un isomorphisme :
$$ \Hom_{\Zl G}\left(H^{d-1}_{c}(\mlt^{\rm ca},\Zl),\Pi\right)
\simeq \bigoplus_{\pi\in{\rm Scusp}_{\oFl}(G)/\sim}
P_{\rho(\pi)}\otimes_{\ZG_{\pi}}P_{\sigma'(\pi)}^{\rm ab}\otimes_{\ZG_{\pi}}
\Hom_{\bZl G}(P_{\pi},\Pi)$$
que l'on peut réécrire sous la forme plus condensée
\begin{equation}
  \label{Hpi}
  \Hom_{\Zl G}\left(H^{d-1}_{c}(\mlt^{\rm ca},\Zl),\Pi\right) \simeq
P_{\scusp}^{D}\otimes_{\ZG_{\scusp}} P^{W,\rm ab}_{\scusp}
\otimes_{\ZG_{\scusp}} \Hom_{\bZl G}\left(P_{\scusp}^{G},\Pi\right)
\end{equation}
où l'on a posé 
$ P_{\scusp}^{G}:=\bigoplus  P_{\pi}, $
$ P_{\scusp}^{D}:=\bigoplus  P_{\rho(\pi)}, $
$ P_{\scusp}^{W,\rm ab}:=\bigoplus  P_{\sigma'(\pi)}^{\rm ab}, $
 et $\ZG_{\scusp}:=\prod \ZG_{\pi}$.
Notons que le terme de droite fait sens pour tout $\Pi$ (sans
l'hypothèse de finitude), et cela nous permet de poser 
$$\sigma'(\Pi):=P_{\scusp}^{W,{\rm ab}}\otimes_{\ZG_{\scusp}} 
 \Hom_{\bZl G}\left(P_{\scusp}^{G},\Pi\right) 
\in \Moc{\bZl}{W_{K}}$$
et 
$$ \rho(\Pi) := P_{\scusp}^{D}\otimes_{\ZG_{\scusp}} 
 \Hom_{\bZl G}\left(P_{\scusp}^{G},\Pi\right) 
\in \Mo{\bZl}{D^{\times}}.$$
Considérons maintenant les trois catégories suivantes :
\begin{enumerate}
\item $\Rep^{\scusp}_{\Zlnr}(G)$ la sous-catégorie pleine
 de $\Mo{\Zlnr}{G}$ formée des représentations 
supercuspidales au sens de \ref{defsupercusp}. Elle est facteur
direct, pro-engendrée par $P_{\scusp}^{G}$, et son centre s'identifie
canoniquement à $\ZG_{\scusp} =\endo{\Zlnr}{P_{\scusp}^{G}}$.
\item $\Rep^{\scusp}_{\Zlnr}(D^{\times})$ la sous-catégorie pleine
 de $\Mo{\Zlnr}{D^{\times}}$ formée des représentations dont tous les
 $\Zlnr D^{0}$-sous-quotients irréductibles sont des sous-quotients de
 $\oFl D$-représenta\-tions qui correspondent à des supercuspidales.
 Elle est facteur direct, pro-engendrée par
 $P_{\scusp}^{D}$, et le théorème \ref{theocoho} nous donne un
  isomorphisme $\ZG_{\scusp}\simto \endo{\bZl
    D^{\times}}{P_{\scusp}^{D}}$.

\item $\Rep^{d, \rm ab}_{\Zlnr}(W_{K})$ la plus petite sous-catégorie
 pleine de $\Moc{\Zlnr}{W_{K}}$ contenant $P_{\scusp}^{W,\rm ab}$,
 stable par sous-quotients et sommes directes. Elle est pro-engendrée
 par $P_{\scusp}^{W,\rm ab}$, et le théorème \ref{theocoho} nous donne un
  isomorphisme $\ZG_{\scusp}\simto \endo{\bZl
    W_{K}}{P_{\scusp}^{W,\rm ab}}$.

Notons qu'elle n'est pas stable par extensions. Sa ``clôture par
extensions'' est la sous-catégorie facteur direct
$\Rep^{d}_{\Zlnr}(W_{K})$ formée des représentations dont tous les
 $\Zlnr I_{K}$-sous-quotients irréductibles sont sous-quotients d'une
 $\oFl$-représentation irréductible de dimension $d$ de $W_{K}$.
\end{enumerate}

\begin{thm} \label{equivcat}
  Avec les notations ci-dessus,
  \begin{enumerate}
\item le fonteur $\Pi\mapsto \sigma'(\Pi)$ est une équivalence
de catégories $\Rep^{\scusp}_{\bZl}(G)\simto {\rm
  Rep}^{d, \rm ab}_{\bZl}(W_{K})$,
  \item le fonteur $\Pi\mapsto \rho(\Pi)$ est une équivalence
de catégories $\Rep^{\scusp}_{\bZl}(G)\simto {\rm
  Rep}^{\scusp}_{\bZl}(D^{\times})$,
\item Pour $\Pi,\Pi' \in\Mo{\bZl}{G}$ engendrées par leurs invariants sous un
  sous-groupe ouvert compact, on a des isomorphismes bifonctoriels en
  $\Pi$, $\Pi'$ 
$$\Hom_{\Zl G}\left(H^{d-1}_{c}(\mlt^{\rm ca},\Zl),\Pi\right)\otimes_{\ZG_{\scusp}}
\Hom_{\bZl G}(P_{\scusp}^{G},\Pi')\simto
\rho(\Pi)\otimes_{\ZG_{\scusp}}\sigma'(\Pi')
$$
\end{enumerate}
\end{thm}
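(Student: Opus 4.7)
The plan is to view all three categories as Morita-equivalent to the same module category over the center $\ZG_{\scusp}$, and to regard the functors $\rho$ and $\sigma'$ as compositions of (inverse) Morita equivalences.

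Concretely, the statements in the appendix summarized just before the theorem should give, for each class $\pi$ of $\oFl$-supercuspidal, that $P_\pi$ (resp. $P_{\rho(\pi)^\vee}$, resp. $P_{\sigma'(\pi)^\vee}^{\rm ab}$) is a progenerator of $\CC_\pi$ (resp. $\CC_{\rho(\pi)^\vee}$, resp. of the subcategory of $\CC_{\sigma'(\pi)^\vee}$ on which $\ZG_{\sigma^\vee}$ acts through $\ZG_{\sigma^\vee}^{\rm ab}$); hence by the standard progenerator theorem $\Hom_{\bZl G}(P_\pi,-)$, $\Hom_{\bZl D^\times}(P_{\rho^\vee},-)$ and $\Hom_{\bZl W_K}(P_{\sigma^\vee}^{\rm ab},-)$ induce equivalences with $\Mod(\ZG_\pi)$, $\Mod(\ZG_{\rho^\vee})$, $\Mod(\ZG_{\sigma^\vee}^{\rm ab})$ respectively. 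Taking products over the inertial classes of $\oFl$-supercuspidals, and using the definitions of the three target categories $\Rep^{\scusp}_{\bZl}(G)$, $\Rep^{\scusp}_{\bZl}(D^\times)$ and $\Rep^{d,\rm ab}_{\bZl}(W_K)$ (which are precisely set up to be pro-generated by $P_{\scusp}^G$, $P_{\scusp}^D$ and $P_{\scusp}^{W,\rm ab}$), we obtain three equivalences onto module categories over $\prod_\pi \ZG_\pi$, $\prod_\pi \ZG_{\rho^\vee}$ and $\prod_\pi \ZG_{\sigma^\vee}^{\rm ab}$.

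The crucial input is Theorem \ref{theocoho}: it produces \emph{canonical} isomorphisms $\ZG_{\scusp}=\prod_\pi \ZG_\pi \simto \prod_\pi \ZG_{\rho^\vee}$ and $\ZG_{\scusp}\simto\prod_\pi \ZG_{\sigma^\vee}^{\rm ab}$, coming from the actions of these centers on the cohomology. Once these identifications are made, the three module categories coincide, and the functors
\[ \rho(\Pi) \;=\; P_{\scusp}^D\otimes_{\ZG_{\scusp}} \Hom_{\bZl G}(P_{\scusp}^G,\Pi), \qquad
\sigma'(\Pi) \;=\; P_{\scusp}^{W,\rm ab}\otimes_{\ZG_{\scusp}} \Hom_{\bZl G}(P_{\scusp}^G,\Pi) \]
are by construction the composites of the Morita equivalence $\Rep^{\scusp}_{\bZl}(G)\simto \Mod(\ZG_{\scusp})$ with the inverse Morita equivalences on the $D^\times$- and $W_K$-sides. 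This gives (i) and (ii).

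For (iii), one starts from formula (\ref{Hpi}) which already displays $\Hom_{\Zl G}(H^{d-1}_c(\mlt^{\rm ca},\Zl),\Pi)$ as $P_{\scusp}^D\otimes_{\ZG_{\scusp}} P_{\scusp}^{W,\rm ab}\otimes_{\ZG_{\scusp}} \Hom_{\bZl G}(P_{\scusp}^G,\Pi)$. Tensoring over $\ZG_{\scusp}$ with $\Hom_{\bZl G}(P_{\scusp}^G,\Pi')$ and regrouping the factors yields $\rho(\Pi)\otimes_{\ZG_{\scusp}}\sigma'(\Pi')$, functorially in $(\Pi,\Pi')$. The main difficulty I anticipate is purely bookkeeping: making sure that the identification between the category $\Rep^{d,\rm ab}_{\bZl}(W_K)$ and $\Mod(\ZG_{\sigma^\vee}^{\rm ab})$ is clean — one must check that this category really equals the essential image of $P_{\scusp}^{W,\rm ab}\otimes_{\ZG_{\scusp}}-$, i.e.\ that ``smallest subcategory containing $P_{\scusp}^{W,\rm ab}$ stable by subquotients and direct sums'' coincides with what Morita produces, which follows from $P_{\scusp}^{W,\rm ab}$ being a progenerator of it.
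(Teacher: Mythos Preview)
Your proposal is correct and follows essentially the same approach as the paper: both use the progenerator equivalences of \ref{faitequiv} on each side, the identifications of centers provided by Theorem \ref{theocoho}, and deduce (iii) directly from formula (\ref{Hpi}) together with the definitions of $\rho(\Pi)$ and $\sigma'(\Pi)$. The paper's proof is simply a terser version of what you wrote, giving the explicit inverse functors $\PC\mapsto P_{\scusp}^{G}\otimes_{\ZG_{\scusp}}\Hom_{\bZl D^{\times}}(P_{\scusp}^{D},\PC)$ and $\Sigma\mapsto P_{\scusp}^{G}\otimes_{\ZG_{\scusp}}\Hom_{\bZl W_{K}}(P_{\scusp}^{W,\rm ab},\Sigma)$ rather than spelling out the Morita framework.
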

Dans le point iii), l'action de $\ZG_{\scusp}$ sur chaque terme est l'action qui
provient fonctoriellement de
l'action canonique du centre de la catégorie ${\rm
  Rep}^{\scusp}_{\bZl}(G)$ sur $\Pi$ et $\Pi'$.
\begin{proof}
$i)$ En revenant à la somme indexée par $\pi\in\rm Scusp_{\oFl}(G)$ et
en utilisant les équivalence de catégories du type \ref{faitequiv}, 
on vérifie aisément que le foncteur
  $\PC\mapsto P_{\scusp}^{G}\otimes_{\ZG_{\scusp}}
  \Hom_{\bZl D^{\times}}\left(P_{\scusp}^{D},\PC\right) $ est une
  équivalence inverse de $\Pi\mapsto \rho(\Pi)$.

$ii)$ De même, le foncteur  $\Sigma\mapsto
  P_{\scusp}^{G}\otimes_{\ZG_{\scusp}} \Hom_{\bZl
    W_{K}}\left(P_{\scusp}^{W,\rm ab},\Sigma\right) $ est une
  équivalence inverse de $\Pi\mapsto \sigma'(\Pi)$.

$iii)$ découle de (\ref{Hpi}) et des définitions de $\sigma'(\Pi)$ et $\rho(\Pi)$.
\end{proof}

\alin{Preuve du théorème 3} \label{preuvethm3}
Le point $ii)$ du théorème 3 découle clairement de celui du théorème
\ref{equivcat}. Le point $i)$ du théorème 3 découlera de celui du
théorème \ref{equivcat}, une fois qu'on aura vérifié que toute
$R$-famille $\Sigma$ de représentations irréductibles de dimension $d$ de
$W_{K}$ est bien un objet de $\Rep^{d, \rm ab}_{\bZl}(W_{K})$.
Pour cela, on peut supposer $\Spec(R)$ connexe, auquel cas  $\Sigma$ est un objet de
$\CC_{\sigma}$ pour une certaine $\oFl$-représentation irréductible de
dimension $d$ de $W_{K}$. Mais alors, avec les notations de la
proposition \ref{propscindageW}, la $R W_{L}$-représentation
$R_{\wt\tau}(\Sigma)$ est une $R$-famille de caractères de $W_{L}$,
donc se factorise par $W_{L}^{\rm ab}$, et  $\Sigma$ appartient à
$\CC_{\sigma}^{\rm ab}$ qui est contenue dans $\Rep^{d, \rm ab}_{\bZl}(W_{K})$.

Il reste à  prouver le point iii) du théorème 3.
Supposons que $\Pi$ est une $R$-famille de représentations
irréductibles supercuspidales de $G$. Comme $R$ est supposée
noethérienne,  $\Spec(R)$ a un nombre fini de composantes connexes et
$\Pi$ est donc engendrée par ses invariants sous un sous-groupe ouvert
compact. 
De plus, $\Hom_{\bZl G}(P_{\scusp}^{G},\Pi)$ est une $R$-famille
plate de \emph{caractères} de $\ZG$. Son $R$-module sous-jacent
$\RC(\Pi)$ est donc localement libre de rang $1$ et
l'action de $\ZG_{\scusp}$ est donnée par un morphisme
$\ZG_{\scusp}\To{} R$. Le point iii) du théorème  \ref{equivcat} nous
donne  alors un $R\otimes_{\ZG_{\scusp}} R$-isomorphisme 
$$\Hom_{\Zl G}\left(H^{d-1}_{c}(\mlt^{\rm ca},\Zl),\Pi\right)\otimes_{\ZG_{\scusp}}
\RC(\Pi) \simto
\rho(\Pi)\otimes_{\ZG_{\scusp}}\sigma'(\Pi)
$$
et on en déduit le point iii) du théorème 3 par extension des
scalaires selon le morphisme produit $R\otimes_{\ZG_{\scusp}}R\To{} R$.

\alin{Preuve du théorème 4} 
On fixe $\varpi\in K^{\times}$ de valuation $v>0$ et on s'intéresse à 
$H^{d-1}_{c}(\mlt^{\rm  ca}/\varpi^{\ZM},\bZl)_{\scusp}$. On a une
décomposition
$$H^{d-1}_{c}(\mlt^{\rm  ca}/\varpi^{\ZM},\bZl)_{\scusp}
=\bigoplus_{\pi\in{\rm Scusp}_{\oFl}(G/\varpi^{\ZM})}
H^{d-1}_{c}(\mlt^{\rm  ca}/\varpi^{\ZM},\bZl)_{\CC_{\pi}^{\varpi}} $$
où la notation $\CC_{\pi}^{\varpi}$ est expliquée dans l'appendice
\ref{secscinG}.
Fixons donc $\pi$, notons $\rho:=\rho(\pi)$ et
$\sigma:=\sigma'(\pi)=\sigma(\pi)(\frac{d-1}{2})$, et
posons $$\HC_{\varpi}:=H^{d-1}_{c}(\mlt^{\rm ca}/\varpi^{\ZM},\bZl)_{\CC_{\pi}^{\varpi}}.$$ 
Par les mêmes arguments que les propositions \ref{progeneG} et
\ref{propcrho}, on obtient que $\HC_{\varpi}$
est un objet projectif de $\CC_{\pi}^{\varpi}$, et qu'en tant que
$D^{\times}$-représentation, c'est un objet de
$\CC_{\rho^{\vee}}^{\varpi}$. C'est aussi un objet de
$\CC_{\sigma^{\vee}}$. Nous allons le décrire en suivant la preuve
du théorème \ref{theocoho} ci-dessus.

Soit $P_{\pi}^{\varpi}$ le progénérateur de $\CC_{\pi}^{\varpi}$, de commutant
noté $\ZG^{\varpi}_{\pi}$, et  décrit dans la proposition
\ref{propscindageG}, et de même soient $P_{\rho^{\vee}}^{\varpi}$ et
$\ZG^{\varpi}_{\rho^{\vee}}$ comme dans la proposition \ref{propscindageD}.
Nous utiliserons aussi les foncteurs $R_{\wt\tau}$ et $I_{\wt\tau}$ de
la proposition \ref{propscindageW} (appliquée à $\sigma^{\vee}$).
Posons
$$ P_{\sigma^{\vee}}^{\varpi}:=\Hom_{\bZl G\times
  D^{\times}}(P_{\pi}^{\varpi}\otimes_{\bZl}P_{\rho^{\vee}}^{\varpi},
\HC_{\varpi}) \hbox{ et } \HC_{\varpi}':=R_{\wt\tau}(P_{\sigma^{\vee}}^{\varpi}).$$
Ce sont des  $\ZG^{\varpi}_{\pi}\otimes_{\bZl}\ZG^{\varpi}_{\rho^{\vee}}$-modules,
munis respectivement d'une
action de $W_{K}$ et de $W_{L}$ (notation de la proposition \ref{propscindageW}).
De plus, on a les isomorphismes suivants :
$$P_{\sigma^{\vee}}^{\varpi}\simeq I_{\wt\tau}(\HC_{\varpi}')\,\hbox{ et
}\,(P_{\pi}^{\varpi}\otimes_{\bZl}P_{\rho^{\vee}}^{\varpi}) 
\otimes_{\ZG^{\varpi}_{\pi}\otimes_{\bZl}\ZG^{\varpi}_{\rho^{\vee}}} P_{\sigma^{\vee}}^{\varpi} \simto \HC_{\varpi}.$$
Maintenant, on sait que $\HC_{\varpi}'$ est un module projectif sur
$\ZG^{\varpi}_{\pi}$, donc libre, puisque $\ZG^{\varpi}_{\pi}$ est local.
De plus,  par le
même argument que dans la preuve du $i)$ du théorème \ref{theocoho}, on
obtient qu'il est de rang $1$.
Par conséquent, le morphisme d'action
$\ZG^{\varpi}_{\pi}\To{}\endo{\bZl}{\HC_{\varpi}'}$ est injectif et le
morphisme d'action
$\ZG^{\varpi}_{\rho^{\vee}}\To{}\endo{\bZl}{\HC_{\varpi}'}$
se factorise par $\ZG^{\varpi}_{\rho^{\vee}}\To{\beta}\ZG^{\varpi}_{\pi}$. Comme
dans le point ii) du théorème \ref{theocoho}, ce dernier morphisme est
injectif car il induit des bijections entre caractères et les anneaux
sont réduits, et il est
surjectif car ces deux anneaux sont engendrés par leur groupe (fini)
de racines $\ell^{?}$-èmes de l'unité.

Ceci nous permet de réécrire
le second isomorphisme ci-dessus comme
$$
P_{\pi}^{\varpi}\otimes_{\ZG_{\pi}^{\varpi}}P_{\rho^{\vee}}^{\varpi}
\otimes_{\ZG_{\pi}^{\varpi}} P_{\sigma^{\vee}}^{\varpi} \simto \HC_{\varpi}.
$$
Le $\ZG_{\pi}^{\varpi} [W_{K}]$-module $P_{\sigma^{\vee}}^{\varpi}=I_{\wt\tau}(\HC'_{\varpi})$ est
projectif sur $\ZG_{\pi}^{\varpi}$, et de réduction isomorphe à $\sigma^{\vee}$ ; c'est
donc un relèvement de $\sigma^{\vee}$ sur $\ZG_{\pi}^{\varpi}$. On sait que 
$$P_{\sigma^{\vee}}^{\varpi}\otimes\oQl
=\bigoplus_{\pi^{\dag}\in\Irr{\oQl}{G/\varpi^{\ZM}},
  r_{\ell}(\pi^{\dag})=\pi} \sigma_{d}(\pi^{\dag})^{\vee}.$$
Par les propriétés de la correspondance de Langlands, chaque
$\sigma_{d}(\pi^{\dag})^{\vee}$ est de déter\-minant $1$, et il s'ensuit que
$P_{\sigma^{\vee}}^{\varpi}$ est aussi de déterminant $1$. C'est donc un
$\varphi$-relèvement. 

Pour prouver le théorème 4, il ne reste plus qu'à étendre les
scalaires de $\Zlnr$ à son complété $\breve\Zl$.
En effet :
\begin{itemize}\item   $\wt\pi:=P_{\pi}^{\varpi}\otimes_{\bZl}\breve\Zl$ est la
  $\varpi$-déformation universelle de $\pi$ d'après
  \ref{propscindageG} iii).
\item
  $\wt{\rho^{\vee}}:=P_{\rho^{\vee}}^{\varpi}\otimes_{\bZl}\breve\Zl$ est la
  $\varpi$-déformation universelle de $\rho$ d'après
  \ref{propscindageD} iii).

\item 
  $\wt\sigma:=P_{\sigma}^{\varpi}\otimes_{\bZl}\breve\Zl$ est
  la $\varphi$-déformation universelle  d'après le ii) du corollaire
  \ref{coroW}.
\end{itemize}

\subsection{Descente à $\Zl$}

Si l'on veut descendre le théorème $3$ aux familles indexées par une
$\Zl$-algèbre, il faut pouvoir étendre les foncteurs $\Pi\mapsto
\sigma'(\Pi)$ et $\Pi\mapsto \rho(\Pi)$, et pour cela il faut trouver
des modèles $\Zl$-rationnels de $P_{\scusp}^{G}$, $P_{\scusp}^{D}$, et
$P_{\scusp}^{W}$. Or cela est possible pour $P_{\scusp}^{G}$, mais
\emph{pas pour $P_{\scusp}^{D}$, et
$P_{\scusp}^{W}$} ! (exercice laissé au lecteur).

Dans cette section, nous fixons un élément $\varpi\in K$ de valuation
$v>0$, et nous allons donner une version $\Zl$-rationnelle
du théorème 4 de l'introduction. Cela permet de descendre les
foncteurs du théorème 3 à la catégorie $\Mo{\Zl}{G/\varpi^{\ZM}}$. 

\alin{Compatibilité des correspondances à l'action de Galois}
Comme dans le cas $\ell$-adique, les correspondances de Langlands et
Jacquet-Langlands $\ell$-modulaires sont compatibles à l'action de
Galois :
\begin{lem}
  Les applications $\pi\mapsto \rho(\pi)$ et $\pi\mapsto \sigma'(\pi):=\sigma(\pi)(\frac{d-1}{2})$
  du théorème 1 sont compatibles à l'action de $\gal(\oFl/\Fl)$ sur
  les ensembles de classes de $\oFl$-représentations irréductibles concernés.
\end{lem}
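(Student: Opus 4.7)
Le plan est d'utiliser la r�alisation cohomologique du th�or�me 1 et le fait que le complexe $H^{d-1}_{c}(\mlt^{\rm ca},\Zl)$ est d�fini sur $\Zl$, donc en particulier que le foncteur $\pi\mapsto \Hom_{\Zl G}(H^{d-1}_{c}(\mlt^{\rm ca},\Zl),\pi)$ commute au twist des coefficients par $\gal(\oFl/\Fl)$.

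Plus pr�cis�ment, pour $\tau\in\gal(\oFl/\Fl)$ et une $\oFl$-repr�sentation $V$, notons $V^{\tau}:=V\otimes_{\oFl,\tau}\oFl$. La premi�re �tape consiste � observer qu'on a un isomorphisme naturel de $D^{\times}\times W_{K}$-repr�sentations
$$\Hom_{\Zl G}\left(H^{d-1}_{c}(\mlt^{\rm ca},\Zl),\pi\right)^{\tau} \simeq
\Hom_{\Zl G}\left(H^{d-1}_{c}(\mlt^{\rm ca},\Zl),\pi^{\tau}\right),$$
cons�quence formelle de la $\Zl$-rationalit� de la cohomologie (l'op�ration $-\otimes_{\oFl,\tau}\oFl$ commute � $\Hom_{\Zl G}$ appliqu� � un argument d�fini sur $\Zl$).

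Ensuite, on applique le th�or�me 1 aux deux membres. � gauche, on part de la d�composition $\rho(\pi)\otimes\sigma'(\pi)$ et on la tord par $\tau$. � droite, on observe que $\pi^{\tau}$ est encore une $\oFl$-repr�sentation irr�ductible supercuspidale de $G$ (la notion de supercuspidalit� est invariante par Galois), donc le th�or�me 1 lui est applicable. On obtient ainsi un isomorphisme
$$\rho(\pi)^{\tau}\otimes \sigma'(\pi)^{\tau} \simeq \rho(\pi^{\tau})\otimes\sigma'(\pi^{\tau})$$
comme $\oFl$-repr�sentations de $D^{\times}\times W_{K}$, les quatre facteurs �tant irr�ductibles.

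La derni�re �tape est la s�paration des facteurs. On la traite en consid�rant la composante $\rho(\pi)^{\tau}$-isotypique sous $D^{\times}$ des deux membres : celle-ci est non nulle � gauche donc aussi � droite, ce qui force $\rho(\pi)^{\tau}\simeq\rho(\pi^{\tau})$ par irr�ductibilit�. En prenant alors $\Hom_{D^{\times}}(\rho(\pi)^{\tau},-)$ des deux c�t�s, on en d�duit $\sigma'(\pi)^{\tau}\simeq\sigma'(\pi^{\tau})$. Il n'y a pas d'obstacle s�rieux ici : l'argument est enti�rement formel, la seule v�rification non triviale �tant celle de la premi�re �tape, qui rel�ve de la manipulation standard du twist galoisien.
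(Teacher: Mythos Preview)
Your proof is correct. Both you and the paper begin with the same cohomological observation: since $H^{d-1}_{c}(\mlt^{\rm ca},\Zl)$ is defined over $\Zl$, the functor $\pi\mapsto\Hom_{\Zl G}(H^{d-1}_{c},\pi)$ commutes with Galois twists, so by the main isomorphism of Th\'eor\`eme~1 the map $\pi\mapsto\rho(\pi)\otimes\sigma'(\pi)$ is Galois-compatible.

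The two proofs diverge in how they extract the individual compatibilities from this. The paper imports the Galois-compatibility of $\pi\mapsto\rho(\pi)$ from an external source --- the Brauer character characterization of \cite{jlmodl} --- and then divides out to get the compatibility of $\pi\mapsto\sigma'(\pi)$. You instead separate the tensor product directly: from $\rho(\pi)^{\tau}\otimes\sigma'(\pi)^{\tau}\simeq\rho(\pi^{\tau})\otimes\sigma'(\pi^{\tau})$ with all four factors irreducible, the $D^{\times}$-isotypic decomposition forces $\rho(\pi)^{\tau}\simeq\rho(\pi^{\tau})$, and then the $W_K$-factor follows. Your route is more self-contained, needing no input beyond Th\'eor\`eme~1 itself; the paper's route has the mild advantage of giving an independent confirmation (via Brauer characters) of the $\rho$-compatibility.
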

\begin{proof}
  La compatibilité de  $\pi\mapsto \rho(\pi)$ à l'action de Galois découle de la
  caractérisation par les caractères de Brauer de \cite{jlmodl}.
Le point i) du théorème 1 nous assure que l'application $\pi\mapsto
\rho(\pi)\otimes\sigma'(\pi)$ est aussi compatible à l'action de
Galois. Il s'ensuit que $\pi\mapsto \sigma'(\pi)$ l'est aussi.
\end{proof}

\alin{Correspondance entre $\Fl$-représentations}
Dans le cas $\ell$-adique, on sait que les $\oQl$-représentations
supercuspidales sont définies sur leur corps de rationalité, mais ce
n'est pas toujours le cas pour les représentations de $D^{\times}$ et $W_{K}$.
Cette obstruction n'existe pas sur $\oFl$ puisque tout corps fini est
commutatif. 
On déduit donc du lemme précédent :
\begin{cor}
  Pour tout corps fini $\FM$ de caractéristique $\ell$,
 les correspondances $\pi\mapsto \sigma'(\pi)$ et $\pi\mapsto
      \rho(\pi)$ descendent en des correspondances entre
      classes de $\FM$-représentations \emph{absolument irréductibles}, la
      première étant une bijection entre ``supercuspidales'' et ``de
      dimension $d$'', la seconde étant injective. De plus,  on a encore
$$\Hom_{\Zl G}\left(H^{d-1}_{c}(\mlt^{\rm ca},\Zl),\pi\right) \simeq \rho(\pi)\otimes_{\FM}\sigma'(\pi).$$
\end{cor}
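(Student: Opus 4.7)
La stratégie est de combiner l'équivariance galoisienne du lemme précédent avec le fait classique qu'au-dessus d'un corps fini $\FM$ de caractéristique $\ell$, toute $\FM$-représentation absolument irréductible se relève sans obstruction : précisément, elle correspond bijectivement à une orbite sous $\gal(\oFl/\FM)$ de $\oFl$-représentations absolument irréductibles. Si $\pi$ est une telle $\FM$-représentation et $\wt\pi$ un constituant irréductible de $\pi\otimes_{\FM}\oFl$, alors $\pi\otimes_{\FM}\oFl\simeq\bigoplus_{\tau}\wt\pi^{\tau}$, où $\tau$ parcourt $\gal(\oFl/\FM)/\gal(\oFl/\FM(\wt\pi))$ et $\FM(\wt\pi)$ désigne le corps de définition de $\wt\pi$. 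Cette assertion, qui repose sur la trivialité du groupe de Brauer d'un corps fini, vaut uniformément pour les représentations lisses (resp. continues) de $G$, $D^{\times}$ et $W_{K}$ considérées.

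J'appliquerais ensuite cette équivalence aux trois groupes. Le lemme précédent assure que les applications $\wt\pi \mapsto \rho(\wt\pi)$ et $\wt\pi \mapsto \sigma'(\wt\pi)$ sont $\gal(\oFl/\Fl)$-équivariantes, donc a fortiori $\gal(\oFl/\FM)$-équivariantes. Elles envoient par conséquent une orbite sur une orbite, induisant au niveau des classes de $\FM$-représentations absolument irréductibles les correspondances voulues. La bijection entre ``supercuspidales'' et ``de dimension $d$'' pour $\sigma'$, ainsi que l'injectivité de $\rho$ au niveau $\FM$, découlent alors immédiatement des énoncés analogues sur $\oFl$ fournis par le théorème 1.

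Pour l'isomorphisme cohomologique, je procéderais par descente fidèlement plate le long de $\FM\subset\oFl$. En appliquant $-\otimes_{\FM}\oFl$ au membre de gauche et en utilisant le théorème 1 pour chaque Galois-conjugué, on obtient
$$ \Hom_{\Zl G}(H^{d-1}_{c}(\mlt^{\rm ca},\Zl),\pi)\otimes_{\FM}\oFl
\simeq \bigoplus_{\tau} \rho(\wt\pi^{\tau})\otimes_{\oFl}\sigma'(\wt\pi^{\tau}), $$
et l'équivariance galoisienne permet d'identifier canoniquement ce $\oFl$-espace à $(\rho(\pi)\otimes_{\FM}\sigma'(\pi))\otimes_{\FM}\oFl$. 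Par descente, l'isomorphisme cherché existe donc au niveau $\FM$.

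La seule vérification non formelle consiste à s'assurer que cet isomorphisme, sommé sur l'orbite galoisienne, est effectivement $\gal(\oFl/\FM)$-équivariant, ce qui résulte de la fonctorialité des constructions et du fait que les actions de Galois et celles de $G$, $D^{\times}$, $W_{K}$ sur la cohomologie de $\mlt$ commutent entre elles. Aucune étape ne constitue un obstacle sérieux : tout le contenu non trivial est concentré dans le lemme de Galois-équivariance qui précède et dans la version $\oFl$-linéaire du théorème 1.
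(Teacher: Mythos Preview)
Your approach is correct and matches the paper's, which simply records that the corollary follows from the Galois-equivariance lemma together with the triviality of the Brauer group of a finite field. One small remark: since you are working with \emph{absolutely} irreducible $\FM$-representations, the decomposition $\pi\otimes_{\FM}\oFl\simeq\bigoplus_{\tau}\wt\pi^{\tau}$ always collapses to a single term (the field of rationality $\FM(\wt\pi)$ is contained in $\FM$, so the index set is trivial); your orbit language is thus more general than needed, and what is really being used is that Galois-equivariance sends fixed points to fixed points. For the cohomological isomorphism, the descent step can be shortened: both $\Hom_{\Zl G}(H^{d-1}_c,\pi)$ and $\rho(\pi)\otimes_{\FM}\sigma'(\pi)$ are absolutely irreducible $\FM$-representations of $D^{\times}\times W_K$ that become isomorphic over $\oFl$, hence are already isomorphic over $\FM$ --- no need to track equivariance of a particular isomorphism.
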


Changeant de point de vue, on passe aux représentations irréductibles
sur $\Fl$. Le commutant d'une telle représentation
$\pi$ est une extension finie $\FM_{\pi}$ de $\Fl$ de degré $d_{\pi}$.
Si l'on voit $\pi$ comme une $\FM_{\pi}$-représentation irréductible, 
 le corollaire ci-dessus nous fournit des
$\FM_{\pi}$-représentations absolument irréductibles $\sigma'_{\FM_{\pi}}(\pi)$ et
$\rho_{\FM_{\pi}}(\pi)$. Comme les $\gal(\FM_{\pi}/\Fl)$-orbites de
ces représentations ont le même cardinal $d_{\pi}$ que celui de la
$\gal(\FM_{\pi}/\Fl)$-orbite de $\pi$, elles sont aussi irréductibles,
\emph{en tant que $\Fl$-représentations.}

\begin{cor} Par le procédé ci-dessus, on obtient :
  \begin{enumerate}
  \item Une bijection $\pi\mapsto \sigma'(\pi)$ entre classes de
    $\Fl$-représentations irréductibles supercuspidales de $G$ et
    classes de $\Fl$-représentations irréductibles de $W_{K}$ de
    dimension $d$ sur leur commutant,
  \item Une injection $\pi\mapsto \rho(\pi)$ des classes de
    $\Fl$-représentations irréductibles supercuspidales de $G$ dans
    les classes de $\Fl$-représentations irréductibles de
    $D^{\times}$,
  \end{enumerate}
uniquement caractérisées par la propriété suivante :
\begin{enumerate} \setcounter{enumi}{2}
\item la $\Fl$-représen\-tation (irréductible) $\Hom_{\Zl G}\left(H^{d-1}_{c}(\mlt^{\rm
  ca},\Zl),\pi\right)$ de $D^{\times}\times W_{K}$ est
  $\rho(\pi)$-isotypique et $\sigma'(\pi)$-isotypique.

\end{enumerate}
\end{cor}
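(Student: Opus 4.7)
The plan is to reduce everything to the previous corollary (which handles $\FM$-representations for any finite field $\FM$ of characteristic $\ell$) applied with $\FM = \FM_\pi := \endo{\Fl G}{\pi}$, then use restriction of scalars from $\FM_\pi$ down to $\Fl$, controlling the descent via Galois-equivariance of the $\oFl$-correspondences (the lemma established earlier in this subsection). Concretely, given an irreducible $\Fl$-representation $\pi$ of $G$, its commutant $\FM_\pi$ is a finite extension of $\Fl$ of some degree $d_\pi$; viewed over $\FM_\pi$, $\pi$ is absolutely irreducible, so the previous corollary yields absolutely irreducible $\FM_\pi$-representations $\sigma'_{\FM_\pi}(\pi)$ and $\rho_{\FM_\pi}(\pi)$, and I define $\sigma'(\pi)$ (resp.\ $\rho(\pi)$) as these with scalars restricted to $\Fl$.

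The first thing to check is $\Fl$-irreducibility. After base change to $\oFl$, one has $\pi\otimes_{\Fl}\oFl=\bigoplus_{\gamma\in\gal(\FM_\pi/\Fl)}\pi^\gamma$ with the $\pi^\gamma$ pairwise non-isomorphic absolutely irreducible $\oFl$-supercuspidals. By Galois-equivariance of $\pi\mapsto \sigma'(\pi)$ over $\oFl$, $\sigma'(\pi)\otimes_\Fl\oFl$ is the analogous Galois orbit $\bigoplus_\gamma \sigma'(\pi^\gamma)$, and since $\sigma'$ is \emph{injective} on $\oFl$-supercuspidals (Theorem~1), this orbit still has cardinality exactly $d_\pi$. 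Hence $\sigma'(\pi)$ is $\Fl$-irreducible. The same argument using injectivity of $\rho$ on $\oFl$-supercuspidals gives irreducibility of $\rho(\pi)$.

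Next, classes of irreducible $\Fl$-representations (of $G$, of $D^\times$, or of $W_K$ of fixed dimension over the commutant) correspond bijectively to $\gal(\oFl/\Fl)$-orbits of isomorphism classes of absolutely irreducible $\oFl$-representations of the same kind. Galois-equivariance of the $\oFl$-correspondences then transfers bijectivity of $\sigma'$ and injectivity of $\rho$ from the $\oFl$-setting to the $\Fl$-setting: a Galois orbit on one side is sent to a Galois orbit of the same cardinality on the other. This yields (i) and (ii). For property (iii), the $\FM_\pi$-version of the $\Hom$-formula from the previous corollary states that $\Hom_{\Zl G}(H^{d-1}_c(\mlt^{\rm ca},\Zl),\pi) \simeq \rho_{\FM_\pi}(\pi)\otimes_{\FM_\pi}\sigma'_{\FM_\pi}(\pi)$ as $\FM_\pi$-representations of $D^\times\times W_K$; restricting scalars to $\Fl$ exhibits this space as isotypic of type $\rho(\pi)$ as a $D^\times$-representation and isotypic of type $\sigma'(\pi)$ as a $W_K$-representation. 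Uniqueness of the correspondences from this isotypy characterization is automatic, since an $\Fl$-irreducible representation is recovered (up to isomorphism) from any isotypic space of its type.

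The only delicate point is step two: one must verify that the Galois orbit of $\sigma'(\pi^\gamma)$ (resp.\ of $\rho(\pi^\gamma)$) indeed has cardinality $d_\pi$ rather than a proper divisor. This follows from injectivity of the correspondences on $\oFl$-supercuspidals together with the fact that the Galois action commutes with the correspondences. Once this descent of the Galois orbit structure is in place, the remainder of the argument is formal.
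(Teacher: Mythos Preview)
Your proposal is correct and follows essentially the same approach as the paper. The paper's own argument is in fact the short paragraph immediately preceding the corollary: view $\pi$ as an absolutely irreducible $\FM_\pi$-representation, apply the previous corollary, and observe that the $\gal(\FM_\pi/\Fl)$-orbits of $\sigma'_{\FM_\pi}(\pi)$ and $\rho_{\FM_\pi}(\pi)$ have the same cardinality $d_\pi$ as that of $\pi$, hence these are $\Fl$-irreducible; your write-up simply spells out why the orbit cardinalities agree (injectivity of the $\oFl$-correspondences plus Galois-equivariance) and makes explicit the passage to isotypy in (iii), which the paper leaves implicit.
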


\alin{Scindage de catégories}
Soit $\pi$ une $\Fl$-représentation irréductible supercuspidale de
$G$. Choisissons un plongement $\FM_{\pi}\injo \oFl$.
La $\oFl$-représentation  $\o\pi:=\pi\otimes_{\FM_{\pi}}\oFl$ contient
un type $(J,\o\lambda')$ comme dans
le paragraphe \ref{propscindageG}. L'unicité de la paire
$(J,\o\lambda')$ à $G$-conjugaison près nous assure que le corps de
rationalité de $\o\lambda'$ est le même que celui de $\o\pi$, à
savoir $\FM_{\pi}$. Donc  $\o\lambda'$ est de la forme
$\lambda'\otimes_{\FM_{\pi}}\oFl$. Choisissons une enveloppe
projective $P_{\lambda'}$ de $\lambda'$ dans
$\Mo{\Wi(\FM_{\pi})}{J/\varpi^{\ZM}}$ (ici $\Wi(\FM_{\pi})$ désigne
l'anneau des vecteurs de Witt à coefficients dans $\FM_{\pi}$), et posons
$P_{\pi}^{\varpi}:=\cind{J}{G}{P_{\lambda'}}$. C'est une
$\FM_{\pi}$-représentation, mais nous oublions maintenant la
$\FM_{\pi}$-structure. 

\begin{pro}
  La sous-catégorie pleine $\CC_{\pi}^{\varpi}$ de $\Mo{\Zl}{G/\varpi^{\ZM}}$
  formée des objets dont tous les sous-quotients irréductibles sont isomorphes à
  $\pi$, est facteur direct et pro-engendrée par $P_{\pi}^{\varpi}$. 
De plus, on a 
$$\ZG_{\pi}^{\varpi}:=\endo{G}{P_{\pi}^{\varpi}}\simeq \Wi(\FM_{\pi})[{\rm
  Syl}_{\ell}(\FM_{q^{f}}^{\times} \times f\ZM/dv\ZM ]$$
où $f$ est la longueur de $\pi_{|G^{0}}$ sur $\FM_{\pi}$.  
\end{pro}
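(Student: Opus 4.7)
Mon plan consiste à déduire cet énoncé de son analogue sur $\bZl$ établi dans la proposition~\ref{propscindageG}, par descente le long de l'inclusion $\Wi(\FM_{\pi})\subset \bZl$. D'abord je vérifierai que $P_{\pi}^{\varpi}$ est projectif dans $\Mo{\Zl}{G/\varpi^{\ZM}}$ : par construction $P_{\lambda'}$ est projectif dans $\Mo{\Wi(\FM_{\pi})}{J/\varpi^{\ZM}}$, donc projectif sur $\Zl[J/\varpi^{\ZM}]$ puisque $\Wi(\FM_{\pi})$ est $\Zl$-libre de rang fini, et l'induction compacte $\cind{J}{G}{-}$ préserve la projectivité. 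La propriété de facteur direct de $\CC_{\pi}^{\varpi}$ résultera alors de l'annulation des Ext entre $\pi$ et toute autre $\Fl$-irréductible non isomorphe : ces Ext sont calculables après extension fidèlement plate à $\bZl$, où ils s'annulent par le scindage de \ref{propscindageG} et la supercuspidalité de $\o\pi$.

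Le point central est de montrer que les sous-quotients $\Fl$-irréductibles de $P_{\pi}^{\varpi}$ sont tous isomorphes à $\pi$, et que $P_{\pi}^{\varpi}$ se surjecte sur $\pi$. Pour le premier point, la base change $P_{\pi}^{\varpi}\otimes_{\Wi(\FM_{\pi})}\bZl$ s'identifie au pro-générateur $P_{\o\pi}^{\varpi}$ de \ref{propscindageG} dont les sous-quotients irréductibles sont tous isomorphes à $\o\pi$ ; si $\pi'$ est un $\Fl$-sous-quotient de $P_{\pi}^{\varpi}$, alors $\pi'\otimes_{\Fl}\oFl$ est une somme de conjuguées galoisiennes de $\o\pi$, ce qui force $\pi'\simeq \pi$ puisque $\o\pi$ est défini sur $\FM_{\pi}$ et que le type utilisé est lui aussi $\FM_{\pi}$-rationnel. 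La surjection $P_{\pi}^{\varpi}\twoheadrightarrow \pi$ découlera de la réciprocité de Frobenius et de l'inclusion $\lambda'\hookrightarrow \pi_{|J}$ inhérente au choix du type.

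Pour le calcul de $\ZG_{\pi}^{\varpi}$, la réciprocité de Frobenius donne
$$\ZG_{\pi}^{\varpi} = \Hom_{J}(P_{\lambda'},\cind{J}{G}{P_{\lambda'}}_{|J}),$$
et l'extension des scalaires à $\bZl$ fournit d'après \ref{propscindageG} l'algèbre $\bZl[{\rm Syl}_{\ell}(\FM_{q^{f}}^{\times}\times f\ZM/dv\ZM)]$ (l'égalité entre la longueur de $\pi_{|G^{0}}$ sur $\FM_{\pi}$ et celle de $\o\pi_{|G^{0}}$ sur $\oFl$ résultant de $\pi\otimes_{\FM_{\pi}}\oFl\simeq\o\pi$). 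La difficulté principale sera alors la descente effective à $\Wi(\FM_{\pi})$ : il faudra exhiber un modèle $\Wi(\FM_{\pi})$-rationnel explicite de l'algèbre d'entrelacement, et en particulier vérifier que les racines $\ell$-primaires de l'unité intervenant dans le facteur ${\rm Syl}_{\ell}(\FM_{q^{f}}^{\times})$ appartiennent déjà à $\Wi(\FM_{\pi})$ --- ce qui tient essentiellement au fait que la théorie des types simples construit les entrelacements à partir de données définies sur $\FM_{\pi}$ lorsque $\pi$ est $\Fl$-irréductible de commutant $\FM_{\pi}$.
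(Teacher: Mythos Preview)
Your approach via base change to $\bZl$ and descent is workable, but the paper takes a more direct route that avoids most of the descent gymnastics. The key observation in the paper is that $\lambda'$ is not only $\FM_{\pi}$-absolutely irreducible but in fact $\Fl J$-\emph{irreducible}: indeed, the rationality field of $\o\lambda'$ equals $\FM_{\pi}$ (the same as that of $\o\pi$, by uniqueness of the type up to conjugacy), so the $\gal(\FM_{\pi}/\Fl)$-conjugates of $\o\lambda'$ are pairwise non-isomorphic and $\lambda'$ viewed over $\Fl$ stays irreducible. Since $P_{\lambda'}$ admits an $\FM_{\pi}J$-filtration with all graded pieces isomorphic to $\lambda'$, and $\lambda'$ is $\Fl J$-irreducible, every $\Fl J$-irreducible subquotient of $P_{\lambda'}$ is $\lambda'$. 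From there the argument is literally that of \ref{propscindageG}~i) (via Lemma~\ref{lemmescind}), without any base change. This is shorter than your Galois-orbit argument for the subquotients of $P_{\pi}^{\varpi}$.

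Two remarks on your version. First, a small slip: to control the $\Fl$-subquotients of $P_{\pi}^{\varpi}$ you should base-change $\otimes_{\Zl}\bZl$ (or $\otimes_{\Fl}\oFl$), not $\otimes_{\Wi(\FM_{\pi})}\bZl$; the former yields $\bigoplus_{\iota}P_{\o\pi_{\iota}}^{\varpi}$ over the embeddings $\iota:\FM_{\pi}\hookrightarrow\oFl$, whose $\oFl$-subquotients are exactly the Galois conjugates of $\o\pi$, and then your conclusion follows. Second, your worry about roots of unity in $\Wi(\FM_{\pi})$ is misplaced: the target $\Wi(\FM_{\pi})[{\rm Syl}_{\ell}(\FM_{q^{f}}^{\times}\times f\ZM/dv\ZM)]$ is a group algebra, which makes sense over any ring. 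The genuine descent issue is rather that one wants $\endo{\Zl G}{P_{\pi}^{\varpi}}$, not merely $\endo{\Wi(\FM_{\pi})G}{P_{\pi}^{\varpi}}$. The paper handles this implicitly: the calculation of \ref{propscindageG}~ii) goes through verbatim over $\Wi(\FM_{\pi})$ and gives a \emph{commutative} algebra; since $\Wi(\FM_{\pi})$ sits inside $\endo{\Zl G}{P_{\pi}^{\varpi}}$ and the latter becomes commutative after $\otimes_{\Zl}\bZl$, the $\Wi(\FM_{\pi})$-action is central, so every $\Zl G$-endomorphism is automatically $\Wi(\FM_{\pi})$-linear and the two commutants coincide.
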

\begin{proof}
Les $\FM_{\pi} J$-sous-quotients irrédutibles de $P_{\lambda'}$ sont tous
isomorphes à $\lambda'$. 
Comme $\lambda'$ est $\Fl J$-irréductible, la même propriété est vraie
pour les $\Fl J$-sous-quotients, et on en déduit les deux premières
assertions comme dans \ref{propscindageG}. Le calcul de commutant a
été effectué dans la proposition \ref{propscindageG} ii).
\end{proof}

De la même manière, on construit 
  un pro-générateur $P_{\rho(\pi)}^{\varpi}$ de la sous-catégorie
  $\CC_{\rho(\pi)}^{\varpi}$ de $\Mo{\Zl}{D^{\times}}$ définie de
  manière maintenant usuelle, et dont le commutant $\ZG_{\rho(\pi)}^{\varpi}$
  est isomorphe (abstraitement) à $\ZG_{\pi}^{\varpi}$.

\alin{Les résultats}\label{resultdescente}
Décomposons $$H^{d-1}_{c}(\mlt^{\rm ca}/\varpi^{\ZM},\Zl)_{\scusp} =
\bigoplus_{\pi\in{\rm Scusp}_{\Fl}(G/\varpi^{\ZM})}  
H^{d-1}_{c}(\mlt^{\rm ca}/\varpi^{\ZM},\Zl)_{\CC_{\pi}^{\varpi}}.$$
Par les mêmes arguments que le théorème \ref{theocoho} et le
théorème 4, on prouve :

\begin{thm}
Il existe un isomorphisme $\ZG_{\pi}^{\varpi}\simto
\ZG_{\rho(\pi)}^{\varpi}$ et une décomposition
$$P_{\pi}^{\varpi}\otimes_{\ZG_{\pi}^{\varpi}}P_{\rho(\pi)^{\vee}}^{\varpi}
\otimes_{\ZG_{\pi}^{\varpi}} P_{\sigma'(\pi)^{\vee}}^{\varpi} \simto
H^{d-1}_{c}(\mlt^{\rm ca}/\varpi^{\ZM},\Zl)_{\CC_{\pi}^{\varpi}},$$
où $P_{\sigma'(\pi)^{\vee}}^{\varpi}$ est un $\varphi$-relèvement de
$\sigma'(\pi)^{\vee}$ sur $\ZG_{\pi}^{\varpi}$, qui est universel pour
les $W(\FM_{\pi})$-$\varphi$-déformations.
\end{thm}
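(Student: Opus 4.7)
The plan is to mirror the arguments of Theorem \ref{theocoho} and Theorem 4, descending them to $\Zl$ via the $\FM_{\pi}$-rational pro-generators $P_{\pi}^{\varpi}$ and $P_{\rho(\pi)^{\vee}}^{\varpi}$ just constructed. Write $\HC_{\varpi}:=H^{d-1}_{c}(\mlt^{\rm ca}/\varpi^{\ZM},\Zl)_{\CC_{\pi}^{\varpi}}$. The first step is to check that $\HC_{\varpi}$ is projective and of finite type in $\CC_{\pi}^{\varpi}$: it is a direct factor of the supercuspidal part, itself projective and locally of finite type by Proposition \ref{propproj} (the quotient by the free central action of $\varpi^{\ZM}$ causes no harm), and generation by $H$-invariants for any congruence subgroup $H$ with $\pi^{H}\neq 0$ yields finite type in $\CC_{\pi}^{\varpi}$.

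I would then define the $W_{K}$-part by
\[
 P_{\sigma'(\pi)^{\vee}}^{\varpi} \;:=\; \Hom_{\Zl[G\times D^{\times}]}\bigl(P_{\pi}^{\varpi}\otimes_{\Zl}P_{\rho(\pi)^{\vee}}^{\varpi},\; \HC_{\varpi}\bigr),
\]
naturally a module over $\ZG_{\pi}^{\varpi}\otimes_{\Zl}\ZG_{\rho(\pi)^{\vee}}^{\varpi}$ equipped with a $W_{K}$-action. Copying verbatim the proof of Theorem \ref{theocoho}(i)--(ii), projectivity and finite type of $\HC_{\varpi}$ combined with the pro-generator equivalence make $P_{\sigma'(\pi)^{\vee}}^{\varpi}$ a projective $\ZG_{\pi}^{\varpi}$-module of finite type, hence free by locality of $\ZG_{\pi}^{\varpi}$, and a (co)specialization at the residue field, using the descended form of Theorem 1 given in the corollary preceding the statement, pins the rank at $1$. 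The symmetric computation over $\ZG_{\rho(\pi)^{\vee}}^{\varpi}$, together with the injectivity--then--surjectivity argument for maps between local $\ell$-torsion-free reduced $\Zl$-algebras generated by their roots of unity (as in the proof of Theorem \ref{theocoho}(ii), the bijection on $\oQl$-characters coming from Jacquet-Langlands), will then produce the isomorphism $\ZG_{\rho(\pi)^{\vee}}^{\varpi}\simto \ZG_{\pi}^{\varpi}$. The evaluation map yields the required decomposition
\[
 P_{\pi}^{\varpi}\otimes_{\ZG_{\pi}^{\varpi}}P_{\rho(\pi)^{\vee}}^{\varpi}\otimes_{\ZG_{\pi}^{\varpi}} P_{\sigma'(\pi)^{\vee}}^{\varpi} \;\simto\; \HC_{\varpi}.
\]
That $P_{\sigma'(\pi)^{\vee}}^{\varpi}$ is a $\varphi$-lift of $\sigma'(\pi)^{\vee}$ on $\ZG_{\pi}^{\varpi}$ is then automatic from the freeness of rank $d$, the identification of its reduction with $\sigma'(\pi)^{\vee}$ via the decomposition, and the fact that every $\oQl$-supercuspidal lift $\wt\pi^{\dag}$ of $\pi$ has $\sigma(\wt\pi^{\dag})$ of determinant $1$ on $\varphi$, exactly as in the proof of Theorem 4.

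The main obstacle is the universality for $W(\FM_{\pi})$-$\varphi$-deformations. Unlike in Theorem 4, I cannot directly invoke a $\breve\ZM_{\ell}$-linear universal property, since $\ZG_{\pi}^{\varpi}$ is only a $W(\FM_{\pi})$-algebra. The plan is to proceed by faithfully flat descent along $W(\FM_{\pi})\to \breve\ZM_{\ell}$: after base change, the ring $\ZG_{\pi}^{\varpi}\otimes_{W(\FM_{\pi})}\breve\ZM_{\ell}$ decomposes as a product indexed by the embeddings $\FM_{\pi}\hookrightarrow \oFl$, each factor being the universal $\varphi$-deformation ring of the corresponding $\gal(\FM_{\pi}/\Fl)$-conjugate of $\pi$ furnished by Theorem 4, and $P_{\sigma'(\pi)^{\vee}}^{\varpi}\otimes_{W(\FM_{\pi})}\breve\ZM_{\ell}$ becomes the product of the associated universal families. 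Since the functor of $\varphi$-deformations of $\sigma'(\pi)^{\vee}$ on finitely generated flat modules is an fpqc sheaf on affine $W(\FM_{\pi})$-algebras, this descends to give the required universality of $(\ZG_{\pi}^{\varpi},P_{\sigma'(\pi)^{\vee}}^{\varpi})$ over $W(\FM_{\pi})$, and it is this descent step that carries the only genuinely new technical content of the argument.
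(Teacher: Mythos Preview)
Your argument for the ring isomorphism and the tensor decomposition is correct and is exactly what the paper intends: the text simply says ``par les m\^emes arguments que le th\'eor\`eme \ref{theocoho} et le th\'eor\`eme~4'', and your steps (projectivity of $\HC_{\varpi}$, definition of $P_{\sigma'(\pi)^{\vee}}^{\varpi}$ as the double $\Hom$, rank~$1$ by cospecialisation, the roots-of-unity argument for $\ZG_{\rho(\pi)^{\vee}}^{\varpi}\simto\ZG_{\pi}^{\varpi}$, and the determinant computation for the $\varphi$-condition) reproduce that faithfully.

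The divergence is in the universality claim, and there your descent argument has a concrete error. You base-change along $W(\FM_{\pi})\to\breve\ZM_{\ell}$ and assert that $\ZG_{\pi}^{\varpi}\otimes_{W(\FM_{\pi})}\breve\ZM_{\ell}$ splits as a product indexed by the embeddings $\FM_{\pi}\hookrightarrow\oFl$. It does not: since $\ZG_{\pi}^{\varpi}\simeq W(\FM_{\pi})[{\rm Syl}_{\ell}(\FM_{q^{f}}^{\times}\times f\ZM/dv\ZM)]$, tensoring over $W(\FM_{\pi})$ gives the \emph{local} ring $\breve\ZM_{\ell}[{\rm Syl}_{\ell}(\dots)]$. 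The product over embeddings only appears if you tensor over $\Zl$. Even after fixing the base, invoking ``the functor of $\varphi$-deformations is an fpqc sheaf on affine $W(\FM_{\pi})$-algebras'' is not innocent: the deformation functor is a~priori only defined on complete local noetherian rings, and extending it to arbitrary affines while keeping the same representing object needs justification.

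The paper's route is both simpler and direct: one repeats the proof of Corollary~\ref{coroW}~ii) with $W(\FM_{\pi})$ in place of $\Zlnr$ (or $\breve\ZM_{\ell}$). The functors $I_{\wt\tau}$, $R_{\wt\tau}$ reduce the question to deformations of a character of a finite abelian $\ell$-group, and the criterion ``$\wt\sigma\otimes\oQl$ contains every $\varphi$-lift exactly once $\Rightarrow$ $\wt\sigma$ is universal'' is proved by the same roots-of-unity argument you already used for the ring isomorphism; nothing in that argument is specific to $\Zlnr$. So the ``genuinely new technical content'' you flag is not needed: the universality follows from the same argument as in Theorem~4, transported to $W(\FM_{\pi})$.
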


\`A partir de là, on définit deux foncteurs 
$$ \Pi\mapsto \rho(\Pi):= \bigoplus_{\pi\in{\rm
    Scusp}_{\Fl}(G/\varpi^{\ZM})}  
P_{\rho(\pi)}^{\varpi}\otimes_{\ZG_{\pi}^{\varpi}}\Hom_{\Zl
  G}(P_{\pi}^{\varpi},\pi) $$
$$ \hbox{et }\,\,\Pi\mapsto \sigma'(\Pi):= \bigoplus_{\pi\in{\rm
    Scusp}_{\Fl}(G/\varpi^{\ZM})}  
P_{\sigma'(\pi)}^{\varpi}\otimes_{\ZG_{\pi}^{\varpi}}\Hom_{\Zl
  G}(P_{\pi}^{\varpi},\pi).$$
On obtient alors le même énoncé (et avec la même preuve) que le
théorème 3 mais pour $R$ une $\Zl$-algèbre, et en se restreignant aux
$R$-familles de $G/\varpi^{\ZM}$-représentations, resp. de
$D^{\times}/\varpi^{\ZM}$-représentations, resp. de
$W_{K}$-représentations de déterminant $1$
sur $\varphi$.


\begin{rema}
Lorsque l'on enlève la condition ``$\varpi$'', on ne peut parfois pas trouver de $P_{\rho(\pi)}$
 dont le commutant soit un anneau commutatif (ou,
  de manière équivalente, tel que $\Hom_{\Zl
    G}(P_{\rho(\pi)},\rho(\pi))$ soit de dimension $1$). Ceci est lié
  au fait que $\rho(\pi)$ et $\rho(\pi)^{0}$ n'ont pas nécessairement
  le même corps de rationalité. Lorsque cela se produit, on ne peut
  plus décomposer $H^{d-1}_{c}(\mlt^{\rm ca},\Zl)_{\CC_{\pi}}$ en un
 triple produit tensoriel. Plus précisément, on ne peut pas séparer
 l'action de $D^{\times}$ de celle de $W_{K}$.

\end{rema}

\appendix

\section{Propriétés de perfection du complexe de cohomologie}

\subsection{Perfection dans $\DC(G)$}
\label{appperfG}
Rappelons l'énoncé que nous voulons prouver, pour une $\Zl$-algèbre
finie $\Lambda$ :

\begin{pro}
Le complexe $R\Gamma_{c}(\mlt^{\rm ca},\Lambda) \in \DC^{b}(\Mo{\Lambda}{G})$
est \emph{localement parfait}, d'amplitude parfaite $[0,2d-2]$.
\end{pro}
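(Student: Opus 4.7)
The strategy, following SGA~4 XVII (5.2.10), is to realize $R\Gamma_c(\mlt^{\rm ca}, \Lambda)$ as the total complex of a finite \v{C}ech bicomplex coming from an admissible cover of a fixed finite-dimensional base, namely the target of the Gross-Hopkins period morphism. The local noetherianity of $\Mo{\Lambda}{G}$ (Fact~\ref{noetherien}) then ensures that the resulting terms stay in the category of projective locally-of-finite-type modules.

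The formula $R\Gamma_c(\mlt^{\rm ca}, \Lambda) \simeq \cind{G^0}{G}{R\Gamma_c(\mlt^{(0), \rm ca}, \Lambda)}$ used in the proof of Proposition~\ref{proptfZf}, together with the fact that $\cind{G^0}{G}{-}$ preserves projectivity and local finite type (it is induction from an open finite-index subgroup), reduces the claim to local perfection of $R\Gamma_c(\mlt^{(0), \rm ca}, \Lambda) \in \DC^b(\Mo{\Lambda}{G^0})$. Now invoke the Gross-Hopkins period morphism $\pi_{GH} : \mlt^{(0)} \to \PM^{d-1}_{\knr}$: it is $G^0$-invariant on a base of trivial $G^0$-action, and each finite-level truncation $\mltn^{(0)} \to \PM^{d-1}$ is \'etale with Galois group $G^0/H_n$. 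Using the quasi-compactness of $\PM^{d-1}$ and the local triviality of \'etale covers of affinoids, choose a finite admissible affinoid covering $(V_\alpha)_{\alpha \in A}$ such that, for $n$ large enough, each $\pi_{GH,n}^{-1}(V_\alpha)$ is a trivial $G^0/H_n$-torsor over $V_\alpha$.

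The \v{C}ech bicomplex for this cover computes $R\Gamma_c(\mltn^{(0), \rm ca}, \Lambda)$, with bidegree-$(k,q)$ term a direct sum of $H^q_c(V_{\alpha_0 \cdots \alpha_k}, \Lambda) \otimes_\Lambda \Lambda[G^0/H_n]$. Passing to the colimit over $n$, the group-theoretic factor becomes $\cind{U_{\alpha_\bullet}}{G^0}{\Lambda}$ for an appropriate open compact stabilizer $U_{\alpha_\bullet}\subset G^0$, hence projective in $\Mo{\Lambda}{G^0}$; tensored with the finitely generated $\Lambda$-module $H^q_c(V_{\alpha_0 \cdots \alpha_k}, \Lambda)$, this gives a projective locally-of-finite-type $\Lambda G^0$-module (here the noetherianity of $\HC_\Lambda(G,H_n)$ is what makes ``locally of finite type'' closed under such direct sums). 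Replacing each $R\Gamma_c(V_{\alpha_0 \cdots \alpha_k}, \Lambda)$ by a perfect $\Lambda$-complex of amplitude $[0, 2d-2]$, which is possible since $V_{\alpha_0 \cdots \alpha_k}$ is a smooth rigid space of dimension at most $d-1$, and totalizing yields a bounded complex of projective locally-of-finite-type $\Lambda G^0$-modules.

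The main obstacle is ensuring that the perfect amplitude is exactly $[0, 2d-2]$, not the larger interval $[0, 2d-2+|A|-1]$ that a naive totalization would give. To pin this down one exploits that on a smooth Stein affinoid of dimension $d-1$ the compactly supported cohomology is concentrated in degrees $[d-1, 2d-2]$, and combines this with a careful choice of \v{C}ech nerve (refining the cover so that high multi-intersections collapse) together with the Godement-type resolutions that redistribute cohomological degrees into the projective resolution. A secondary technical point is to check that the trivializing cover can be chosen compatibly with the whole tower $(\mltn^{(0)})_n$ simultaneously; this uses quasi-compactness of $\PM^{d-1}$ and the fact that \'etale covers of affinoids are trivialized by finite admissible refinements. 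The local noetherianity of $\Mo{\Lambda}{G^0}$ is the key global ingredient that makes all these truncations and direct sums internal to the subcategory of locally-of-finite-type modules.
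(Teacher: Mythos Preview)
Your approach has a genuine gap at its core: the period morphism $\mltn^{(0)}\to\PM^{d-1}$ is \emph{not} a finite \'etale Galois cover with group $G^{0}/H_{n}$. The group $G^{0}=\ker(\val_{K}\circ\det)$ is not compact, and the geometric fibres of $\xi_{n}$ are infinite discrete sets (the paper records that the fibres of $\xi_{n,!}(\Lambda)$ are the $\HC$-modules $\CC_{c}(H\backslash G,\Lambda)$). This is an \'etale covering only in the de~Jong sense, and there is no finite admissible affinoid cover of $\PM^{d-1}$ over which it becomes a trivial torsor. Consequently your \v{C}ech bicomplex cannot be set up with terms of the shape $H^{q}_{c}(V_{\alpha_{\bullet}},\Lambda)\otimes_{\Lambda}\Lambda[G^{0}/H_{n}]$, and the passage to projective compactly-induced $G^{0}$-modules breaks down. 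The amplitude discussion is then moot.

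The paper's argument is structurally different and avoids any \v{C}ech construction. It works level by level with the Hecke algebra $\HC=\HC_{\Lambda}(G,H_{n})$: by noetherianity one represents $R\Gamma_{c}^{H}$ by a complex of finitely generated projective $\HC$-modules in degrees $\leq 2d-2$, truncates at $0$, and is left with proving that the degree-$0$ term $Q_{0}$ is flat. The period morphism enters not through a cover but through the identification $R\Gamma_{c}^{H}\simeq R\Gamma_{c}(\PM^{d-1,\rm ca},\xi_{n,!}(\Lambda))$ as complexes of $\HC$-modules. The key point is that the fibres $\CC_{c}(H\backslash G,\Lambda)$ of $\xi_{n,!}(\Lambda)$ are \emph{flat} over $\HC$ (a consequence of the direct-factor property of the subcategory generated by $H$-invariants), so for any finitely generated right $\HC$-module $M$ one gets $M\otimes^{L}_{\HC}R\Gamma_{c}^{H}\simeq R\Gamma_{c}(\PM^{d-1,\rm ca},M\otimes_{\HC}\xi_{n,!}(\Lambda))$, which manifestly has no cohomology in negative degrees. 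This gives $\mathrm{Tor}^{\HC}_{i}(M,Q_{0})=0$ for $i>0$, hence $Q_{0}$ is projective. The exact amplitude $[0,2d-2]$ is then read off from the specialisation $M=\mathbf{1}_{\Lambda}$, which collapses to $R\Gamma_{c}(\PM^{d-1,\rm ca},\Lambda)$.
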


Notre preuve est une adaptation de l'argument de Deligne et Lusztig
de \cite[Prop. 3.7]{DL}, avec quelques complications techniques.

\alin{Preuve dans le cas de torsion} Nous fixons un niveau $H=H_{n}$
et notons $\HC:=\HC_{\Lambda}(G,H_{n})$ l'algèbre de Hecke correspondante.
Nous noterons simplement
$R\Gamma_{c}^{H}$ le complexe $R\Gamma_{c}(\mlt^{\rm ca},\Lambda)^{H}$
qui est un objet de la catégorie dérivée $\DC^{b}(\Mod(\HC))$  des
$\HC$-modules à gauche. On veut montrer qu'il est parfait d'amplitude
contenue dans $[0,2d-2]$.

\emph{Première étape.} 
D'après \cite[Lemme 3.5.9]{lt}, ce complexe
s'identifie à $R\Gamma_{c}(\mltn,\Lambda)$. En particulier, 
ses groupes de cohomologie  sont
 de type fini sur $\HC$,
et nuls en dégré $> 2d-2$.
Par \ref{noetherien}, on peut donc représenter $R\Gamma_{c}^{H}$ par
un complexe $(P_{n}, d_{n})_{n\leq 2d-2}$ de $\HC$-modules projectifs de type fini, à composantes
nulles en degré $> 2d-2$. Tronquons ce complexe en degré $0$ en
remplaçant $P_{0}$ par $Q_{0}:=P_{0}/{\rm im}(d_{-1})$ et en posant
$Q_{i}=P_{i}$ pour $i>0$. On obtient
un complexe borné de $\HC$-modules, nul en dehors de $[0,2d-2]$, dont tous les termes sont
projectifs, sauf éventuellement $Q_{0}$, et isomorphe à
$R\Gamma_{c}^{H}$ dans $\DC^{b}(\Mod(\HC))$. 

\emph{Deuxième étape.} Il nous faut maintenant prouver
que $Q_{0}$ est projectif. Comme il est de présentation finie, il
suffira de prouver qu'il est \emph{plat}, \emph{i.e.} que pour tout
$i>0$ et pour tout $\HC$-module à droite $M$, le $\Lambda$-module
 $\tor{i}{M}{Q_{0}}{\HC}=\HC^{-i}(M\otimes^{L}_{\HC}Q_{0})$ est
 nul. Il est alors utile de remarquer que le morphisme canonique
 $R\Gamma_{c}^{H}\To{} Q_{0}$ dans $\DC^{b}(\Mod(\HC))$ induit justement des isomorphismes
$$ \HC^{-i}(M\otimes^{L}_{\HC}R\Gamma_{c}^{H})\simto \HC^{-i}(M\otimes^{L}_{\HC}Q_{0})$$
pour $i>0$ ; ceci se voit facilement sur la suite spectrale
$E_{1}^{pq}=\tor{p}{M}{Q_{q}}{\HC}\Rightarrow
\HC^{q-p}(M\otimes^{L}_{\HC}R\Gamma_{c}^{H})$
qui dégénère en $E_{2}$ puisque $E_{1}^{pq}=0$ dès que $pq\neq 0$.

\emph{Troisième étape.} Afin de mieux comprendre l'objet
$M\otimes^{L}_{\HC}R\Gamma_{c}^{H}$, nous exprimons le complexe
$R\Gamma_{c}^{H}$ d'une autre manière. Soit $\PC_{\rm
  LT}$
l'espace des périodes de Lubin-Tate,
et soit 
$$\xi_{n}:\mltn\To{}\PC_{\rm LT}^{\rm nr}\simeq\PM^{d-1}_{\knr}$$  le morphisme de
périodes de Gross-Hopkins et Rapoport-Zink. Comme $\xi_{n}$ est un
morphisme étale, on a $R\xi_{n,!}(\Lambda) =
\xi_{n,!}(\Lambda)$, et donc un isomorphisme canonique dans $\DC^{b}(\Mod(\Lambda))$
\begin{equation}
 R\Gamma_{c}(\mltn^{\rm ca},\Lambda) \simto R\Gamma_{c}(\PC^{\rm
   ca}_{\rm LT}, \xi_{n,!}(\Lambda)).\label{isom}
 \end{equation}

\begin{lem}
 Le faisceau étale $\xi_{n,!}(\Lambda)$ sur $\plt^{\rm nr}$ est naturellement
  muni d'une structure de faisceau de $\HC$-modules, telle que 
  \begin{enumerate}
\item    ses fibres sont isomorphes au $\HC$-module
    $\CC_{c}(H\ba G,\Lambda)$ (fonctions à support compact).
  \item l'isomorphisme (\ref{isom}) provient d'un isomorphisme dans $\DC^{b}(\Mod(\HC))$.
  \end{enumerate}
\end{lem}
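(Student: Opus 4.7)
The plan is to regard the periods morphism $\xi_n$ as the level-$H_n$ quotient of the pro-étale $G$-torsor $\mlt \to \plt^{\rm nr}$, and to transfer the $G$-equivariance of the tower into an $\HC$-action on $\xi_{n,!}\Lambda$ via Hecke correspondences over $\plt^{\rm nr}$.

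For the fibers: since $\xi_n$ is étale, the stalk of $\xi_{n,!}\Lambda$ at a geometric point $\bar x$ of $\plt^{\rm ca}$ is the free $\Lambda$-module on the (infinite discrete) set $\xi_n^{-1}(\bar x)$. A lift of $\bar x$ to a geometric point of $\mlt^{\rm ca}$ trivializes the pro-fiber as the $G$-torsor $G$ itself, and the identification $\mltn = \mlt/H_n$ at finite level identifies $\xi_n^{-1}(\bar x)$ $G$-equivariantly with $H_n \ba G$; the stalk is thus $\CC_c(H_n \ba G, \Lambda)$ as a $\Lambda$-module.

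To define the $\HC$-action, for each $g \in G$ I would pick $n' \geq n$ with $g H_{n'} g^{-1} \subset H_n$ and form the Hecke correspondence
\[
  \mltn \xleftarrow{p_g} \mlt_{n'} \xrightarrow{q_g} \mltn
\]
of finite étale morphisms over $\plt^{\rm nr}$, where $p_g$ is level-lowering and $q_g$ is the $g$-translate on the tower followed by level-lowering. This produces an endomorphism $T_g := p_{g,!}\, q_g^{\ast}$ of $\xi_{n,!}\Lambda$ in the category of étale sheaves on $\plt^{\rm ca}$. Since fiber functors are faithful on étale sheaves, one may verify on geometric stalks that the collection $\{T_g\}_{g \in G}$ assembles into an $\HC$-action, recovering at each stalk the standard convolution action of $\HC(G, H_n)$ on $\CC_c(H_n \ba G, \Lambda)$; this completes (i). The isomorphism (\ref{isom}) came from $R\xi_{n,!} = \xi_{n,!}$ together with the projection formula, both natural under endomorphisms of $\xi_{n,!}\Lambda$. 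Hence the right-hand side of (\ref{isom}) inherits the $T_g$-action, and on the left-hand side this coincides with the usual Hecke action of $\HC$ on $R\Gamma_c(\mltn^{\rm ca},\Lambda)$, itself defined by the very same correspondences applied to compactly supported cohomology. This promotes (\ref{isom}) to an isomorphism in $\DC^b(\Mod(\HC))$, yielding (ii).

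The main obstacle, I expect, will be verifying that $g \mapsto T_g$ extends to a genuine $\HC$-action, i.e. that $T_g \circ T_{g'}$ realizes the convolution of the characteristic functions of the corresponding double cosets. Since the components $\mltn = \bigsqcup_{i \in \ZM} \mltn^{(i)}$ are shuffled by Hecke operators with $\val \det \neq 0$, and $\mltn$ is not quasi-compact, the bookkeeping is delicate; the finite étaleness of $p_g$ and $q_g$ together with the base-change formula should, however, reduce everything to a direct check on geometric stalks.
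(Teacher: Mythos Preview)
Your approach is correct and close in spirit to the paper's, but the execution differs in a way worth noting. You build the $\HC$-action directly at level $n$ via Hecke correspondences $T_g = p_{g,!}q_g^*$ and then propose to verify the algebra relations on stalks; this is sound, and your flagging of the bookkeeping in checking $T_g \circ T_{g'}$ is apt. The paper sidesteps exactly that bookkeeping: rather than work one double coset at a time, it passes to the full tower by considering, for any \'etale $U \to \plt^{\rm nr}$ with pullbacks $U_m := \MC_{{\rm LT},m}\times_{\plt^{\rm nr}} U$, the colimit $\limind_m \Gamma_!(U_m, \xi_m^*\FC)$, which by \cite[3.3.2]{lt} already carries a smooth $G$-action; the canonical map from $\Gamma_!(U_n, \xi_n^*\FC)$ to the $H_n$-invariants of this colimit is an isomorphism (as in \cite[3.5.10]{lt}), and the $\HC$-action on $\xi_{n,!}\xi_n^*\FC$ is then \emph{inherited} rather than reconstructed. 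The payoff is that the Hecke relations are automatic (they come from a genuine group action), and (ii) follows by running the construction through an injective resolution of $\Lambda$ on $\plt^{\rm nr}$, using that $\xi_{n,!}\xi_n^*$ of an injective sheaf is $\Gamma_c$-acyclic. Your correspondence approach buys concreteness on stalks and makes (i) immediate, at the cost of the verification you anticipate; the paper's approach trades that verification for the prior work in \cite{lt} setting up the $G$-action on the tower. One minor slip: the projection formula plays no role in (\ref{isom}) in the torsion case---only $R\xi_{n,!}=\xi_{n,!}$ for \'etale $\xi_n$ is used; the projection formula enters only in the $\ell$-adic variant treated afterward.
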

\begin{proof}
Commençons plus généralement avec un faisceau étale de
$\Lambda$-modules $\FC$ sur
$\plt^{\rm nr}$, et montrons comment le faisceau
$\xi_{n,!}\xi_{n}^{*}(\FC)$ est naturellement un faisceau de $\HC$-modules.
Soit $U\To{f}\PC_{\rm LT}^{\rm nr}$ un morphisme étale.
 Pour expliciter la structure de $\HC$-module sur
 $\Gamma(U,\xi_{n,!}\xi_{n}^{*}(\FC))$, posons $U_{n}:=\mltn\times_{\plt^{\rm nr}}U$.
On peut alors identifier $\Gamma(U,\xi_{n,!}\xi_{n}^{*}(\FC))$ au
 sous-module $\Gamma_{!}(U_{n},\xi_{n}^{*}(\FC))\subset \Gamma(U_{n},\xi_{n}^{*}(\FC))$
 des sections à support propre au-dessus de $U$.
Comme dans \cite[3.3.2]{lt}, la colimite 
$\limind_{m}\Gamma_{!}(U_{m},\xi_{m}^{*}(\FC))$ est munie d'une action lisse
de $G$. Comme dans \cite[3.5.10]{lt}, le morphisme canonique
$$ \Gamma(U,\xi_{n,!}\xi_{n}^{*}(\FC))=\Gamma_{!}(U_{n},\xi_{n}^{*}(\FC))\To{}
\left(\limind_{m}\Gamma_{!}(U_{m},\xi_{m}^{*}(\FC)\right)^{H_{n}} $$
est un isomorphisme et permet donc de munir
 le terme de gauche d'une action de $\HC$. De même on a un
 isomorphisme
$$ \Gamma_{c}(U,\xi_{n,!}\xi_{n}^{*}(\FC))=\Gamma_{c}(U_{n},\xi_{n}^{*}(\FC))\To{}
\left(\limind_{m}\Gamma_{c}(U_{m},\xi_{m}^{*}(\FC)\right)^{H_{n}} $$
qui montre que $\Gamma_{c}(U,\xi_{n,!}\xi_{n}^{*}(\FC))$ est un
sous-$\HC$-module de $\Gamma(U,\xi_{n,!}\xi_{n}^{*}(\FC))$.
Si le $\Lambda$-module $\FC$ est injectif, il en est de même de
$\xi_{n}^{*}(\FC)$ (puisque $\xi_{n}^{*}$ est adjoint à droite du
foncteur exact  $\xi_{n,!}$), et le faisceau
$\xi_{n,!}\xi_{n}^{*}(\FC)$ est donc  $\Gamma_{c}(\plt^{\rm
  ca},-)$-acyclique (par \cite[Thm 5.2.2]{Bic2}).
 Appliquant ceci à une résolution injective de $\Lambda$
sur $\plt^{\rm nr}$, 
on obtient l'isomorphisme annoncé au ii) dans
$\DC^{b}(\Mod(\HC))$ :
$$ R\Gamma_{c}(\plt^{\rm ca},\xi_{n,!}(\Lambda))\simto
R\Gamma_{c}(\mlt^{\rm ca},\Lambda)^{H_{n}}.$$
Finalement, le point i) découle du fait que $\xi_{n}$ fait de
$\mltn$ un revêtement étale au sens de 
\cite{DeJong}, dont les fibres géométriques sont isomorphes à $G/H$.

\end{proof}

\emph{Quatrième étape.}
Rappelons maintenant que le foncteur $R\Gamma_{c}(\plt^{\rm ca}, -)$,
dont la source naturelle est $\DC^{+}(\Modtop{\Lambda}{\wt\plt_{\rm
  et}})$ 
(catégorie dérivée des faisceaux étales de $\Lambda$-modules sur
$\plt$,  \emph{cf} \cite[3x.3]{lt} pour les notations), 
est de dimension cohomologique finie, égale à $2d-2$. Il se prolonge
donc à la catégorie dérivée non bornée $\DC(\Modtop{\Lambda}{\wt\plt_{\rm
    et}})$, ce  
qui nous permet de définir le complexe $R\Gamma_{c}(\PC_{\rm
  LT}^{\rm ca},M\otimes^{L}_{\HC}\xi_{n,!}(\Lambda))$ pour un
 $\HC$-module à droite $M$. Lorsque $M$ est libre de type fini sur
 $\HC$, le morphisme canonique
 \begin{equation}
 M\otimes^{L}_{\HC} R\Gamma_{c}(\PC_{\rm LT}^{\rm ca}, \xi_{n,!}(\Lambda))
\To{} R\Gamma_{c}(\PC_{\rm LT}^{\rm ca},M\otimes^{L}_{\HC}\xi_{n,!}(\Lambda))\label{comptens}
\end{equation}
est clairement un isomorphisme. Lorsque $M$ est simplement de type fini,
la noethériannité de $\HC$ permet d'en trouver une résolution (infinie) par des $\HC$-modules
libres de type fini. Un argument de double complexe montre alors 
que \emph{le morphisme (\ref{comptens}) est encore un isomorphisme}.

Maintenant, les fibres de $\xi_{n,!}(\Lambda)$, étant isomorphes à
$\CC_{c}(H\ba G,\Lambda)$,  sont \emph{plates sur $\HC$.} En effet, on a
\begin{fac} 
  La sous-catégorie de $\Mo{\Lambda}{G}$ formée des objets
  engendrés par leurs $H$-inva\-riants est une sous-catégorie facteur direct.
\end{fac}
Voir \cite[Fait 3.5.8]{lt}, ou appliquer \cite[Thm 3.1]{MS1} au système d'idempotents
$(e_{x})_{x\in BT^{\circ}}$ associé aux sous-groupes de congruences de
niveau $n$, comme dans \cite[Lem 2.6]{MS1}. 
Dans cette situation, le foncteur $V\mapsto
V^{H}=\Hom_{G}(\CC_{c}(H\ba G,\Lambda),V)$ induit une équivalence de
cette sous-catégorie sur la catégorie des $\HC$-modules à droite, et
son adjoint à gauche 
\begin{equation}
M\mapsto M\otimes_{\HC}\CC_{c}(H\ba G,\Lambda)\label{equiv}
\end{equation}
en est une équivalence inverse, donc est exact.

On a donc simplement
$M\otimes^{L}_{\HC}\xi_{n,!}(\Lambda)=M\otimes_{\HC}\xi_{n,!}(\Lambda)$
et on obtient finalement un isomorphisme
$$M\otimes^{L}_{\HC}R\Gamma_{c}^{H} \simto R\Gamma_{c}(\plt^{\rm ca},M\otimes_{\HC} \xi_{n,!}(\Lambda)).$$
Mais le complexe de droite est le complexe de cohomologie d'un faisceau,
donc n'a pas de cohomologie en degrés négatifs. On a donc bien
$\HC^{-i}(M\otimes^{L}_{\HC}R\Gamma_{c}^{H})=0$ pour $i>0$.

\emph{Cinquième étape.} A ce point, nous avons prouvé que
$R\Gamma_{c}^{H}$ est parfait d'amplitude contenue dans
$[0,2d-2]$. Le fait que l'amplitude est exactement
$[0,2d-2]$ découle du lemme ci-dessous.
On y note $\mathbf{1}_{\Lambda}$ le caractère ``trivial'' de $\HC$ à
valeurs dans $\Lambda$, qui envoie la fonction caractéristique de
$HgH$ sur $\#(H\ba HgH)\in q^{\ZM}$. C'est l'image de la
$\Lambda$-représentation triviale de $G$ par le foncteur $V\mapsto V^{H}$.

\begin{lem}
 On a un isomorphisme canonique
$$ \mathbf{1}_{\Lambda}\otimes^{L}_{\HC} R\Gamma_{c}(\mlt^{\rm
  ca},\Lambda)^{H}\simto R\Gamma_{c}(\plt^{\rm
  ca},\Lambda)=\bigoplus_{k=0}^{2d-2} \Lambda(-k)[-2k].$$  
\end{lem}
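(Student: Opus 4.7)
The plan is to reduce the statement to the classical computation of the cohomology of projective space, by specializing the isomorphism \eqref{comptens} to the $\HC$-module $M=\mathbf{1}_{\Lambda}$ and combining it with the identification $R\Gamma_{c}(\mlt^{\rm ca},\Lambda)^{H}\simeq R\Gamma_{c}(\plt^{\rm ca},\xi_{n,!}(\Lambda))$ supplied by the previous lemma. This yields a canonical isomorphism
$$\mathbf{1}_{\Lambda}\otimes^{L}_{\HC} R\Gamma_{c}(\mlt^{\rm ca},\Lambda)^{H} \simto R\Gamma_{c}\bigl(\plt^{\rm ca},\, \mathbf{1}_{\Lambda}\otimes^{L}_{\HC}\xi_{n,!}(\Lambda)\bigr),$$
and, since the stalks of $\xi_{n,!}(\Lambda)$ were shown to be flat right $\HC$-modules, the derived tensor product on the right collapses to the ordinary one $\mathbf{1}_{\Lambda}\otimes_{\HC}\xi_{n,!}(\Lambda)$.

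The heart of the argument is then to identify this latter sheaf with the constant sheaf $\Lambda$ on $\plt^{\rm nr}$. For this, I would use the counit of the adjunction $\xi_{n,!}\dashv\xi_{n}^{*}$ applied to the constant sheaf $\Lambda$, noting that $\xi_{n}^{*}(\Lambda)=\Lambda$ since $\xi_{n}$ is \'etale; this provides a canonical morphism $\xi_{n,!}(\Lambda)\to\Lambda_{\plt}$ whose stalk at a geometric point $\bar{x}$ is the ``sum along the fiber'' map $\CC_{c}(H\ba G,\Lambda)\to\Lambda,\ \varphi\mapsto\sum_{Hg\in H\ba G}\varphi(Hg)$. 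This map is $\HC$-equivariant when the target carries the $\HC$-action via the character $\mathbf{1}_{\Lambda}$, so it factors through a canonical morphism $\mathbf{1}_{\Lambda}\otimes_{\HC}\xi_{n,!}(\Lambda)\to\Lambda_{\plt}$.

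To check that the induced morphism is an isomorphism, it suffices to argue on stalks. There, by the equivalence of categories \eqref{equiv}, the trivial $G$-representation $\Lambda$ corresponds under $V\mapsto V^{H}$ to the right $\HC$-module $\mathbf{1}_{\Lambda}$; applying the inverse equivalence gives precisely $\mathbf{1}_{\Lambda}\otimes_{\HC}\CC_{c}(H\ba G,\Lambda)=\Lambda$. Putting everything together, the statement reduces to the well-known computation of $R\Gamma_{c}(\plt^{\rm ca},\Lambda)$ via $\plt^{\rm nr}\simeq\PM^{d-1}_{\knr}$, which supplies the announced direct sum of Tate twists.

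The step requiring the most care is the sheaf-theoretic identification of $\mathbf{1}_{\Lambda}\otimes_{\HC}\xi_{n,!}(\Lambda)$ with the constant sheaf $\Lambda_{\plt}$; the rest of the argument is either formal (the adjunction and the flatness already established in the fourth step) or a classical computation on projective space.
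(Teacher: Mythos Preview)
Your reduction matches the paper's exactly up through the identification $\mathbf{1}_{\Lambda}\otimes^{L}_{\HC}R\Gamma_{c}^{H}\simeq R\Gamma_{c}(\plt^{\rm ca},\mathbf{1}_{\Lambda}\otimes_{\HC}\xi_{n,!}(\Lambda))$. The only divergence is in showing that $\mathbf{1}_{\Lambda}\otimes_{\HC}\xi_{n,!}(\Lambda)$ is the constant sheaf $\Lambda$. The paper does not construct an explicit trivialization: it observes (via the equivalence \eqref{equiv}) that the stalks are free of rank~$1$, so the sheaf is a rank-$1$ local system on $\plt^{\rm nr}$, and then cites de Jong's result \cite[Lemma 7.3]{DeJong} that such a local system becomes trivial over $\plt^{\rm ca}$. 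Your route through the counit $\xi_{n,!}(\Lambda)\to\Lambda$ is more direct and already works over $\plt^{\rm nr}$; it avoids the appeal to simple connectedness at the price of checking that the counit is $\HC$-equivariant for the character $\mathbf{1}_{\Lambda}$. That check is legitimate: since both the Hecke operators and the counit are morphisms of sheaves, the required commutation can be tested on a single stalk, where it becomes the elementary summation identity you sketch. So your argument is correct; it trades a geometric input (simple connectedness of analytic $\PM^{d-1}$) for an explicit algebraic one.
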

\begin{proof}
  D'après la quatrième étape, le terme de gauche s'identifie à 
$R\Gamma_{c}(\plt^{\rm
  ca},\mathbf{1}_{\Lambda}\otimes_{\HC}\xi_{n,!}(\Lambda))$.
Comme le foncteur (\ref{equiv}) est une équivalence inverse de
$V\mapsto V^{H}$, on a 
$\mathbf{1}_{\Lambda}\otimes_{\HC}\CC_{c}(H\ba G,\Lambda)\simeq \Lambda$,
et le faisceau $\mathbf{1}_{\Lambda}\otimes_{\HC}\xi_{n,!}(\Lambda)$
est donc un
système local en $\Lambda$-modules libres de rang $1$ sur $\plt^{\rm
  nr}$. D'après \cite[Lemma 7.3]{DeJong}, un tel système local est
trivial sur  $\plt^{ \rm ca}$.
\end{proof}

\alin{Preuve dans le cas $\ell$-adique} 
Il n'y a rien à modifier aux deux premières étapes, mais
l'isomorphisme (\ref{isom}) n'est plus correct.
Pour le corriger, nous devons
rappeler la construction du complexe $R\Gamma_{c}$.
On utilise librement les notations de
\cite[3.3]{lt}. On note en particulier $\Lambda_{\bullet}$ le
pro-anneau $(\Lambda/\ell^{n}\Lambda)_{n\in\NM}$, 
$\Modtop{\Lambda_{\bullet}}{\wt{X_{\rm et}}}$ la catégorie
abélienne des faisceaux étales de $\Lambda_{\bullet}$-modules sur
un espace analytique $X$, et $\DC^{+}(\Mod_{\Lambda_{\bullet}}(\wt{X_{\rm
  et}} ))$ sa catégorie dérivée. On dispose alors du foncteur limite projective
$$\limproj^{\wt{X_{et}}} :\,\Modtop{\Lambda_{\bullet}}{\wt{X_{\rm et}}}\To{}\Modtop{\Lambda}{\wt{X_{\rm et}}}.$$
Par \cite[(3.5.10)]{lt}, 
$R\Gamma_{c}(\mlt^{\rm ca},\Lambda)^{H}$ est l'évaluation en le pro-faisceau
constant $\Lambda_{\bullet}$ sur $\plt^{\rm nr}$ du foncteur dérivé du foncteur 
$$\FC_{\bullet} \mapsto \Gamma_{c}\left(\mltn^{\rm
    ca},\limproj^{\wt{\MC_{\rm LT, n, et}}} \left(\xi_{n}^{*}(\FC_{\bullet})\right)\right)
= \Gamma_{c}\left(\plt^{\rm
    ca}, \xi_{n,!}\xi_{n}^{*}\left(\limproj^{\wt{\PC^{\rm nr}_{\rm LT,et}}}(\FC_{\bullet})\right)\right).$$
L'égalité utilise l'isomorphisme
$\limproj^{\wt{\MC_{\rm LT, n, et}}}\circ \xi_{n}^{*}
\simto \xi_{n}^{*}\circ \limproj^{\wt{\PC^{\rm nr}_{\rm LT,et}}}$, qui
lui-même vient du fait que $\xi_{n}$ est étale. Maintenant, la formule
de projection nous donne un isomorphisme de faiceaux sur $\plt^{\rm nr}$
$$ \xi_{n,!}\xi_{n}^{*}\left(\limproj^{\wt{\PC^{\rm nr}_{\rm
        LT,et}}}(\FC_{\bullet})\right) \simto
\xi_{n,!}(\Lambda)\otimes_{\Lambda}\left(\limproj^{\wt{\PC^{\rm nr}_{\rm
        LT,et}}}(\FC_{\bullet})\right)$$
dans lequel $\xi_{n,!}(\Lambda)$ désigne l'image directe à supports
propres du faisceau constant de fibres $\Lambda$ sur $\mltn$.
Appliquant ceci à une résolution injective de $\Lambda_{\bullet}$, on
obtient l'isomorphisme suivant dans $\DC^{b}(\Mod(\Lambda))$, analogue de (\ref{isom}) :
\begin{equation}
R\Gamma_{c}^{H}=R\Gamma_{c}(\mltn^{\rm ca},\Lambda) \simeq R\Gamma_{c}\left(\plt^{\rm ca}, \xi_{n,!}(\Lambda)\otimes^{L}_{\Lambda}(R\limproj^{\wt{\PC^{\rm nr}_{\rm
        LT,et}}}(\Lambda_{\bullet}))\right).\label{isom2} 
\end{equation}
Comme dans le cas de torsion (lemme ci-dessus), cet isomorphisme vit
par construction dans
$\DC^{b}(\Mod(\HC))$. Pour tout $\HC$-module à droite de type
fini $M$, l'isomorphisme (\ref{comptens}) devient
\begin{equation}
 M\otimes^{L}_{\HC}R\Gamma_{c}^{H} \simto R\Gamma_{c}\left(\plt^{\rm ca},
  (M\otimes^{L}_{\HC} \xi_{n,!}(\Lambda))\otimes^{L}_{\Lambda}(R\limproj^{\wt{\PC^{\rm nr}_{\rm
        LT,et}}}(\Lambda_{\bullet}))\right).\label{comptens2}
\end{equation}
Les fibres de $\xi_{n,!}(\Lambda)$ étant des $\HC$-modules plats, on
peut supprimer le premier $L$ dans le terme de droite. 
Le complexe $R\limproj^{\wt{\PC^{\rm nr}_{\rm  LT,et}}}(\Lambda_{\bullet}))$
de $\DC^{+}(\Mod_{\Lambda}(\wt{\PC_{\rm LT,et}^{\rm nr}}))$ est concentré en
degrés $\geq 0$, et son $\HC^{0}$ est le faisceau constant
$\Lambda$, qui est donc sans $\ell$-torsion.
Il s'ensuit que
le complexe $(M\otimes_{\HC}
\xi_{n,!}(\Lambda))\otimes^{L}_{\Lambda}(R\limproj^{\wt{\PC^{\rm
      nr}_{\rm  LT,et}}}(\Lambda_{\bullet})) \in
\DC^{+}(\Mod_{\Lambda}(\wt{\PC_{\rm LT,et}^{\rm nr}}))$ est concentré en
degrés $\geq 0$, et il en est donc de même du complexe
$M\otimes^{L}_{\HC}R\Gamma_{c}^{H}$.

\subsection{Perfection dans $\DC(D^{\times})$}

\begin{prop} \label{propparfaitD}
Soit $\Lambda$ une $\Zl$-algèbre finie et $H$ un sous-groupe de
congruences de $\GL_{d}(\OC)$.
  Le complexe $R\Gamma_{c}(\mlt^{\rm ca},\Lambda)^{H}$ est 
parfait d'amplitude $[d-1,2d-2]$ dans $\DC^{b}(\Mo{\Lambda}{D^{\times}})$.
\end{prop}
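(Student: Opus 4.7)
The plan is to reproduce the five-step argument of the previous proposition on the Drinfeld side of the tower, via the Faltings--Fargues isomorphism. Concretely, the equivalence of towers from \cite{FarFal} should yield, for each compact-open $H\subset\GL_{d}(\OC)$, a $D^{\times}$-equivariant identification
\[R\Gamma_{c}(\mlt^{\rm ca},\Lambda)^{H} \simeq R\Gamma_{c}(\MC_{{\rm Dr},H}^{\rm ca},\Lambda),\]
where $\MC_{{\rm Dr},H}$ is the corresponding finite-type quotient of the Drinfeld tower. On this quotient, $D^{\times}$ plays the role that $G$ played before: its compact-open subgroups $H_{D}$ yield noetherian Hecke algebras $\HC_D:=\HC_{\Lambda}(D^{\times},H_{D})$ --- in fact easier here, since $D^{\times}/\varpi^{\ZM}$ is compact and these algebras are finite over $\Lambda[\varpi^{\ZM}]$.

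The five steps transfer verbatim once one substitutes the Drinfeld period map $\xi_{m}^{\rm Dr}:\MC_{{\rm Dr},m}\to\PC_{\rm Dr}\simeq\Omega^{d-1}_{K}$ (which is again \'etale) for the Gross--Hopkins map $\xi_{n}$, and Drinfeld's upper-half space $\Omega^{d-1}_{K}$ for $\PM^{d-1}$. The projection formula
\[M\otimes^{L}_{\HC_{D}}R\Gamma_{c}^{H_{D}}\simto R\Gamma_{c}\!\left(\Omega^{d-1,{\rm ca}},\,M\otimes_{\HC_{D}}\xi_{m,!}^{\rm Dr}\Lambda\right)\]
then holds by the same combination of \'etaleness of $\xi_{m}^{\rm Dr}$ and flatness of the fibres of its lower-shriek pushforward over $\HC_{D}$ (the $D^{\times}$-analogue of \cite[Fait~3.5.8]{lt}).

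The essential new geometric input, which shifts the perfect amplitude from $[0,2d-2]$ to $[d-1,2d-2]$, is the concentration
\[H^{i}_{c}(\Omega^{d-1,{\rm ca}},\Lambda)=0\quad\text{pour }i\notin[d-1,2d-2].\]
This reflects the Stein-like (rather than projective) nature of Drinfeld's space: for $\oQl$-coefficients it follows from Poincar\'e duality combined with the vanishing of $H^{i}(\Omega^{d-1,{\rm ca}},\oQl)$ for $i>d-1$ (Schneider--Stuhler); it extends to finite $\Zl$-algebras $\Lambda$ by reduction modulo $\ell^{n}$ and the absence of $\ell$-torsion at the extreme degrees. With this concentration in hand, the Tor-vanishing argument of step five applies in all negative degrees up to $d-2$, showing flatness --- hence projectivity, by finite presentation --- of the bottom term $Q_{d-1}$ of the truncated complex, and establishes perfection of amplitude exactly $[d-1,2d-2]$.

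The principal obstacle will be making the Faltings--Fargues identification rigorous with the correct $D^{\times}$-equivariance at a \emph{finite} level $H$, since \cite{FarFal} is cleanest on pro-systems; one likely proceeds by choosing a cofinal compatible family of levels on both towers and tracking the $D^{\times}$-action through the Hecke-algebra formalism used in the preceding proposition. Once this is in place, the rest is a mechanical transposition of the five steps.
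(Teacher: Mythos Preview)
Your overall strategy --- transfer to the Drinfeld side via Faltings--Fargues, then rerun the period-map/projection-formula argument --- is exactly what the paper does, and it correctly yields perfect amplitude contained in $[0,2d-2]$. The gap is in the sharpening to $[d-1,2d-2]$. The Tor-vanishing needed to show that the truncated term $Q_{d-1}$ is flat is
\[
\HC^{j}\!\left(M\otimes^{L}_{\HC_D}R\Gamma_{c}^{H_D}\right)=0\quad\hbox{for }j<d-1\hbox{ and all }M,
\]
which via the projection formula becomes $H^{j}_{c}\!\left(\Omega^{d-1,\rm ca},\,M\otimes_{\HC_D}\xi^{\rm Dr}_{m,!}\Lambda\right)=0$ for $j<d-1$. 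This is a vanishing statement for \emph{non-constant} sheaves on $\Omega^{d-1}$, whereas you only invoke the Schneider--Stuhler concentration for the constant sheaf $\Lambda$. The constant-coefficient result does not feed back into the Tor argument, so your ``step five'' does not actually reach degree $d-2$. (One could try to prove an Artin-type bound $H^{j}_{c}(\Omega^{d-1},\LC)=0$ for $j<d-1$ and all local systems $\LC$, but that is a separate and nontrivial input you have not supplied.)

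The paper bypasses this entirely. After obtaining amplitude $\subset[0,2d-2]$, it restricts to the compact subgroup $D^{0}=\OC_{D}^{\times}$, writes $R\Gamma_{c}(\mdr^{\rm ca},\Lambda)^{H}$ as compactly induced from $R\Gamma_{c}(\mdr^{(0),\rm ca},\Lambda)^{H}$, and observes that the latter is perfect over $\Zl[D^{0}/J_{n}]$ with $D^{0}/J_{n}$ \emph{finite}. For a perfect complex $\CC$ over $\Zl[A]$ with $A$ finite, the $\Zl$-dual of a projective is projective, so one can compare perfect and cohomological amplitudes: if $\HC^{a}(\CC)$ is $\ell$-torsion-free then the perfect amplitude equals the cohomological amplitude $[a,b]$. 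Here $a=d-1$, $b=2d-2$, and torsion-freeness of $H^{d-1}_{c}$ follows from $H^{d-2}_{c}(\mdr^{(0),\rm ca},\Fl)=0$. This finiteness-and-duality trick is the key idea your proposal is missing; it exploits the compactness of $D^{0}$, which has no analogue on the $G$-side.
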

\begin{proof}
La preuve de cette proposition 
repose sur le théorème de Faltings-Fargues \cite[IV.13.2]{FarFal} qui
affirme l'existence d'un isomorphisme naturel 
$$ R\Gamma_{c}(\mlt^{\rm ca},\Lambda)\simeq R\Gamma_{c}(\mdr^{\rm
  ca},\Lambda) \,\,\hbox{ dans }\, \DC^{b}(\Rep^{\infty, c}_{\Lambda}(GD\times W_{K}))$$
où le terme de droite désigne le complexe de cohomologie équivariant
de la ``tour de Drinfeld''. Nous allons donc travailler du côté
``Drinfeld'' en renvoyant à \cite[3.3]{lt} pour les
notations employées.

On sait que la cohomologie de $R\Gamma_{c}(\mlt^{\rm ca},\Lambda)^{H}$ est
de type fini sur le centre de $G$. Puisque le centre de
$D^{\times}$ agit comme celui de $G$, on en conclut que la cohomologie
de $R\Gamma_{c}(\mlt^{\rm ca},\Lambda)$ est de type fini sur $D^{\times}$.
 Fixons alors un sous-groupe de congruences $J_{n}:=1+\varpi^{n}\OC_{D}$ de
$D^{\times}$ agissant trivialement sur ces espaces de cohomologie.
On sait d'après \cite[3.5.6]{lt} que $R\Gamma_{c}(\mdr^{\rm
  ca},\Lambda)^{J_{n}}\simeq R\Gamma_{c}(\mdrn^{\rm ca},\Lambda)$.
Soit $\xi_{n}:\, \mdrn\To{}\pdr=\Omega^{d-1}_{\knr}$ le morphisme de
périodes de Drinfeld et Rapoport-Zink. Comme dans la preuve de la
proposition \ref{propparfait}, le faisceau $\xi_{n,!}(\Lambda)$ est
naturellement un faisceau $G$-équivariant de $\Lambda[D^{\times}/J_{n}]$-modules et on a un isomorphisme
$$ R\Gamma_{c}(\mdr^{\rm 
  ca},\Lambda)^{J_{n}}\simeq R\Gamma_{c}(\pdr^{\rm ca},\xi_{n,!}(\Lambda))$$
dans $\DC^{b}(\Rep_{\Lambda}(D^{\times}/J_{n}))$, et même dans
$\DC^{b}(\Mo{\Lambda}{G\times D^{\times}/J_{n}})$. Les fibres de $\xi_{n,!}(\Lambda)$ sont
ici encore des $\Lambda[D^{\times}/J_{n}]$-modules plats, et même libres de rang $1$. 
On a donc pour tout $\Lambda[D^{\times}/J_{n}]$-module $M$ un isomorphisme 
$$M\otimes^{L}_{\Lambda[D^{\times}/J_{n}]} R\Gamma_{c}(\mdr^{\rm
  ca},\Lambda)^{J_{n}}\simeq R\Gamma_{c}(\pdr^{\rm
  ca},M\otimes_{\Lambda[D^{\times}/J_{n}]}\xi_{n,!}(\Lambda))$$
dans $\DC^{b}(\Mo{\Lambda}{G})$, d'où l'on déduit un isomorphisme
$$M\otimes^{L}_{\Lambda[D^{\times}/J_{n}]} R\Gamma_{c}(\mdr^{\rm
  ca},\Lambda)^{H}\simeq R\Gamma_{c}(\pdr^{\rm
  ca},M\otimes_{\Lambda[D^{\times}/J_{n}]}\xi_{n,!}(\Lambda))^{H}$$
dans $\DC^{b}(\Mod(\Lambda))$.
On peut
donc suivre le même raisonnement que dans la preuve de la proposition
\ref{propparfait} pour en conclure que \emph{le complexe
  $R\Gamma_{c}(\mlt^{\rm ca},\Zl)^{H}$ est 
parfait d'amplitude contenue dans $[0,2d-2]$ dans 
$\DC^{b}(\Rep_{\Zl}(D^{\times}/J_{n}))$.}

Il reste maintenant à montrer que l'amplitude parfaite est en fait la
meilleure possible, égale à l'amplitude cohomologique $[d-1,2d-2]$. Pour cela, notons
$D^{0}:=\OC_{D}^{\times}$ le groupe des entiers inversibles de $D$. On
se souvient \cite[3.5.2]{lt} que
$$R\Gamma_{c}(\mdr^{\rm
  ca},\Lambda)=\cind{D^{0}}{D}{R\Gamma_{c}(\mdr^{(0),\rm
    ca},\Lambda)},$$
où $R\Gamma_{c}(\mdr^{(0),\rm
    ca},\Lambda) \in \DC^{b}(\Rep^{\infty}_{\Lambda}(D^{0}))$ est le complexe de
  cohomologie de la tour $(\mltn^{(0)})_{n\in\NM}$ formée de
  revêtements étales finis de $\pdr$. De même 
$$R\Gamma_{c}(\mdr^{\rm
  ca},\Lambda)^{H}= \Lambda[D^{\times}/J_{n}]\otimes_{\Lambda[D^{0}/J_{n}]} (R\Gamma_{c}(\mdr^{(0),\rm
    ca},\Lambda)^{H}).$$
Notons que la même preuve que ci-dessus montre que $R\Gamma_{c}(\mdr^{(0),\rm
    ca},\Lambda)^{H}$ est un complexe parfait de
  $\Lambda[D^{0}/J_{n}]$-modules. Il nous suffira donc de montrer que son
  amplitude parfaite est bien $[d-1,2d-2]$.
On peut aussi se restreindre au cas $\Lambda=\Zl$ puisque les autres
s'en déduisent par changement de coefficients. 

Faisons ici une petite digression. Soit $A$ un groupe et $\CC$ un complexe parfait de
$\Zl[A]$-modules d'amplitude cohomologique $[a,b]$ et d'amplitude parfaite $[\alpha,\beta]$. 
Par définition, on a $[a,b]\subset[\alpha,\beta]$. On voit aussi
facilement que $b=\beta$, car si $\beta>b$, la différentielle
$d_{\beta-1}:P_{\beta-1}\To{}P_{\beta}$ est surjective et donc
scindable. Supposons de plus que $A$ est \emph{fini}. Dans ce cas le
$\Zl$-dual d'un $\Zl[A]$-module projectif est encore un
$\Zl[A]$-module projectif. Ainsi le complexe dual $\CC^{\vee}$ est
parfait d'amplitude parfaite $[-\beta,-\alpha]$. De plus, les suites
exactes
$$\Ext^{1}_{\Zl}(\HC^{-i+1}(\CC),\Zl) \To{}\HC^{i}(\CC^{\vee})\To{}\Hom_{\Zl}(\HC^{-i}(\CC),\Zl)
 $$ montrent que $\HC^{-i}(\CC^{\vee})$ est nul
pour $i>-a+1$, et même pour $i=-a+1$ si $\HC^{a}(\CC)$ n'a pas de
$\ell$-torsion. Dans ce dernier cas, on a alors $\alpha=a$,
\emph{i.e.} l'amplitude parfaite coïncide avec l'amplitude
cohomologique.

Ceci s'applique au complexe $\CC=R\Gamma_{c}(\mdr^{(0),\rm
    ca},\Zl)^{H}$ et $A=D^{0}/J_{n}$, puisque l'on sait que $H^{d-1}_{c}(\mdr^{(0),\rm
    ca},\Zl)$ est sans torsion (vu que $H^{d-2}_{c}(\mdr^{(0),\rm
    ca},\Fl)$ est nul, par \cite[Cor 6.2]{Bic4}).
\end{proof}

\begin{rema}  \label{remparfait}
 Contrairement à ce que pourraient laisser penser
 les propositions \ref{propparfait} et \ref{propparfaitD}, le
 complexe $R\Gamma_{c}(\mlt^{\rm  ca},\Lambda)$ n'est pas parfait en
 tant que complexe de $\DC^{b}_{\Lambda}(G\times D^{\times})$ ou $\DC^{b}\Rep^{\infty}_{\Lambda}(GD)$. Pour le voir, faisons
 $\Lambda=\Fl$ et considérons le complexe
$$\CC:=R\Gamma_{c}(\mlt^{\rm ca},\Fl)\otimes^{L}_{\GL_{d}(\OC)K^{\times}}
\Fl \in \DC^{b}\Rep^{\infty}_{\Fl}(D^{\times}/K^{\times}).$$ Si ce complexe n'est pas
parfait dans $\DC^{b}\Rep^{\infty}_{\Fl}(D^{\times})$,
resp. $\DC^{b}\Rep^{\infty}_{\Fl}(D^{\times}/K^{\times})$, alors le complexe
$R\Gamma_{c}(\mlt^{\rm  ca},\Fl)$ ne l'est pas dans
$\DC^{b}\Rep^{\infty}_{\Fl}(G\times D^{\times})$, resp. $\DC^{b}\Rep^{\infty}_{\Fl}(GD)$.
Comme dans \cite[3.5.9]{lt} et \cite[3.5.5]{lt}, on a
$$ \CC \simeq R\Gamma_{c}(\mlto^{\rm
  ca},\Fl)\otimes^{L}_{\varpi^{\ZM}}\Fl
=\cind{\OC_{D}^{\times}\varpi^{\ZM}}{D^{\times}}{R\Gamma_{c}(\mlto^{(0),
    \rm
  ca},\Fl)}.$$
Or, $\mlto^{(0)}$ est une boule unité ouverte de dimension $d-1$
donc 
$$ \CC \simeq
\cind{\OC_{D}^{\times}/\OC^{\times}}{D^{\times}/K^{\times}}{\Fl}[2-2d].$$
Supposons maintenant que l'ordre de $q$ dans $\Fl$ soit exactement
$d$. Dans ce cas, la $\Fl$-représentation triviale du groupe
$\FM_{q^{d}}^{\times}$ est de dimension cohomologique infinie, et il
en est donc de même de la $\Fl$-représentation lisse triviale de
$\OC_{D}^{\times}/K^{\times}$, ainsi que de la $\Fl$-représentation 
$\cind{\OC_{D}^{\times}/\OC^{\times}}{D^{\times}/K^{\times}}{\Fl}$,
qu'elle soit vue comme représentation de $D^{\times}$ ou de $D^{\times}/K^{\times}$.
Le complexe $\CC$ n'est donc pas parfait dans 
$\DC^{b}\Rep^{\infty}_{\Fl}(D^{\times})$, ni dans $\DC^{b}\Rep^{\infty}_{\Fl}(D^{\times}/K^{\times})$.

On en déduit, sous la même hypothèse de congruence,  que le
complexe de cohomologie du demi-espace de Drinfeld
$R\Gamma_{c}(\mdro^{(0),\rm ca},\Fl)$ n'est pas parfait dans
$\DC^{b}\Rep^{\infty}_{\Fl}(G)$ ni dans $\DC^{b}\Rep^{\infty}_{\Fl}(G/K^{\times})$. Comme ce
complexe est scindé (par les poids), cela signifie qu'au moins un des
espaces de cohomologie est de dimension cohomologique infinie. 

\end{rema}

\section{Scindages de catégories, enveloppes projectives, et déformations}
\setcounter{subsubsection}{0}

Dans cet appendice, nous rassemblons les résultats de pure théorie des
représentations utilisés dans le corps du texte.
Nous utiliserons le lemme suivant sur une manière générale de scinder
une catégorie abélienne.
\begin{lemme} \label{lemmescind}
  Soit $\CC$ une catégorie abélienne avec produits et coproduits, et 
$S$ un ensemble d'objets simples de $\CC$. Notons $\CC_{S}$,
  resp. $\CC^{S}$ la
  sous-catégorie pleine de $\CC$ formée des objets dont tous les
  sous-quotients simples sont dans $S$, resp. aucun sous-quotient
  simple n'est dans $S$.
  \begin{enumerate}
  \item Si $\CC_{S}$ contient un objet projectif $P_{S}$ de $\CC$ tel
    que $\Hom_{\CC}(P_{S},s)\neq 0$ pour tout $s\in S$, 
alors tout objet $V$ de $\CC$ possède une unique filtration
    $V_{S}\injo V \twoheadrightarrow V^{S}$ avec $V_{S}\in\CC_{S}$ et $V^{S}\in\CC^{S}$.
  \item Si de plus $\CC_{S}$ contient un objet injectif $I_{S}$ de $\CC$ tel
    que $\Hom_{\CC}(s,I_{S})\neq 0$ pour tout $s\in S$,
alors la filtration est (canoniquement) scindée. 
Le foncteur somme directe $\CC_{S}\times \CC^{S}\To{}\CC$
 est donc une équivalence de catégories.
  \end{enumerate}
\end{lemme}
\begin{proof}
$i)$
Comme la somme de deux sous-objets de $V$ apartenant à $\CC_{S}$
appartient encore à $\CC_{S}$, on peut définir
le plus grand sous-objet de $V$ appartenant à $\CC_{S}$. Notons-le
$V_{S}$. Supposons que $V/V_{S}$ n'est pas dans $\CC^{S}$. Alors $V$ possède un
sous-objet $W$ contenant $V_{S}$ tel que $W/V_{S}$ se surjecte sur un
objet $s\in S$.
Mais alors on en déduit un morphisme $P_{S}\To{}W$ dont l'image n'est
pas dans $V_{S}$ : contradiction.

$ii)$ Dualement, comme le quotient de $V$ par l'intersection de deux
sous-objets de quotients dans $\CC_{S}$ est encore dans $\CC_{S}$, on
peut définir le
plus grand quotient de $V$ appartenant à $\CC_{S}$. Notons-le $V_{S}^{q}$.
Toujours dualement, l'existence de $I_{S}$ assure que le noyau de
$V\twoheadrightarrow V^{q}_{S}$ est dans $\CC^{S}$. Ceci entraine
alors que la composée $V_{S}\To{} V^{q}_{S}$ est un isomorphisme.

Le foncteur du $ii)$ est clairement fidèle. Il
est aussi plein car $\Hom(A,B)=\Hom(B,A)=0$ si $A\in\CC_{S}$ et $B\in\CC^{S}$.
On vient de montrer qu'il est essentiellement surjectif.
\end{proof}

Nous aurons aussi besoin du fait suivant, très classique. 
\begin{fact} \label{faitequiv}
  Soit $P$  un générateur projectif compact (=de type fini) d'une catégorie abélienne $\CC$
essentiellement petite avec limites inductives exactes,  et soit
$\ZG:=\endo{\CC}{P}^{\rm opp}$ l'anneau opposé de son commutant. Alors les foncteurs
$ V\mapsto \Hom_{\CC}(P,V)$  et $M\mapsto P\otimes_{\ZG} M$ sont des
équivalences de catégories entre $\CC$ et $\Mod(\ZG)$ ``inverses'' l'une de l'autre.
\end{fact}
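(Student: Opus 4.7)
The plan is to carry out the classical Morita-theoretic argument. The first step is to construct $T := P\otimes_\ZG(-)$ as the left adjoint of $H := \Hom_\CC(P,-)$. One starts by setting $T(\ZG^{(I)}) := P^{(I)}$ on free right $\ZG$-modules; the formation of these arbitrary direct sums inside $\CC$ is permitted by the cocompleteness hypothesis (implicit in ``limites inductives exactes''). For a general $M$ with presentation $\ZG^{(J)}\to \ZG^{(I)}\to M\to 0$, one defines $T(M)$ as the corresponding cokernel in $\CC$. Standard checks then produce the adjunction
\[ \Hom_\CC(T(M),V)\simeq \Hom_\ZG(M, H(V)), \]
together with its unit $\eta_M: M\to H(T(M))$ and its counit $\epsilon_V: T(H(V))\to V$. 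The goal is to prove both are natural isomorphisms.

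The two non-formal ingredients I would use are: first, $H$ is \emph{exact}, because $P$ is projective; second, $H$ \emph{commutes with arbitrary direct sums}, because $P$ is compact (of finite type, in the stated sense). The functor $T$, being a left adjoint, is automatically right-exact and preserves all colimits, so no extra hypothesis is needed on that side.

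For the counit, $\epsilon_P$ is tautologically the canonical isomorphism $P\otimes_\ZG\ZG\simto P$, and by additivity $\epsilon_{P^{(I)}}$ is an isomorphism for every set $I$. Since $P$ is a projective generator of $\CC$, any $V\in\CC$ fits into a right-exact sequence $P^{(J)}\to P^{(I)}\to V\to 0$; applying the right-exact functor $TH$ and invoking naturality of $\epsilon$, a direct five-lemma argument shows that $\epsilon_V$ is an isomorphism. Dually, $\eta_\ZG$ is the identity on $\ZG$, and $\eta_{\ZG^{(I)}}$ is an isomorphism precisely because $H$ commutes with direct sums of copies of $P$; any $M\in\Mod(\ZG)$ has a presentation $\ZG^{(J)}\to \ZG^{(I)}\to M\to 0$, and the same five-lemma argument applied to the right-exact functor $HT$ finishes the verification.

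The main place where all hypotheses are used simultaneously, and the only potential snag, is the step that $H$ commutes with the specific direct sums and cokernels appearing in those presentations: exactness of $H$ comes from projectivity of $P$, commutation with direct sums of copies of $P$ comes from compactness, and cocompleteness of $\CC$ is what allows us to form the objects $P^{(I)}$ inside $\CC$ in the first place. Everything else is formal adjunction yoga.
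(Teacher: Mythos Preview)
Your argument is the standard Morita-theoretic proof and is correct. The paper itself does not prove this statement: it is recorded as a \emph{Fait}, introduced with the phrase ``tr\`es classique'', and simply invoked where needed. So there is nothing to compare against on the paper's side.

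A couple of minor remarks on presentation. First, in the counit step you write ``applying the right-exact functor $TH$'': it is slightly cleaner to say that $H$ is exact (projectivity) and $T$ is right exact (left adjoint), hence the composite is right exact --- which you do spell out later, but placing it before the five-lemma invocation avoids any appearance of circularity. Second, for $\epsilon_{P^{(I)}}$ to be an isomorphism you are implicitly using that $H(P^{(I)})\simeq \ZG^{(I)}$, which is exactly the compactness hypothesis; you mention this for the unit side but it is already needed here. Finally, note that the paper's hypotheses (``essentiellement petite avec limites inductives exactes'') are a bit loosely stated --- what is really being used, as you correctly identify, is the existence of the relevant coproducts and cokernels in $\CC$. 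None of this affects the validity of your argument.
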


\subsection{Représentations supercuspidales et scindages de $\Rep(G)$}
\label{secscinG}

\ali Soit $\pi$ une $\oFl$-représentation irréductible
\emph{supercuspidale} de $G$ (au sens habituel de Vignéras rappelé
au-dessus du théorème 1), et soit $\pi^{0}$ un sous-quotient irréductible
de $\pi_{|G^{0}}$. On rappelle que $\bZl$ désigne l'extension non
ramifiée maximale de $\Zl$ dans $\oZl$.
On  désigne par $\varpi$ un élément de $K$ de valuation $v>0$, et
par $\varpi^{\ZM}$ le sous-groupe discret qu'il engendre, que
l'on considère parfois comme un sous-groupe central de $G$.
Sans perte de généralité pour ce qui suit, nous supposerons que
le caractère central de $\pi$ est trivial sur $\varpi$. 
On s'intéresse aux sous-catégories pleines suivantes :
\begin{itemize}
\item $\CC_{\pi}\subset \Mo{\bZl}{G}$ formée des 
  objets dont tous les $\Zlnr G^{0}$-sous-quotients irréductibles sont
  isomorphes à un sous-quotient de $\pi_{|G^{0}}$.
\item $\CC_{\pi}^{\varpi}\subset \Mo{\bZl}{G/\varpi^{\ZM}}$ formée des $\bZl$-représentations de
  $G/\varpi^{\ZM}$ et dont tous les $\Zlnr G$-sous-quotients irréductibles sont
  isomorphes   à  $\pi$.
\item $\CC_{\pi^{0}}^{0}\subset \Mo{\bZl}{G^{0}}$ formée des $\bZl$-représentations de
  $G^{0}$ dont tous les $\Zlnr G^{0}$-sous-quotients  sont  isomorphes à  $\pi^{0}$.
\end{itemize}
Nous voulons montrer qu'elles sont facteurs directs, en exhiber des
progénérateurs convenables, et calculer les commutants de ces générateurs.
Pour cela, choisissons un type simple  $(J^{\circ},\lambda)$
contenu dans $\pi^{0}$, \emph{cf.} \cite[III.5.10.i)]{Vig} où le type est
dit ``minimal-maximal''. Ainsi, $J^{\circ}$ est un sous-groupe ouvert
compact de $G$ et $\lambda$ une $\oFl$-représentation irréductible de
$J^{\circ}$, et l'on a d'après \cite[III.5.3]{Vig}
$$\pi^{0}\simeq \cind{J^{\circ}}{G^{0}}{\lambda}.$$
Soit $P_{\lambda}$ une enveloppe projective de $\lambda$ dans
$\Mo{\bZl}{J^{\circ}}$. 
Dans les énoncés qui suivent, l'entier $f$ désigne la longueur de $\pi_{|G^{0}}$.

\begin{prop}\label{propscindageG0}
Posons ${P}_{\pi^{0}}:=\cind{J^{\circ}}{G^{0}}{{P_{\lambda}}}$ et ${P}_{\pi}:=\cind{J^{\circ}}{G}{{P_{\lambda}}}$.
\begin{enumerate}\item 
  \begin{enumerate}\item
    La sous-catégorie $\CC^{0}_{\pi^{0}}$ est facteur direct de
    $\Mo{\bZl}{G^{0}}$, pro-engendrée par ${P}_{\pi^{0}}$.
  \item Le commutant $\ZG_{\pi^{0}}^{\rm opp}:=\endo{\bZl
      G^{0}}{{P_{\pi^{0}}}}$ est isomorphe
    à l'algèbre $\bZl[{\rm Syl}_{\ell}(\FM_{q^{f}}^{\times})]$ du
    $\ell$-Sylow du groupe $\FM_{q^{f}}^{\times}$.
  \end{enumerate}
\item 
  \begin{enumerate}
     \item La sous-catégorie $\CC_{\pi}$ est facteur direct dans
    $\Mo{\bZl}{G}$,  pro-engendrée par ${P_{\pi}}$.
  \item Le commutant $\ZG_{\pi}^{\rm opp}:=\endo{\bZl G}{{P_{\pi}}}$ est isomorphe à l'algèbre
    $\bZl[{\rm Syl}_{\ell}(\FM_{q^{f}}^{\times})\times\ZM]$.
  \end{enumerate}
\end{enumerate}

\end{prop}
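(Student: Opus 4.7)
The plan is to apply Lemma \ref{lemmescind} in each of the two settings, using compact inductions of $P_\lambda$ as the projective objects, and then to compute the relevant endomorphism rings via a Mackey decomposition combined with the intertwining theorem for types.

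For (i)(a), since $J^\circ$ is open in $G^0$, the compact induction $\cInd{J^\circ}{G^0}$ is left adjoint to the exact restriction functor, hence preserves projectives; so $P_{\pi^0} = \cind{J^\circ}{G^0}{P_\lambda}$ is projective in $\Mo{\bZl}{G^0}$. First I would verify that every irreducible $\oFl G^0$-subquotient of $P_{\pi^0}$ is isomorphic to $\pi^0$: by exactness of compact induction this reduces to the analogous statement for $P_\lambda$ over $J^\circ$, which follows from the structure of a maximal simple type -- writing $\lambda = \kappa \otimes \sigma$, the block of $\lambda$ in $\Mo{\bZl}{J^\circ}$ corresponds to a modular block of the finite quotient $J^\circ/J^1$ containing a cuspidal of a group of Lie type in residual characteristic $p \neq \ell$, and this block contains only $\lambda$ itself. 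Frobenius reciprocity gives $\Hom_{G^0}(P_{\pi^0}, \pi^0) \simeq \Hom_{J^\circ}(P_\lambda, \pi^0) \neq 0$, and the smooth contragredient $P_{\pi^0}^\vee$, which is injective since $P_{\pi^0}$ is projective and admissible, provides the injective counterpart required by Lemma \ref{lemmescind}(ii), yielding the direct factor decomposition. Pro-generation then follows because any $V \in \CC_{\pi^0}^0$ surjects onto a direct sum of copies of $\pi^0$, which lifts through the projectivity of $P_{\pi^0}$.

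For (i)(b), I would combine Mackey with Frobenius reciprocity to write
$$\endo{\bZl G^0}{\cind{J^\circ}{G^0}{P_\lambda}} \simeq \bigoplus_{g \in J^\circ \backslash G^0 / J^\circ} \Hom_{J^\circ \cap g J^\circ g^{-1}}(P_\lambda, {}^g P_\lambda),$$
then invoke the intertwining theorem for maximal simple types, following \cite[III.5]{Vig}, to localise the sum on the $G^0$-normalizer of $(J^\circ,\lambda)$. Its quotient by $J^\circ$ is a finite cyclic group whose order is precisely $f$ by Clifford theory applied to $\pi \mapsto \pi_{|G^0}$. The computation then splits into determining $\endo{\bZl J^\circ}{P_\lambda}$ and controlling the cyclic action on it. Modulo $J^1$, the $J^\circ$-endomorphism ring of $P_\lambda$ is the endomorphism ring of the projective envelope of the cuspidal $\oFl$-representation $\sigma$ of $J^\circ/J^1 \simeq \GL_{d/e}(\FM_{q^e})$; the modular block theory of finite general linear groups at cuspidals identifies this as $\bZl[{\rm Syl}_\ell(\FM_{q^f}^\times)]$, the parameter group being the cyclic group of unramified characters stabilising $\sigma$. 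A further check shows the cyclic normalizer piece acts trivially on this commutative commutant, which gives the claim.

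For part (ii), the compact induction $P_\pi = \cind{G^0}{G}{P_{\pi^0}}$ is still projective by transitivity, and the verification of (ii)(a) is formally identical to (i)(a). For (ii)(b), the Mackey decomposition now ranges over $J^\circ \backslash G / J^\circ$; the extra double cosets outside $G^0$ contribute intertwining via the unramified twists of $\pi$ isomorphic to $\pi$, and these form a free abelian group of rank $1$ since $G/G^0 \simeq \ZM$. The resulting extension of the cyclic piece of (i)(b) by $\ZM$ splits, so one recovers the claimed formula $\bZl[{\rm Syl}_\ell(\FM_{q^f}^\times) \times \ZM]$. The main obstacle will be the commutant computation in step (b), specifically verifying the exact order $f$ of the cyclic normalizer quotient and showing that the cyclic action on the Sylow group algebra is trivial; this requires a careful interplay between the intertwining theory of types and the modular block theory of finite reductive groups, whereas (a) is essentially formal once projectivity and the subquotient control are in place.
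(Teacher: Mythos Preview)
Your architecture for (i)(a), (ii)(a) and (ii)(b) matches the paper's. There is one small slip in (i)(a): over $\bZl$ the smooth contragredient $P_{\pi^0}^\vee$ is again \emph{projective} (this is Lemma~\ref{lemprojcusp}), not injective; the paper builds the required injective as $I_{\pi^0}:=\Hom_{\bZl}(P_{\pi^0}^\vee,\Ql^{\rm nr}/\bZl)$, which lies in $\CC^0_{\pi^0}$ and contains $\pi^0$. This is an easy fix.

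The genuine gap is in (i)(b). First, a misconception: the intertwining set of $\lambda$ in $G^0$ is exactly $J^\circ$, so there is no ``cyclic normalizer of order $f$'' inside $G^0$; one has directly $\endo{J^\circ}{P_\lambda}\simto\endo{G^0}{P_{\pi^0}}$. The integer $f$ enters only through the identity $f=f'd'$ (where $J^\circ/J^1\simeq\GL_{d'}(\FM_{q^{f'}})$), which the paper proves separately from the shape $J=J^\circ E^\times$ of the normalizer in $G$, not $G^0$. Second, and more seriously, your assertion that ``modular block theory'' identifies $\endo{\GL_{d'}(\FM_{q^{f'}})}{P_\tau}$ with $\bZl[{\rm Syl}_\ell(\FM_{q^{f'd'}}^\times)]$ is precisely the hard point. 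One sees easily that this ring is a local $\bZl$-order of the correct rank (by counting $\oQl$-lifts of $\tau$ as in \cite[III.2.9]{Vig}), but identifying \emph{which} order it is needs a real argument. The paper's author states explicitly that he could not do this by elementary means and instead uses the Deligne--Lusztig variety $Y_{d',f'}$ for a Coxeter element: the $(\CC_\tau,\CC_\theta)$-summand of $H^{d'-1}_c(Y_{d',f'},\bZl)$ is simultaneously a projective envelope of $\tau$ over $\GL_{d'}(\FM_{q^{f'}})$ and a projective $\bZl[\FM_{q^{f'd'}}^\times]_{\CC_\theta}\simeq\bZl[{\rm Syl}_\ell(\FM_{q^{f'd'}}^\times)]$-module, and comparing the two actions yields the isomorphism. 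If you have a genuine block-theoretic route (say via nilpotent-block theory \`a la Puig) you must spell it out; as written, this step is unjustified.
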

\begin{proof}
$i)(a)$ Le point clef est
\cite[III.4.28]{Vig} qui nous dit que, puisque $\pi$ est
\emph{super}cuspidale, 
tous les sous-quotients irréductibles de
$P_{\lambda}\otimes_{\Zlnr}\oFl$, et donc aussi ceux de ${P}_{\lambda}$, sont isomorphes à
$\lambda$ (en fait, nous donnerons une preuve alternative de ce fait (moins
élémentaire) lorsque nous démontrerons le point ii)). 
Il s'ensuit que $P_{\pi^{0}}\otimes\oFl$ est une représentation
de longueur finie de $G^{0}$, et que tous ses sous-quotients
irréductibles, ainsi que ceux de ${P_{\pi^{0}}}$,
 sont isomorphes à $\pi^{0}$. En particulier, ${P_{\pi^{0}}}$ est bien un objet de
$\CC^{0}_{\pi^{0}}$. C'est même un objet projectif, puisqu'induit à supports compacts
d'un objet projectif. 

Il reste à voir que  $\CC^{0}_{\pi^{0}}$ est bien facteur direct de
$\Mo{\bZl}{G^{0}}$. Nous avons le nécessaire pour appliquer le i)
du lemme \ref{lemmescind} avec $\CC=\Mo{\bZl}{G^{0}}$,
$S=\{\pi^{0}\}$ et $P_{S}= {P_{\pi^{0}}}$. 
Pour appliquer le ii) de ce lemme,  remarquons que la 
$\bZl$-contragédiente $P_{\pi^{0}}^{\vee}$ de $P_{\pi^{0}}$
est encore un objet projectif de $\Mo{\bZl}{G^{0}}$ (\emph{cf} lemme
\ref{lemprojcusp}) et que tous ses 
sous-quotients irréductibles sont isomorphes à $(\pi^{0})^{\vee}$.
Posons alors
$I_{\pi^{0}}:=\Hom_{\bZl}(P_{\pi^{0}}^{\vee},\Ql^{\rm nr}/\bZl)$.
C'est un objet injectif (pas de type fini) de $\Mo{\bZl}{G^{0}}$ qui
appartient $\CC_{\pi^{0}}^{0}$ et qui contient $\pi_{0}$. On peut donc
appliquer le lemme \ref{lemmescind} et conclure.

$i)(b)$ Pour calculer
  l'anneau $\endo{G^{0}}{{P_{\pi^{0}}}}$, on rappelle deux propriétés du type
  $(J^{\circ},\lambda)$.
  \begin{enumerate}
  \item[a)] L'ensemble d'entrelacement de $\lambda$ dans $G^{0}$ est égal
    à $J^{\circ}$. En conséquence,  le  morphisme canonique
  $\endo{J^{\circ}}{{P_{\lambda}}}\To{}\endo{G^{0}}{{P_{\pi^{0}}}}$
est un isomorphisme.
\item[b)] Le quotient de $J^{\circ}$ par son pro-$p$-radical $J^{1}$ est
isomorphe à un groupe du type $\GL_{d'}(\FM_{q^{f'}})$ et $\lambda$ est de la forme
$\kappa\otimes \tau$ avec $\kappa$ une représentation dont la
restriction à $J^{1}$ est irréductible et admettant un unique
relèvement $\wt\kappa$, et $\tau$ une représentation supercuspidale du
quotient $\GL_{d'}(\FM_{q^{f'}})$. En conséquence, ${P_{\lambda}}$
est de la forme ${\wt\kappa}\otimes {P_{\tau}}$ et on a un isomorphisme
$\endo{\GL_{d'}(\FM_{q^{f'}})}{{P_{\tau}}}\simto \endo{J^{\circ}}{{P_{\lambda}}}$.
  \end{enumerate}
Reste alors à calculer
$\endo{\GL_{d'}(\FM_{q^{f'}})}{{P_{\tau}}}$. Après son étude des
relèvements possibles de $\tau$, Vignéras a observé dans
\cite[III.2.9]{Vig} que d'après les propriétés du triangle de
Cartan-Brauer, $P_{\tau}\otimes\oQl$ est isomorphe à la somme directe $\bigoplus_{\wt\tau}
\wt\tau$ des $\oQl$-relèvements de $\tau$, chacun apparaissant avec
multiplicité $1$. Elle a compté $\ell^{v_{\ell}(q^{f'd'}-1)}=|{\rm Syl}_{\ell}(\FM_{q^{f'd'}}^{\times})|$ tels
relèvements, et l'anneau qui nous intéresse est donc un ordre local
dans l'algèbre $\oQl^{\ell^{v_{\ell}(q^{f'd'}-1)}}$. L'auteur n'ayant pas su
identifier cet ordre par des moyens élémentaires, 
nous utiliserons la cohomologie de la variété de Deligne-Lusztig $Y_{d',f'}$ associée à
l'élément de Coxeter de $\GL_{d'}(\FM_{q^{f'}})$, et qui est munie
d'une action de $\GL_{d'}(\FM_{q^{f'}})\times
\FM_{q^{f'd'}}^{\times}$. 
Notons $\CC_{\tau}$ la sous-catégorie facteur direct de
$\Mo{\Zlnr}{\GL_{d'}(\FM_{q^{f'}})}$ formée des représentations dont
tous les sous-quotients irréductibles sont isomorphes à $\tau$, qui est donc
pro-engendrée par $P_{\tau}$.
On sait que le complexe $R\Gamma_{c}(Y_{d',f'}^{\rm ca},\bZl)$ est parfait
dans $\DC^{b}(\Rep_{\bZl}( \GL_{d'}(\FM_{q^{f'}})))$,
donc son facteur direct $R\Gamma_{c}(Y_{d',f'}^{\rm
  ca},\bZl)_{\CC_{\tau}}$ est aussi parfait. Comme 
 dans la preuve de la proposition
\ref{progeneG}, ce facteur direct est concentré en degré $d'-1$ : cela
découle par exemple du théorème 3.10 de \cite{BonRou2} car $P_{\tau}$ est
facteur direct du module de Gelfand-Graev. Il s'ensuit que 
$H^{d'-1}_{c}(Y_{d',f'}^{\rm
  ca},\bZl)_{\CC_{\tau}}$ est projectif, et donc est un progénérateur
de  $\CC_{\tau}$. 
 Cet espace se décompose
encore selon l'action de $\FM_{q^{f'd'}}^{\times}$ en 
$$H^{d'-1}_{c}(Y_{d',f'}^{\rm ca},\bZl)_{\CC_{\tau}}=\bigoplus_{\theta}H^{d'-1}_{c}(Y_{d',f'}^{\rm
  ca},\bZl)_{\CC_{\tau},\CC_{\theta}}$$
 où $\theta$ décrit l'ensemble des $\oFl$-caractères de $\FM_{q^{f'd'}}^{\times}$
 et $\CC_{\theta}$ désigne la sous-catégorie 
des $\bZl$-représentations de $\FM_{q^{f'd'}}^{\times}$ dont tous les
sous-quotients sont isomorphes à $\theta$.
La théorie de Deligne-Lusztig nous dit qu'il y a au moins un 
facteur comme ci-dessus non nul et que de plus on a
$$H^{d'-1}_{c}(Y_{d',f'}^{\rm ca},\bZl)_{\CC_{\tau},\CC_{\theta}}\otimes\oQl =
\bigoplus_{\wt\tau}\wt\tau\otimes \wt\theta
$$
où $\wt\tau$ décrit l'ensemble des relèvements de $\tau$ et $\wt\theta$ est
l'unique caractère relevant $\theta$ et associé à $\wt\tau$ par la
``correspondance de Deligne-Lusztig''. En
particulier, $H^{d'-1}_{c}(Y_{d',f'}^{\rm
  ca},\bZl)_{\CC_{\tau},\CC_{\theta}}$ est une enveloppe projective de $\tau$.
Maintenant, le complexe $R\Gamma_{c}(Y_{d',f'}^{\rm ca},\bZl)$ est aussi parfait
dans $\DC^{b}(\Rep_{\bZl}(\FM_{q^{d'f'}}^{\times}))$,
donc le facteur direct $H^{d'-1}_{c}(Y_{d',f'}^{\rm 
  ca},\bZl)_{\CC_{\tau},\CC_{\theta}}$ est aussi projectif dans
$\CC_{\theta}$.  
Considérons alors le morphisme canonique
$$ \bZl[\FM_{q^{f'd'}}^{\times}]_{\CC_{\theta}}\simeq  \bZl[{\rm Syl}_{\ell}(\FM_{q^{f'd'}}^{\times})]
\To{} \endo{\bZl\GL_{d'}(\FM_{q^{f'}})}{H^{d'-1}_{c}(Y_{d',f'}^{\rm
    ca},\bZl)_{\CC_{\tau},\CC_{\theta}}} $$
Ce morphisme entre deux $\bZl$-algèbres libres de rang fini devient
bijectif après inversion de $\ell$ d'après la décomposition
précédente. Pour voir qu'il est lui-même bijectif, il suffit de
prouver que sa réduction
$$ \oFl[\FM_{q^{f'd'}}^{\times}]_{\CC_{\theta}}\simeq \oFl[{\rm Syl}_{\ell}(\FM_{q^{f'd'}}^{\times})]
\To{} \endo{\oFl\GL_{d'}(\FM_{q^{f'}})}{H^{d'-1}_{c}(Y_{d',f'}^{\rm
    ca},\oFl)_{\CC_{\tau},\CC_{\theta}}} $$
l'est, ou encore, par égalité des rangs, que cette réduction est
injective. Or, cela découle de la projectivité de $H^{d'-1}_{c}(Y_{d',f'}^{\rm
    ca},\oFl)_{\CC_{\tau},\CC_{\theta}}$ sur l'anneau local $\oFl[\FM_{q^{f'd'}}^{\times}]_{\CC_{\theta}}$.

Pour achever la preuve du point $i)(b)$, il reste à montrer l'identité
$f=f'd'$. 
On sait que le normalisateur $J$ de la paire $(J^{\circ},\lambda)$ est
de la forme  $J^{\circ}E^{\times}$ pour une extension $E\supset K$ de
degré $d/d'$ et degré résiduel $f'$, \emph{cf} \cite[III.5]{Vig},
et que $\lambda$ s'étend en une représentation $\lambda'$ telle
que $\cind{J}{G}{\lambda'}\simeq \pi$.
On a alors
$$\pi_{|G^{0}}=\bigoplus_{x\in J\ba G/G^{0}}
\cind{(J^{0})^{x}}{G^{0}}{\lambda^{x}}= \bigoplus_{x\in J\ba G/G^{0}}
(\pi^{0})^{x}.$$
La longueur $f$ de $\pi_{|G^{0}}$ est donc donnée par 
\begin{eqnarray*}
f=[G:G^{0}J] & = & [G:G^{0}\varpi^{\ZM}][G^{0}J:G^{0}\varpi^{\ZM}]^{-1}=
d [J:J^{0}\varpi^{\ZM}]^{-1} \\ & =  & d[E^{\times}:\OC_{E}^{\times}K^{\times}]^{-1}
= d e(E/K)^{-1}= d'f'
\end{eqnarray*}

$ii)(a)$ Les sous-quotients irréductibles de $\pi_{|G^{0}}$ sont en nombre
fini, et permutés par conjugaison sous $G$.
En appliquant la proposition précédente à chacun d'eux, on constate que la
 sous-catégorie 
$\CC_{\pi}^{0}\subset \Mo{\bZl}{G^{0}}$ formée des $\bZl$-représentations de
  $G^{0}$ dont tous les sous-quotients irréductibles sont isomorphes à
  un  sous-quotient de $\pi_{|G^{0}}$
est \emph{facteur direct} de
$\Mo{\bZl}{G^{0}}$, et pro-engendrée par $\bigoplus_{x\in G/G^{0}\varpi^{\ZM}}
{P_{\pi^{0}}}^{x} \simeq {P_{\pi}^{0}}$.

Par définition, une
$\bZl$-représentation lisse de $G$ est dans $\CC_{\pi}$ \ssi\ sa
restriction à $G^{0}$ est dans $\CC_{\pi}^{0}$. Soit alors 
$V\in\Mo{\bZl}{G}$. Par unicité, la décomposition
 $V_{|G^{0}}=V_{\CC_{\pi}^{0}}\oplus V^{\CC_{\pi}^{0}}$ est
 nécessairement $G$-invariante, avec $V_{\CC_{\pi}^{0}} \in \CC_{\pi}$
 et $V^{\CC_{\pi}^{0}} \in \CC^{\pi}$.
Inversement, l'induite
d'un objet de $\CC_{\pi}^{0}$ à $G$ appartient à $\CC_{\pi}$, et
envoie tout progénérateur de $\CC_{\pi}^{0}$ sur un progénérateur de $\CC_{\pi}$.
Or, $\cind{G^{0}}{G}{{P_{\pi}^{0}}}\simeq
{P_{\pi}}^{[G:G^{0}\varpi^{\ZM}]}$, donc ${P_{\pi}}$ est bien un
progénérateur de $\CC_{\pi}$.

$ii)(b)$ Soit $J$ le normalisateur de la paire $(J^{\circ},\lambda)$
(noté $J^{\circ}E^{\times}$ dans \cite[III.5]{Vig}). On sait que
l'ensemble d'entrelacement de $\lambda$ dans $G$ est contenu dans (et donc
égal à) $J$. Celui de $P_{\lambda}$  est 
alors aussi contenu dans $J$, et on en déduit l'isomorphisme
$$ \endo{J}{\cind{J^{\circ}}{J}{{P_{\lambda}}}}\simto \endo{G}{{P_{\pi}}}.$$
Par unicité de
l'enveloppe projective, $J$ normalise aussi $P_{\lambda}$.
 Comme le quotient $J/J^{\circ}$ est isomorphe à
$\ZM$, on peut prolonger la représentation ${P_{\lambda}}$ de
$J^{\circ}$ en une représentation ${P_{\lambda}}^{\dag}$ de $J$. On
a alors un isomorphisme naturel de $\bZl$-représentations de $J$
$$\cind{J^{\circ}}{J}{{P_{\lambda}}}\simeq{P_{\lambda}}^{\dag}\otimes
    \bZl[J/J^{\circ}]$$
où $J$ agit diagonalement sur le produit tensoriel (et par
translations  sur $\bZl[J/J^{\circ}]$). On en déduit un morphisme
d'algèbres
\begin{equation}
\endo{J}{{P_{\lambda}}^{\dag}}\otimes
    \bZl[J/J^{\circ}]\To{} \endo{J}{\cind{J^{\circ}}{J}{{P_{\lambda}}}}.\label{morph}
 \end{equation}
De plus, la réciprocité de Frobenius nous donne une décomposition
de $\bZl[J/J^{\circ}]$-modules
$$\endo{J}{\cind{J^{\circ}}{J}{{P_{\lambda}}}}\simeq\Hom_{J^{\circ}}\left({P_{\lambda}},
{P_{\lambda}}^{\dag}\otimes
\bZl[J/J^{\circ}]\right)\simeq \endo{J^{\circ}}{{P_{\lambda}}} \otimes \bZl[J/J^{\circ}],$$
et le morphisme (\ref{morph}) est induit par l'inclusion
$\endo{J}{{P_{\lambda}}^{\dag}}\subset\endo{J^{\circ}}{{P_{\lambda}}}$
et l'identité de $\bZl[J/J^{\circ}]$.
Reste donc à voir que l'inclusion
$\endo{J}{{P_{\lambda}}^{\dag}}\subset\endo{J^{\circ}}{{P_{\lambda}}}$
est une égalité. Comme il s'agit d'une inclusion de $\bZl$-modules libres
de rang fini qui est  manifestement scindée, il suffit de voir que ces
modules ont même rang sur $\bZl$, et pour cela on peut étendre les
scalaires à $\oQl$. Or on sait d'une part que ${P_{\lambda}}\otimes\oQl$ est la
somme directe sans multiplicité des relèvements de $\lambda$ à
$\oQl$, et d'autre part que chacun de ces relèvements est normalisé
par $J$. On en déduit que les deux algèbres ont même rang égal au
nombre de relèvements de $\lambda$.
\end{proof}

\begin{coro}\label{equivdefsupercusp}
  La représentation $\pi$ n'est pas sous-quotient d'une induite
  parabolique propre.
\end{coro}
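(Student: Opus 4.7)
Mon plan consiste \`a exploiter le prog\'en\'erateur $P_{\pi}=\cind{J^{\circ}}{G}{P_{\lambda}}$ de $\CC_{\pi}$ construit dans la proposition \ref{propscindageG0} pour ramener l'\'enonc\'e \`a une annulation de module de Jacquet, puis \`a utiliser la cuspidalit\'e du type simple maximal $(J^{\circ},\lambda)$.

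Supposons par l'absurde que $\pi$ est sous-quotient d'une induite parabolique $\ip{P}{G}{\tau}$ avec $P$ propre de Levi $M$ et $\tau$ une $\bZl$-repr\'esentation lisse quelconque de $M$ (non n\'ecessairement admissible). Si $V_{1}\subset V_{2}\subset \ip{P}{G}{\tau}$ v\'erifient $V_{2}/V_{1}\simeq \pi$, la projectivit\'e de $P_{\pi}$ (proposition \ref{propscindageG0}, $ii)(a)$) permet de relever la surjection $P_{\pi}\twoheadrightarrow \pi$ en un morphisme $P_{\pi}\to V_{2}$ dont la compos\'ee avec $V_{2}\hookrightarrow \ip{P}{G}{\tau}$ est non nulle. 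La r\'eciprocit\'e de Frobenius $\Hom_{G}(P_{\pi},\ip{P}{G}{\tau})\simeq \Hom_{M}(r_{P}(P_{\pi}),\tau)$, valable sans hypoth\`ese d'admissibilit\'e sur $\tau$, entra\^ine alors $r_{P}(P_{\pi})\neq 0$. Je suis donc ramen\'e \`a prouver que $r_{P}(P_{\pi})=0$ pour tout parabolique propre $P$ de $G$.

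La formule de Mackey appliqu\'ee \`a la restriction $\ip{P}{G}{\tau}_{|J^{\circ}}$, combin\'ee aux adjonctions de Frobenius (ou, de fa\c{c}on \'equivalente, le lemme g\'eom\'etrique pour la Jacquet d'une induite compacte), r\'eduit la question \`a l'annulation des coinvariants $(P_{\lambda})_{N'}$ pour certains sous-groupes $N'=N^{g}\cap J^{\circ}$, o\`u $N$ est le radical unipotent de $P$ et $g$ parcourt des repr\'esentants de $P\ba G/J^{\circ}$. Par d\'efinition du type simple maximal $(J^{\circ},\lambda)$ \`a la Vign\'eras \cite[III.5]{Vig}, on dispose de l'annulation $\lambda_{N'}=0$ pour tous ces sous-groupes. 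Comme $P_{\lambda}/\ell P_{\lambda}$ est un $\Fl J^{\circ}$-module de longueur finie dont tous les sous-quotients irr\'eductibles sont isomorphes \`a $\lambda$ (\cite[III.4.28]{Vig}), la droite-exactitude des coinvariants donne d'abord $(P_{\lambda}/\ell P_{\lambda})_{N'}=0$ ; la suite exacte $0\to \ell P_{\lambda}\to P_{\lambda}\to P_{\lambda}/\ell P_{\lambda}\to 0$ fournit alors l'\'egalit\'e $(P_{\lambda})_{N'}=\ell\cdot(P_{\lambda})_{N'}$, et le lemme de Nakayama conclut puisque $(P_{\lambda})_{N'}$ est un quotient du $\bZl$-module de type fini $P_{\lambda}$.

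L'obstacle principal sera ce dernier passage du mod $\ell$ \`a l'entier : la finitude de $P_{\lambda}$ sur $\bZl$, cons\'equence de ce que $\lambda$ est de dimension finie et se factorise \`a travers un quotient fini de $J^{\circ}$, est essentielle pour appliquer Nakayama ; par ailleurs, la formulation pr\'ecise du lemme g\'eom\'etrique int\'egral valable sans hypoth\`ese d'admissibilit\'e sur $\tau$ m\'eritera une v\'erification soigneuse des doubles classes $P\ba G/J^{\circ}$ et de leurs groupes d'isotropie.
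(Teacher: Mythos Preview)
Your first paragraph --- using projectivity of $P_{\pi}$ and Frobenius reciprocity to reduce the question to $r_{P}(P_{\pi})=0$ --- is exactly the paper's opening. The paper then takes a shorter route than yours: since $N_{P}\subset G^{0}$, the decomposition $P_{\pi}|_{G^{0}}=\bigoplus_{x\in G/G^{0}}(P_{\pi^{0}})^{x}$ gives $(P_{\pi})_{N_{P}}=\bigoplus_{x}(P_{\pi^{0}}^{x})_{N_{P}}$, and it suffices to show $(P_{\pi^{0}})_{N_{P}}=0$. This follows from the cuspidality of $\pi$ (hence of $\pi^{0}$), which is part of the supercuspidality hypothesis, via exactly the Nakayama trick you spell out --- but applied to $P_{\pi^{0}}$ rather than to $P_{\lambda}$ (using that $P_{\pi^{0}}$ is $\bZl$-admissible with $P_{\pi^{0}}/\ell$ of finite length and all composition factors $\pi^{0}$).

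Your detour through $(J^{\circ},\lambda)$ and the Mackey decomposition of $\ip{P}{G}{\tau}|_{J^{\circ}}$ can be made to work, but the justification you give for $\lambda_{N'}=0$ is not correct: this vanishing is \emph{not} part of the definition of a simple type in \cite[III.5]{Vig}. In fact, the family of vanishings $\lambda_{N^{g}\cap J^{\circ}}=0$ over all $g\in P\backslash G/J^{\circ}$ is --- by the very Mackey computation you invoke, applied to $\cind{J^{\circ}}{G^{0}}{\lambda}=\pi^{0}$ --- equivalent to the cuspidality of $\pi^{0}$. So you are using the same input as the paper, wrapped in an extra layer of double-coset bookkeeping; the ``v\'erification soigneuse'' you flag at the end disappears if you stay at the $G^{0}$ level.
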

\begin{proof}
  En effet, si on a $ W\subset \ip{P}{G}{U}$ avec $W \twoheadrightarrow
  \pi$, alors par projectivité on a un morphisme non nul
  $P_{\pi}\To{}W\subset \ip{P}{G}{U}$, et par adjonction le module de Jacquet
  $(P_{\pi})_{N_{P}}=\bigoplus_{x\in G/G^{0}}
  (P_{\pi^{0}}^{x})_{N_{P}}$ est non nul, 
donc il existe $x\in G$ tel que $(P_{\pi^{0}}^{x})_{N_{P}}$ est
  non nul, auquel cas $((\pi^{0})^{x})_{N_{P}}$ est aussi non nul, 
et finalement $\pi_{N_{P}}$ est non nul, ce qui est absurde.
\end{proof}

\alin{Preuve de la proposition \ref{catsupercusp}}
\label{coroscindscusp}
Soit $V\in\Mo{\Zl}{G}$. Posons $V_{\Zlnr}:=V\otimes_{\Zl}\Zlnr$ et $\Gamma:=\gal(\Zlnr/\Zl)$.
 D'après la proposition \ref{propscindageG0}, on a une décomposition
$$ V_{\Zlnr} = (V_{\Zlnr})_{\rm sc} \oplus
(V_{\Zlnr})', \,\hbox{ avec }
(V_{\Zlnr})_{\rm sc}:=\bigoplus_{\pi\in {\rm
    Scusp}_{\oFl}(G)/\sim} (V_{\Zlnr})_{\CC_{\pi}}$$
et $(V_{\Zlnr})'$ n'a aucun $\Zlnr G^{0}$-sous-quotient isomorphe à un
$\Zlnr G^{0}$-sous-quotient d'une
$\oFl$-représentation supercuspidale.
Comme une $\oFl$-représentation irréductible est supercuspidale \ssi\
ses  conjuguées sous Galois le sont, on constate que la décomposition
ci-dessus est stables par $\Gamma$,
donc provient d'une décomposition $V=V_{\scusp}\oplus V'$, évidemment
fonctorielle en $V$.
Par construction, $V'$ n'a aucun sous-quotient supercuspidal au sens
de \ref{defsupercusp}, et par le corollaire précédent, $V_{\rm sc}$ est bien un objet supercuspidal
au sens de \ref{defsupercusp}.

\ali Le lemme suivant est utilisé dans la preuve du théorème \ref{theocoho}.
\begin{lem} \label{lemmecommut}
  Soit $V$ une $\bZl$-représentation lisse de $G^{0}$. Fixons un ensemble
  $[G/G^{0}J]$ de représentants des $G^{0}J$ classes dans $G$. Alors il existe un
  isomorphisme de $\ZG_{\pi}$-modules
$$   \Hom_{\bZl G}({P_{\pi}},\cind{G^{0}}{G}{V}) \simeq \ZG_{\pi}\otimes_{\ZG_{\pi^{0}}}\left(\bigoplus_{x\in
    [G/G^{0}J]} \Hom_{\bZl
    G}({P_{\pi^{0}}},V^{x})\right).
$$
\end{lem}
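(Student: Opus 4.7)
The plan is to combine Frobenius reciprocity with Mackey's decomposition, and then to regroup the resulting direct sum by the orbits of $J$ acting on $G/G^{0}$.

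First, using $P_{\pi}=\cind{J^{\circ}}{G}{P_{\lambda}}$, Frobenius reciprocity gives $\Hom_{\bZl G}(P_{\pi},\cind{G^{0}}{G}{V})\simeq \Hom_{\bZl J^{\circ}}(P_{\lambda},(\cind{G^{0}}{G}{V})_{|J^{\circ}})$. Since $J^{\circ}\subset G^{0}$ and $G^{0}$ is normal in $G$, the Mackey decomposition identifies the restriction with $\bigoplus_{g\in G/G^{0}} V^{g}$ (each summand viewed as a $J^{\circ}$-representation). Because $P_{\lambda}$ is finitely generated over the compact group $J^{\circ}$, $\Hom$ commutes with the direct sum; a second Frobenius applied to $P_{\pi^{0}}=\cind{J^{\circ}}{G^{0}}{P_{\lambda}}$ then yields
$$ \Hom_{\bZl G}(P_{\pi},\cind{G^{0}}{G}{V}) \simeq \bigoplus_{g\in G/G^{0}}\Hom_{\bZl G^{0}}(P_{\pi^{0}},V^{g}). $$

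Next, I would partition $G/G^{0}$ along the orbits of left multiplication by $J$. Since $J\cap G^{0}=J^{\circ}$, each orbit is a free $J/J^{\circ}$-set, and the orbits themselves are indexed by $[G/G^{0}J]$; writing $g=jx$ with $x\in [G/G^{0}J]$ and $j\in J/J^{\circ}$, the representation $V^{jx}$ is the $j$-twist of $V^{x}$. The crucial input is that $J$ normalizes the pair $(J^{\circ},\lambda)$, so by unicity $P_{\lambda}$ extends to a $J$-representation $P_{\lambda}^{\dag}$ (as used in the proof of proposition \ref{propscindageG0}(ii)(b)). This extension produces, for each $j\in J/J^{\circ}$, a distinguished $G^{0}$-equivariant isomorphism $(P_{\pi^{0}})^{j}\simto P_{\pi^{0}}$, and hence canonical identifications $\Hom_{\bZl G^{0}}(P_{\pi^{0}},V^{jx})\simeq \Hom_{\bZl G^{0}}(P_{\pi^{0}},V^{x})$. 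Regrouping gives
$$ \Hom_{\bZl G}(P_{\pi},\cind{G^{0}}{G}{V}) \simeq \bigoplus_{x\in [G/G^{0}J]} \bZl[J/J^{\circ}]\otimes_{\bZl}\Hom_{\bZl G^{0}}(P_{\pi^{0}},V^{x}). $$

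Finally, proposition \ref{propscindageG0}(ii)(b) identifies $\ZG_{\pi}\simeq \ZG_{\pi^{0}}\otimes_{\bZl}\bZl[J/J^{\circ}]$ through the same extension $P_{\lambda}^{\dag}$, so $\ZG_{\pi}\otimes_{\ZG_{\pi^{0}}}M\simeq \bZl[J/J^{\circ}]\otimes_{\bZl}M$ for any $\ZG_{\pi^{0}}$-module $M$. Applying this with $M=\bigoplus_{x}\Hom_{\bZl G^{0}}(P_{\pi^{0}},V^{x})$ delivers the stated isomorphism. The main technical point is to verify that the $\bZl[J/J^{\circ}]$-action on the left-hand side (coming from precomposition with the action of $J$ on $P_{\pi}$) matches the one on the right-hand side (coming from the $\bZl[J/J^{\circ}]$-factor in $\ZG_{\pi}$); this reduces to a cocycle check for the family of isomorphisms $(P_{\pi^{0}})^{j}\simto P_{\pi^{0}}$, which is immediate from their construction via the $J$-module structure of $P_{\lambda}^{\dag}$.
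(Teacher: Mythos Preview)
Your proof is correct and follows essentially the same route as the paper's: Frobenius reciprocity combined with the Mackey decomposition of $\cind{G^{0}}{G}{V}$, then regrouping the summands along $J/J^{\circ}$-orbits via the extension $P_{\lambda}^{\dag}$ of Proposition~\ref{propscindageG0}~(ii)(b), and finally invoking the tensor decomposition $\ZG_{\pi}\simeq \ZG_{\pi^{0}}\otimes_{\bZl}\bZl[J/J^{\circ}]$. The only cosmetic difference is that the paper performs the identification $\Hom_{\bZl J^{\circ}}(P_{\lambda},V^{xj})\simeq \Hom_{\bZl J^{\circ}}(P_{\lambda},V^{x})$ directly at the $J^{\circ}$-level (via $\alpha\mapsto \alpha\circ P_{\lambda}^{\dag}(j)$) rather than first passing to $\Hom_{\bZl G^{0}}(P_{\pi^{0}},-)$; your compatibility check with the $\ZG_{\pi}$-action is exactly what the paper records as ``compatible avec~(\ref{morph})''.
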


\begin{proof}
La réciprocité de Frobenius
  et la formule de Mackey fournissent
  \begin{eqnarray*}
    \Hom_{\bZl G}({P_{\pi}},\cind{G^{0}}{G}{V}) 
    & \simeq & \Hom_{\bZl J^{\circ}}
    \left(P_{\lambda}, \bigoplus_{x\in [G/JG^{0}]} \bigoplus_{j\in
        J}V^{xj}\right) \\
  & \simeq & \Hom_{\bZl J^{\circ}}
    \left(P_{\lambda}, \bigoplus_{x\in [G/JG^{0}]} \bigoplus_{j\in
        J}V^{x}\right) \\
 & \simeq & 
    \Hom_{\bZl J^{\circ}}
    \left(P_{\lambda}, \bigoplus_{x\in G/JG^{0}} {V^{x}}\right)
  \otimes_{\bZl} \bZl[J/J^{\circ}] \\  
    & \simeq & 
    \Hom_{\bZl J^{\circ}}
    \left(P_{\lambda}, \bigoplus_{x\in G/JG^{0}} {V^{x}}\right)
    \otimes_{\ZG_{\pi^{0}}}\ZG_{\pi}
  \end{eqnarray*}
Pour passer à la deuxième ligne, on identifie 
$\Hom_{\bZl J^{\circ}}(P_{\lambda},V^{xj})\simto\Hom_{\bZl
  J^{\circ}}(P_{\lambda},V^{x})$ en envoyant $\alpha$ sur $\alpha\circ
P_{\lambda}^{\dag}(j)$, où $P_{\lambda}^{\dag}$ est le prolongement
choisi dans (\ref{morph}). On constate alors que la décomposition de
la troisième ligne est compatible avec (\ref{morph}).
\end{proof}

\ali Pour traiter le cas de $\CC_{\pi}^{\varpi}$, nous introduisons
quelques notations supplémen\-taires. 
Comme dans les preuves précédentes, 
soit $J$ le normalisateur de $\lambda$ dans $G$ et soit $\lambda'$
l'unique prolongement de $\lambda$ à $J$ contenu dans
$\pi_{|J}$. Ce prolongement est nécessairement trivial sur $\varpi$,
et puisque le groupe $J/\varpi^{\ZM}$ est compact, il
 admet une enveloppe projective $P_{\lambda'}$ dans
$\Mo{\bZl}{J/\varpi^{\ZM}}$.
On renvoie à l'introduction pour la notion de
$\varpi$-relèvement et de $\varpi$-déformation, et on rappelle que $v$
désigne la valuation de $\varpi$, et $\breve\Zl$ le complété de $\Zlnr$.

  \begin{pro}\label{propscindageG}
    Soit ${P_{\pi}^{\varpi}}:=\cind{J}{G}{{P_{\lambda'}}}$.
    \begin{enumerate}
  \item 
    La catégorie $\CC_{\pi}^{\varpi}$ est facteur direct dans
    $\Mo{\bZl}{G/\varpi^{\ZM}}$, pro-engendrée par
    ${P_{\pi}^{\varpi}}$, qui est une enveloppe projective de $\pi$
    dans $\Mo{\bZl}{G/\varpi^{\ZM}}$.
  \item Le commutant $\ZG_{\pi}^{\varpi}:=\endo{\bZl G}{{P_{\pi}^{\varpi}}}$ est isomorphe à
    l'algèbre $\bZl[{\rm Syl}_{\ell}(\FM_{q^{f}}^{\times}\times
    f\ZM/dv\ZM)]$ du $\ell$-Sylow du groupe
    $\FM_{q^{f}}^{\times}\times f\ZM/dv\ZM$.
  \item Soit
    $\Lambda_{\pi}^{\varpi}:=\ZG_{\pi}^{\varpi}\otimes_{\Zlnr}\breve\Zl$. 
La $\Lambda_{\pi}^{\varpi}$-représentation
$\breve{P}_{\pi}^{\varpi}:=P_{\pi}^{\varpi}\otimes_{\Zlnr}\breve\Zl$ 
est une $\varpi$-déformation universelle de $\pi$. En particulier, l'anneau de
$\varpi$-déformations de $\pi$ est isomorphe à  $\breve\Zl[{\rm Syl}_{\ell}(\FM_{q^{f}}^{\times}\times
    f\ZM/dv\ZM)]$.
  \end{enumerate}

  \end{pro}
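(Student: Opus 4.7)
The plan is to prove the three points in order, mirroring the structure of Proposition \ref{propscindageG0} but in the quotient category $\Mo{\bZl}{G/\varpi^{\ZM}}$, with the single simple $\pi$ playing the role that the $G$-orbit of $\pi^{0}$ played there.

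For (i), I would apply Lemma \ref{lemmescind} with $S = \{\pi\}$ and $P_{S} = P_{\pi}^{\varpi}$. Since compact induction preserves projectivity, $P_{\pi}^{\varpi} = \cind{J}{G}{P_{\lambda'}}$ is projective. The key input is that every $\oFl$-irreducible sub-quotient of $P_{\lambda'}$ is isomorphic to $\lambda'$: the pro-$p$-radical $J^{1}$ acts trivially on such sub-quotients, reducing the question to the block theory of $\oFl[J/J^{1}\varpi^{\ZM}]$; there, the block of $\lambda$ in $\oFl[J^{\circ}/J^{1}] = \oFl[\GL_{d'}(\FM_{q^{f'}})]$ is a singleton by super-cuspidality of $\tau$, and in the cyclic quotient $J/J^{\circ}\varpi^{\ZM}$ only the $\ell$-primary characters contribute to the block of $\lambda'$, all of which reduce trivially over $\oFl$. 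Since $\cind{J}{G}{\lambda'}\simeq\pi$ by construction, this forces every $\oFl$-irreducible sub-quotient of $P_{\pi}^{\varpi}$ to be isomorphic to $\pi$, so $P_{\pi}^{\varpi} \in \CC_{\pi}^{\varpi}$. The dual construction $I_{\pi}^{\varpi}:=\Hom_{\bZl}((P_{\pi^{\vee}}^{\varpi})^{\vee},\Ql^{\rm nr}/\bZl)$ supplies an injective object in $\CC_{\pi}^{\varpi}$ receiving $\pi$, fulfilling Lemma \ref{lemmescind}(ii) and giving the direct-factor splitting. The projective-envelope property of $P_{\pi}^{\varpi}$ then reduces to the locality of its commutant, proved in (ii).

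For (ii), the intertwining set of $\lambda'$ in $G$ coincides with $J$ (it contains, and is contained in, the intertwining set of $\lambda$), so the Frobenius-reciprocity argument of \ref{propscindageG0}(ii)(b) gives $\endo{\bZl G}{P_{\pi}^{\varpi}} \simeq \endo{\bZl J}{P_{\lambda'}}$. Pick an extension $P_{\lambda}^{\dag}$ of $P_{\lambda}$ from $J^{\circ}$ to $J$ trivial on $\varpi$; such an extension exists by uniqueness of the envelope applied to any extension $\lambda'$ of $\lambda$ to $J$ that is trivial on $\varpi$. In $\Mo{\bZl}{J/\varpi^{\ZM}}$ one then has
$$ \cind{J^{\circ}}{J}{P_{\lambda}} / \langle \varpi - 1\rangle \simeq P_{\lambda}^{\dag} \otimes_{\bZl} \bZl[J/J^{\circ}\varpi^{\ZM}], $$
whose $\bZl J$-endomorphism ring is $\endo{\bZl J^{\circ}}{P_{\lambda}} \otimes_{\bZl} \bZl[J/J^{\circ}\varpi^{\ZM}]$. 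Using \ref{propscindageG0}(i)(b) together with the computation made there that $J/J^{\circ} \to G/G^{0}$ is multiplication by $f$ (since $\val_{K}(\det \varpi_{E}) = f'd' = f$), and $\val_{K}(\det \varpi) = dv$, the cyclic group $J/J^{\circ}\varpi^{\ZM}$ has order $dv/f$, yielding $\bZl[{\rm Syl}_{\ell}(\FM_{q^{f}}^{\times})] \otimes_{\bZl} \bZl[\ZM/(dv/f)\ZM]$. Finally $P_{\lambda'}$ is the local block of the above at $\lambda'$, and passing to this block extracts the $\ell$-Sylow part of the cyclic factor (the $\ell'$-characters of $\ZM/(dv/f)\ZM$ index the distinct blocks of its $\bZl$-group algebra, while the $\ell$-characters all live in one local block), giving
$$ \endo{\bZl J}{P_{\lambda'}} \simeq \bZl[{\rm Syl}_{\ell}(\FM_{q^{f}}^{\times} \times f\ZM/dv\ZM)]. $$

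For (iii), given a $\varpi$-lifting $\wt\pi$ of $\pi$ to $\Lambda \in \AC_{\oFl}$, projectivity of $\breve P_{\pi}^{\varpi}$ produces a surjection $\breve P_{\pi}^{\varpi} \twoheadrightarrow \wt\pi$ lifting the identity on $\pi$, which induces a ring morphism $\Lambda_{\pi}^{\varpi} = \endo{\breve\Zl G}{\breve P_{\pi}^{\varpi}} \to \endo{\Lambda G}{\wt\pi} = \Lambda$ (the last equality from absolute irreducibility of $\pi$ plus Nakayama applied to the admissible $\Lambda$-module $\wt\pi^{H}$ for a suitable compact open $H$). The induced map $\breve P_{\pi}^{\varpi}\otimes_{\Lambda_{\pi}^{\varpi}} \Lambda \to \wt\pi$ is surjective and an isomorphism by Nakayama (both sides are $\Lambda$-flat with reduction $\pi$), with uniqueness of the classifying morphism following from the indecomposability of $\breve P_{\pi}^{\varpi}$. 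The main obstacle is the block-theoretic step in (ii): one must distinguish carefully the $\ell'$-characters (which split into distinct blocks of $\bZl[\ZM/(dv/f)\ZM]$) from the $\ell$-characters (which populate a single local block and produce the $\bZl[{\rm Syl}_{\ell}(\cdot)]$ factor). Extracting the correct local factor from $\bZl[J/J^{\circ}\varpi^{\ZM}]$ is the only non-formal input and is precisely what converts the ``naive'' tensor computation into the claimed answer.
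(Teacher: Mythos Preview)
Your approach to (i) and (ii) is essentially the paper's own: reduce to $\endo{\bZl J}{P_{\lambda'}}$ via the intertwining set, compute $\endo{\bZl J}{\cind{J^{\circ}}{J/\varpi^{\ZM}}{P_{\lambda}}}$ as $\bZl[{\rm Syl}_{\ell}(\FM_{q^{f}}^{\times})]\otimes\bZl[J/J^{\circ}\varpi^{\ZM}]$, and then extract the local block at $\lambda'$ by splitting off the $\ell'$-characters of the cyclic quotient. The paper phrases the last step as a matching of two decompositions (one by extensions $\lambda^{\dag}$ of $\lambda$, one by $\oFl$-characters of $f\ZM/dv\ZM$), but the content is identical.

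There is a genuine gap in (iii). A chosen surjection $\breve P_{\pi}^{\varpi}\twoheadrightarrow\wt\pi$ does \emph{not} by itself induce a ring morphism $\Lambda_{\pi}^{\varpi}=\endo{\breve\Zl G}{\breve P_{\pi}^{\varpi}}\to\endo{\Lambda G}{\wt\pi}$: for that you need every $\alpha\in\Lambda_{\pi}^{\varpi}$ to preserve the kernel, which is not automatic. Likewise, ``indecomposability of $\breve P_{\pi}^{\varpi}$'' is not the reason the classifying morphism is unique. What actually makes both steps work is that $\ZG_{\pi}^{\varpi}$ is \emph{commutative} (by (ii)) and hence, since $P_{\pi}^{\varpi}$ is a progenerator of $\CC_{\pi}^{\varpi}$, the canonical map $\ZG_{\pi}^{\varpi}\to\ZG(\CC_{\pi}^{\varpi})$ to the centre of the category is an isomorphism. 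The centre acts on \emph{every} object of $\CC_{\pi}^{\varpi}$, in particular on $\wt\pi$, and this action commutes with $\Lambda$, giving a canonical morphism $\ZG_{\pi}^{\varpi}\to\endo{\Lambda G}{\wt\pi}=\Lambda$ (your Nakayama argument for the last equality is fine). This is how the paper proceeds: the morphism to $\Lambda$ is defined via the centre, not via a chosen lift, and uniqueness is then immediate because two deformations isomorphic as $\Lambda G$-modules carry the same centre action. Once you insert this observation, the rest of your argument for versality (lift by projectivity, factor through $\breve P_{\pi}^{\varpi}\otimes_{\Lambda_{\pi}^{\varpi}}\Lambda$, conclude by Nakayama and rank) goes through as written.
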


  \begin{proof}
$i)$ Puisque l'on sait que tous les sous-quotients de 
$P_{\pi}^{\varpi}$ sont isomorphes à $\pi$, \cite[III.5.16]{Vig},
l'argument utilisé pour prouver le $i)(a)$ de la proposition
\ref{propscindageG0} montre que $\CC_{\pi}^{\varpi}$ est facteur direct et
pro-engendrée par $P_{\pi}^{\varpi}$. Le
fait que $P_{\pi}^{\varpi}$ soit une enveloppe projective découlera du
point ii) qui, à travers l'équivalence de catégories $V\mapsto
\Hom_{\bZl G}(P_{\pi}^{\varpi},V)$ de $\CC_{\pi}^{\varpi}$ vers
$\Mod(\ZG_{\pi}^{\varpi})$,  montre que
$P_{\pi}^{\varpi}$ est l'unique objet projectif
indécomposable de $\CC_{\pi}^{\varpi}$.

$ii)$ On sait que l'ensemble d'entrelacement de $\lambda'$, donc aussi celui de
${P_{\lambda'}}$, est contenu dans $J$. Le morphisme canonique
$$ \endo{J}{{P_{\lambda'}}}\To{}\endo{G}{{P_{\pi}^{\varpi}}}$$
est donc un isomorphisme.
Considérons l'induite
$\cind{J^{\circ}}{J/\varpi^{\ZM}}{{P_{\lambda}}}$.
D'une part, elle est isomorphe aux coinvariants
$\cind{J^{\circ}}{J}{{P_{\lambda}}}_{\varpi^{\ZM}}$, donc par le  $ii)(b)$ du
lemme précédent (et sa preuve), on a
$$ \endo{J}{\cind{J^{\circ}}{J/\varpi^{\ZM}}{{P_{\lambda}}}}\simeq
\bZl[{\rm Syl}_{\ell}(\FM_{q^{f}}^{\times})\times J/J^{\circ}\varpi^{\ZM}].  $$
D'autre part elle se décompose en une somme
$$\cind{J^{\circ}}{J/\varpi^{\ZM}}{{P_{\lambda}}} \simeq
\bigoplus_{\lambda^{\dag}} {P_{\lambda^{\dag}}} $$
où $\lambda^{\dag}$ décrit tous les prolongements de $\lambda$ à
$J/\varpi^{\ZM}$ (parmi lesquels figure $\lambda'$).

Or, ces prolongements sont en
bijection avec les $\oFl$-caractères de $J/J^{\circ}\varpi^{\ZM}\simeq
f\ZM/dv\ZM$ \cite[III.4.27.2]{Vig}, et l'algèbre décrite ci-dessus se décompose aussi en un produit
$$\bZl[{\rm Syl}_{\ell}(\FM_{q^{f}}^{\times})\times J/J^{\circ}\varpi^{\ZM}] =
\prod_{\chi}
\bZl[{\rm Syl}_{\ell}(\FM_{q^{f}}^{\times,\ell} )\times
{\rm Syl}_{\ell}(J/J^{\circ}\varpi^{\ZM})]$$
indexé par les $\oFl$-caractères de $J/J^{\circ}\varpi^{\ZM}$.
En identifiant les deux décompositions, on en déduit la propriété annoncée.

$iii)$ Puisque le foncteur $M\mapsto
P_{\pi}^{\varpi} \otimes_{\ZG_{\pi}^{\varpi}}M$ est une équivalence de
catégories entre $\Mod(\ZG_{\pi}^{\varpi})$ et $\CC_{\pi}^{\varpi}$, on
voit que $P_{\pi}^{\varpi}$ est plat sur $\ZG_{\pi}^{\varpi}$, de réduction
$P_{\pi}^{\varpi}\otimes_{\ZG_{\pi}^{\varpi}}\oFl$ isomorphe à l'unique
objet simple de $\CC_{\pi}^{\varpi}$, à savoir $\pi$. Ainsi,
$\breve{P}_{\pi}^{\varpi}$ est bien un $\varpi$-relèvement de $\pi$ sur $\Lambda_{\pi}^{\varpi}$.

Soit maintenant $\wt\pi$ un $\varpi$-relèvement de $\pi$ sur une
$\bZl$-algèbre locale complète noethé\-rienne $\Lambda$ de corps
résiduel $\oFl$. Montrons
d'abord que $\endo{\Lambda G}{\wt\pi}=\Lambda$. 
En effet, si $H$ est un 
pro-$p$-sous-groupe ouvert tel que $\pi^{H}\neq 0$, alors la
sous-$\Lambda$-algèbre de $\endo{\Lambda}{\wt\pi^{H}}$ engendrée par les opérateurs
de Hecke se surjecte sur la $\oFl$-algèbre $\endo{\oFl}{\pi^{H}}$
(par le théorème de densité de Jacobson appliqué à $\pi$), donc  est
égale à $\endo{\Lambda}{\wt\pi^{H}}$ par le lemme de Nakayama.

Considérons alors l'action canonique du centre $\ZG(\CC_{\pi}^{\varpi})$ de la
catégorie $\CC_{\pi}^{\varpi}$ sur
$\wt\pi$. Cette action ``commute au commutant'' donc est
$\Lambda$-linéaire, et fournit un morphisme d'anneaux
$\ZG(\CC_{\pi}^{\varpi})\To{}\endo{\Lambda G}{\wt\pi}=\Lambda$.
Dans le cas $\wt\pi=P_{\pi}^{\varpi}$, le morphisme obtenu
$\ZG(\CC_{\pi}^{\varpi})\To{}\ZG_{\pi}^{\varpi}$ est un isomorphisme. Par composition, on
en déduit un morphisme $\ZG_{\pi}^{\varpi}\To{}\Lambda$, qui se
prolonge au complété $\Lambda_{\pi}^{\varpi}\To{}\Lambda$.

Puisque $P_{\pi}^{\varpi}$ est projectif, on peut choisir un
relèvement $P_{\pi}^{\varpi}\To{}\wt\pi$ de
$P_{\pi}^{\varpi}\To{}\pi$. Ce relèvement est $\ZG(\CC_{\pi}^{\varpi})$-linéaire,
donc se factorise à travers un morphisme $\Lambda$-linéaire
$P_{\pi}^{\varpi}\otimes_{\ZG_{\pi}^{\varpi}}\Lambda \To{}\wt\pi$ qui
se prolonge uniquement en 
$\breve{P}_{\pi}^{\varpi}\otimes_{\Lambda_{\pi}^{\varpi}}\Lambda \To{}\wt\pi$. Par
Nakayama, ce dernier morphisme est surjectif. Par égalité des rangs
(des $H$-invariants pour $H$ pro-$p$-sous-groupe ouvert tel que
$\pi^{H}\neq 0$), il est même bijectif. Cela montre que la
$\varpi$-déformation $\breve{P}_{\pi}^{\varpi}$ est verselle.

Supposons maintenant donnés deux morphismes
$\alpha_{1},\alpha_{2}:\,\Lambda_{\pi}^{\varpi}\To{}\Lambda$ tels que les
déformations
$\breve{P}_{\pi}^{\varpi}\otimes_{\Lambda_{\pi}^{\varpi},\alpha_{1}}\Lambda$
et $\breve{P}_{\pi}^{\varpi}\otimes_{\Lambda_{\pi}^{\varpi},\alpha_{2}}\Lambda$ soient
$\Lambda G$-isomorphes. L'action du centre $\ZG(\CC_{\pi}^{\varpi})$ sur ces deux
objets est alors donnée par un morphisme $\ZG(\CC_{\pi}^{\varpi})\To{}\Lambda$ égal à
la restriction de $\alpha_{1}$ et $\alpha_{2}$ à
$\ZG_{\pi}^{\varpi}\subset \Lambda_{\pi}^{\varpi}$.
 Par densité de $\ZG_{\pi}^{\varpi}$ dans $\Lambda_{\pi}^{\varpi}$, on
 a donc $\alpha_{1}=\alpha_{2}$ et la
$\varpi$-déformation $P_{\pi}^{\varpi}$ est universelle.
  \end{proof}

Remarquons que les déformations d'une représentation cuspidale de
$\GL_{2}(\QM_{p})$ ont été étudiées dans \cite{HelmGL2} par une
approche différente (mais reposant sur les types). 
Comme M. Emerton nous l'a signalé, notre argument reposant sur les
propriétés très particulières de l'enveloppe projective de $\pi$
(notamment la commutativité du commutant) est un cas particulier
d'une théorie plus générale développée par
Paskunas dans \cite[Ch. 3]{Paskunas}.

\subsection{Représentations supercuspidales et scindages de $\Rep(D^{\times})$}
Soit $\rho$ une $\oFl$-représentation irréductible de
$D^{\times}$ dont le caractère central est trivial sur $\varpi\in
K^{\times}\subset D^{\times}$, et soit $\rho^{0}$ un sous-quotient irréductible de
$\rho_{|D^{0}}$, où l'on pose $D^{0}:=\OC_{D}^{\times}$ pour
homogénéiser les notations. Comme dans le paragraphe précédent, on
définit des sous-catégories pleines $\CC_{\rho} \subset
\Mo{\bZl}{D^{\times}}$, $\CC_{\rho}^{\varpi} \subset \Mo{\bZl}{D^{\times}/\varpi^{\ZM}}$
et $\CC_{\rho^{0}} \subset \Mo{\bZl}{D^{0}}$. 

\begin{prop}\label{propscindageD0}
Soit ${P_{\rho^{0}}}$ une
enveloppe projective de $\rho^{0}$ dans
  $\Mo{\bZl}{D^{0}}$.
  \begin{enumerate}\item 
    La sous-catégorie $\CC^{0}_{\rho^{0}}$ est facteur direct dans
    $\Mo{\bZl}{D^{0}}$,  pro-engendrée par 
    ${P_{\rho^{0}}}$.
  \item     La sous-catégorie $\CC_{\rho}$ est facteur direct dans
    $\Mo{\bZl}{D^{\times}}$,  pro-engendrée par l'induite
    ${P_{\rho}}:=\cind{D^{0}}{D^{\times}}{{P_{\rho^{0}}}}$.
    \end{enumerate}
Supposons de plus que $\JL_{\oFl}(\rho)$ est supercuspidale, et notons
$f$ la longueur de $\rho_{|D^{0}}$. Alors
\begin{enumerate} \setcounter{enumi}{2}
 \item Le commutant $\ZG_{\rho^{0}}^{\opp}:=\endo{\bZl D^{0}}{{P_{\rho^{0}}}}$ est isomorphe à $\bZl[{\rm
      Syl}_{\ell}(\FM_{q^{f}}^{\times})]$.
  \item Le commutant $\ZG_{\rho}^{\opp}:=\endo{\bZl D^{\times}}{{P_{\rho}}}$ est isomorphe à
    $\bZl[{\rm Syl}_{\ell}(\FM_{q^{f}}^{\times})\times\ZM]$.
\end{enumerate}

\end{prop}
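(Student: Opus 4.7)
La strat\'egie consiste \`a d\'ecalquer la preuve de la proposition \ref{propscindageG0}, en rempla\c cant $G^{0}$ par $D^{0}:=\OC_{D}^{\times}$ et $G$ par $D^{\times}$. La compacit\'e de $D^{0}$ apporte une simplification essentielle : toute $\oFl$-repr\'esentation lisse irr\'eductible de $D^{0}$ se factorise par un quotient fini $D^{0}/D^{n}$ avec $D^{n}:=1+\varpi_{D}^{n}\OC_{D}$ pro-$p$, et les enveloppes projectives dans $\Mo{\bZl}{D^{0}}$ s'obtiennent par inflation depuis celles des alg\`ebres de groupe finies $\bZl[D^{0}/D^{n}]$.

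Pour les points (i) et (ii), on appliquera le lemme \ref{lemmescind} comme dans les preuves (i)(a) et (ii)(a) de \ref{propscindageG0}. Le point d\'elicat sera de v\'erifier que $P_{\rho^{0}}$ appartient bien \`a $\CC_{\rho^{0}}^{0}$, c'est-\`a-dire que tous ses $\oFl D^{0}$-sous-quotients irr\'eductibles sont isomorphes \`a $\rho^{0}$ : on utilisera la suite exacte $1\to D^{1}\to D^{0}\to \FM_{q^{d}}^{\times}\to 1$ et la th\'eorie de Clifford au niveau $D^{0}/D^{n}$ pour se ramener au cas d'un caract\`ere d'un sous-groupe de $\FM_{q^{d}}^{\times}$, o\`u l'\'enonc\'e tient gr\^ace \`a la d\'ecomposition de $\bZl[\FM_{q^{d}}^{\times}]$ en produit d'anneaux locaux index\'es par les $\oFl$-caract\`eres de la partie premi\`ere \`a $\ell$. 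L'objet injectif requis par le lemme s'obtiendra en dualisant l'enveloppe projective de $(\rho^{0})^{\vee}$, $P_{\rho^{0}}$ \'etant $\bZl$-admissible par compacit\'e de $D^{0}$. Pour (ii), les sous-quotients irr\'eductibles de $\rho_{|D^{0}}$ forment une $D^{\times}$-orbite finie, la d\'ecomposition d'une $\CC_{\rho}$-repr\'esentation selon cette orbite est $D^{\times}$-\'equivariante par unicit\'e, et $P_{\rho}=\cind{D^{0}}{D^{\times}}{P_{\rho^{0}}}$ est alors un prog\'en\'erateur.

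Pour le point (iii), l'hypoth\`ese que $\JL_{\oFl}(\rho)$ est supercuspidale permet d'invoquer la th\'eorie des types simples sur $D^{\times}$, compatible via Jacquet-Langlands avec celle de Bushnell-Kutzko et Vign\'eras sur $G$. On en d\'eduira une structure $\rho^{0}\simeq \kappa_{D}\otimes \tau_{D}$, o\`u $\kappa_{D}$ admet un unique rel\`evement et $\tau_{D}$ est un caract\`ere d'un quotient cyclique isomorphe \`a $\FM_{q^{f}}^{\times}$, l'entier $f$ co\"incidant avec la longueur de $\rho_{|D^{0}}$. Comme dans \ref{propscindageG0} (i)(b), le commutant $\endo{\bZl D^{0}}{P_{\rho^{0}}}$ se ram\`ene \`a celui de l'enveloppe projective de $\tau_{D}$, qui est le facteur local en $\tau_{D}$ de $\bZl[\FM_{q^{f}}^{\times}]$, donc $\bZl[{\rm Syl}_{\ell}(\FM_{q^{f}}^{\times})]$. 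Le point (iv) s'ensuivra comme en \ref{propscindageG0} (ii)(b) : puisque $D^{\times}/D^{0}\simeq \ZM$ via $\val_{K}\circ \nrd$ et que $\rho^{0}$ s'\'etend \`a $D^{\times}$, la r\'eciprocit\'e de Frobenius fournit un isomorphisme $\endo{\bZl D^{\times}}{P_{\rho}} \simeq \endo{\bZl D^{0}}{P_{\rho^{0}}}\otimes_{\bZl}\bZl[\ZM]$, d'o\`u l'identification voulue $\bZl[{\rm Syl}_{\ell}(\FM_{q^{f}}^{\times})\times \ZM]$.

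L'obstacle principal sera l'\'etape (iii) : la justification rigoureuse de la description $\rho^{0}\simeq \kappa_{D}\otimes \tau_{D}$ et de l'\'egalit\'e des longueurs $f=f_{\pi}$ entre les deux c\^ot\'es de la correspondance de Jacquet-Langlands modulaire demande d'invoquer la th\'eorie des types simples sur les formes int\'erieures de $\GL_{d}$ et ses compatibilit\'es, notamment telles que d\'evelopp\'ees dans \cite{jlmodl}.
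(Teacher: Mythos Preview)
Votre plan pour (i), (ii) et (iii) suit de pr\`es celui du papier et est correct dans ses grandes lignes. Pour (i) le papier proc\`ede exactement comme vous l'indiquez : la th\'eorie de Clifford pour le quotient cyclique $D^{0}/(1+\mG_{D})\simeq\FM_{q^{d}}^{\times}$ donne $\rho^{0}\simeq\cind{N_{\tau}}{D^{0}}{\tau^{\dag}}$, puis en passant au plus grand sous-groupe d'indice premier \`a $\ell$ de $N_{\tau}$ on obtient $P_{\rho^{0}}$ explicitement, ce qui montre que tous ses sous-quotients simples sont isomorphes \`a $\rho^{0}$. Pour (iii) et (iv), le papier renvoie \`a la preuve de la proposition \ref{propscindageD} et s'appuie sur la th\'eorie des types de Broussous \cite{Broussous} pour $D^{\times}$, le point crucial sous l'hypoth\`ese de supercuspidalit\'e \'etant l'identit\'e $m=d'$ tir\'ee de \cite[Cor.~1]{BH} ; votre r\'ef\'erence \`a \cite{jlmodl} n'est pas la bonne source pour ces faits pr\'ecis.

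En revanche, il y a une erreur dans votre argument pour (iv) : vous affirmez que $\rho^{0}$ s'\'etend \`a $D^{\times}$. C'est faux d\`es que $f>1$, puisque $\rho_{|D^{0}}$ a alors $f$ constituants non isomorphes permut\'es transitivement par $D^{\times}$. La repr\'esentation $\rho^{0}$ ne s'\'etend qu'\`a son normalisateur $N$ dans $D^{\times}$, qui est d'indice $f$. La bonne nouvelle est que la correction est imm\'ediate : comme $N/D^{0}$ est un sous-groupe non nul de $D^{\times}/D^{0}\simeq\ZM$, on a encore $N/D^{0}\simeq\ZM$. L'entrelacement de $P_{\rho^{0}}$ dans $D^{\times}$ \'etant contenu dans $N$, la r\'eciprocit\'e de Frobenius donne $\endo{\bZl D^{\times}}{P_{\rho}}\simeq\endo{\bZl N}{\cind{D^{0}}{N}{P_{\rho^{0}}}}$, et c'est sur cette derni\`ere alg\`ebre que l'argument de prolongement \`a la \ref{propscindageG0} (ii)(b) s'applique pour obtenir $\endo{\bZl D^{0}}{P_{\rho^{0}}}\otimes_{\bZl}\bZl[N/D^{0}]\simeq\bZl[{\rm Syl}_{\ell}(\FM_{q^{f}}^{\times})\times\ZM]$.
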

\begin{proof}
$i)$ Soit $\mG_{D}$ l'idéal maximal de $\OC_{D}$.
Le quotient $\OC_{D}^{\times}/(1+\mG_{D})$ étant cyclique, la
théorie de Clifford nous dit que $\rho^{0}\simeq
\cind{N_{\tau}}{D^{0}}{\tau^{\dag}}$, où  
$\tau^{\dag}$ est un prolongement d'un sous-quotient
  irréductible $\tau$ de $\rho^{0}_{|1+\mG_{D}}$ à son normalisateur
  $N_{\tau}$ dans $D^{0}$. Toujours par cyclicité de $N_{\tau}/(1+\mG_{D})$,
   $N_{\tau}$ possède un plus grand $\ell'$-sous-groupe
   $N_{\tau}^{\ell}$, dont le quotient est un $\ell$-groupe cyclique.
On en déduit que
$$ P_{\rho^{0}}\simeq\cind{N^{\ell}_{\tau}}{D^{0}}{\tau^{\dag}},$$
donc en particulier que tous les sous-quotients irréductibles de
$P_{\rho^{0}}$ sont isomorphes à $\rho^{0}$. Il en va de même de ceux
de ${P_{\rho^{0}}}$. \`A partir de là, on raisonne comme dans la
preuve du point $i)(a)$ de la proposition \ref{propscindageG0}.

$ii)$  Même preuve que le point $ii)(a)$ de la proposition
\ref{propscindageG0}.

La preuve de $iii)$ et $iv)$ nécessite plus de notations. Elle est
donnée à la fin de la preuve de la proposition \ref{propscindageD} ci-dessous.
\end{proof}

\alin{Equivalences de catégories et dualité} 
La proposition précédente  nous donne, grâce à \ref{faitequiv},
une paire d'équivalences ``inverses'' 
$\xymatrix{ \CC_{\rho}  \ar@<.5ex>[r]^-{} &
  \ar@<.5ex>[l]^-{} \Mod(\ZG_{\rho})}$
Nous allons donner une autre forme à ces foncteurs à partir de la
contragrédiente $\rho^{\vee}$ de $\rho$.

Auparavant, il nous faut identifier $\ZG_{\rho^{\vee}}$ et 
$\ZG^{\rm opp}_{\rho}$. Pour cela, remarquons d'abord que 
la contragrédiente $P_{\rho^{0}}^{\vee}$ de
  $P_{\rho^{0}}$ est un objet projectif
  indécomposable de $\Mo{\bZl}{D^{0}}$ dont tous les sous-quotients
  sont isomorphes à $\rho^{0,\vee}$. C'est donc une enveloppe
  projective de $\rho^{0,\vee}$ et nous identifierons
  $P_{\rho^{0}}^{\vee}$ et $P_{\rho^{0, \vee}}$.
Maintenant, comme $P_{\rho^{0}}$ est monogène, on peut trouver un idempotent
$\varepsilon_{\rho}$ de l'algèbre des mesures localement constantes
 $\HC^{0}:=\HC(D^{0},\bZl)$ sur $D^{0}$ tel que
$P_{\rho^{0}}\simeq \HC^{0}\varepsilon_{\rho}$. On fixe un tel
idempotent et un tel isomorphisme. Alors $P_{\rho^{0}}^{\vee}$
s'identifie à $\HC^{0} \check{\varepsilon_{\rho}}$ où $\check{f}$ désigne l'image
de $f$ par l'automorphisme $d\mapsto d^{-1}$. 
 En notant
$\HC:=\HC(D^{\times},\bZl)$ l'algèbre des mesures localement
constantes à support compact sur $D^{\times}$, on a maintenant
 $P_{\rho} = \HC\varepsilon_{\rho}$ et $P_{\rho^{\vee}}=\HC\check{\varepsilon_{\rho}}$. 
En particulier on a $\ZG_{\rho}=\varepsilon_{\rho}\HC\varepsilon_{\rho}$ et
$\ZG_{\rho^{\vee}}=\check\varepsilon_{\rho}\HC\check\varepsilon_{\rho}$
Ainsi, l'application $d\mapsto d^{-1}$ induit un isomorphisme
\begin{equation}
 \ZG_{\rho^{\vee}}\mapsto \ZG_{\rho}^{\rm opp}\label{centredual}
 \end{equation}

\begin{pro}\label{dualite}
Identifions 
$\ZG_{\rho}$ et $\ZG_{\rho^{\vee}}^{\rm opp}$
au moyen de l'isomorphisme  (\ref{centredual}). 
\begin{enumerate}
\item  Il y a un $\ZG_{\rho}$-isomorphisme fonctoriel en $V\in
  \Mo{\bZl}{D^{\times}}$
$$  P_{\rho^{\vee}}\otimes_{\bZl D^{\times}} V \simto 
\Hom_{\bZl D^{\times}}(P_{\rho},V).$$
\item Il y a un $D^{\times}$-isomorphisme
fonctoriel en $M\in \Mod(\ZG_{\rho})$
$$ P_{\rho}\otimes_{\ZG_{\rho}} M\simto \Hom_{\ZG_{\rho}}(P_{\rho^{\vee}},M).$$
\end{enumerate}

\end{pro}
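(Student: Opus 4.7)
The plan is to use the explicit realizations $P_{\rho} = \HC\varepsilon_{\rho}$ and $P_{\rho^{\vee}} = \HC\check\varepsilon_{\rho}$ introduced just before the statement, and to reduce both isomorphisms to standard Morita-type identities for the idempotent $\varepsilon_\rho \in \HC$. The starting observation is that the anti-involution $\sigma:\HC\to \HC^{\opp}$, $f\mapsto \check{f}$, turns left $\HC$-modules into right $\HC$-modules; applied to $P_{\rho^{\vee}}=\HC\check\varepsilon_\rho$ it yields a canonical isomorphism of right $\HC$-modules $P_{\rho^{\vee}}\simto \varepsilon_{\rho}\HC$, and simultaneously identifies $\ZG_{\rho^{\vee}}=\check\varepsilon_{\rho}\HC\check\varepsilon_{\rho}$ with $(\varepsilon_{\rho}\HC\varepsilon_{\rho})^{\opp}=\ZG_{\rho}^{\opp}$; this recovers exactly (\ref{centredual}).

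For (i), I would compute both sides as $\bZl$-modules with commuting $\ZG_\rho$-action. On the right, evaluation at $\varepsilon_\rho$ gives the standard functorial isomorphism
$$\Hom_{\bZl D^\times}(\HC\varepsilon_\rho,V)\simto \varepsilon_\rho V,\qquad \phi\mapsto \phi(\varepsilon_\rho).$$
On the left, interpreting the tensor product via the right $\HC$-structure supplied by $\sigma$, the multiplication map provides
$$P_{\rho^{\vee}}\otimes_{\bZl D^\times} V \;\simeq\; \varepsilon_\rho\HC\otimes_{\HC}V \simto \varepsilon_\rho V.$$
Composing with the inverse of the first isomorphism produces the desired natural map. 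On both sides the action of $\ZG_\rho\cong \varepsilon_\rho\HC\varepsilon_\rho$ ends up being left multiplication on $\varepsilon_\rho V$, so $\ZG_\rho$-equivariance is automatic, and functoriality in $V$ is manifest.

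For (ii), I would argue formally. By Fact \ref{faitequiv}, $V\mapsto \Hom_{\bZl D^\times}(P_\rho,V)$ is an equivalence $\CC_\rho\simto \Mod(\ZG_\rho)$ with quasi-inverse $M\mapsto P_\rho\otimes_{\ZG_\rho}M$. The isomorphism proved in (i), restricted to objects $V\in\CC_\rho$, identifies this equivalence with $V\mapsto P_{\rho^{\vee}}\otimes_{\bZl D^\times}V$. Its quasi-inverse is therefore naturally isomorphic to the right adjoint $M\mapsto \Hom_{\ZG_\rho}(P_{\rho^{\vee}},M)$ of the tensor product, and unwinding this adjunction yields the evaluation map of (ii) together with the fact that it is an isomorphism. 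The only subtle point throughout the argument is the left/right bookkeeping and matching the two $\ZG_\rho$-actions through (\ref{centredual}); once $\sigma$ is applied consistently this becomes a routine verification, and I do not anticipate any substantial obstacle beyond this notational accounting.
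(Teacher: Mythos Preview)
Your proposal is correct and follows essentially the same route as the paper: both sides of (i) are identified with $\varepsilon_\rho V$ via the anti-involution $f\mapsto\check f$ (which the paper invokes implicitly when it writes $(\HC\check\varepsilon_\rho\otimes V)_{D^\times}=\varepsilon_\rho\HC\otimes_\HC V$), and (ii) is deduced by recognizing $M\mapsto\Hom_{\ZG_\rho}(P_{\rho^\vee},M)$ as the right adjoint of the equivalence established in (i), hence a quasi-inverse isomorphic to $M\mapsto P_\rho\otimes_{\ZG_\rho}M$. Your write-up is in fact slightly more explicit about the left/right bookkeeping than the paper's terse ``abstract nonsense''.
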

Dans le produit tensoriel du premier isomorphisme, on fait
implicitement de $P_{\rho^{\vee}}$ un $\Zlnr D^{\times}$-module
\emph{à droite} en composant l'action à gauche par $d\mapsto d^{-1}$. 
\begin{proof}
C'est essentiellement de l'``abstract nonsense''.
Pour $V\in\Mo{\bZl}{D^{\times}}$, on a d'une part
$$\Hom_{\bZl D^{\times}}(P_{\rho},V)=\Hom_{\bZl D^{\times}}(\HC\varepsilon_{\rho},V)
\simto \varepsilon_{\rho}V $$
et d'autre part
$$P_{\rho^{\vee}}\otimes_{\bZl D^{\times}} V =(\HC\check\varepsilon_{\rho}\otimes
V)_{D^{\times}} = \varepsilon_{\rho}\HC\otimes_{\HC}V \simto
\varepsilon_{\rho} V.$$
Cela règle le premier isomorphisme. Pour le second, observons que le
foncteur $M\mapsto \Hom_{\ZG_{\rho}}(P_{\rho^{\vee}},M)$ est adjoint à
droite du foncteur $V\mapsto P_{\rho^{\vee}}\otimes_{\bZl
  D^{\times}}V$. 
Or, ce dernier est une équivalence $\CC_{\rho}\simto
\Mod(\ZG_{\rho})$ par le premier isomorphisme, donc $M\mapsto
\Hom_{\ZG_{\rho}}(P_{\rho^{\vee}},M)$ en est un équivalence ``inverse'',
nécessairement isomorphe à l'équivalence ``inverse'' $M\mapsto
M\otimes_{\ZG_{\rho}}P_{\rho}$.
\end{proof}

Passons maintenant à la catégorie $\CC^{\varpi}_{\rho}$. L'entier $f$ désigne la longueur
de $\rho_{|D^{0}}$ et $v$ est toujours la valuation de $\varpi$.

\begin{prop}\label{propscindageD}
Supposons que $\JL_{d,\oFl}(\rho)$
est une représentation supercuspidale, et soit
 ${P_{\rho}^{\varpi}}$ une
enveloppe projective de  $\rho$ dans
 $\Mo{\bZl}{D^{\times}/\varpi^{\ZM}}$.
 \begin{enumerate}
 \item     La sous-catégorie $\CC_{\rho}^{\varpi}$ est facteur direct dans
    $\Mo{\bZl}{D^{\times}/\varpi^{\ZM}}$, pro-engendrée par ${P_{\rho}^{\varpi}}$.
 \item Le commutant $\ZG_{\rho}^{\varpi}:=\endo{\bZl D^{\times}}{{P_{\rho}^{\varpi}}}$ est isomorphe à
    $\bZl[{\rm Syl}_{\ell}(\FM_{q^{f}}^{\times}\times f\ZM/dv\ZM)]$.
  \item Soit $\Lambda_{\rho}^{\varpi}:=
    \ZG_{\rho}^{\varpi}\otimes_{\Zlnr}\breve\Zl$. La
    $\Lambda_{\rho}^{\varpi}$-représentation  $P_{\rho}^{\varpi}$ est
    une $\varpi$-déformation universelle de $\rho$. L'anneau de
$\varpi$-déformations de $\rho$ est isomorphe à  $\breve\Zl[{\rm Syl}_{\ell}(\FM_{q^{f}}^{\times}\times
    f\ZM/dv\ZM)]$.
  \end{enumerate}  
\end{prop}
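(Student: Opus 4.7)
The plan is to mirror the proof of Proposition \ref{propscindageG}, supplemented by the commutant computations iii) and iv) of Proposition \ref{propscindageD0} that have been deferred to this proof. I first set up the Clifford-theoretic picture: let $N_{\rho} \subset D^{\times}$ denote the stabilizer of the isomorphism class of $\rho^{0}$ under $D^{\times}$-conjugation. It contains $D^{0}\varpi^{\ZM}$; identifying $D^{\times}/D^{0} \simeq \ZM$ via $\val_{D}$, the subgroup $N_{\rho}/D^{0}$ is of index $f$, and since $\val_{D}(\varpi) = dv$ we obtain $N_{\rho}/D^{0}\varpi^{\ZM} \simeq f\ZM/dv\ZM$. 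Clifford theory, applicable since $D^{\times}/D^{0}$ is cyclic, provides a unique irreducible extension $\rho'$ of $\rho^{0}$ to $N_{\rho}$ with trivial action of $\varpi$ and $\rho \simeq \cind{N_{\rho}}{D^{\times}}{\rho'}$, and accordingly $P_{\rho}^{\varpi} \simeq \cind{N_{\rho}/\varpi^{\ZM}}{D^{\times}/\varpi^{\ZM}}{P_{\rho'}}$ where $P_{\rho'}$ is the projective envelope of $\rho'$ in $\Mo{\bZl}{N_{\rho}/\varpi^{\ZM}}$.

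For (i), all simple sub-quotients of $P_{\rho^{0}}$ are isomorphic to $\rho^{0}$ by the explicit description $P_{\rho^{0}} \simeq \cind{N_{\tau}^{\ell}}{D^{0}}{\tau^{\dag}}$ from the proof of \ref{propscindageD0} i); the same then holds for $P_{\rho'}$ versus $\rho'$ and for $P_{\rho}^{\varpi}$ versus $\rho$. Lemma \ref{lemmescind} then applies with $S = \{\rho\}$ exactly as in \ref{propscindageG0} i)--ii), the injective envelope being constructed from the $\bZl$-dual of $P_{\rho}^{\varpi}$. For (ii), the intertwining of $\rho'$ and of $P_{\rho'}$ in $D^{\times}/\varpi^{\ZM}$ equals $N_{\rho}/\varpi^{\ZM}$, giving
\[ \endo{\bZl D^{\times}/\varpi^{\ZM}}{P_{\rho}^{\varpi}} \simto \endo{\bZl N_{\rho}/\varpi^{\ZM}}{P_{\rho'}}. \]
The argument at the end of \ref{propscindageG} ii) then transcribes directly: the induction $\cind{D^{0}}{N_{\rho}/\varpi^{\ZM}}{P_{\rho^{0}}}$ decomposes as $\bigoplus_{\rho^{\dag}} P_{\rho^{\dag}}$ over the extensions of $\rho^{0}$ to $N_{\rho}/\varpi^{\ZM}$, with endomorphism algebra $\ZG_{\rho^{0}} \otimes_{\bZl} \bZl[N_{\rho}/D^{0}\varpi^{\ZM}]$. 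Once the identification $\ZG_{\rho^{0}} \simeq \bZl[{\rm Syl}_{\ell}(\FM_{q^{f}}^{\times})]$ from \ref{propscindageD0} iii) is in hand, decomposition along the $\oFl$-characters of $N_{\rho}/D^{0}\varpi^{\ZM} \simeq f\ZM/dv\ZM$ yields the claimed formula for $\ZG_{\rho}^{\varpi}$. Part (iii) is then purely formal, following \ref{propscindageG} iii) verbatim: Jacobson density plus Nakayama on $\tilde{\rho}^{H}$ (for $H$ pro-$p$-open with $\rho^{H} \neq 0$) gives $\endo{\Lambda D^{\times}}{\tilde{\rho}} = \Lambda$ for any deformation, and the canonical action of the center of $\CC_{\rho}^{\varpi}$ combined with projectivity of $P_{\rho}^{\varpi}$ produces and uniquely characterizes the classifying morphism $\Lambda_{\rho}^{\varpi} \to \Lambda$.

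The main obstacle is the establishment of iii) of \ref{propscindageD0}, i.e. $\ZG_{\rho^{0}} \simeq \bZl[{\rm Syl}_{\ell}(\FM_{q^{f}}^{\times})]$. From $P_{\rho^{0}} \simeq \cind{N_{\tau}^{\ell}}{D^{0}}{\tau^{\dag}}$, a Mackey-Frobenius computation (using that the intertwining of $\tau^{\dag}$ in $D^{0}$ is $N_{\tau}$ and that $N_{\tau}$ normalizes $P_{\tau^{\dag}}$ by cyclicity of $N_{\tau}/N_{\tau}^{\ell}$) reduces the problem to identifying $N_{\tau}/N_{\tau}^{\ell}$ with ${\rm Syl}_{\ell}(\FM_{q^{f}}^{\times})$. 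Unlike the $G$-side where a Bushnell-Kutzko simple type gives this identification directly, here one must invoke the classification of $\oFl$-representations of $D^{\times}$ by admissible pairs $(L,\theta)$ and control the $\ell$-structure of $\theta|_{\OC_{L}^{\times}}$, then use the JL-compatibility $f_{\rho} = f_{\pi}$ from the lemma immediately preceding Theorem \ref{theocoho} to transfer the counting to the $G$-side computation already performed in \ref{propscindageG0} i)(b). Once iii) is proved, iv) of \ref{propscindageD0} follows from the argument of (ii) above applied without the $\varpi$-quotient: one replaces $N_{\rho}/D^{0}\varpi^{\ZM}$ by $N_{\rho}/D^{0} \simeq \ZM$, which contributes the free $\ZM$-factor in the claimed $\bZl[{\rm Syl}_{\ell}(\FM_{q^{f}}^{\times}) \times \ZM]$.
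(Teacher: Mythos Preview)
Your Clifford-theoretic framework relative to $D^{0}\subset N_{\rho}\subset D^{\times}$ is sound and genuinely different from the paper's approach. Parts (i) and (iii) go through as you say, and your reduction of (ii) to the identity $\ZG_{\rho^{0}}\simeq\bZl[{\rm Syl}_{\ell}(\FM_{q^{f}}^{\times})]$ is correct: the Mackey--Frobenius computation does give $\ZG_{\rho^{0}}\simeq\bZl[N_{\tau}/N_{\tau}^{\ell}]$, and the rest of (ii) follows formally.

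The gap is in identifying $N_{\tau}/N_{\tau}^{\ell}$ with ${\rm Syl}_{\ell}(\FM_{q^{f}}^{\times})$. Your two proposed routes both fail. First, admissible pairs $(L,\theta)$ only classify the essentially tame representations of $D^{\times}$; when $\rho$ is wildly ramified there is no such description, and the crude Clifford data $(N_{\tau},\tau^{\dag})$ from the proof of \ref{propscindageD0}~i) do not by themselves determine $|N_{\tau}/(1+\mG_{D})|$. Second, the lemma $f_{\rho}=f_{\pi}$ tells you the value of $f$, but $f$ alone does not pin down $|N_{\tau}/N_{\tau}^{\ell}|$: the group $N_{\tau}/(1+\mG_{D})$ is an a priori unknown subgroup of $\FM_{q^{d}}^{\times}$, and nothing in your argument relates it to $\FM_{q^{f}}^{\times}$. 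Moreover the proof of that lemma in the paper relies on the type-theoretic formula $f=d'f'$ established \emph{inside} the proof of \ref{propscindageD}, so invoking it here risks circularity.

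The paper instead works with Broussous's simple types: one writes $\rho=\cind{J}{D^{\times}}{\eta\otimes\tau}$ where $\eta_{|J^{1}}$ is irreducible and $\tau$ is a tamely ramified representation of the units of a smaller division algebra $B^{\times}$ with $B^{\times}/(1+\mG_{B})\simeq\FM_{q^{f'd'}}^{\times}\rtimes\ZM$. The supercuspidality of $\JL(\rho)$ enters precisely here, through the Bushnell--Henniart result \cite{BH}: it forces the Clifford invariant $m$ of $\tau$ to equal $d'$, so that $\tau$ is induced from a Frobenius-regular character $\chi$ of the maximal abelian subgroup $\FM_{q^{f'd'}}^{\times}\times d'\ZM$. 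The projective envelope and its commutant are then read off from $P_{\chi}$, giving $\ZG_{\rho^{0}}\simeq\bZl[{\rm Syl}_{\ell}(\FM_{q^{f'd'}}^{\times})]$, and a Mackey computation shows $f=f'd'$. Your framework never isolates where supercuspidality is used; this is exactly the missing ingredient.
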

\begin{proof}
 Nous allons utiliser la théorie des types de Broussous pour
$D^{\times}$ pour décrire l'enveloppe projective $P_{\rho}^{\varpi}$ de manière suffisamment explicite.
L'article \cite{Broussous} n'est écrit que pour les
$\oQl$-représentations, mais s'étend sans problème aux
$\oFl$-représentations comme dans le chapitre III de
\cite{Vig}. Ainsi, 
la représentation $\rho$ est de la forme
$\rho\simeq \cind{J}{D^{\times}}{\lambda}$ pour un ``type simple
étendu'' $(J,\lambda)$ formé d'un sous-groupe ouvert  $J$ de
$D^{\times}$ contenant le centre $K^{\times}$  et
d'une $\oFl$-représentation irréductible $\lambda$ de $J$. 
Nous avons besoin de quelques précisions sur la forme de ces objets.
Notons $J^{0}:=D^{0}\cap J$ le sous-groupe compact maximal de $J$, et
$J^{1}$ son pro-$p$-radical. Il existe une extension $E$ de $K$
contenue dans $D$, dont nous noterons $B$ le commutant (qui est une
algèbre à division de centre $E$)  telle que 
$J=J^{1}B^{\times}$ ($B^{\times}$ normalise $J^{1}$) et $J^{1}\cap
B^{\times}=B^{1}:= 1+\MG_{B}$.
De plus $\lambda$ est de la forme $\lambda\simeq \eta\otimes\tau$ où
$\eta_{|J^{1}}$ est irréductible, et $\tau$ est une représentation de
$J/J^{1}\varpi^{\ZM}$, \emph{i.e.} une représentation
modérément ramifiée de $B^{\times}$. 
Une telle  représentation est facile à décrire, puisque
$B^{\times}/(1+\mG_{B})$, resp. $B^{\times}/(1+\mG_{B})\varpi^{\ZM}$, est isomorphe au groupe
$\FM_{q^{f'd'}}^{\times}\rtimes {\ZM}$, resp. au groupe $\FM_{q^{f'd'}}^{\times}\rtimes {\ZM}/e'd'v\ZM$ 
où 
\begin{itemize}\item 
$f'$ est le degré résiduel
de $E$ sur $K$ et $e'$ est son indice de ramification
\item $(d')^{2}$ est la dimension de $B$ sur
$E$ 
\item 
  le générateur positif de $\ZM$ agit sur $\FM_{q^{f'd'}}$ par le
  Frobenius relatif à $\FM_{q^{f'}}$.
\end{itemize}
 Par conséquent, il existe un unique diviseur $m$ de $d'$, et un
caractère Frob-régulier $\chi$ de $\FM_{q^{f'm}}^{\times}\times m\ZM/e'd'v\ZM$ tel que 
$$\tau=\cind{\FM_{q^{f'd'}}^{\times}\rtimes
  m\ZM/e'd'v\ZM}{\FM_{q^{f'd'}}^{\times}\rtimes \ZM/e'd'v\ZM}{\chi\circ N_{\FM_{q^{f'd'}}|\FM_{q^{f'm}}}}.$$
L'entier $m$ est le même que celui de \cite[Déf. 10.1.4]{Broussous}. 
Un point crucial maintenant est que, vu notre hypothèse de
supercuspidalité de $\JL_{d,\oFl}(\rho)$, on a $m=d'$. En effet, soit
$\wt\rho$ un relèvement de $\rho$ à $\oQl$ ; la représentation
$\JL_{d,\oQl}(\wt\rho)$ est alors également supercuspidale, et
l'égalité $m=d'$ est une reformulation du point (2) de  \cite[Cor. 1]{BH}.
On a donc finalement
$$\tau=\cind{\FM_{q^{f'd'}}^{\times}\times d'\ZM/e'd'v\ZM}{\FM_{q^{f'd'}}^{\times}\rtimes
  \ZM/e'd'v\ZM}{\chi}$$
pour un caractère Frob-régulier $\chi$ de $\FM_{q^{f'd'}}^{\times}\times
d'\ZM/e'd'v\ZM$.

Nous allons maintenant produire une enveloppe
projective ${P_{\rho}^{\varpi}}$ à partir de ces objets.
De la discussion précédente on déduit que si ${P_{\chi}^{\varpi}}$ est une enveloppe
projective de $\chi$ dans $\Mo{\bZl}{\FM_{q^{f'd'}}^{\times}\times
d'\ZM/e'd'v\ZM}$, alors, son induite 
$${P_{\tau}^{\varpi}}=\cind{\FM_{q^{f'd'}}^{\times}\times d'\ZM/e'd'v\ZM}{\FM_{q^{f'd'}}^{\times}\rtimes
  \ZM/e'd'v\ZM}{{P_{\chi}^{\varpi}}}$$
est une enveloppe projective de $\tau$ dans
$\Mo{\bZl}{J/J^{1}\varpi^{\ZM}}$, dont tous
les sous-quotients irréductibles sont
isomorphes à $\tau$.
Par ailleurs, comme dans le cas des types de Bushnell-Kutzko, \cite[III.4.20]{Vig}, on
peut relever $\eta$ en une $\bZl$-représentation $\wt\eta$.
La $\bZl$-représentation 
$${P_{\lambda}^{\varpi}}:=\wt\eta\otimes{P_{\tau}^{\varpi}}$$ de
$J/\varpi^{\ZM}$ est projective et se surjecte sur $\lambda$. Elle est
aussi indécomposable (car si
$\wt\eta\otimes{P_{\tau}^{\varpi}}=W_{1}\oplus W_{2}$, on obtient
en appliquant le foncteur $\Hom_{J^{1}}(\wt\eta,-)$ une décomposition
de ${P_{\tau}^{\varpi}}$, or ce foncteur n'annule aucun sous-objet
non nul de $\wt\eta\otimes{P_{\tau}^{\varpi}}$ puisque la
restriction de ce dernier à $J^{1}$
est $\wt\eta$-isotypique). Ainsi, ${P_{\lambda}^{\varpi}}$ est une
enveloppe projective de $\lambda$ dans $\Mo{\bZl}{J/\varpi^{\ZM}}$.
Considèrons maintenant l'induite
$${P_{\rho}^{\varpi}}:=\cind{J}{D^{\times}}{{P_{\lambda}^{\varpi}}}.$$
Elle est projective dans $\Mo{\bZl}{D^{\times}/\varpi^{\ZM}}$, et tous
ses sous-quotients irréductibles sont isomorphes à $\rho$. Par
réciprocité de Frobenius, la dimension de
$\Hom_{D^{\times}}({P_{\rho}^{\varpi}},\rho)$ sur $\oFl$ est $1$,
donc ${P_{\rho}^{\varpi}}$ est également indécomposable, et
finalement est une enveloppe projective de $\rho$ dans $\Mo{\bZl}{D^{\times}/\varpi^{\ZM}}$.

Passons maintenant à la preuve de la proposition. Puisque
$P_{\rho}^{\varpi}\in\CC_{\rho}^{\varpi}$, le point $i)$ se prouve comme
le $i)(a)$ de la proposition \ref{propscindageG0}.

 $ii)$
Par construction on a une suite de morphismes de $\bZl$-algèbres
$$ \endo{\FM_{q^{f'd'}}^{\times}\times
    d'\ZM/e'd'v\ZM}{{P_{\chi}^{\varpi}}}\To{}
\endo{J}{{P_{\tau}^{\varpi}}}\To{} \endo{J}{{P_{\lambda}^{\varpi}}}
\To{}\endo{D^{\times}}{{P_{\rho}^{\varpi}}}. 
$$
Le premier est un isomorphisme car le caractère $\chi$ est
Frob-régulier. Le troisième est un isomorphisme car
l'ensemble d'entrelacement de
$(J,\lambda)$ est égal à $J$ \cite[(10.1.3)]{Broussous}. Enfin, le second est un isomorphisme car
$\endo{J^{1}}{\wt\eta}=\bZl$.
Maintenant, la première algèbre est facile à calculer :
$$ \endo{\FM_{q^{f'd'}}^{\times}\times
    d'\ZM/e'd'v\ZM}{{P_{\chi}^{\varpi}}} \simeq
  \bZl[{\rm Syl}_{\ell}(\FM_{q^{f'd'}}^{\times}\times d'\ZM/e'd'v\ZM)].$$
Notons que $f'd'e'=d$, de sorte que $d'\ZM/e'd'v\ZM\simeq
f'd'\ZM/dv\ZM$. Il ne nous reste donc plus qu'à vérifier que $f'd'=f$.
Avec les notations ci-dessus, on a $\rho=\cind{J^{0}E^{\times}}{D^{\times}}{\eta\otimes\chi}$.
Par la formule de Mackey on a donc
$$ \rho_{|D^{0}}\simeq \bigoplus_{x\in J^{0}E^{\times}\ba D^{\times}/D^{0}}
\left(\cind{J^{0}}{D^{0}}{\eta\otimes\chi}\right)^{x}. $$
Chaque induite est irréductible puisque l'entrelacement de
$\eta\otimes\chi$ dans $D^{0}$ est $J^{0}$. Donc 
$$f = [D^{\times}:D^{0}E^{\times}]= d
[D^{0}E^{\times}:D^{0}K^{\times}]^{-1}= d
[E^{\times}:K^{\times}]^{-1}= d{e'}^{-1}=d'f'.$$

$iii)$ La preuve est la même que celle du $iii)$ de la proposition \ref{propscindageG}.

\medskip

Nous pouvons maintenant prouver les points $iii)$ et $iv)$ de la
proposition \ref{propscindageD0}. Comme ci-dessus, on utilise le type
pour produire une enveloppe projective $P_{\rho^{0}}$ sous la forme :
$$ P_{\rho^{0}}:=\cind{J^{0}}{D^{0}}{\wt\eta\otimes P_{\chi}^{0}} $$
où $P_{\chi}^{0}$ est une enveloppe projective de $\chi$ dans 
$\Mo{\bZl}{\FM_{q^{f'd'}}^{\times}}$. L'homomorphisme canonique
$$ \endo{\bZl \FM_{q^{f'd'}}^{\times}}{P_{\chi}^{0}}\To{} \endo{\bZl
    D^{0}}{P_{\rho^{0}}} $$
est un isomorphisme, et on identifie facilement le terme de gauche à 
$\bZl[{\rm Syl}_{\ell}(\FM_{q^{f'd'}}^{\times})]$.
Par ailleurs, posons
$P_{\chi}:=\cind{\FM_{q^{f'd'}}^{\times}}{\FM_{q^{f'd'}}^{\times}\times
d'\ZM}{P_{\chi}^{0}}$ et 
$P_{\tau}:=\cind{\FM_{q^{f'd'}}^{\times}\times d'\ZM}{\FM_{q^{f'd'}}^{\times}\rtimes
\ZM}{P_{\chi}}$, de sorte que 
$P_{\rho}=\cind{J}{D^{\times}}{\wt\eta\otimes P_{\tau}}$. Les deux homomorphismes
canoniques
$$ \endo{\bZl( \FM_{q^{f'd'}}^{\times}\times
d'\ZM)}{P_{\chi}}\To{} 
\endo{\bZl(\FM_{q^{f'd'}}^{\times}\rtimes
\ZM)}{P_{\tau}} \To{} \endo{\bZl D^{\times}}{P_{\rho}} $$
sont des isomorphismes, et on identifie facilement le terme de gauche
à $\bZl[{\rm Syl}_{\ell}(\FM_{q^{f'd'}}^{\times})\times\ZM]$ de la
même manière que dans la preuve du $ii)(b)$ de la proposition \ref{propscindageG0}.
\end{proof}

\subsection{$\varphi$-déformations de représentations irréductibles de
  $W_{K}$}
Fixons une $\oFl$-représentation irréductible $\sigma$ de $W_{K}$ de
dimension $d$. 
D'après \cite[2.6]{VigAENS}, on peut écrire $\sigma$ sous la forme
$$ \sigma=\cind{W_{L}}{W_{K}}{\tau}, \,\, \hbox{où}$$
\begin{itemize}\item 
  $L$ est une extension modérément ramifiée de $K$, de degré résiduel
  noté $d'e'$ et d'indice de ramification noté $f'$.
\item La restriction $\nu:=\tau_{|P_{K}}$ de $\tau$ au sous-groupe d'inertie
  sauvage de $W_{K}$ est irréductible, et son normalisateur ${\rm
    Norm}_{W_{K}}(\nu)$ est le groupe de Weil $W_{E}$ d'une sous-extension
  $E\subset L$ sur laquelle $L$ est non
  ramifiée de degré $d'$.
\end{itemize}

La forme de $\sigma$ montre que $\tau$ y apparait avec multiplicité
$1$. En fait on a mieux ;  soit $I_{L}^{\ell'}$ le plus grand sous-groupe fermé d'ordre
  premier à $\ell$ du sous-groupe d'inertie $I_{L}$ de
  $W_{L}$. C'est un sous-groupe distingué de $W_{E}$.
et  $W_{L}/I_{L}^{\ell'}\simeq\ZM_{\ell}\rtimes \ZM$.

\begin{lemme} Pour toute $\oFl$-représentation irréductible $\sigma'$ de
  $W_{K}$, on a
$$ \cas{\dim_{\oFl}(\Hom_{\oFl I_{L}^{\ell'}}(\tau,\sigma'))}{1}
{\sigma'\sim\sigma}
{0}{\sigma' \nsim \sigma}
$$ 
où $\sigma\sim\sigma'$ signifie que $\sigma'$ est un twist non ramifié
de $\sigma$.
\end{lemme}
\begin{proof}
Si $\sigma'\sim \sigma$, on a
$\dim_{\oFl}(\Hom_{\oFl I_{L}^{\ell'}}(\tau,\sigma'))=
\dim_{\oFl}(\Hom_{\oFl I_{L}^{\ell'}}(\tau,\sigma))\geq 1.$
Pour montrer l'égalité, on écrit $\tau$ sous la forme $\rho_{|W_{L}}\otimes
\chi$ où $\rho$ est un prolongement de $\nu$ à $W_{E}$ et $\chi$ est
un caractère de $W_{L}$, dont le normalisateur dans $W_{E}$ est nécessairement
$W_{L}$.  Il faut alors voir que
le normalisateur de $\chi_{|I_{L}^{\ell'}}$ dans $W_{E}$ est encore
$W_{L}$. Via le corps de classes, on a une décomposition de l'abélianisé
$$ W_{L}^{\rm ab}\simeq {L}^{\times}=
\OC_{L}^{\times,\ell'}\times\OC_{L}^{\times,\ell}\times
\varpi_{E}^{\ZM}
$$
où $I_{L}^{\ell',\rm ab}\simeq\OC_{L}^{\times,\ell'}$, 
où $\OC_{L}^{\times,\ell}$ est le $\ell$-Sylow (cyclique) de
$\OC_{L}^{\times}$, et $\varpi_{E}$ est une uniformisante de $E$ qui
en est une aussi de $L$ puisque $L$ est non ramifiée sur $E$.
L'action de $W_{E}$ est triviale sur $\varpi_{E}^{\ZM}$ et le
caractère $\chi$ est trivial sur $\OC_{L}^{\times,\ell}$. Le
normalisateur de $\chi_{|I_{L}^{\ell'}}$ est donc le même que celui
de $\chi$, à savoir $W_{L}$.

Soit maintenant $\sigma'$ irréductible telle que $\Hom_{\oFl
  I_{L}^{\ell'}}(\tau,\sigma')\neq 0$. Il existe alors 
un caractère $\psi$ de $W_{L}/I_{L}^{\ell'}$ tel que $\Hom_{\oFl W_{L}}(\tau\psi,\sigma')\neq 0$.
Comme $\psi$ est aussi trivial sur le $\ell$-Sylow de $W_{L}^{\rm
  ab}$,  il est non ramifié.  Il est donc
 restriction d'un caractère non ramifié encore noté $\psi$ de
$W_{K}$. Ainsi, $\Hom_{\oFl
  W_{L}}(\tau,\sigma'\psi^{-1})\neq 0$, ce qui montre que
$\sigma'\psi^{-1}\simeq \sigma$.
\end{proof}

\ali On note $\CC_{\sigma}=\CC_{\sigma}(K)$ la sous-catégorie
pleine de $\Moc{\bZl}{W_{K}}$ formée des objets dont tous les
$\bZl I_{K}$-sous-quotients irréductibles sont isomorphes à un
sous-quotient de
$\sigma_{|I_{K}}$. De même on dispose de la sous-catégorie $\CC_{1}(L)$ (pour
la représentation triviale de $W_{L}$) de
$\Moc{\bZl}{W_{L}}$. 
On choisit maintenant un relèvement $\wt\tau$ de $\tau$ dans $\Mo{\bZl}{W_{L}}$. 
Comme le pro-ordre de $I_{L}^{\ell'}$ est premier à $\ell$, la
restriction $\wt\tau_{|I_{L}^{\ell'}}$ est uniquement déterminée.

\begin{pro} \label{propscindageW} Soit
  $P_{\sigma}:=\cind{I_{L}}{W_{K}}{\wt\tau_{|I_{L}^{\ell'}}\otimes_{\bZl[[I_{L}^{\ell'}]]} \bZl[[I_{L}]]}$.
  \begin{enumerate}
  \item La sous-catégorie $\CC_{\sigma}(K)$ de $\Moc{\bZl}{W_{K}}$ est
    facteur direct, pro-engendrée par $P_{\sigma}$. Le commutant
    $\ZG_{\sigma}^{\opp}:=\endo{\bZl W_{K}}{P_{\sigma}}$ est isomorphe au produit croisé
    $\bZl[[I_{L}/I_{L}^{\ell'}]]\rtimes \varphi_{L}^{\ZM}$.
  \item Le foncteur
$$\application{I_{\wt\tau}:\,}{\CC_{1}(L)}{\CC_{\sigma}(K)}
{M}{\cind{W_{L}}{W_{K}}{\wt\tau\otimes_{\bZl} M}}.$$
est une équivalence de
    catégorie, dont une équivalence inverse est donnée par le foncteur
$$\application{R_{\wt\tau}:\,}{\CC_{\sigma}(K)}{\CC_{1}(L)}{(V,\sigma_{V})}{\Hom_{\bZl
  I_{L}^{\ell'}}(\wt\tau,V)}$$
où $W_{L}$ agit sur le terme de gauche par composition
$w\cdot\alpha:= \sigma_{V}(w)\circ\alpha\circ \wt\tau(w)^{-1}$.
  \end{enumerate}
\end{pro}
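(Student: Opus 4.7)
The plan is to exploit the preceding Lemma, which states that $\Hom_{\oFl I_L^{\ell'}}(\tau,\sigma') = \oFl$ when $\sigma'\sim\sigma$ and vanishes otherwise, combined with the fact that $I_L^{\ell'}$ has pro-prime-to-$\ell$ order. The latter makes $\bZl[[I_L^{\ell'}]]$-modules behave as over a semi-simple algebra, in particular the Lemma lifts verbatim from $\oFl$ to $\bZl$, and $\wt\tau|_{I_L^{\ell'}}$ is a projective $\bZl[[I_L^{\ell'}]]$-module. First I would verify that $P_\sigma \in \CC_\sigma(K)$ by restricting to $I_L^{\ell'}$: $P_\sigma|_{I_L^{\ell'}}$ is a direct sum of $W_K$-translates of $\wt\tau|_{I_L^{\ell'}}$, each reducing modulo $\ell$ to an $I_K$-subquotient of $\sigma$. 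For projectivity in $\Moc{\bZl}{W_K}$, I would propagate projectivity along $I_L^{\ell'}\subset I_L\subset W_K$: the first inclusion by flat base change $\bZl[[I_L^{\ell'}]]\to\bZl[[I_L]]$, the second because $I_L\subset W_K$ is open so compact induction preserves projective objects. Finally, for pro-generation, Frobenius reciprocity gives
\begin{equation*}
\Hom_{\bZl W_K}(P_\sigma, V) \simeq \Hom_{\bZl I_L^{\ell'}}(\wt\tau, V) = R_{\wt\tau}(V),
\end{equation*}
and the Lemma (in its $\bZl$-version) shows $R_{\wt\tau}(V)\neq 0$ for every nonzero $V\in\CC_\sigma(K)$. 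Together with a cogenerating injective produced by Pontryagin duality from $P_{\sigma^\vee}$, Lemma \ref{lemmescind}(ii) then yields the direct factor decomposition.

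For the commutant in (i), specializing the reciprocity isomorphism to $V=P_\sigma$ gives $\ZG_\sigma^{\opp} \simeq R_{\wt\tau}(P_\sigma)$. I would compute this via Mackey on $P_\sigma|_{W_L}$: by the Lemma, the $W_K$-stabilizer of the isomorphism class of $\wt\tau|_{I_L^{\ell'}}$ is exactly $W_L$ (any double coset representative outside $W_L$ would twist $\tau$ by a non-trivial element of $W_K/W_L$, distinguishable on $I_L^{\ell'}$), so only the trivial double coset contributes. Because $\endo{\bZl I_L^{\ell'}}{\wt\tau} = \bZl$, the $I_L$-piece reduces to the completed group algebra $\bZl[[I_L/I_L^{\ell'}]]$, and inducing up further along $W_L/I_L = \varphi_L^\ZM$ yields the asserted crossed product $\bZl[[I_L/I_L^{\ell'}]]\rtimes \varphi_L^\ZM$, with $\varphi_L$ acting on $I_L/I_L^{\ell'}$ through the natural conjugation inherited from the embedding $I_L\subset W_L$.

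For part (ii), $(I_{\wt\tau}, R_{\wt\tau})$ form an adjoint pair by the same Frobenius reciprocity. The unit $M \to R_{\wt\tau}(I_{\wt\tau}(M))$ for $M\in\CC_1(L)$ is an isomorphism by repeating the Mackey argument on $I_{\wt\tau}(M)|_{I_L^{\ell'}}$: again only the trivial double coset survives and the Lemma identifies $\Hom_{I_L^{\ell'}}(\wt\tau,\wt\tau\otimes M) \simeq M$. The counit $I_{\wt\tau}(R_{\wt\tau}(V)) \to V$ is an isomorphism on irreducible objects of $\CC_\sigma(K)$, which by the Lemma are precisely the twists $\cind{W_L}{W_K}{\wt\tau\otimes\chi}$ that $R_{\wt\tau}$ sends back to $\chi$; since $R_{\wt\tau}$ is exact (being $\Hom$ from a projective) and $I_{\wt\tau}$ preserves colimits, the isomorphism extends from simples to all of $\CC_\sigma(K)$.

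The main technical obstacle is the Mackey-based commutant calculation of the middle paragraph: one must identify $I_L^{\ell'}$ as characteristic in $I_L$ but not normal in $W_K$, carefully track the $W_K$-stabilizer of $\wt\tau|_{I_L^{\ell'}}$ using the Lemma in its $\bZl$-version, and verify that the Frobenius acts on $I_L/I_L^{\ell'}$ via the standard local class field theory recipe so that the resulting structure is literally the semi-direct product $\bZl[[I_L/I_L^{\ell'}]]\rtimes \varphi_L^\ZM$ rather than some twisted variant.
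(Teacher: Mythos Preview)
Your overall strategy matches the paper's and most steps are correct, but your argument for the counit has a gap. You check $\alpha_V\colon I_{\wt\tau}R_{\wt\tau}(V)\to V$ on simples and then claim the isomorphism ``extends'' via exactness and colimit preservation. This does not work as stated: objects of $\CC_\sigma(K)$ are not in general filtered colimits of finite-length objects (for instance $P_\sigma$ itself, whose image under $R_{\wt\tau}$ is the free rank-one module over $\bZl[[\Zl]]\rtimes\varphi_L^{\ZM}$, has no nonzero finite-length subobject), so knowing $\alpha_V$ on simples does not control it everywhere. The paper avoids simples entirely: using the vanishing $\Hom_{I_L^{\ell'}}(\tau,\tau^w)=0$ for $w\notin W_L$, one checks directly that $R_{\wt\tau}(\alpha_V)$ is invertible (equivalently, this follows from the triangle identity once the unit $\beta$ is known to be an isomorphism), and then conservativity of $R_{\wt\tau}\simeq\Hom_{W_K}(P_\sigma,-)$ on $\CC_\sigma(K)$, already available from (i), forces $\alpha_V$ to be invertible.

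Two smaller organizational differences are worth noting. The paper establishes the direct-factor property first at the $I_K$ level (via $\sigma^0:=\cind{I_L}{I_K}{\tau}$ and its block $\CC^0_{\sigma^0}$) and then promotes to $W_K$ as in Proposition~\ref{propscindageG0}(ii)(a). More relevant to your ``main technical obstacle'': the paper does not compute the commutant by a direct Mackey decomposition. It simply observes that $P_\sigma=I_{\wt\tau}(P_1)$, so once (ii) is in hand the equivalence transports the commutant computation to $\endo{\bZl W_L}{P_1}$, which is visibly $\bZl[[I_L/I_L^{\ell'}]]\rtimes\varphi_L^{\ZM}$. This sidesteps all the double-coset and twisting bookkeeping you anticipate.
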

\begin{proof}
  $i)$ Posons $\sigma^{0}:=\cind{I_{L}}{I_{K}}{\tau}$. C'est un
  sous-quotient irréductible de $\sigma_{|I_{K}}$ et tous les autres
  lui sont conjugués. Le lemme précédent nous dit que 
$\sigma^{0}$ est la seule 
  $\oFl$-représentation  irréductible de $I_{K}$ dont la restriction à
  $I_{L}^{\ell'}$ contient $\tau_{|I_{L}^{\ell'}}$. Il s'ensuit
  immédiatement que  la sous-catégorie $\CC^{0}_{\sigma^{0}}$
  de $\Moc{\Zlnr}{I_{K}}$ formée des objets dont les sous-quotients
  irréductibles sont isomorphes à $\sigma^{0}$ est stable par
  sous-objet (et quotients et sommes directes), et
  pro-engendrée par l'induite
$P_{\sigma^{0}}:=\cind{I_{L}}{I_{K}}{\wt\tau_{|I_{L}^{\ell'}}\otimes_{\bZl[[I_{L}^{\ell'}]]}
  \bZl[[I_{L}]]}$. Ceci s'appliquant à toute représentation
irréductible de $I_{K}$, on en déduit aussi qu'elle est facteur direct.
La première assertion du point $i)$ en découle, comme dans la preuve
de \ref{propscindageG0} $ii)(a)$.
 La deuxième assertion peut se voir par un calcul direct,
  ou comme conséquence du $ii)$, \emph{cf} plus bas.

$ii)$ Les deux foncteurs sont visiblement exacts. On a une
 transformation naturelle
$$\alpha_{V}:\,\,\cind{W_{L}}{W_{K}}{\wt\tau\otimes\Hom_{I_{L}^{\ell'}}(\wt\tau,V)}\To{}
V $$
donnée par évaluation des morphismes et réciprocité de Frobenius.
En utilisant le fait que  $\Hom_{I_{L}^{\ell'}}(\tau,\tau^{w})$ est
nul si $w$ n'est pas dans $W_{L}$, on constate que
$R_{\wt\tau}(\alpha_{V})$ est inversible. Comme ce foncteur est
pleinement fidèle par $i)$, $\alpha_{V}$ est aussi inversible.
Dans l'autre sens, on a la transformation naturelle
$$\beta_{M}:\,\, M\To{} \Hom_{I_{L}^{\ell'}}\left(\wt\tau,\cind{W_{L}}{W_{K}}{\wt\tau\otimes M}\right)$$
qui envoie $m$ sur $\id_{\wt\tau}\otimes m$. 
Toujours grâce au fait que  $\Hom_{I_{L}^{\ell'}}(\tau,\tau^{w})$ est
nul si $w$ n'est pas dans $W_{L}$, on constate que $\beta_{M}$ est un isomorphisme.
On notera que
$P_{\sigma}=I_{\wt\tau}(P_{1})$, donc le
commutant de $P_{\sigma}$ est isomorphe à celui de $P_{1}$ qui est
bien comme annoncé au $i)$.
\end{proof}

\alin{La catégorie $\CC_{\sigma}^{\rm ab}$} \label{csigmaab}
Soit $\ZG_{\sigma}^{\rm ab}$ le plus grand quotient
commutatif de $\ZG_{\sigma}$. La catégorie $\Mod(\ZG_{\sigma}^{\rm
  ab})$ est naturellement une sous-catégorie
pleine de $\Mod(\ZG_{\sigma})$, stable par sous-quotients et
sommes directes. 

\begin{defn}
  On note $\CC_{\sigma}^{\rm ab}$ l'image essentielle du foncteur
  $M\in \Mod(\ZG_{\sigma}^{\rm ab}) \mapsto
  P_{\sigma}\otimes_{\ZG_{\sigma}}M\in\CC_{\sigma}$, et $P_{\sigma}^{\rm ab}:=
  P_{\sigma}\otimes_{\ZG_{\sigma}}\ZG_{\sigma}^{\rm ab}$.
\end{defn}

Ainsi $\CC_{\sigma}^{\rm ab}$ est une sous-catégorie pleine de
$\CC_{\sigma}$, stable par sous-quotients et sommes directes. En tant
que catégorie abélienne, elle est pro-engendrée par $P_{\sigma}^{\rm
  ab}$, mais on prendra garde au fait que celui-ci n'est pas un objet
projectif dans $\CC_{\sigma}$. Par construction, on a un isomorphisme canonique
$\ZG_{\sigma}^{\rm ab} \simto \endo{\bZl W_{K}}{P_{\sigma}^{\rm ab}}$.

Concrètement, le foncteur $R_{\wt\tau}$ induit une équivalence
$\CC_{\sigma}^{\rm ab}(K)\simto \CC_{1}^{\rm
  ab}(L)$, ce qui via le corps de classes montre que
$$\ZG_{\sigma}^{\rm ab} \simeq \bZl[{\rm Syl}_{\ell}(k_{L}^{\times})\times \ZM].$$
D'autre part, 
$$P_{\sigma}^{\rm ab} \simeq \cind{I_{L}^{\ell\rm
    -ab}}{W_{K}}{\wt\tau_{|I_{L}^{\ell-\rm ab}}}$$
 où $I_{L}^{\ell\rm -ab}$ désigne le noyau de la composée $I_{L}\To{\rm
  Art_{L}}\OC_{L}^{\times}\twoheadrightarrow {\rm
  Syl}_{\ell}(k_{L}^{\times})$.

\alin{Dualité et équivalences}
Ce paragraphe est l'analogue du paragraphe \ref{dualite}.
Soit $\HC=\HC(W_{K},\bZl)$ l'algèbre des mesures continues à support
compact sur $W_{K}$ et localement constantes pour l'action de $I_{K}^{\ell'}$.
La représentation $\wt\tau_{|I_{L}^{\ell'}}$ fournit un idempotent
abusivement noté $\varepsilon_{\sigma}$ qui permet d'identifier  $P_{\sigma}$ à
$\HC \varepsilon_{\sigma}$ et fournit un progénérateur
$P_{\sigma^{\vee}}:= \HC \check{\varepsilon}_{\sigma}$ de
$\CC_{\sigma^{\vee}}$.
L'inversion $w\mapsto w^{-1}$ permet alors d'identifier $\ZG_{\sigma}=e_{\sigma}\HC e_{\sigma}$
à $\ZG_{\sigma^{\vee}}$.

\begin{pro}\label{dualiteW}
 Avec cette identification,
\begin{enumerate}
\item  Il y a un $\ZG_{\sigma}$-isomorphisme fonctoriel en $V\in\Moc{\bZl}{W_{K}}$  
 $$ P_{\sigma}\otimes_{\bZl W_{K}} V \simto 
\Hom_{\bZl W_{K}}(P_{\sigma^{\vee}},V).$$
\item Il y a un $W_{K}$-isomorphisme
fonctoriel en $M\in \Mod(\ZG_{\sigma})$
$$ P_{\sigma}\otimes_{\ZG_{\sigma}} M\simto \Hom_{\ZG_{\sigma}}(P_{\sigma^{\vee}},M).$$
\end{enumerate}
\end{pro}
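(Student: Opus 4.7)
La preuve suivra exactement le patron ``abstract nonsense'' de la proposition \ref{dualite}, en exploitant la description par idempotents �tablie juste avant l'�nonc�. L'id�e est que $P_{\sigma}$ et $P_{\sigma^{\vee}}$ sont respectivement isomorphes � $\HC\varepsilon_{\sigma}$ et $\HC\check{\varepsilon}_{\sigma}$, et que l'involution $f\mapsto \check f$ de $\HC=\HC(W_{K},\bZl)$ �change action � gauche et action � droite.

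Pour le point $i)$, je calculerais s�par�ment chaque membre via l'idempotent. D'un c�t�, l'isomorphisme d'�valuation en $\check{\varepsilon}_{\sigma}$ donne
$$\Hom_{\bZl W_{K}}(P_{\sigma^{\vee}},V)=\Hom_{\bZl W_{K}}(\HC\check{\varepsilon}_{\sigma},V) \simto \check{\varepsilon}_{\sigma} V,$$
puisque tout morphisme $\HC\check{\varepsilon}_{\sigma}\to V$ est d�termin� par l'image de $\check{\varepsilon}_{\sigma}$, laquelle est un �l�ment $v\in V$ v�rifiant $\check{\varepsilon}_{\sigma}\cdot v=v$. De l'autre c�t�, l'involution $w\mapsto w^{-1}$ transforme $P_{\sigma}=\HC\varepsilon_{\sigma}$ (module � gauche) en $\check{\varepsilon}_{\sigma}\HC$ (module � droite), donc
$$P_{\sigma}\otimes_{\bZl W_{K}} V \simeq \check{\varepsilon}_{\sigma}\HC \otimes_{\HC} V \simto \check{\varepsilon}_{\sigma} V.$$
Il resterait � v�rifier que ces deux isomorphismes sont bien $\ZG_{\sigma}$-�quivariants, via l'identification $\ZG_{\sigma}=\varepsilon_{\sigma}\HC\varepsilon_{\sigma}\simeq \check{\varepsilon}_{\sigma}\HC\check{\varepsilon}_{\sigma}=\ZG_{\sigma^{\vee}}$ fix�e ; c'est imm�diat puisque l'action de $\varepsilon_{\sigma}\HC\varepsilon_{\sigma}$ sur $\check{\varepsilon}_{\sigma} V$ co�ncide des deux c�t�s avec la restriction de l'action de $\HC$ sur $V$.

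Pour le point $ii)$, je proc�derais par formalit� pure, exactement comme dans la preuve de \ref{dualite}. Le point $i)$ montre que le foncteur $V\mapsto P_{\sigma}\otimes_{\bZl W_{K}}V$ est isomorphe au foncteur $V\mapsto \Hom_{\bZl W_{K}}(P_{\sigma^{\vee}},V)$ ; comme $P_{\sigma^{\vee}}$ est un prog�n�rateur de $\CC_{\sigma^{\vee}}$ (proposition \ref{propscindageW}), ce dernier induit via le fait \ref{faitequiv} une �quivalence $\CC_{\sigma^{\vee}}\simto \Mod(\ZG_{\sigma^{\vee}})\simeq \Mod(\ZG_{\sigma})$. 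Or le foncteur $M\mapsto \Hom_{\ZG_{\sigma}}(P_{\sigma^{\vee}},M)$ est l'adjoint � droite de $V\mapsto P_{\sigma}\otimes_{\bZl W_{K}}V$ (adjonction tensor-hom standard), donc en est une �quivalence inverse, n�cessairement isomorphe � l'autre �quivalence inverse $M\mapsto P_{\sigma}\otimes_{\ZG_{\sigma}}M$ fournie par \ref{faitequiv}.

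La principale subtilit� � surveiller, plut�t qu'un v�ritable obstacle, sera d'ordre technique : il faut s'assurer que l'alg�bre de mesures $\HC(W_{K},\bZl)$ (qui fait intervenir la condition de continuit� sur $I_{K}^{\ell'}$) agit bien sur les objets de $\Moc{\bZl}{W_{K}}$ et que les manipulations d'idempotent restent valides dans ce cadre topologique. En particulier, il faudra v�rifier que le foncteur $V\mapsto \check{\varepsilon}_{\sigma}V$ est effectivement exact et commute aux sommes directes arbitraires (pour que les identifications ci-dessus s'�tendent � toute $V\in\Moc{\bZl}{W_{K}}$ et non seulement aux objets de $\CC_{\sigma^{\vee}}$), ce qui r�sulte de la description explicite de $\varepsilon_{\sigma}$ comme projecteur sur la composante $\wt\tau_{|I_{L}^{\ell'}}$-isotypique sous $I_{L}^{\ell'}$.
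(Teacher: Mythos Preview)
Your proposal is correct and follows exactly the approach the paper intends: the paper's own proof of \ref{dualiteW} consists of the single sentence ``Cela se prouve comme la proposition \ref{dualite}'', and you have faithfully transposed that idempotent/adjunction argument to the $W_K$-setting, including the additional care about the topological algebra $\HC(W_{K},\bZl)$.
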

\begin{proof}
 Cela se prouve comme la proposition \ref{dualite}.
\end{proof}

\ali Notons  $\RC\rm el(\sigma)$ la catégorie cofibrée en groupoïdes
au-dessus de la catégorie des $\bZl$-algèbres locales complètes
noethériennes, dont les objets sont les paires $(\Lambda,\wt\sigma)$
avec $\wt\sigma$ un relèvement de $\sigma$ sur $\Lambda$.   
De même on a la catégorie $\RC\rm el(1_{W_{L}})$ des relèvements de la représentation
triviale de $W_{L}$. La proposition \ref{propscindageW} nous assure que
\emph{les foncteurs $I_{\wt\tau}$ et $R_{\wt\tau}$ induisent
  des équivalences (fibrées) inverses entre $\RC\rm el(\sigma)$ et $\RC\rm el(1_{W_{L}})$.}

Fixons maintenant un élément $\varpi$ de valuation $v>0$ dans $K$
et un élément $\varphi\in W_{K}$ d'image $\varpi$ via l'homomorphisme
$W_{K}\To{} K^{\times}$ du corps de classes,
puis supposons que le déterminant de $\sigma$ sur $\varphi$ soit égal
à $1$.
 Si $\varepsilon$ désigne la signature de l'action
de $\varphi$ sur $W_{K}/W_{L}$, et si $t:\, W_{K}^{\rm ab}\To{}
W_{L}^{\rm ab}$ désigne le transfert, on a la formule \cite{Gallagher} 
$$ \det(\sigma(\varphi)) = \varepsilon(\varphi) \det(\tau(
t(\varphi))) =1. $$
Quittes à ajuster notre choix de relèvement $\wt\tau$ par un caractère
non ramifié de $W_{L}$, \emph{nous supposerons que }
$$    \varepsilon(\varphi) \det(\wt\tau(
t(\varphi))) =1. $$

On s'intéresse 
à la catégorie  $\varphi$-$\RC\rm el(\sigma)$ des
$\varphi$-relèvements de $\sigma$. De l'autre côté, on note
$t(\varphi)^{\delta}$-$\RC\rm el(1_{W_{L}})$ la catégorie des relèvements du caractère
trivial de $W_{L}$ qui valent $1$ sur l'élément $t(\varphi)^{\delta}\in
W_{L}^{\rm ab}$. On rappelle que $\delta$ est la dimension de $\tau$.

\begin{coro}\label{coroW}
les foncteurs $I_{\wt\tau}$ et $R_{\wt\tau}$ induisent
  des équivalences (fibrées) inverses entre $\varphi$-$\RC\rm
  el(\sigma)$ et $t(\varphi)^{\delta}$-$\RC\rm el(1_{W_{L}})$. En particulier,
  \begin{enumerate}
  \item L'anneau $\Lambda_{\sigma}$ de $\varphi$-déformation de $\sigma$ est isomorphe à
    $\bZl[{\rm Syl}_{\ell}(\FM_{q^{f}}^{\times}\times f\ZM/dv\ZM)]$, où $f$
    désigne la longueur de $\sigma_{|I_{K}}$.
  \item Soit $\wt\sigma$ un $\varphi$-relèvement de $\sigma$ sur
    $\Lambda_{\sigma}$ tel que 
$$\wt\sigma\otimes\oQl \simeq
\bigoplus_{r_{\ell}\sigma^{\dag}=\sigma,\rm det\sigma^{\dag}(\varphi)=1}
\sigma^{\dag}.$$
Alors $\wt\sigma$ est la $\varphi$-déformation universelle de $\sigma$.
  \end{enumerate}
 \end{coro}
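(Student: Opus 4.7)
L'énoncé principal --- que $I_{\wt\tau}$ et $R_{\wt\tau}$ induisent des équivalences fibrées inverses entre $\varphi$-$\RC\rm el(\sigma)$ et $t(\varphi)^{\delta}$-$\RC\rm el(1_{W_L})$ --- se déduira de la proposition \ref{propscindageW} en suivant la condition sur le déterminant à travers l'équivalence. Les énoncés $i)$ et $ii)$ en découleront par un calcul explicite de corps de classes. Ma première étape sera d'étendre les équivalences de \ref{propscindageW} $ii)$ aux catégories cofibrées de relèvements : puisque $\wt\tau$ est platte sur $\bZl$ et que $I_{\wt\tau}, R_{\wt\tau}$ sont exacts et $\bZl$-linéaires, ils commutent aux extensions des scalaires $-\otimes_{\bZl}\Lambda$ pour $\Lambda\in\AC_{\oFl}$. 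La platitude et la propriété de relèvement se lisant dans $\CC_{1}(L)$, on obtient sans peine une équivalence fibrée $\RC\rm el(\sigma)\simto \RC\rm el(1_{W_L})$.

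\textbf{Suivi de la condition $\varphi$.} La seconde étape consiste à traduire la condition ``$\det(-)(\varphi)=1$'' sur $\sigma$ en une condition sur le caractère de $W_L$ correspondant. Pour $M$ un relèvement de $1_{W_L}$ à $\Lambda$, la formule de Gallagher \cite{Gallagher} appliquée à $I_{\wt\tau}(M)=\cind{W_L}{W_K}{\wt\tau\otimes M}$ donne
\[ \det\bigl(I_{\wt\tau}(M)\bigr)(\varphi) \;=\; \varepsilon(\varphi)\cdot\det(\wt\tau)(t(\varphi))\cdot M\bigl(t(\varphi)\bigr)^{\delta}. \]
La normalisation $\varepsilon(\varphi)\det(\wt\tau(t(\varphi)))=1$ imposée juste avant l'énoncé du corollaire simplifie cette expression en $M(t(\varphi)^{\delta})$. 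Ainsi $I_{\wt\tau}(M)$ est un $\varphi$-relèvement de $\sigma$ si et seulement si $M$ est un $t(\varphi)^{\delta}$-relèvement de $1_{W_L}$, ce qui établit la bijection annoncée entre les deux catégories cofibrées.

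\textbf{Preuve des points $i)$ et $ii)$.} Pour $i)$, je calcule l'anneau universel des $t(\varphi)^{\delta}$-déformations du caractère trivial de $W_L$. Un tel relèvement $\wt\chi:W_L\to 1+\mG_\Lambda$ prend ses valeurs dans un pro-$\ell$-groupe, donc se factorise par le plus grand quotient pro-$\ell$ de $W_L^{\rm ab}\simeq L^\times$, à savoir ${\rm Syl}_\ell(k_L^\times)\times\ZM_\ell$ (le second facteur engendré par l'image d'une uniformisante $\varpi_L$). Par compatibilité du corps de classes local au transfert, $t(\varphi)$ correspond à l'image de $\varpi$ dans $L^\times$, dont la valuation est $f'$, et la condition $\wt\chi(t(\varphi)^{\delta})=1$ donne un quotient fini dont l'algèbre de groupe sur $\bZl$ est l'anneau de déformation voulu. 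L'identification avec $\bZl[{\rm Syl}_\ell(\FM_{q^f}^{\times}\times f\ZM/dv\ZM)]$ --- où $f$ est la longueur de $\sigma_{|I_K}$ --- s'obtient en copiant le calcul de longueur à la fin de la preuve de la proposition \ref{propscindageD} $ii)$, qui donne $f=d'f'$, joint à l'égalité $d=d'e'f'\delta$. Le point $ii)$ est alors une simple vérification : le $\varphi$-relèvement universel $\wt\sigma$ sur $\Lambda_\sigma$ se décompose sur $\oQl$ en somme directe de ses spécialisations via les $\oQl$-caractères de $\Lambda_\sigma$, lesquels sont en bijection, via l'équivalence ci-dessus, avec les $\oQl$-relèvements $\sigma^\dag$ de $\sigma$ vérifiant $\det\sigma^\dag(\varphi)=1$. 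L'obstacle principal à bien exécuter ce plan sera le suivi précis des constantes multiplicatives et l'identification explicite du groupe fini apparaissant à la troisième étape, où il faut jongler entre les invariants $d, d', e', f', \delta, f, v$ et le quotient par $t(\varphi)^\delta$ du groupe pro-$\ell$ complété de $L^\times$.
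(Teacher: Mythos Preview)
Your overall strategy matches the paper's proof: reduce via the equivalence of proposition \ref{propscindageW} to deformations of $1_{W_L}$, track the determinant condition through Gallagher's formula, and compute the deformation ring via local class field theory. The paper does exactly this, including the identification $f=f'd'$ (which it obtains directly via Mackey for $\sigma_{|I_K}$ rather than by quoting \ref{propscindageD}, but your cross-reference is fine).

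There is one minor slip in your treatment of $i)$: the image of $\varpi$ in $L^{\times}$ has $L$-valuation $e'v$ (ramification index times $v$), not $f'$. This only affects the bookkeeping you yourself flagged as delicate, and the paper's decomposition $L^{\times}/\varpi^{\delta\ZM}\simeq (1+\mG_L)\times k_L^{\times}\times \ZM/e'\delta v\ZM$ makes the correct constant visible.

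The genuine gap is in your treatment of $ii)$. You argue that the universal $\varphi$-deformation, once tensored with $\oQl$, decomposes as the sum of the $\sigma^{\dag}$. That is the \emph{converse} of what is asserted. The statement says: given \emph{some} $\varphi$-lift $\wt\sigma$ on $\Lambda_\sigma$ whose $\oQl$-fibre has that decomposition, conclude it is universal. By universality there is a ring map $\alpha:\Lambda_\sigma\to\Lambda_\sigma$ through which $\wt\sigma$ is obtained from the universal lift; the content of $ii)$ is that $\alpha$ is an isomorphism, and this is not automatic. The paper's argument (transported to the $W_L$ side) is: the hypothesis forces $\alpha$ to induce a bijection on $\oQl$-characters, hence $\alpha$ is injective since $\Lambda_\sigma$ is reduced and $\ell$-torsion-free; then $\alpha$ restricts to an injection of the finite group $\mu_{\ell^{\infty}}(\Lambda_\sigma)$ into itself, hence a bijection, and since this group generates $\Lambda_\sigma$ over $\bZl$, $\alpha$ is surjective. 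Without this step your ``simple vérification'' does not close the argument.
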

 \begin{proof}
   On sait que ces foncteurs induisent des équivalences entre
   catégories de défor\-mations sans conditions de
   déterminant. Il suffit donc de vérifier que ces conditions se
   correspondent. Ecrivons
$$\wt\sigma=I_{\wt\tau}(\wt{1_{W_{L}}})=
  \cind{W_{L}}{W_{K}}{\wt\tau\otimes\wt{1_{W_{L}}}}.$$ La
  formule
$$ \det(\wt\sigma(\varphi)) = \varepsilon(\varphi) \det(\wt\tau(
t(\varphi))) \wt{1_{W_{L}}}(t(\varphi))^{\delta}= \wt{1_{W_{L}}}(t(\varphi))^{\delta}$$
(vu notre choix de relèvement $\wt\tau$) montre que les conditions se
correspondent bien. 

$i)$ Il suffit de calculer la $t(\varphi)$-déformation universelle de
$1_{W_{L}}$, c'est-à-dire la déformation universelle du caractère
trivial du groupe $W_{L}^{\rm ab}/t(\varphi)^{\delta\ZM}$. Via le corps de
classe, ce groupe est isomorphe à $L^{\times}/\varpi^{\delta\ZM}$, puisque
le transfert s'identifie à l'inclusion de $K^{\times}$ dans
$L^{\times}$. Ce dernier groupe se décompose en
$$ L^{\times}/\varpi^{\delta\ZM}\simeq (1+\mG_{L})\times
k_{L}^{\times}\times \ZM/e'\delta v\ZM.$$
La catégorie des $t(\varphi)^{\delta}$-relèvements de $1_{W_{L}}$ est
donc équivalente à la catégorie des relève\-ments du caractère trivial
du $\ell$-Sylow ${\rm Syl}_{\ell}(k_{L}^{\times}\times
\ZM/e'\delta v\ZM)$. En particulier l'anneau de $\varpi$-déformation de
$\sigma$ est isomorphe à la $\bZl$-algèbre de ce $\ell$-Sylow.
Maintenant, vu nos notations, on a $k_{L}\simeq \FM_{q^{f'd'}}$,
et $\ZM/e'\delta v\ZM\simeq f'd'\ZM/dv\ZM$. Il suffit donc de vérifier que
$f=f'd'$. 
La formule de Mackey donne
$$ \sigma_{|I_{K}}\simeq \bigoplus_{w\in W_{K}/I_{K}W_{L}}
\cind{I_{L}}{I_{K}}{\tau_{|I_{L}}}^{w} .$$
On sait que $\sigma_{|I_{K}}$ est semi-simple, donc l'induite
$\cind{I_{L}}{I_{K}}{\tau}$ l'est aussi, mais puisque l'entrelacement
de $\tau_{|I_{L}}$ est égal à $I_{L}$, cette induite est
indécomposable, donc finalement irréductible. La longueur $f$ est donc
égale à $[W_{K}:W_{L}I_{K}]=f'd'$.

$ii)$ En vertu du $i)$ et de sa preuve ci-dessus, il suffit de montrer
que si $\wt{1}$ est une déformation du caractère trivial du groupe 
$S_{\ell}:={\rm Syl}_{\ell}(\FM_{q^{f}}^{\times}\times f\ZM/dv\ZM)$ sur
son algèbre de fonctions $\Lambda:=\bZl[S_{\ell}]$ telle que
$\wt{1}\otimes\oQl\simeq\bigoplus_{\chi:\,S_{\ell}\To{}\oQl} \chi$,
alors elle est universelle.

Partons de la déformation universelle, à savoir la représentation
régulière $\wt{1}_{\rm un}$ de $S_{\ell}$ vue comme déformation du caractère trivial sur
l'algèbre $\Lambda_{\rm un}=\bZl[S_{\ell}]$. Par universalité, la
déformation $\wt{1}$ est obtenue en poussant $\wt{1}_{\rm un}$ par un
morphisme $\Lambda_{\rm un}\To{\alpha}\Lambda$. Par hypothèse,
$\alpha$ induit une bijection entre $\oQl$-caractères de
$\Lambda$ et de $\Lambda_{\rm un}$. En particulier $\alpha$ est injectif,
car $\Lambda_{\rm un}$ est réduit et sans $\ell$-torsion. Il induit
donc une injection $\mu_{\ell^{\infty}}(\Lambda_{\rm
  un})\To{}\mu_{\ell^{\infty}}(\Lambda)$, laquelle est une bijection
puisque ces deux groupes sont finis de même cardinal. Or, $\Lambda$
est engendré par $\mu_{\ell^{\infty}}(\Lambda)$ sur $\bZl$, donc
$\alpha$ est aussi surjectif.
 \end{proof}

\bigskip

\noindent\textsc{Institut de Mathématiques de Jussieu, Université
  Pierre et Marie Curie. \\
 4, place Jussieu,  75252 Paris cedex 05, France}

\noindent\texttt{dat@math.jussieu.fr}


\end{document}